\providecommand{\U}[1]{\protect\rule{.1in}{.1in}}
\providecommand{\U}[1]{\protect\rule{.1in}{.1in}}
\newtheorem{theorem}{Theorem}
\newtheorem{definition}[theorem]{Definition}
\newtheorem{lemma}[theorem]{Lemma}
\newtheorem{proposition}[theorem]{Proposition}
\newtheorem{remark}[theorem]{Remark}
\newenvironment{proof}[1][Proof]{\noindent\textbf{#1.} }{\ \rule{0.5em}{0.5em}}
\begin{document}

\title{Thin instantons in $G_{2}$-manifolds and \ Seiberg-Witten invariants}
\author{Naichung Conan Leung, Xiaowei Wang and Ke Zhu}
\maketitle

\begin{abstract}
For two nearby disjoint coassociative submanifolds $C$ and $C^{\prime}$ in a
$G_{2}$-manifold, we construct thin instantons with boundaries lying on $C$
and $C^{\prime}$ from regular $J$-holomorphic curves in $C$. We explain their
relationship with the Seiberg-Witten invariants for $C$.

\end{abstract}
\tableofcontents

\section{\bigskip Introduction}

Intersection theory of Lagrangian submanifolds is an essential part of
symplectic geometry. By counting the number of holomorphic disks bounding
intersecting Lagrangian submanifolds, Floer and others defined the celebrated
Floer homology theory. It plays an important role in mirror symmetry for
Calabi-Yau manifolds and string theory in physics. In M-theory, Calabi-Yau
threefolds are replaced by seven dimensional $G_{2}$-manifolds $M$ (i.e.
oriented Octonion manifolds \cite{Leung RG over A}). The analogs of
holomorphic disks (resp. Lagrangian submanifolds) are instantons or
associative submanifolds (resp. coassociative submanifolds or branes) in $M$
\cite{Lee Leung Instanton Brane}. In \cite{GayetWitt}, the Fredholm theory for
instantons with coassociative boundary conditions has been set up. However,
existence of instantons is still a difficult problem. As a first step, we want
to give a construction modeled on the work of Fukaya and Oh \cite{Fukaya Oh}
in symplectic geometry. As\ it was shown in \cite{Fukaya Oh}, if we choose two
nearby Lagrangian submanifolds in such a way that one is the graph of a closed
one form on the other then the holomorphic disks bounding two is closely
related to gradient flow lines of the one form. Searching for the analog in
$G_{2}$-geometry leads us to study the following problem.

\bigskip

\textbf{Problem}: Given two nearby coassociative submanifolds $C$ and
$C^{\prime}$ in a (almost) $G_{2}$-manifold $M$. Relate the number of
instantons in $M$ bounding $C\cup C^{\prime}$ to the Seiberg-Witten invariants
of $C$.

\bigskip

The basic idea is as follows: When the coassociative submanifold $C^{\prime
}\ $is sufficiently close to $C$, then it is the graph of a self-dual two form
on $C$. This two form is essentially a symplectic form on $C$\ away from the
intersection $C\cap C^{\prime}$. Instantons bounding $C\cup C^{\prime}$ would
become \emph{holomorphic curves} on $C$ when $C^{\prime}$ collapses onto $C$
modulo the possible \emph{bubblings}. By the seminal work of Taubes \cite{Ta
Gr Sw} on the equivalence of Gromov-Witten and Seiberg-Witten invariants, we
expect that the number counted with algebraic weights of such instantons is
given by the Seiberg-Witten invariants of $C$.

Settling the above problem completely{\footnotesize \ }is very difficult at
the current stage. We treat the special case when $C$ and $C^{\prime}$ are
disjoint, i.e. $C$ is a symplectic four manifold in this paper. The basic tool
is the gluing technique. But even in this simpler case, the set up is quite
different from the Lagrangian Floer theory (c.f. Fukaya and Oh \cite{Fukaya
Oh}). Our domains are three dimensional instead of two dimensional and we have
to deform the submanifolds rather than deforming the maps as was done in Floer
theory. This is because we do not have the luxury of applying the conformal
geometry in dimension two to transform the problem of finding holomorphic
curves into the one of finding holomorphic maps. Furthermore, the linear
theory is more difficult in this case since we have to deal with a problem
which lacks uniform ellipticity, as we will explain in Section \ref{linear}.
Since the $G_{2}$ form is cubic, the needed quadratic estimate of the
$3$-dimensional instanton equation appears unavailable in the $L^{p}$ setting
(see end of Section \ref{quadratic} ). So we set up the problem in Schauder
setting, and in linear theory we need to go further from $L^{p}$ estimates to
Schauder estimates. This is different from \cite{Fukaya Oh}.

As $C^{\prime}$\ should be sufficiently close to $C$, we assume that they
arise in a one-parameter smooth family of coassociative submanifolds $C_{t}%
$\ with small $t$. Contracting with the normal vector field $n:=dC_{t}%
/dt|_{t=0}$\ for the infinitesimal deformation with the $G_{2}$-form $\Omega$,
we obtain a closed self-dual two form $\iota_{n}\Omega\in\Omega_{+}^{2}\left(
C_{0}\right)  $. Using the induced metric, from $n$ one can define an
\emph{almost complex structure} $J=J_{n}$\ on $C_{0}$\ away from the zero set
of $\iota_{n}\Omega$ (see $\left(  \ref{Jn1}\right)  $ for details). \

\begin{theorem}
\label{correspondence}Suppose that $\left(  M,\Omega\right)  $\ is a $G_{2}%
$-manifold and $\left\{  C_{t}\right\}  $\ is a one-parameter smooth family of
coassociative submanifolds in $M$. When $\iota_{n}\Omega\in\Omega_{+}%
^{2}\left(  C_{0}\right)  $\ is nonvanishing, then

\begin{enumerate}
\item (Proposition \ref{j-curve}) If $\left\{  \mathtt{A}_{t}\right\}  $\ is
any one-parameter family of associative submanifolds (i.e. instantons) in
$M$\ satisfying
\[
\partial\mathtt{A}_{t}\subset C_{t}\cup C_{0},\text{ }\lim_{t\rightarrow
0}\mathtt{A}_{t}\cap C_{0}=\Sigma_{0}\text{ in the }C^{1}\text{-topology,}%
\]
then $\Sigma_{0}$\ is a $J_{n}$-holomorphic curve in $C_{0}$. \

\item (Theorem \ref{main}) Conversely, every regular $J_{n}$-holomorphic curve
$\Sigma_{0}$ (namely those for which the linearization of $\overline{\partial
}_{J_{n}}$ on $\Sigma_{0}$ is surjective) in $C_{0}\ $is the limit of a family
of associative submanifolds $A_{t}$'s as described above.
\end{enumerate}
\end{theorem}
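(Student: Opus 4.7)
\textbf{Plan for Part 1 (the forward direction, Proposition \ref{j-curve}).} I would work pointwise on the limiting surface. Fix $p\in \Sigma_0$ and a sequence of points $p_t\in \mathtt{A}_t$ converging to $p$. Because $\mathtt{A}_t$ has boundary on $C_0\cup C_t$ and collapses to $\Sigma_0\subset C_0$ in $C^1$, the tangent $3$-plane $T_{p_t}\mathtt{A}_t$ converges (up to choosing a sequence) to a $3$-plane $V\subset T_pM$ that contains $T_p\Sigma_0$ together with the vertical direction joining $C_0$ to $C_t$, which is exactly $n(p)$. Since each $\mathtt{A}_t$ is associative, $V$ is a limit of associative $3$-planes and is therefore itself associative. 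Thus it suffices to show that a $3$-plane of the form $T_p\Sigma_0\oplus \mathbb{R}n$ is associative iff $T_p\Sigma_0$ is $J_n$-invariant. This is a linear algebra calculation on the octonions: associativity of $V$ is equivalent to $\Omega|_V$ being the volume form, and contracting with $n$ reduces this to the assertion that $\iota_n\Omega$ restricts to $T_p\Sigma_0$ as a positive multiple of its area form. By the very definition of $J_n$ as the almost complex structure compatible with the self-dual form $\iota_n\Omega$ and the induced metric, this is exactly the $J_n$-holomorphicity of $\Sigma_0$ at $p$.

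\textbf{Plan for Part 2 (the converse, Theorem \ref{main}), overall strategy.} Given a regular $J_n$-holomorphic curve $\Sigma_0\subset C_0$, I would produce $\mathtt{A}_t$ by a gluing / implicit function argument. The natural approximate solution $\mathtt{A}_t^{\mathrm{app}}$ is the ``slab'' swept out by $\Sigma_0$ as one flows in the normal direction $n$ from $C_0$ to $C_t$, suitably cut off so that $\partial \mathtt{A}_t^{\mathrm{app}}\subset C_0\cup C_t$. The true instanton $\mathtt{A}_t$ is sought as a normal graph over $\mathtt{A}_t^{\mathrm{app}}$ determined by a section $\xi$ of the normal bundle satisfying the boundary conditions $\xi(\cdot,0)\in TC_0$ and $\xi(\cdot,t)\in TC_t$ (so that the deformed boundary stays on the coassociative pieces). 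The associative condition becomes a nonlinear equation of the schematic form
\[
D_t\,\xi \;=\; e_t \;+\; Q_t(\xi),
\]
where $D_t$ is the linearized instanton operator at $\mathtt{A}_t^{\mathrm{app}}$, $e_t$ is the error (how far $\mathtt{A}_t^{\mathrm{app}}$ fails to be associative), and $Q_t$ collects the quadratic-and-higher terms coming from the cubic $G_2$ form $\Omega$.

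\textbf{The three analytic inputs.} To solve this by contraction mapping I need (i) a decay estimate $\|e_t\|\to 0$ of controlled order in $t$, obtained by Taylor expanding the associator about $\mathtt{A}_t^{\mathrm{app}}$ and using that $\Sigma_0$ is already $J_n$-holomorphic and that $C_0,C_t$ are coassociative, so the lowest-order obstructions vanish; (ii) a uniformly bounded right inverse $P_t$ for $D_t$, which is exactly where the hypothesis that $\Sigma_0$ is a \emph{regular} $J_n$-holomorphic curve enters: restricting $D_t$ to the slab and taking the $t\to 0$ limit, the leading symbol degenerates in the $n$-direction, and after rescaling $D_t$ converges (modulo the boundary conditions, which become the $\bar{\partial}_{J_n}$ Cauchy--Riemann boundary data) to $\bar{\partial}_{J_n}$ on $\Sigma_0$; surjectivity of this limit operator supplies a uniform right inverse; (iii) a quadratic estimate $\|Q_t(\xi_1)-Q_t(\xi_2)\|\le c(\|\xi_1\|+\|\xi_2\|)\|\xi_1-\xi_2\|$ with a constant $c$ independent of $t$. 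Combining (i)--(iii) with a standard Picard iteration, applied to $\xi\mapsto P_t(e_t+Q_t(\xi))$, produces a unique small solution $\xi_t$, and the corresponding graph is the desired $\mathtt{A}_t$.

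\textbf{The main obstacle.} Steps (ii) and (iii) are the hard part. As the introduction emphasizes, the linearized problem is not uniformly elliptic: as $t\to 0$ the slab degenerates to a surface and the normal direction shrinks, so the naive $L^p$ inverse bound blows up. I would therefore work in Schauder ($C^{k,\alpha}$) spaces with weights adapted to the scale $t$ in the normal direction, rescaling the slab to unit thickness to recover a uniform elliptic boundary value problem whose model is $\bar{\partial}_{J_n}$ with Dirichlet-type boundary data coming from the coassociative conditions on $C_0$ and $C_t$. Establishing a uniform right inverse in these weighted Hölder norms, and verifying that the cubic nonlinearity of $\Omega$ yields a quadratic estimate in the same norms (which is the reason for leaving the $L^p$ framework, as flagged at the end of the excerpt), is where the bulk of the analytic work goes.
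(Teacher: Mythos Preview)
Your overall architecture for both parts matches the paper, but there are a couple of points worth flagging.

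For Part 1, your argument is morally the same as the paper's, but you assert without proof that the limiting $3$-plane contains $n(p)$. This is the one nontrivial step. The paper handles it by first showing that the inward unit normal $w_t$ to $\Sigma_t:=\mathtt{A}_t\cap C_0$ inside $\mathtt{A}_t$ is automatically \emph{orthogonal to $C_0$}: writing $w_t=u\times v$ with $u,v\in T\Sigma_t$, one has $g(w_t,w)=\Omega(u,v,w)=0$ for all $w\in TC_0$ by coassociativity. Only then does the boundary condition on $C_t$ force $\lim_{t\to 0}w_t=n/|n|$. Without this perpendicularity, your ``vertical direction joining $C_0$ to $C_t$'' could a priori pick up a tangential drift and not limit to $n$. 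Once you fill this in, your linear-algebra characterization (associativity of $T_p\Sigma_0\oplus\mathbb{R}n$ $\Leftrightarrow$ $J_n$-invariance of $T_p\Sigma_0$) is exactly what the paper uses.

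For Part 2, your strategy (slab approximate solution, linearize, error/inverse/quadratic estimates, contraction) is the paper's strategy, and you correctly identify that Schauder rather than $L^p$ is forced by the cubic nonlinearity. The substantive divergence is in step (ii). You propose to rescale the slab to unit thickness and invoke convergence to $\bar\partial_{J_n}$; the paper does \emph{not} rescale. Instead it works directly on the thin manifold $\mathtt{A}_\varepsilon=[0,\varepsilon]\times\Sigma$ with the warped metric, models the linearization by an explicit operator $\mathcal{D}=e_1\cdot h^{-1/2}\partial_{x_1}+\bar\partial$, and obtains the Schauder inverse bound by an odd/even \emph{reflection trick}: choose $k$ with $k\varepsilon\in[1/2,3/2]$, reflect $V$ across the walls to extend the equation to $\mathtt{A}_{k\varepsilon}$, and apply uniform interior estimates there. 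A further point: the paper does \emph{not} obtain a uniformly bounded right inverse as you anticipate; it gets $\|Q_\varepsilon\|\le C\varepsilon^{-(3/p+2\alpha)}$, and closes the implicit function theorem by showing the error decays like $\varepsilon^{1-\alpha}$ so that the product still vanishes for $3/p+3\alpha<1$. Your proposal of weighted H\"older norms to obtain a genuinely uniform inverse is a reasonable alternative route, but it is not what the paper does and would require its own analysis.
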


\bigskip

Notice that in the product situation, where
\[
\left(  M,\Omega\right)  :=\left(  X\times S^{1},\operatorname{Re}\Omega
_{X}+\omega_{X}\wedge d\theta\right)
\]
(c.f. Section \ref{counting}) with $\left(  X,\Omega_{X}\right)  $ being a
Calabi-Yau threefold, then our theorem would follow from the work of
\cite{Fukaya Oh}.

The paper is organized as follows: In Section \ref{instantons}, we first
recall some basics of Floer theory, next we describe their $G_{2}%
$-counterparts, then we explain the connection between instantons and
Seiberg-Witten invariants, and last we study the deformation of instantons
with the aim to generalize to almost instantons. In Section
\ref{Dirac-thin-mfd}, we first study the linear differential operator
$\mathcal{D}$ (defined in $\left(  \ref{odd-Dirac}\right)  $) on a type of
thin $3$-manifolds, which is a linear approximation of the instanton equation,
then we give the $L^{2}$ and Schauder estimates of its inverse $\mathcal{D}%
^{-1}$. In Section \ref{proof}, we first compare the linearized instanton
equation on almost instantons with the operator $\mathcal{D}$ on linear
models, then we use the implicit function theorem to perturb almost instantons
to true instantons, thus proving our main theorem.

\textbf{Acknowledgments: }The first author expresses his gratitude to J.H.
Lee, Y.G. Oh, C. Taubes, R. Thomas and A. Voronov for useful discussions. The
second author thanks S.L. Kong, G. Liu, Y.J. Lee, Y.G. Shi, L. Yin for useful
discussions. The third author thanks the nice research environment in math
departments of The Chinese University of Hong Kong and University of
Minnesota, and useful discussion with T.J. Li. We thank the anonymous referees
for pointing out many inaccuracies in our earlier versions and providing
suggestions for improvement.

The work of the first author described in this paper was partially supported
by grants from the Research Grants Council of the Hong Kong Special
Administrative Region, China (Project No. CUHK401908 and CUHK403709). The work
of the second author described in this paper was partially supported by grant
from the Research Grants Council of the Hong Kong Special Administrative
Region, China (Project No. CUHK403709). The third author was partially
supported by RGC grant from the Hong Kong Government.

\section{Instantons of Dimension $2$ and $3\label{instantons}$}

\subsection{Review of Symplectic Geometry\label{sympl-geo}}

Given any symplectic manifold $\left(  X,\omega\right)  $ of dimension $2n$,
there exists a compatible metric $g$ so that the equation
\[
\omega\left(  u,v\right)  =g\left(  Ju,v\right)
\]
defines a Hermitian almost complex structure
\[
J:T_{X}\rightarrow T_{X}\text{,}%
\]
that is $J^{2}=-id$ and $g\left(  Ju,Jv\right)  =g\left(  u,v\right)  $.

A \textbf{holomorphic curve}, or \textbf{instanton }of dimension\textbf{ }$2$,
is a two dimensional submanifold $\Sigma$ in $X$ whose tangent bundle is
preserved by $J$. Equivalently $\Sigma$ is calibrated by $\omega,$ i.e.
$\omega|_{\Sigma}=vol_{\Sigma}$. By \textbf{algebraic counting} the number of
instantons in $X$, one can define a highly nontrivial invariant for the
symplectic structure on $X$, called the Gromov-Witten invariant.

When the instanton $\Sigma$ has nontrivial boundary, then the corresponding
boundary value problem would require $\partial\Sigma$ to lie on a
\textbf{Lagrangian submanifold} $L$ in $X$, i.e. $\dim L=n$ and $\omega
|_{L}=0$. Floer studied the intersection theory of Lagrangian submanifolds and
defined the Floer homology group $HF\left(  L,L^{\prime}\right)  $ under
certain assumptions.

Suppose that $X$ is a compact Calabi-Yau manifold, i.e. the holonomy group of
the Levi-Civita connection is inside $SU\left(  n\right)  $, equivalently $J$
is an integrable complex structure on $X$ and there exists a holomorphic
volume form $\Omega_{X}\in\Omega^{n,0}\left(  X\right)  $ on $X$ satisfying
\[
\left(  -1\right)  ^{\frac{n\left(  n-1\right)  }{2}}\left(  i/2\right)
^{n}\Omega_{X}\wedge\bar{\Omega}_{X}=\omega^{n}/n!.
\]
Under the mirror symmetry transformation, $HF\left(  L,L^{\prime}\right)  $ is
expected to correspond to the Dolbeault cohomology group of coherent sheaves
in the mirror Calabi-Yau manifold.

A Lagrangian submanifold $L$ in $X$ is called a \textbf{special Lagrangian
submanifold} with phase zero (resp. $\pi/2$) if $\operatorname{Im}\Omega
_{X}|_{L}=0$ (resp. $\operatorname{Re}\Omega_{X}|_{L}$ $=0$). With suitable
choice of orientation of $L$, $L$ is calibrated by $\operatorname{Re}%
\Omega_{X}|_{L}$ (resp. $\operatorname{Im}\Omega_{X}$%
$\vert$%
$_{L}$), that is $\operatorname{Re}\Omega_{X}|_{L}$ is the volume form of $L$.
They play important roles in the Strominger-Yau-Zaslow mirror conjecture for
Calabi-Yau manifolds \cite{SYZ}.

When $X$ is a Calabi-Yau threefold, there are conjectures of Vafa and others
(e.g. \cite{Mina Vafa}\cite{GV2}) that relates the (partially defined) open
Gromov-Witten invariant of the number of instantons with Lagrangian boundary
condition to the large $N$ Chern-Simons invariants of knots in three manifolds.

\subsection{Counting Instantons in (almost) $G_{2}$-manifolds\label{counting}}

Notice that a real linear homomorphism $J:\mathbb{R}^{m}\rightarrow
\mathbb{R}^{m}$ being a Hermitian complex structure on $\mathbb{R}^{m}$ is
equivalent to the following conditions: for any vector $v\in\mathbb{R}^{m}$ we have

\begin{enumerate}
\item $Jv$ is perpendicular to $v$.

\item $\left\vert Jv\right\vert =\left\vert v\right\vert $.
\end{enumerate}

We can generalize $J$ to the case involving more than one vector. We call a
skew symmetric bilinear map
\[
\times:\mathbb{R}^{m}\otimes\mathbb{R}^{m}\rightarrow\mathbb{R}^{m}%
\]
a (2-fold) \textbf{vector cross product} if it satisfies
\[%
\begin{array}
[c]{cl}%
\text{(i) } & \left(  u\times v\right)  \,\text{is perpendicular to both
}u\text{ and }v.\\
\text{(ii) } & \left\vert u\times v\right\vert =\text{Area of parallelogram
spanned by }u\text{ and }v=\left\vert u\wedge v\right\vert .
\end{array}
\]
The obvious example of this is the standard vector product on $\mathbb{R}^{3}%
$. By identifying $\mathbb{R}^{3}$ with $\operatorname{Im}\mathbb{H}$, the
imaginary part of the quaternion numbers, we have
\[
u\times v=\operatorname{Im}\bar{v}u.
\]
The same formula defines a vector cross product on $\mathbb{R}^{7}%
=\operatorname{Im}\mathbb{O}$, the imaginary part of the octonion numbers.
Brown and Gray \cite{Gray VectorCrossProd} showed that these two are the only
possible vector cross product structures on $\mathbb{R}^{m}$ up to
automorphism of $\mathbb{R}^{m}$.

Suppose that $M$ is a seven dimensional Riemannian manifold with a vector
cross product $\times$ on each of its tangent spaces. The analog of the
symplectic form is a degree three differential form $\Omega$ on $M$ defined as
follows:
\[
\Omega\left(  u,v,w\right)  =g\left(  u\times v,w\right)  \text{.}%
\]

\begin{definition}
Suppose that $\left(  M,g\right)  $ is a Riemannian manifold of dimension
seven with a vector cross product $\times$ on its tangent bundle. Then

\begin{enumerate}
\item $M$ is called an \textbf{almost }$G_{2}$\textbf{-manifold} if
$d\Omega=0$.

\item $M$ is called a\textbf{\ }$G_{2}$\textbf{-manifold} if $\nabla\Omega=0 $
with $\nabla$ being the Levi-Civita connection.
\end{enumerate}
\end{definition}

\begin{remark}
$M$ is a $G_{2}$-manifold if and only if its holonomy group is inside the
exceptional Lie group $G_{2}=Aut\left(  \mathbb{O}\right)  $. The geometry of
$G_{2}$-manifolds can be interpreted as the symplectic geometry on its knot
space (see e.g. \cite{Lee Leung Instanton Brane}, \cite{Movshev}).
\end{remark}

A typical family of examples of $G_{2}$-manifolds can be obtained via the
product manifold $M:=X\times S^{1}$ with $\left(  X,\omega_{X}\right)  $ being
a Calabi-Yau threefold with a holomorphic volume form $\Omega_{X}$, and the
$G_{2}$-form is given by
\[
\Omega=\operatorname{Re}\Omega_{X}+\omega_{X}\wedge d\theta\text{.}%
\]

Next we define the analogs of holomorphic curves and Lagrangian submanifolds
in the $G_{2}$ setting.

\begin{definition}
Suppose that $A$ is a three dimensional submanifold of an almost $G_{2}%
$-manifold $M$. We call $A$ an \textbf{instanton }or \textbf{associative
submanifold}, if $TA$ is preserved by the vector cross product $\times$.
\end{definition}

Harvey and Lawson \cite{Harvey Lawson} showed that $A\subset M$ is an
instanton if and only if $A$ is calibrated by $\Omega$, i.e. $\Omega
|_{A}=vol_{A}$. This is in turn equivalent to $\tau|_{TA}=0$ in our Lemma
\ref{instanton-associative} for $\tau$ defined in $\left(  \ref{tau}\right)  $
(i.e. Corollary 1.7 in Section IV.1.A. of\ \cite{Harvey Lawson} and Corollary
14 in \cite{Lee Leung Instanton Brane}).

In M-theory, associative submanifolds are also called \emph{M2-branes}. In the
case when $M=X\times S^{1}$ with $X$ a Calabi-Yau threefold, $\Sigma\times
S^{1}$ (resp. $L\times\left\{  p\right\}  $) is an instanton in $M$ if and
only if $\Sigma$ (resp. $L$) is a holomorphic curve (resp. special Lagrangian
submanifold with zero phase) in $X$.

A natural interesting question is to count the number of instantons in $M$. In
the special case of $M=X\times S^{1}$, these numbers are related to the
conjectural invariants proposed by Joyce \cite{Joyce Count SLag} by counting
special Lagrangian submanifolds in Calabi-Yau threefolds, and the product of
holomorphic curves with $S^{1}$. This problem has been discussed by many
physicists. For example Harvey and Moore discussed in \cite{Harvey Moore} the
mirror symmetry aspects of these invariants; Aganagic and Vafa in \cite{Mina
Vafa} related these invariants to the open Gromov-Witten invariants for local
Calabi-Yau threefolds; Beasley and Witten argued in \cite{BeasleyWitten} that
when there is a moduli of instantons, then one should count them using the
Euler characteristic of the moduli space. The compactness issues of the moduli
of instantons is a very challenging problem because the bubbling-off phenomena
of ($3$-dimensional) instantons has not been well understood. This makes it
very difficult to define an honest invariant by counting instantons.

Analogous to the Floer homology for Lagrangian intersections, when an
instanton $A$ has a nontrivial boundary, $\partial A\neq\phi,$ one should
require it to lie inside a \textbf{brane}\emph{\ }or a \textbf{coassociative
submanifold} to make it a well-posed elliptic problem (see \cite{GayetWitt}
for Fredholmness and index computation), i.e. submanifolds in $M$ where the
restriction of $\Omega$ is zero and have the largest possible dimension.
Branes are the analog of Lagrangian submanifolds in symplectic geometry.

\begin{definition}
Suppose that $C$ is a submanifold of an almost $G_{2}$-manifold $M$. We call
$C$ a \textbf{coassociative submanifold} if
\[
\Omega|_{C}=0\text{ and }\dim C=4.
\]

\end{definition}

For example when $M=X\times S^{1}$ with $X$ a Calabi-Yau threefold, $H\times
S^{1}$ (resp. $C\times\left\{  p\right\}  $) is a coassociative submanifold in
$M$ if and only if $H$ (resp. $C$) is a special Lagrangian submanifold with
phase $\pi/2$ (resp. complex surface) in $X$. In \cite{Lee Leung Instanton
Brane} J.H. Lee and the first author showed that the isotropic knot space
$\mathcal{\hat{K}}_{S^{1}}X$ of $X$ admits a natural holomorphic symplectic
structure. Moreover $\mathcal{\hat{K}}_{S^{1}}H$ (resp. $\mathcal{\hat{K}%
}_{S^{1}}C$) is a complex Lagrangian submanifold in $\mathcal{\hat{K}}_{S^{1}%
}X$ with respect to the complex structure $J$ (resp. $K$).

Constructing special Lagrangian submanifolds with zero phase in $X$ with
boundaries lying on $H$ (resp. $C$) corresponds to the Dirichlet (resp.
Neumann) boundary value problem for minimizing volume among Lagrangian
submanifolds as studied by Schoen, Wolfson (\cite{S}, \cite{SW}) and
\cite{Bu}. For a general $G_{2}$-manifold $M$, the natural boundary value for
an instanton is a coassociative submanifold. Similar to the intersection
theory of Lagrangian submanifolds in symplectic manifolds, we propose to study
the following problem: Count the number of instantons in $G_{2}$-manifolds
bounding two coassociative submanifolds.

The product of a coassociative submanifold with a two dimensional plane inside
the eleven dimension spacetime $M\times\mathbb{R}^{3,1}$ is called a
\emph{D5-brane }in M-theory. Counting the number of M2-branes between two
D5-branes has also been studied in the physics literatures.

In general this is a very difficult problem. For instance, counting $S^{1}%
$-invariant instantons in $M=X\times S^{1}$ is the open Gromov-Witten
invariant. However when the two coassociative submanifolds $C$ and $C^{\prime
}$ are \emph{close} to each other, we can relate the number of thin instantons
between them to the number of $J$-holomorphic curves in $C$ (Theorem
\ref{correspondence}), hence by Taubes' work, to the Seiberg-Witten invariant
of $C$.

\subsection{Relationships to Seiberg-Witten Invariants\label{sw}}

To determine the number of instantons between nearby coassociative
submanifolds, we first recall the deformation theory of compact coassociative
submanifolds $C$ inside any $G_{2}$-manifold $M$, as developed by McLean
\cite{McLean}. Given any normal vector field $n\in\Gamma\left(  N_{C/M}%
\right)  $, the interior product $\iota_{n}\Omega$ is naturally a self-dual
two form on $C$ because of $\Omega|_{C}=0$. This gives a natural
identification,
\begin{align}
\Gamma &  \left(  N_{C/M}\right)  \overset{\simeq}{\rightarrow}\Lambda_{+}%
^{2}\left(  C\right) \nonumber\\
n  &  \rightarrow\eta_{0}=\iota_{n}\Omega\text{.} \label{n-eta}%
\end{align}
Furthermore infinitesimal deformations of coassociative submanifolds are
parameterized by \emph{self-dual harmonic two forms} $\eta_{0}\in H_{+}%
^{2}\left(  C\right)  $, and they are always unobstructed, i.e. any such forms
with sufficiently small norm give actual deformations to nearby coassociative
submanifolds (see section 4 of \cite{McLean}). Notice that the zero set of
$\eta_{0}$ is the intersection of $C$ with a infinitesimally nearby
coassociative submanifold $C_{t}$, that is
\[
\left\{  \eta_{0}=0\right\}  =\lim_{t\rightarrow0}\left(  C\cap C_{t}\right)
\text{,}%
\]
where $C=C_{0}$ and $\eta_{0}=dC_{t}/dt|_{t=0}$.

Since
\[
\eta_{0}\wedge\eta_{0}=\eta_{0}\wedge\ast\eta_{0}=\left\vert \eta
_{0}\right\vert ^{2}\ast1\text{,}%
\]
$\eta_{0}$ defines a natural symplectic structure on $C^{reg}:=C\backslash
\left\{  \eta_{0}=0\right\}  $.\ If we normalize $\eta_{0\text{ }}$to $\eta$,
\[
\eta=\eta_{0}/\left\vert \eta_{0}\right\vert \text{,}%
\]
then the equation
\[
\eta\left(  u,v\right)  =g\left(  Ju,v\right)
\]
defines a Hermitian almost complex structure $J$ on $C^{reg}$. The $J$ is
determined by $\eta$, which in turn is determined by $n$, so we denote it by
$J_{n}$. More explicitly, for $u\in TC^{reg}$,%
\begin{equation}
J_{n}\left(  u\right)  =\left\vert n\right\vert ^{-1}n\times u. \label{Jn1}%
\end{equation}

The next proposition says that when two coassociative submanifolds $C$ and
$C^{\prime}$ come together, not necessarily disjoint, then the limit of
instantons bounding them will be a $J_{n}$-holomorphic curve $\Sigma$ in
$C^{reg}$ with boundary $C\cap C^{\prime}$.

\begin{proposition}
\label{j-curve}Let $M$ be a $G_{2}$-manifold. Suppose that for some
$\varepsilon_{0}>0$, there is a smooth map
\[
\psi:C\times\left[  0,\varepsilon_{0}\right]  \longrightarrow M
\]
such that for each $t\in\left[  0,\varepsilon_{0}\right]  ,$ $\psi_{t}\left(
\cdot\right)  :=\psi\left(  \cdot,t\right)  $ is a smooth immersion of $C$
into $M$ as a coassociative submanifold $C_{t}:=\psi\left(  C\times\left\{
t\right\}  \right)  $. Suppose that $n=dC_{t}/dt|_{t=0}\in\Gamma\left(
N_{C/M}\right)  $ is nowhere vanishing, and
\[
\phi_{t}:\Sigma\times\left[  0,t\right]  \longrightarrow M
\]
is a smooth family of instantons in $M$ such that for each $t\in
(0,\varepsilon_{0}]$, $\ \phi_{t}$ is an associative immersion with boundary
condition
\[
\phi_{t}\left(  \Sigma\times\left\{  0\right\}  \right)  \subset C_{0}%
:=\psi\left(  C\times\left\{  0\right\}  \right)  \text{ , }\phi_{t}\left(
\Sigma\times\left\{  t\right\}  \right)  \subset C_{t}:=\psi\left(
C\times\left\{  t\right\}  \right)  .
\]
Suppose that the $C^{1}$-limit of $\phi_{t}\left(  \Sigma\times\left\{
0\right\}  \right)  $ exists as $t\rightarrow0$. Then $\Sigma_{0}%
:=\lim_{t\rightarrow0}\phi_{t}\left(  \Sigma\times\left\{  0\right\}  \right)
$ is a $J_{n}$-holomorphic curve in $C_{0}$ ,
\end{proposition}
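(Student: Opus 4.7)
The plan is to convert the associative calibration $\Omega|_{T\mathtt{A}_t}=\mathrm{vol}_{T\mathtt{A}_t}$ into a pointwise condition along $\Sigma_0$ in the limit $t\to 0$, and then use the coassociativity $\Omega|_{C_0}=0$ to reduce that condition to $J_n$-invariance of $T_p\Sigma_0$. Since $\phi$ is a smooth family and $\phi_t(\Sigma\times\{0\})\to\Sigma_0$ in $C^1$, I would first introduce the ``width vector'' $w(q):=\partial_s\phi_t(q,0)|_{t=0}$ along $\phi_0:\Sigma\to\Sigma_0$; the tangent $3$-planes of $\mathtt{A}_t$ along its $C_0$-boundary then converge to $\Pi_p:=T_p\Sigma_0+\mathbb{R}\,w(q)$ at $p=\phi_0(q)$.

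Second, I would pin down the normal part of $w$. Differentiating the curve $t\mapsto\phi_t(q,t)\in C_t=\psi(C\times\{t\})$ at $t=0$ and using $\partial_t\psi(\cdot,t)|_{t=0}=n$ together with the fact that $\partial_t\phi_t(q,0)|_{t=0}\in T_pC_0$, I expect to obtain
\[
w(q)\equiv n(\phi_0(q))\pmod{T_pC_0},
\]
leaving the component of $w$ tangent to $C_0$ unconstrained at this stage.

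Third, I would pass to the limit in the calibration: each $\mathtt{A}_t$ being calibrated by $\Omega$, the $3$-plane $\Pi_p$ is also $\Omega$-calibrated. Picking an oriented orthonormal basis $\{e_1,e_2\}$ of $T_p\Sigma_0$ and decomposing $w=w_{\Sigma}+w_T^{\perp}+n$ with $w_{\Sigma}\in T_p\Sigma_0$ and $w_T^{\perp}\in T_pC_0\cap T_p\Sigma_0^{\perp}$, the unit vector in $\Pi_p$ orthogonal to $T_p\Sigma_0$ is $v=(w_T^{\perp}+n)/|w_T^{\perp}+n|$. The calibration identity $\Omega(e_1,e_2,v)=1$, together with $\Omega(e_1,e_2,w_T^{\perp})=0$ (from $\Omega|_{C_0}=0$, since $e_1,e_2,w_T^{\perp}\in T_pC_0$), collapses to the scalar equation
\[
\iota_n\Omega(e_1,e_2)=\sqrt{|n|^2+|w_T^{\perp}|^2}.
\]

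To close the argument I would invoke the pointwise linear algebra of the $G_2$ structure: $\eta_0=\iota_n\Omega$ is a self-dual $2$-form on $C_0$ of norm $\sqrt{2}\,|n|$, so its maximum value on orthonormal $2$-frames of $T_pC_0$ equals $|n|$, attained precisely on $J_n$-invariant $2$-planes. Inserting this bound,
\[
|n|\ge\iota_n\Omega(e_1,e_2)=\sqrt{|n|^2+|w_T^{\perp}|^2}\ge|n|,
\]
forces $w_T^{\perp}=0$ and saturation of the upper bound, which is exactly the $J_n$-invariance of $T_p\Sigma_0$. The main obstacle I anticipate is the first step: from only $C^1$-convergence of the boundary $\phi_t(\Sigma\times\{0\})$ and smoothness of the instanton family, one has to justify rigorously that the tangent $3$-planes of $\mathtt{A}_t$ converge to $\Pi_p$ and that the pointwise calibration survives in the limit; once this limiting tangent geometry is in place, the rest reduces to the coassociativity-plus-Wirtinger squeeze above.
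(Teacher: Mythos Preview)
Your argument is correct, but it takes a genuinely different route from the paper's proof. The paper never introduces the decomposition $w=w_\Sigma+w_T^{\perp}+n$ or the Wirtinger squeeze. Instead it works at each finite $t$: if $w_t$ is the unit normal to $\Sigma_t=\phi_t(\Sigma\times\{0\})$ inside the associative $A_t$, then associativity gives $w_t=u\times v$ for $u,v\in T\Sigma_t$, and for any $w\in TC_0$ one has $g(w_t,w)=\Omega(u,v,w)=0$ by coassociativity of $C_0$. Thus $w_t\perp C_0$ \emph{before} passing to the limit, so your component $w_T^{\perp}$ is zero from the outset; the limit $w_t\to n/|n|$ then follows from the boundary condition, and $J_n$-invariance of $T\Sigma_0$ is read off from $w_t\times u_t\in T\Sigma_t$ for all $t$.

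What each approach buys: the paper's argument is shorter and more algebraic, using only the identity $g(u\times v,w)=\Omega(u,v,w)$ and the cross-product closure of $TA_t$; it also yields the stronger intermediate fact that the transverse direction of the instanton is orthogonal to $C_0$ for every $t$, not just in the limit. Your calibration-plus-Wirtinger argument is more in the spirit of calibrated geometry and would transplant more readily to other calibrations where one lacks a cross product; it also makes the role of the bound $\iota_n\Omega(e_1,e_2)\le |n|$ (equality exactly on $J_n$-complex lines) explicit. The convergence step you flag as the main obstacle is handled in the paper by staying at finite $t$ as long as possible and only taking the $C^1$ limit at the very end.
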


\begin{proof}
Let us denote the boundary component of $A_{t}=$Image$\left(  \phi_{t}\right)
$ in $C_{0}$ by $\Sigma_{t},$ i.e. $\Sigma_{t}:=\phi_{t}\left(  \Sigma
\times\left\{  0\right\}  \right)  $. Let $w_{t}$ be a unit normal vector
field for $\Sigma_{t}$ in $A_{t}$. We claim that $w_{t}$ is
\emph{perpendicular to} $C_{0}$. To see this, note that $TA_{t}$ being
preserved by the vector cross product implies that
\[
w_{t}=u\times v
\]
for some tangent vectors $u$ and $v$ in $\Sigma_{t}$; In fact for any unit
vector $u\in T\Sigma_{t}$, $v:=w_{t}\times u\in T\Sigma_{t}$ by associativity
condition of $A_{t}$ and $v \perp w_{t}$, and%
\begin{align*}
u\times v &  =-u\times\left(  u\times w_{t}\right)  \\
&  =\tau\left(  u,u,w_{t}\right)  +g\left(  u,w_{t}\right)  u+g\left(
u,u\right)  w_{t}\\
&  =0+0+w_{t}=w_{t},
\end{align*}
where we have used in the second row the definition of $\tau$ in $\left(
\ref{tau}\right)$ and $\tau$ is a (vector-valued) form. Therefore given any
tangent vector $w$ along $C_{0}$, we
have
\[
g\left(  w_{t},w\right)  =g\left(  u\times v,w\right)  =\Omega\left(
u,v,w\right)  =0,
\]
where the last equality follows from $C_{0}$ being coassociative and
$\Sigma_{t}\subset C_{0}$. Reparameterize $\phi_{t}:\Sigma\times\left[
0,\varepsilon\right]  \rightarrow M$ when necessary, then for small $t$ we can
assume that $\left.  \frac{d}{ds}\phi_{t}\left(  z,s\right)  \right\vert
_{s=0}$ is parallel to $w_{t}\left(  z\right)  $ for any $z\in\Sigma$. Noting
that
\[
\phi_{t}\left(  z,0\right)  \subset C_{0}\text{ and }\phi_{t}\left(
z,t\right)  \subset C_{t},
\]
we see $\lim_{t\rightarrow0}w_{t}\left(  z\right)  $ is parallel to
\[
\left.  \left(  \left.  \frac{dC_{t}}{dt}\right\vert _{t=0}\right)
\right\vert _{\Sigma_{0}}=n\in\Gamma\left(  \Sigma_{0},N_{C_{0}/M}\right)
\text{.}%
\]
Therefore along $\Sigma_{0}$
\[
\lim_{t\rightarrow0}w_{t}\left(  z\right)  =\left\vert n\left(  z\right)
\right\vert ^{-1}n\left(  z\right)  .
\]
For any $u_{t}\in T\Sigma_{t}$, $w_{t}\times u_{t}\perp w_{t}$ in associative
$A_{t}$ so
\[
w_{t}\times u_{t}\in T\Sigma_{t}.
\]
Since $\Sigma_{t}\rightarrow\Sigma_{0}$ in $C^{1}$ topology, for any $u\in
T\Sigma_{0}$, $u$ can be realized as the limit of $u_{t}$. Therefore
\[
J_{n}\left(  u\right)  =\left\vert n\right\vert ^{-1}n\times u=\lim
_{t\rightarrow0}\left(  w_{t}\times u_{t}\right)  \in\lim_{t\rightarrow
0}T\Sigma_{t}=T\Sigma_{0},
\]
i.e. $\Sigma_{0}$ is a $J_{n}$-holomorphic curve in $C_{0}$ with respect to
the almost complex structure $J_{n}$ defined $\left(  \ref{Jn1}\right)  $.
\end{proof}

The reverse of the above proposition is also true (Theorem \ref{main}). The
Lagrangian analog of it was proven by Fukaya and Oh in \cite{Fukaya Oh}. On
the other hand, by the celebrated work of Taubes, we expect that the number
(counted with algebraic weights) of such holomorphic curves in $C_{0}$ equals
to the Seiberg-Witten invariant of $C_{0}$. We conjecture the following statement.

\bigskip

\textbf{Conjecture}: Suppose that $C$ and $C^{\prime}$ are nearby
coassociative submanifolds in a $G_{2}$-manifold $M$. Then the number of
instantons counted with algebraic weights in $M$ with small volume and with
boundary lying on $C\cup C^{\prime}$ is given by the Seiberg-Witten invariants
of $C$.

The main result of our paper is to solve a special case of the above
conjecture, namely, we will concentrate on the case that $C$ and $C^{\prime}$
are both \textbf{compact} and they do \textbf{NOT} intersect.\footnote{In the
remainder of the paper, we will always assume $C$ and $C^{\prime}$ are compact
and they do NOT intersect.}The basic ideas are (i) the limit of such
instantons is a $J$-holomorphic curve for almost complex structure $J$
compatible to the (degenerated) symplectic form $\eta$ on $C$ coming from its
deformations as coassociative submanifolds and this process can be reversed;
(ii) the number of $J$-holomorphic curves in the four manifold $C$ should be
related to the Seiberg-Witten invariant of $C$ by the work of Taubes (\cite{Ta
ICM1998}, \cite{Ta SW GW deg}). Note that one only gets one symplectic form
$\eta$ (and hence one almost complex structure $J$) from a given coassociative
deformation of $C$, though of course one can get more (from different
coassociative deformations).

Suppose that $\eta$ is a self-dual two form on $C$ with constant length
$\sqrt{2}$, in particular it is a (non-degenerate) symplectic form, and
$\Sigma$ is a smooth holomorphic curve in $C$, possibly disconnected. If
$\Sigma$ is \textbf{regular} in the sense that the linearized Cauchy-Riemann
operator $D_{\Sigma}\bar{\partial}_{J}$ \ has trivial cokernel \cite{Ta Gr
Sw}, then Taubes showed that the perturbed Seiberg-Witten equations,
\begin{align*}
F_{a}^{+}  &  =q\left(  \psi\right)  -r\sqrt{-1}\eta,\\
D_{A\left(  a\right)  }\psi &  =0,
\end{align*}
have solutions for all sufficiently large $r$. Here $a$ is a connection on the
complex line bundle $E$ over $C$ whose first Chern class equals the
Poincar\'{e} dual of $\Sigma$, $PD\left[  \Sigma\right]  $, $F_{a}$ is the
curvature $2$-form of $E$ and $F_{a}^{+}$ \ is the projection of $F_{a}$ to
$\wedge_{+}^{2}\left(  C\right)  $, $\psi$ is a section of the twisted spinor
bundle $S_{+}=E\oplus\left(  K^{-1}\otimes E\right)  $ and $D_{A\left(
a\right)  }$ is the twisted Dirac operator, and $q\left(  \cdot\right)  $ is a
certain canonical quadratic map from $S_{+}$ to $i\cdot\wedge_{+}^{2}\left(
C\right)  $. The number of such solutions (counted with algebraic weights) is
the Seiberg-Witten invariant $SW_{C}\left(  \Sigma\right)  $ of $C$.
Furthermore the converse is also true, namely the Seiberg-Witten invariant
$SW_{C}\left(  \Sigma\right)  $ is equal to the Gromov-Witten invariants
counting holomorphic curves $\Sigma$. Thus Taubes established an equivalence
between Seiberg-Witten theory and Gromov-Witten theory for symplectic four
manifolds. This result has far reaching applications in four dimensional
symplectic geometry.

For a general four manifold $C$ with nonzero $b^{+}\left(  C\right)  $, using
a generic metric, any self-dual two form $\eta$ on $C$ defines a degenerate
symplectic form on $C$, i.e. $\eta$ is a symplectic form on the complement of
$\left\{  \eta=0\right\}  $, which is a finite union of circles (see
\cite{Kirby}\cite{Honda}). Therefore, one might expect to have a relationship
between the Seiberg-Witten invariants of $C$ and the number of holomorphic
curves with boundaries $\left\{  \eta=0\right\}  $ in $C$. Part of this
Taubes' program has been verified in \cite{Ta ICM1998}, \cite{Ta SW GW deg}.

Suppose that $\eta$ is a nowhere vanishing self-dual harmonic two form on a
coassociative submanifold $C$ in a $G_{2}$-manifold $M$. For any holomorphic
curve $\Sigma$ in $C$, we want to construct an instanton in $M$ bounding $C$
and $C^{\prime}$, where $C^{\prime}$ is a small deformation of the
coassociative submanifold $C$ along the normal direction given by $\eta$.
Notice that $C$ and $C^{\prime}$ do not intersect. We will construct such an
instanton using a perturbation argument which requires a lower bound on the
first eigenvalue for the appropriate elliptic operator. Recall that the
deformation of an instanton is governed by a twisted Dirac operator. We will
reinterpret it as a complexified version of the Cauchy-Riemann operator in
Section \ref{linear-model}.\newline

\subsection{Deformation of Instantons}

To construct an instanton A in $M$ from a holomorphic curve $\Sigma$ in $C$,
we need to perturb an almost instanton A$^{\prime}$ to a honest one using a
quantitative version of the implicit function theorem. Let us first recall the
deformation theory of instantons A (\cite{Harvey Lawson} and \cite{Lee Leung
Instanton Brane}) in a Riemannian manifold $\left(  M,g\right)  $ with a
parallel (or closed) $r$-fold vector cross product
\[
\times:\Lambda^{r}TM\rightarrow TM\text{.}%
\]
In our situation, we have $r=2$. By taking the wedge product with $TM$ we
obtain a homomorphism $\tau$,
\[
\tau:\Lambda^{r+1}TM\rightarrow\Lambda^{2}TM\cong\Lambda^{2}T^{\ast}M\text{,}%
\]
where the last isomorphism is induced from the Riemannian metric. As a matter
of fact, the image of $\tau$ lies inside the subbundle $\mathfrak{g}_{M}%
^{\bot}$ which is the orthogonal complement of $\mathfrak{g}_{M}%
\subset\mathfrak{so}\left(  TM\right)  \cong\Lambda^{2}T^{\ast}M$, the bundle
of infinitesimal isometries of $TM$ preserving $\times$. That is,
\[
\tau\in\Omega^{r+1}\left(  M,\mathfrak{g}_{M}^{\bot}\right)  .
\]

\begin{lemma}
\label{instanton-associative}(\cite{Harvey Lawson}, \cite{Lee Leung Instanton
Brane}) An $r+1$ dimensional submanifold A$\subset M$ is an instanton, i.e.
$T$A is preserved by $\times$, if and only if
\[
\tau|_{\text{A}}=0\in\Omega^{r+1}\left(  \text{A},\mathfrak{g}_{M}^{\bot
}\right)  .
\]

\end{lemma}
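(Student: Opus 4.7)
The proof is pointwise and purely linear-algebraic. At each $p \in A$, write $V := T_p A \subset T_p M$; the claim reduces to showing that the $(r{+}1)$-dimensional subspace $V$ is preserved by $\times$ if and only if $\tau$ vanishes on every $(r{+}1)$-tuple drawn from $V$. Unwinding the construction of $\tau$ as the antisymmetrization of the composite $\Lambda^r TM \otimes TM \to \Lambda^2 TM$ followed by projection to $\mathfrak{g}_M^\perp$, the underlying $\Lambda^2 T_pM$-valued expression is, up to an overall sign,
\[
\tilde\tau(v_1,\ldots,v_{r+1}) = \sum_{i=1}^{r+1}(-1)^{i-1}\bigl(v_1 \times \cdots \times \widehat{v_i} \times \cdots \times v_{r+1}\bigr) \wedge v_i,
\]
and $\tau|_V = 0$ is equivalent to $\tilde\tau(V,\ldots,V) \subset \mathfrak{g}_M$.

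For the forward direction, assume $V$ is $\times$-invariant. Then $\times$ restricts to a genuine $r$-fold vector cross product on $V$, so one can choose an oriented orthonormal frame $e_1,\ldots,e_{r+1}$ of $V$ adapted to this VCP, satisfying $e_1 \times \cdots \times \widehat{e_i} \times \cdots \times e_{r+1} = (-1)^{i-1}e_i$. For $r=2$ this is simply a right-handed orthonormal frame with $e_1 \times e_2 = e_3$, $e_2 \times e_3 = e_1$, $e_3 \times e_1 = e_2$. Substituting into $\tilde\tau$ turns every summand into $\pm e_i \wedge e_i = 0$, so $\tilde\tau$ vanishes on this one tuple; being alternating multilinear on an $(r{+}1)$-dimensional space, $\tilde\tau|_V$, and hence $\tau|_V$, vanishes identically.

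For the converse, assume $\tau|_V = 0$, so $\tilde\tau(v_1,\ldots,v_{r+1}) \in \mathfrak{g}_M$ for all $v_i \in V$. Decompose each $r$-wise product as $v_1 \times \cdots \times \widehat{v_i} \times \cdots \times v_{r+1} = T_i + N_i$ with $T_i \in V$ and $N_i \in V^\perp$; then $\tilde\tau$ splits as a piece in $\Lambda^2 V$ (built from the $T_i$'s) plus a mixed piece $\sum_i (-1)^{i-1} N_i \wedge v_i \in V^\perp \wedge V$. Using the $G_2$-splitting $\Lambda^2 T_pM = \mathfrak{g}_M \oplus \mathfrak{g}_M^\perp$ and the canonical isomorphism $\mathfrak{g}_M^\perp \cong T_pM$ given by contraction with $\Omega$, the $\mathfrak{g}_M^\perp$-projection of this mixed piece is a definite linear expression in the $N_i$'s which, vanishing by hypothesis, forces $N_i = 0$ for all $i$ after varying the frame within $V$. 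This is precisely the statement that $V$ is closed under $\times$.

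The main obstacle is the $G_2$-representation-theoretic step in the converse: quantifying the projection of $\sum_i(-1)^{i-1} N_i \wedge v_i$ onto $\mathfrak{g}_M^\perp$ and verifying that non-vanishing of any $N_i$ produces a non-trivial image. A cleaner route, adopted in Harvey--Lawson \cite{Harvey Lawson} and Lee--Leung \cite{Lee Leung Instanton Brane}, is to work instead with a $TM$-valued associator-type form $\chi \in \Omega^{r+1}(M, TM)$ whose value on vectors of $V$ records exactly $\sum_i (-1)^{i-1} N_i$, and to check directly from the $\times$-data that $\chi|_A = 0$, $\tau|_A = 0$, and $\times$-invariance of $T_pA$ are mutually equivalent, bypassing any explicit analysis inside $\mathfrak{g}_2 \subset \mathfrak{so}(7)$.
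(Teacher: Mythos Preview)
The paper does not supply its own proof of this lemma; it is stated with attributions to \cite{Harvey Lawson} and \cite{Lee Leung Instanton Brane}, and the surrounding text only records, in the $G_2$ case, the $TM$-valued reformulation $\tau(u,v,w) = -u\times(v\times w) - g(u,v)w + g(u,w)v$ obtained after identifying $\mathfrak{g}_M^\perp \cong TM$ via $u\wedge v \mapsto u\times v$. So there is nothing in the paper to compare your argument against beyond this formula.

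Your forward direction is correct and clean. Your converse, however, is not actually proved: you assert that the $\mathfrak{g}_M^\perp$-projection of $\sum_i(-1)^{i-1}N_i\wedge v_i$ vanishing forces every $N_i=0$, but give no mechanism for this, and then defer to the literature. That deferral is the right instinct, but note that the $TM$-valued form you mention at the end is precisely the paper's formula above, and with it the converse becomes immediate rather than merely ``cleaner'': for an orthonormal basis $e_1,e_2,e_3$ of $V=T_pA$ one has $\tau(e_1,e_2,e_3) = -e_1\times(e_2\times e_3)$, and since $e_2\times e_3$ is a unit vector, $e_1\times(e_2\times e_3)=0$ forces $e_2\times e_3$ to be parallel to $e_1$, hence to lie in $V$; antisymmetry of $\tau$ then handles the remaining products. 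So rather than leaving the converse as an unresolved representation-theoretic obstacle, you could simply carry out this three-line argument using the paper's own identification $\mathfrak{g}_M^\perp\cong TM$ and be done.
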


This lemma is important in describing deformations of an instanton. Mclean
\cite{McLean} used this to show that the normal bundle to an instanton A is a
twisted spinor bundle over A and infinitesimal deformations of A are
parameterized by twisted harmonic spinors.

In our present situation, $\left(  M,g\right)  $ is a $G_{2}$-manifold. Using
the cross product, we can identify $\mathfrak{g}_{M}^{\bot}\subset\Lambda
^{2}T^{\ast}M\cong\Lambda^{2}TM$ with the tangent bundle $TM$, i.e. for
$u\wedge v\in\Lambda^{2}TM$, we identify it with $w\in TM$ that $w=u\times v$.
Then we can also characterize $\tau\in\Omega^{3}\left(  M,TM\right)  $ by the
following formula,
\[
\left(  \ast\Omega\right)  \left(  u,v,w,z\right)  =g\left(  \tau\left(
u,v,w\right)  ,z\right)  \text{,}%
\]
More explicitly,
\begin{equation}
\tau\left(  u,v,w\right)  =-u\times\left(  v\times w\right)  -g\left(
u,v\right)  w+g\left(  u,w\right)  v. \label{tau}%
\end{equation}
Therefore A$\subset M$ is an instanton if and only if $\ast_{\text{A}}\left(
\tau|_{\text{A}}\right)  =0\in T_{M}|_{\text{A}}$.

\textbf{Example.} The $G_{2}$ manifold $\mathbb{R}^{7}$:
\[
\mathbb{R}^{7}\simeq\operatorname{Im}\mathbb{O}\simeq\operatorname{Im}%
\mathbb{H\oplus H=}\left\{  \left(  x_{1}\mathbf{i}+x_{2}\mathbf{j}%
+x_{3}\mathbf{k},x_{4}+x_{5}\mathbf{i}+x_{6}\mathbf{j}+x_{7}\mathbf{k}\right)
\right\}  ,
\]
the standard basis consists of $e_{i}=\frac{\partial}{\partial x_{i}}\left(
i=1,2,\cdots7\right)  $, the multiplication $\times$ for $\left(  a,b\right)
,\left(  c,d\right)  \in\operatorname{Im}\mathbb{H\oplus H}\simeq
\operatorname{Im}\mathbb{O}$ is
\begin{equation}
\left(  a,b\right)  \times\left(  c,d\right)  =\left(  ac-d^{\ast
}b,da+bc^{\ast}\right)  \label{Cayley-Dickson}%
\end{equation}
(Cayley--Dickson construction), where $z^{\ast}$ denotes the conjugate of the
quaternion $z$. The $G_{2}$ form $\Omega$ \ is%
\[
\Omega=\omega^{123}-\omega^{167}-\omega^{527}-\omega^{563}-\omega^{154}%
-\omega^{264}-\omega^{374},
\]
the form $\tau$ is the following ($\left(  5.4\right)  $ in \cite{McLean})%
\begin{align}
\tau &  =\left(  \omega^{256}-\omega^{247}+\omega^{346}-\omega^{357}\right)
\frac{\partial}{\partial x_{1}}+\left(  \omega^{156}-\omega^{147}-\omega
^{345}+\omega^{367}\right)  \frac{\partial}{\partial x_{2}}\nonumber\\
&  +\left(  \omega^{245}-\omega^{267}-\omega^{146}-\omega^{157}\right)
\frac{\partial}{\partial x_{3}}+\left(  \omega^{567}-\omega^{127}+\omega
^{136}-\omega^{235}\right)  \frac{\partial}{\partial x_{4}}\nonumber\\
&  +\left(  \omega^{126}-\omega^{467}+\omega^{137}+\omega^{234}\right)
\frac{\partial}{\partial x_{5}}+\left(  \omega^{457}-\omega^{125}-\omega
^{134}+\omega^{237}\right)  \frac{\partial}{\partial x_{6}}\nonumber\\
&  +\left(  \omega^{124}-\omega^{456}-\omega^{135}-\omega^{236}\right)
\frac{\partial}{\partial x_{7}},\text{ \ \ \ \ } \label{tau-standard}%
\end{align}
where $\omega^{ijk}=dx_{i}\wedge dx_{j}\wedge dx_{k}$. $\operatorname{Im}%
\mathbb{H\oplus}\left\{  0\right\}  $ is associative (i.e. an instanton), and
$\left\{  0\right\}  \mathbb{\oplus H}$ is coassociative.

As a matter of fact, if A is already close to being an instanton, then we only
need the normal components of $\ast_{\mathtt{A}}\left(  \tau|_{\mathtt{A}%
}\right)  $ to vanish.

\begin{proposition}
\label{1Prop alm instanton}There is a positive constant $\delta$ such that for
any $3$-plane A in $\left(  \mathbb{R}^{7},\Omega\right)  $ with $\left\vert
\tau|_{\text{A}}\right\vert <\delta$, A is an instanton if and only if
$\ast_{\text{A}}\left(  \tau|_{\text{A}}\right)  \in T_{\text{A}}$.
\end{proposition}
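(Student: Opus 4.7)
Define $F : \mathrm{Grass}(3,7) \to \mathbb{R}^{7}$ by $F(A) = \ast_{A}(\tau|_{A})$. For any orthonormal basis $\{e_{1},e_{2},e_{3}\}$ of $A$, identity $\left(\ref{tau}\right)$ gives $F(A) = \tau(e_{1},e_{2},e_{3}) = -e_{1}\times(e_{2}\times e_{3})$, so $A$ is an instanton iff $F(A) = 0$. Decompose $F = F_{T}+F_{N}$ along the splitting $\mathbb{R}^{7} = T_{A}\oplus N_{A}$. The ``only if'' direction is immediate. The plan for the ``if'' direction is to prove the quantitative bound $|F_{T}(A)| \le C|F(A)|^{2}$ in a neighborhood $\mathcal{U}$ of the associative Grassmannian $\mathrm{Grass}^{ass}(3,7)\cong G_{2}/SO(4)$ which contains $\{|F|<\delta_{0}\}$ for some $\delta_{0}>0$. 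Once this bound is established, $F_{N}(A)=0$ together with $|F(A)| < \delta := \min(\delta_{0}, 1/(2C))$ forces $|F(A)| = |F_{T}(A)| \le C|F(A)|^{2}$, hence $F(A) = 0$.

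The core step is the local computation at an arbitrary $A_{0} \in \mathrm{Grass}^{ass}(3,7)$. For $\alpha \in \mathrm{Hom}(T_{A_{0}}, N_{A_{0}})$, write $\tilde{e}_{i} = e_{i}+\epsilon n_{i}$ with $n_{i} := \alpha(e_{i}) \in N_{A_{0}}$, and expand
\[
\tilde{e}_{2}\times\tilde{e}_{3} = e_{1} + \epsilon(n_{2}\times e_{3} + e_{2}\times n_{3}) + O(\epsilon^{2}).
\]
The associative condition $T_{A_{0}}\times T_{A_{0}}\subset T_{A_{0}}$ implies $T_{A_{0}}\times N_{A_{0}}\subset N_{A_{0}}$, because for $t,t'\in T_{A_{0}}$ and $n\in N_{A_{0}}$,
\[
g(t\times n,t') = \Omega(t,n,t') = -\Omega(t,t',n) = -g(t\times t',n) = 0.
\]
Hence $n_{i}\times e_{j} \in N_{A_{0}}$, and after subtracting the component along $\tilde{e}_{1}$ the vector $\tilde{v} := \tilde{e}_{2}\times\tilde{e}_{3} - \langle\tilde{e}_{2}\times\tilde{e}_{3},\tilde{e}_{1}\rangle\tilde{e}_{1}$ lies in $N_{A_{0}}$ modulo $O(\epsilon^{2})$. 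Since $e_{1}\times N_{A_{0}}\subset N_{A_{0}}$ by the same identity, $F(A) = -\tilde{e}_{1}\times\tilde{v} \in N_{A_{0}} + O(\epsilon^{2})$, and projecting onto $T_{A}$ (which differs from $T_{A_{0}}$ by $O(\epsilon)$) gives $|F_{T}(A)| = O(\epsilon^{2})$; that is, $F_{T}$ vanishes to second order, so $|F_{T}(A)| \le C_{A_{0}}\,\mathrm{dist}(A,A_{0})^{2}$. Furthermore, specializing to $n_{2}=n_{3}=0$ yields $dF|_{A_{0}}(\alpha) = e_{1}\times n_{1}$, an isometric isomorphism from $N_{A_{0}}$ to $N_{A_{0}}$, so $dF_{N}|_{A_{0}}$ is surjective and $|F_{N}(A)| \ge c_{A_{0}}\,\mathrm{dist}(A,\mathrm{Grass}^{ass})$ near $A_{0}$.

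Since $\mathrm{Grass}^{ass}(3,7)$ is a compact submanifold of the compact Grassmannian $\mathrm{Grass}(3,7)$, the local constants $C_{A_{0}}$ and $c_{A_{0}}$ can be chosen uniform on a tubular neighborhood $\mathcal{U}$ of $\mathrm{Grass}^{ass}$, and by continuity $\mathcal{U}$ contains $\{|F|<\delta_{0}\}$ for some $\delta_{0}>0$. Combining the two estimates yields $|F_{T}(A)| \le C|F(A)|^{2}$ throughout $\{|F|<\delta_{0}\}$, and the proof concludes as sketched in the first paragraph.

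The main obstacle is the algebraic identity $T_{A_{0}}\times N_{A_{0}}\subset N_{A_{0}}$ at an associative plane and the resulting second-order vanishing of $F_{T}$ along the associative locus; once that structural fact is in hand, the rest is a quantitative implicit function argument made uniform via the compactness of the associative Grassmannian.
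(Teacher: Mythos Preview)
Your proof is correct. Both arguments rest on the same linear--algebraic fact: at an associative $3$--plane $A_{0}$, the derivative of $A\mapsto\ast_{A}(\tau|_{A})$ takes values in $N_{A_{0}}$. The paper quotes this as McLean's observation, then exploits the transitivity of $G_{2}$ on $Gr(2,7)$ to normalize any nearby plane to $\mathrm{span}\{e_{1},e_{2},e_{3}+\sum_{a=4}^{7}t_{a}e_{a}\}$, reducing the problem to four parameters; an explicit one--line formula for $\ast(\tau|_{A})^{\perp}$ in the $t_{a}$'s then exhibits an invertible linearization, and the conclusion is immediate. You instead derive the normality of $dF$ from the identity $T_{A_{0}}\times N_{A_{0}}\subset N_{A_{0}}$, package it as ``$F_{T}$ vanishes to second order along $G_{2}/SO(4)$ and $dF_{N}$ is a submersion there,'' and close with the quadratic bound $|F_{T}|\le C|F|^{2}$ made uniform via compactness of the associative Grassmannian. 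One small wording issue: your pointwise Taylor estimate $|F_{T}(A)|\le C_{A_{0}}\mathrm{dist}(A,A_{0})^{2}$ only combines with the $|F_{N}|$ lower bound after you take $A_{0}$ to be the nearest associative plane to $A$, so that $\mathrm{dist}(A,A_{0})=\mathrm{dist}(A,\mathrm{Grass}^{ass})$; you do this implicitly in the compactness step, but it is worth stating. The paper's route is shorter and more explicit thanks to the symmetry reduction; yours is coordinate--free, makes the structural mechanism ($T\times N\subset N$) visible, and would transplant to other calibrations with the analogous property.
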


\begin{proof}
McLean observed (From formula (5.6) in \cite{McLean}) that if A$_{t}$ is a
family of linear subspaces in $M\cong\mathbb{R}^{7}$ with A$_{0}$ an
instanton, then
\[
\ast_{\text{A}_{t}}\left.  \left(  \frac{d\tau|_{\text{A}_{t}}}{dt}\right)
\right\vert _{t=0}\in N_{\text{A}_{0}/M}\subset T_{M}|_{\text{A}_{0}}\text{.}%
\]
We may assume that A is spanned by $e_{1},e_{2}$ and $\tilde{e}_{3}=e_{3}%
+\sum_{a=4}^{7}t_{a}e_{a}$ for some small $t_{a}$'s where $e_{i}$'s are a
standard basis for $\mathbb{R}^{7},$ in particular $e_{1}\times e_{2}=e_{3}.$
This is because the natural action of $G_{2}$ on the Grassmannian $Gr\left(
2,7\right)  $ is transitive. An easy computation (c.f. equation (5.4) in
\cite{McLean}) shows that the normal component of $\ast\left(  \tau
|_{\text{A}}\right)  $ in $N_{\text{A}/M}$ is given by
\[
\ast\left(  \tau|_{\text{A}}\right)  ^{\perp}=-t_{5}\left(  e_{4}\right)
^{\perp}+t_{4}\left(  e_{5}\right)  ^{\perp}+t_{7}\left(  e_{6}\right)
^{\perp}-t_{6}\left(  e_{7}\right)  ^{\perp},
\]
where $\left(  \cdot\right)  ^{\bot}$ denote the orthogonal projection onto
$N_{\text{A}/M}.$When $t_{a}$'s are all zero, we have $\left(  e_{a}\right)
^{\perp}=e_{a}$ for $4\leq a\leq7$. In particular, they are linearly
independent when $t_{a}$'s are small. In that case, $\ast\left(
\tau|_{\text{A}}\right)  ^{\perp}=0$ will actually imply that $t_{a}=0$ for
all $a$, i.e. A is an instanton in $M$. Hence the proposition.
\end{proof}

This proposition will be needed later when we perturb an almost instanton to
an honest one. We also need to identify the normal bundle $N_{\text{A}/M}$ to
an instanton A with a twisted spinor bundle over A as following \cite{McLean}:
We denote $P$ to be the $SO\left(  4\right)  $-frame bundle of $N_{\text{A}%
/M}$. Using the identification
\[
a:SO\left(  4\right)  =\left(  Sp\left(  1\right)  \times Sp\left(  1\right)
\right)  /\pm\left(  1,1\right)  \rightarrow SO\left(  \mathbb{H}\right)  ,
\]%
\[
\left(  p,q\right)  \cdot y=py\bar{q}\text{, with }p,q\in Sp\left(  1\right)
\text{ and }y\in\mathbb{H}.
\]
Mclean \cite{McLean} showed that the normal bundle $N_{\mathtt{A}/M}$ can be
identified as an associated bundle to $P$ for the representation $SO\left(
4\right)  \rightarrow SO\left(  \mathbb{H}\right)  $ given by $\left(
p,q\right)  \cdot y=py\bar{q}$. The spinor bundle $\mathbb{S}$ of $\mathtt{A}$
is associated to $P$ for the representation $s:SO\left(  4\right)  \rightarrow
SO\left(  \mathbb{H}\right)  $ given by $\left(  p,q\right)  \cdot y=y\bar{q}%
$. Let $E$ be the associated bundle to $P$ for the representation $e:SO\left(
4\right)  \rightarrow SO\left(  \mathbb{H}\right)  $ given by $\left(
p,q\right)  \cdot y=py$. Then because the $3$ representations $a,s$ and $e$
have the relation%
\[
a=s\circ e,
\]
we obtain
\[
N_{\text{A}/M}\cong\mathbb{S}\otimes_{\mathbb{H}}E\text{.}%
\]
An alternative proof of $N_{\mathtt{A}/M}\cong\mathbb{S}\otimes_{\mathbb{H}}E$
using explicit frame identification is contained in the proof of next theorem.

We re-derive McLean's theorem on deformation of associative submanifolds $A$
in a $G_{2}$ manifold $M$. The original proof (Theorem 5.2 in \cite{McLean})
is not quite precise: the associative form $\tau\in\Omega^{3}\left(
M,TM\right)  $ is \emph{vector-valued} rather than a usual differential form,
so the pull back operation and Cartan-formula need to be clarified. The key is
to define suitable notion of pull back for vector-valued forms. Our
calculation is flexible and can be extended to almost associative submanifolds
in later sections. Other proofs were given in \cite{Akbulut Salur} and
\cite{Gayet}.

\begin{theorem}
[McLean]\label{McLean-thm}Under the correspondence of normal vector fields
with twisted spinors, the Zariski tangent space to associative submanifolds at
an associative sub-manifold $A$ is the space of harmonic twisted spinors on
$A$, that is the kernel of the twisted Dirac operator.
\end{theorem}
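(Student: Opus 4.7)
The plan is to linearize the associative condition $\tau|_{A}=0$ at $A$ along a normal deformation and identify the resulting first order operator, under the isomorphism $N_{A/M}\cong \mathbb{S}\otimes_{\mathbb{H}}E$, with the twisted Dirac operator. First I would parametrize nearby submanifolds by the normal exponential map: for a section $V\in\Gamma(N_{A/M})$ and small $t$, set
\[
A_{tV}:=\bigl\{\exp_{p}(tV(p)):p\in A\bigr\}.
\]
By Proposition \ref{1Prop alm instanton}, for $|tV|$ small enough, $A_{tV}$ is associative if and only if the normal component $\ast_{A_{tV}}(\tau|_{A_{tV}})^{\perp}$ vanishes in $N_{A_{tV}/M}$. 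So the Zariski tangent space at $A$ consists exactly of those $V$ with
\[
\mathcal{L}(V):=\left.\frac{d}{dt}\right|_{t=0}\ast_{A_{tV}}\bigl(\tau|_{A_{tV}}\bigr)^{\perp}=0\in\Gamma(N_{A/M}),
\]
and the task is to compute $\mathcal{L}$ and recognize it as a twisted Dirac operator.

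Second, because $\tau\in\Omega^{3}(M,TM)$ is \emph{vector-valued}, the standard pullback of differential forms must be upgraded: to compare the vector part at $\exp_{p}(tV(p))$ with that at $p$ I would use parallel transport along the normal geodesic $s\mapsto\exp_{p}(sV(p))$ (equivalently, the Levi-Civita connection $\nabla$ of the $G_{2}$-metric, which preserves $\tau$). With this notion of pullback, the usual Cartan-type computation gives
\[
\left.\tfrac{d}{dt}\right|_{t=0}\tau|_{A_{tV}}=\bigl(\nabla_{V}\tau\bigr)|_{A}+\text{(tangent-plane rotation terms)}=\text{(tangent-plane rotation terms)},
\]
since $\nabla\tau=0$. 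Working in an oriented orthonormal frame $e_{1},e_{2},e_{3}$ of $TA$ with $e_{1}\times e_{2}=e_{3}$, and expanding the rotation of the $3$-plane $T_{p}A_{tV}$ using $\nabla^{\perp}V$, the formula \eqref{tau} and the fact that $u\times(v\times w)+g(u,v)w-g(u,w)v$ lies in $TA$ for $u,v,w\in TA$, one finds after bookkeeping that
\[
\mathcal{L}(V)=\sum_{i=1}^{3}e_{i}\times\nabla^{\perp}_{e_{i}}V\in\Gamma(N_{A/M}).
\]

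Third, I would identify this operator with the twisted Dirac operator. The $G_{2}$ cross product $e_{i}\times(\cdot)$ acts on $N_{A/M}\cong \mathbb{H}$ (in the standard local model following the example in the excerpt) by left multiplication by the imaginary unit quaternions, which is precisely Clifford multiplication on the spinor bundle $\mathbb{S}$ of the oriented Riemannian $3$-manifold $A$. Under McLean's splitting $(p,q)\cdot y=py\bar q=(s\circ e)(p,q)\cdot y$, the Clifford action factors through the $\mathbb{S}$-factor while $E$ stays inert, so $\sum_{i}e_{i}\times\nabla^{\perp}_{e_{i}}$ is exactly the Dirac operator $D^{E}$ on $\mathbb{S}\otimes_{\mathbb{H}}E$ (the frame-independence of this identification is built into the construction). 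Hence $\mathcal{L}=D^{E}$ and the Zariski tangent space is $\ker D^{E}$, as claimed.

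The main obstacle is the second step: the pullback of the vector-valued form $\tau$ is not the textbook pullback, and McLean's original argument is imprecise on this point. I would handle it by fixing once and for all the convention of parallel transport of the $TM$-factor along normal geodesics and by using $\nabla\tau=0$ to eliminate the term coming from moving the base point, so that only the tangent-plane rotation contributes. With the vector-valued Cartan formula justified, the identification of the remaining expression with Clifford multiplication plus the normal connection is a direct frame computation that matches the representation-theoretic description $a=s\circ e$ recalled before the theorem.
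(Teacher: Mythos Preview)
Your proposal is correct and follows essentially the same route as the paper: both define the pullback of the vector-valued form $\tau$ by parallel-transporting the $TM$-factor along normal geodesics, use $\nabla\tau=0$ to kill the base-point term, and identify the surviving first-order piece with $\sum_{i}e_{i}\times\nabla^{\perp}_{e_{i}}V$, i.e.\ the twisted Dirac operator on $N_{A/M}\cong\mathbb{S}\otimes_{\mathbb{H}}E$. The only difference is one of explicitness: the paper carries out your ``bookkeeping'' step by building a Cayley--Dickson adapted orthonormal frame $\{W_{\alpha}\}_{\alpha=1}^{7}$ with $\nabla^{A}W_{j}(p)=\nabla^{\perp}W_{k}(p)=0$, writes $\tau=\omega^{\alpha}\otimes W_{\alpha}$ in standard form, and then reads off $F'(0)V(p)=d(i_{V}\omega^{\alpha})\otimes W_{\alpha}(p)=\mathcal{D}V(p)\otimes dvol_{A}$ by direct comparison with the coordinate expression \eqref{tau-standard}.
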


\begin{proof}
For any section $V$ of $N_{A/M}$ with $C^{0}$ norm smaller than the
injectivity radius $\delta_{0}$ of $M$, we define a nonlinear map%
\begin{align}
F &  :\Gamma\left(  N_{A/M}\right)  \rightarrow\Omega^{3}\left(  A,i^{\ast
}TM\right)  ,\nonumber\\
F\left(  V\right)   &  =T_{V}\circ\left(  \exp V\right)  ^{\ast}%
\tau,\label{pull-vec-form}%
\end{align}
where for the embedding $\exp V:A\rightarrow M$, $\left(  \exp V\right)
^{\ast}$ pulls back the differential form part of the tensor $\tau$, and
$T_{V}:T_{\exp_{p}\left(  tV\right)  }M\rightarrow T_{p}M$ pulls back the
vector part of the tensor $\tau$ by parallel transport along the geodesic
$\exp_{p}\left(  tV\right)  $. There is an ambiguity of the form part and
vector part of tensor $\tau$ up to a scalar function-valued matrix transform
$\Theta$ and $\Theta^{-1}$ respectively, but by the linearity of $T_{V}$ and
$\left(  \exp V\right)  ^{\ast}$ on scalar function factors one can easily
show the definition of $F$ is independent on such $\Theta,$ so $F$ is well-defined.
We make $F$ more explicit by using a good frame. At a point $p\in A$, we pick
two orthonormal vectors $\left\{  W_{1},W_{2}\right\}  $ in $T_{p}A$, \ then
\emph{with respect to the induced connection }$\nabla^{A}$ on $A$, we parallel
transport $\left\{  W_{1},W_{2},W_{3}=W_{1}\times W_{2}\right\}  $ from $p$ to
a neighborhood $B$ in $A$ along geodesic rays from $p$. From the construction
we see
\begin{equation}
\nabla_{W_{i}}^{A}W_{j}\left(  p\right)  =0\text{, for }1\leq i,j\leq3\text{
}\label{orthonormal-p}%
\end{equation}
Then at any $q\in B\subset A$ we have orthonormal basis$\ $%
\[
\left\{  W_{1}\left(  q\right)  ,W_{2}\left(  q\right)  ,W_{3}\left(
q\right)  =W_{1}\left(  q\right)  \times W_{2}\left(  q\right)  \right\}
\]
spanning $T_{q}A$. (This uses that $W_{1}\left(  q\right)  \times W_{2}\left(
q\right)  $ and the parallelly transported $W_{3}\left(  p\right)  $ both
orthogonal to $W_{1}\left(  q\right)  $ and $W_{2}\left(  q\right)  $ in $3$
dimensional $A$). We further choose a smooth normal unit vector field $W_{4}$
on $B$ in $M$ as following: we choose a $W_{4}\in N_{A/M}\left(  p\right)  $
then use the parallel transport of $N_{A/M}$ \emph{with respect to the induced
connection }$\nabla^{\bot}$ in $N_{A/M}$ defined as $\nabla^{\bot}=\bot\nabla$
from the metric on $M$, where $\bot:TM\rightarrow N_{A/M}$ is the natural
projection. Thus on $B$,%
\[
W_{4}\perp\left\{  W_{1},W_{2},W_{1}\times W_{2}\right\}  .
\]
Using Lemma A.15 in \cite{Harvey Lawson} (Cayley--Dickson construction) we can
uniquely extend $\left\{  W_{\alpha}\left(  q\right)  \right\}  _{1,2,3,4}$ to
basis $\left\{  W_{\alpha}\left(  q\right)  \right\}  _{\alpha=1,2,...,7}$ of
$T_{q}M$, such that
\[
W_{i+3}\left(  q\right)  =W_{i}\left(  q\right)  \times W_{4}\left(  q\right)
\]
for $i=1,2,3$, and the correspondence%
\begin{equation}
T_{q}M\ni W_{\alpha}\overset{i}{\leftrightarrow}e_{\alpha}\in\operatorname{Im}%
\mathbb{O},\text{ \ \ }\alpha=1,2,...,7\label{bdl-identification}%
\end{equation}
preserves the inner product $\cdot$ and cross product $\times$, where
$\left\{  e_{\alpha}\right\}  _{\alpha=1,2,...,7}$ is the standard basis of
$\mathbb{R}^{7}\simeq\operatorname{Im}\mathbb{O}$ defined in previous example.
So locally $N_{A/M}$ is trivialized as $B\times\mathbb{H}$, and at any $q\in
B$, by the above basis $W_{\alpha}$ we have algebra isomorphism
\[
T_{q}M=T_{q}A\oplus N_{A/M}\left(  q\right)  \overset{i}{\simeq}%
\operatorname{Im}\mathbb{H\oplus H}\simeq\operatorname{Im}\mathbb{O}%
\]
that smoothly depends on $q\in B$. Hence $N_{A/M}$ is a quaternion valued
bundle over $A$ isomorphic to $\mathbb{S}\otimes_{\mathbb{H}}E$.
From our construction we also have
\begin{equation}
\nabla_{W_{i}}^{\bot}W_{k}\left(  p\right)  =0\text{ for }i=1,2,3\text{ and
}k=4,5,6,7, \label{orthonormal-n}%
\end{equation}
because $\nabla_{W_{i}}^{\bot}W_{4}\left(  p\right)  =0$ by construction of
$W_{4}$, and for $k=5,6,7$ say $k=5$,
\begin{align}
\nabla_{W_{i}}^{\bot}W_{5}\left(  p\right)   &  =\bot\left(  \nabla_{W_{i}%
}\left(  W_{1}\times W_{4}\right)  \right)  \left(  p\right)  =\bot\left(
\nabla_{W_{i}}W_{1}\times W_{4}+W_{1}\times\nabla_{W_{i}}W_{4}\right)  \left(
p\right) \nonumber\\
&  =\nabla_{W_{i}}^{A}W_{1}\left(  p\right)  \times W_{4}+W_{1}\times
\nabla_{W_{i}}^{\bot}W_{4}\left(  p\right)  =0,
\label{orthonormal-compatibility}%
\end{align}
where the second row is because $N_{A/M}\left(  p\right)  \times W_{4}\subset
T_{p}A$ (for $\operatorname{Im}\mathbb{O}\simeq\operatorname{Im}%
\mathbb{H\oplus H}$, $\left(  0,\mathbb{H}\right)  \mathbb{\times}\left(
0,1\right)  \subset\left(  \operatorname{Im}\mathbb{H},0\right)  $ by $\left(
\ref{Cayley-Dickson}\right)  $) and $W_{1}\times T_{p}A\subset T_{p}A$ by
associative condition. We remark that the parallel transport in $N_{A/M}$
w.r.t $\nabla^{\bot}\,$\ is an \emph{isometry}, for if $\nabla_{T}^{\bot}W=0$
for section $W$ in $N_{A/M}$ then
\begin{equation}
\nabla_{T}\left\langle W,W\right\rangle =2\left\langle \nabla_{T}%
W,W\right\rangle =2\left\langle \nabla_{T}^{\bot}W,W\right\rangle =0.
\label{normal-conn-isometry}%
\end{equation}
Next for each $q\in B$, we \emph{parallel transport} the frame $\left\{
W_{\alpha}\left(  q\right)  \right\}  _{\alpha=1,2,...,7}$ along geodesical
rays emanating from $q$ in $M$ in $N_{A/M}\left(  q\right)  $ directions up to
length $\delta_{0}$. This extends the frame to a tubular neighborhood of $A$
in $M$. Then $\nabla_{V}W_{\alpha}\left(  q\right)  =0$ for any $V\in
\Gamma\left(  N_{A/M}\right)  $. If we write
\[
\tau=\omega^{\alpha}\otimes W_{\alpha}\text{ \ \ \ }\left(  \alpha
=1,2,...,7\right)
\]
following Einstein's summation convention, then
\[
F\left(  V\right)  \left(  q\right)  =\left(  \exp V\right)  ^{\ast}%
\omega^{\alpha}\left(  q\right)  \otimes T_{V}W_{\alpha}\left(  \exp
_{q}V\right)  .
\]
We have
\begin{align}
F^{\prime}\left(  0\right)  V &  =\left.  \frac{d}{dt}\right\vert
_{t=0}F\left(  tV\right)  \nonumber\\
&  =\left.  \frac{d}{dt}\right\vert _{t=0}\left[  \left(  \exp tV\right)
^{\ast}\omega^{\alpha}\otimes T_{tV}W_{\alpha}\right]  \nonumber\\
&  =L_{V}\omega^{\alpha}\otimes W_{\alpha}+\omega^{\alpha}\otimes\nabla
_{V}W_{\alpha}\nonumber\\
&  =d\left(  i_{V}\omega^{\alpha}\right)  \otimes W_{\alpha}+i_{V}%
d\omega^{\alpha}\otimes W_{\alpha}+\omega^{\alpha}\otimes\nabla_{V}W_{\alpha
}\label{DF}%
\end{align}
Since $\nabla\tau=0$ and $\nabla_{V}W_{\alpha}\left(  q\right)  =0$ by the
parallel property of $\tau$ and $W_{\alpha}$, we have
\[
0=\nabla_{V}\tau\left(  q\right)  =\nabla_{V}\omega^{\alpha}\otimes W_{\alpha
}\left(  q\right)  +\omega^{\alpha}\otimes\nabla_{V}W_{\alpha}\left(
q\right)  =\nabla_{V}\omega^{\alpha}\left(  q\right)  \otimes W_{\alpha
}\left(  q\right)  ,
\]
and so $\nabla_{V}\omega^{\alpha}\left(  q\right)  =0$. Since $\nabla=d+A$ and
in normal coordinates the connection $1$-form $A$ vanishes at $q$ along the
fiber direction of $N_{A/M}$, we have $i_{V}d\omega^{\alpha}\left(  q\right)
=0$. Therefore by $\left(  \ref{DF}\right)  $, at $q$ we have%
\begin{equation}
F^{\prime}\left(  0\right)  V\left(  q\right)  =d\left(  i_{V}\omega^{\alpha
}\right)  \otimes W_{\alpha}\left(  q\right)  .\label{DFVp}%
\end{equation}
By our choice of $W_{a}$, the $\tau=\omega^{a}\otimes W_{\alpha}$ at $q$ is
the standard form $\left(  \ref{tau-standard}\right)  $, and by the parallel
property of the cross product $\times$ and $\tau$, in the neighborhood of $q$
in $M$ the $\tau$ is also of standard form as $\left(  \ref{tau-standard}%
\right)  $, in the sense that the coordinate vector $\frac{\partial}%
{\partial\omega^{i}}$ is replaced by the frame $W_{i}$, and $\omega
^{ijk}=dx^{i}\wedge$ $dx^{j}\wedge dx^{k}$ is replaced by $W_{i}^{\ast}\wedge
W_{j}^{\ast}\wedge W_{k}^{\ast}$ where $W_{\alpha}^{\ast}$ is the dual vector
of $W_{\alpha}$. Namely%
\[
\tau=\left(  W_{2}^{\ast}\wedge W_{5}^{\ast}\wedge W_{6}^{\ast}-W_{2}^{\ast
}\wedge W_{4}^{\ast}\wedge W_{7}^{\ast}+\cdots\right)  \otimes W_{1}%
+\text{similar terms.}%
\]
This is because $\nabla_{V}\left(  \tau\left(  W_{i},W_{j},W_{k}\right)
\right)  =0$ for $V\in\Gamma\left(  N_{A/M}\right)  $, by $\nabla\tau=0$ and
that $\left\{  W_{\alpha}\right\}  _{\alpha=1,2,...,7}$ is parallel in
$N_{A/M}$ directions. Therefore from $\left(  \ref{DFVp}\right)  $, similar to
the way of deriving $\left(  5.6\right)  $ in \cite{McLean}, and noting
$dW_{i}^{\ast}\left(  p\right)  =0$ $\left(  i=1,2,\cdots7\right)  $ from
$\left(  \ref{orthonormal-p}\right)  $ and $\left(  \ref{orthonormal-n}%
\right)  $, we get%
\begin{align}
F^{\prime}\left(  0\right)  V\left(  p\right)   &  =d\left(  i_{V}%
\omega^{\alpha}\right)  \otimes W_{\alpha}\left(  p\right)
\label{symbol-Dirac}\\
&  =\mathcal{D}V\left(  p\right)  \otimes W_{1}^{\ast}\wedge W_{2}^{\ast
}\wedge W_{3}^{\ast}\left(  p\right)  \nonumber\\
&  =\mathcal{D}V\left(  p\right)  \otimes dvol_{A}\left(  p\right)  ,\nonumber
\end{align}
where for $V=V^{4}W_{4}+V^{5}W_{5}+V^{6}W_{6}+V^{7}W_{7}$,
\begin{align*}
\mathcal{D}V\left(  p\right)   &  =-\left(  V_{1}^{5}+V_{2}^{6}+V_{3}%
^{7}\right)  W_{4}+\left(  V_{1}^{4}+V_{3}^{6}-V_{2}^{7}\right)  W_{5}\\
&  +\left(  V_{2}^{4}-V_{3}^{5}+V_{1}^{7}\right)  W_{6}+\left(  V_{3}%
^{4}+V_{2}^{5}-V_{1}^{6}\right)  W_{7}%
\end{align*}
is the twisted Dirac operator $\left(  5.2\right)  $ in \cite{McLean} and also
$\left(  \ref{twisted-Dirac}\right)  $ below, with $V_{i}^{k}=dV^{k}\left(
W_{i}\right)  $, and $dvol_{A}=W_{1}^{\ast}\wedge W_{2}^{\ast}\wedge
W_{3}^{\ast}$ is the induced volume form on $A\subset M$ since $\left\{
W_{\alpha}\right\}  _{\alpha=1,2,3}$ is the orthonormal basis of $TA$. Since
$p\in A$ and the section $V$ are arbitrary, we have
\begin{equation}
F^{\prime}\left(  0\right)  =\mathcal{D}\otimes dvol_{A}%
.\label{DF-equal-Dirac}%
\end{equation}
Note that both sides of the above identity are independent on the choice of
the frame $\left\{  W_{\alpha}\right\}  _{\alpha=1,2,...,7}$. If the normal
vector field $V$ is induced from deformation of associative submanifolds
$\left\{  A_{t}\right\}  _{0\leq t\leq\varepsilon_{0}}$, i.e.
\[
A_{t}=\exp U\left(  t\right)  \cdot A\text{ \ for }U\left(  t\right)
\in\Gamma\left(  N_{A/M}\right)  \text{ and }U^{\prime}\left(  0\right)  =V,
\]
then differentiating $F\left(  U\left(  t\right)  \right)  =0$ at $t=0$ and
using $\left(  \ref{DF-equal-Dirac}\right)  $ we get $\mathcal{D}V=0$ on $A$,
namely $V$ is a harmonic twisted spinor.
\end{proof}

\begin{remark}
\begin{enumerate}
\item The vector field $V$ is only defined on $A\subset M$, but along the
geodesic rays from $A$ in the fiber directions of $N_{A/M}$, one can extend
$V$ to an open neighborhood of $A$ by parallel transport, therefore the Lie
derivative $L_{V}\omega$ for any $3$-form $\omega$ on $M$ makes sense in this
neighborhood. However, the Lie derivative of $\omega$ restricted on $A$,
namely $i_{A}^{\ast}\left(  L_{V}\omega\right)  $ for the inclusion
$i_{A}:A\hookrightarrow M$, is actually independent of the extension of $V$,
as mentioned in \cite{McLean}. One can see this from the Cartan formula
\[
L_{V}\omega=d\left(  i_{V}\omega\right)  +i_{V}d\omega
\]
as follows: On $A$, the second term $i_{V}d\omega$ is independent on the
extension of $V$. For the first term $d\left(  i_{V}\omega\right)  $, since we
restrict it on $A\subset M$, one can directly check that this term only
involves the derivatives of the section $V$ and $\omega$ in tangent directions
of $A$, for any derivative in normal direction will contribute a covector not
in $T^{\ast}A$ and make the corresponding summand in $i_{A}^{\ast}\left(
L_{V}\omega\right)  $ vanish. So $i_{A}^{\ast}\left(  L_{V}\omega\right)  $ is
independent on the extension of $V$ and $\omega\,$\ to $M$. \

\item In the preceding proof, the normal vector field $V$ is used only to
ensure that $\exp V:A\rightarrow M$ is an embedding and give $d\left(
i_{V}\omega^{\alpha}\right)  \otimes W_{\alpha}$ a twisted Dirac operator
interpretation on the normal bundle $N_{A/M}$. Actually, to write down the
linearization of $F$, one only needs the normal bundle of $A$ in $M$ in
differential topology sense (namely at any $p\in A$, $N_{A/M}\left(  p\right)
+T_{p}A=T_{p}M$, but not necessarily $N_{A/M}\left(  p\right)  \perp T_{p}A$).
If we denote the differential topological normal bundle by $N_{A/M}^{top}$,
then for section $V\in\Gamma\left(  N_{A/M}^{top}\right)  $ with small $C^{0}$
norm, $\exp V:A\rightarrow M$ is an embedding so we can define the
linearization of $F$ as before.

\item The linearization formula
\begin{equation}
F^{\prime}\left(  0\right)  V=d\left(  i_{V}\omega^{\alpha}\right)  \otimes
W_{\alpha}+i_{V}d\omega^{\alpha}\otimes W_{\alpha}+\omega^{\alpha}%
\otimes\nabla_{V}W_{\alpha}\label{DF-general}%
\end{equation}
holds for any section $V\in\Gamma\left(  N_{A/M}^{top}\right)  $, and any
tangent frame $\left\{  W_{a}\right\}  _{i=1,2,\cdots7}$ along any submanifold
$A$ $\subset M$ (not necessarily associative). The term $d\left(  i_{V}%
\omega^{\alpha}\right)  \otimes W_{\alpha}$ is the principal symbol term of
the first order linear differential operator $F^{\prime}\left(  0\right)  $
and behaves functorially under the diffeomorphism between two manifolds. To
get $F^{\prime}\left(  0\right)  V\left(  p\right)  =d\left(  i_{V}%
\omega^{\alpha}\right)  \otimes W_{\alpha}\left(  p\right)  $, one only needs
to choose the frame $\left\{  W_{\alpha}\right\}  _{\alpha=1,\cdots,7}$ that
is parallel along the curves $\exp_{p}\left(  tv\right)  $ for all $p\in A$
and $v\in N_{A/M}^{top}\left(  p\right)  $.

\item On an almost instanton $A$, its normal bundle is not closed under
$\times$ by $TA$ in general, so the linearized instanton equation $F^{\prime
}\left(  0\right)  V$ can NOT be interpreted as a twisted Dirac operator. For
this reason, we will seldom use the twisted Dirac operator, but mainly
$\left(  \ref{DF-general}\right)  $ to do computations and estimates of
$F^{\prime}\left(  0\right)  V$, with the aid of good local frames $\left\{
W_{a}\right\}  _{i=1,2,\cdots7}$.

\item At a point $p$ in a general $3$-manifold $A\subset M$ with volume form
$dvol_{A}$, we can write the linearization
\[
F^{\prime}\left(  0\right)  V\left(  p\right)  =G\circ V\left(  p\right)
\mathbb{\otimes}dvol_{A}\left(  p\right)  ,
\]
where $G$ is a first order linear differential operator, whose principal
symbol depends on the algebraic relation of the frame $\left\{  W_{a}\right\}
$ under the products $\times$ and $\cdot$ , and the volume form on $A$.
Besides the associative $\left\{  W_{a}\right\}  _{i=1,2,3}$+coassociative
$\left\{  W_{a}\right\}  _{i=4,5,6,7}$ type frame, there maybe some other type
of frames leading to a meaningful differential operator $G$.
\end{enumerate}
\end{remark}

By McLean's theorem, the normal bundle to any instanton $A$ is a twisted
spinor bundle $\mathbb{S}$ over $A$ corresponding to the representation
$SO\left(  4\right)  \rightarrow SO\left(  \mathbb{H}\right)  $ given by
$\left(  p,q\right)  \cdot y=py\bar{q}$. Let $\mathcal{D}$ be the twisted
Dirac operator on $A$. We want to write it down explicitly in local
coordinates near $p.$ We let $\left\{  W_{\alpha}\right\}  _{\alpha=1}^{7}$ be
a local orthonormal frame constructed as above. Suppose
\[
V=V^{4}W_{4}+V^{5}W_{5}+V^{6}W_{6}+V^{7}W_{7}%
\]
is a normal vector field to $A$ and we write the covariant differentiation of
$V$ as $\nabla\left(  V\right)  :=V_{i}^{\alpha}W_{\alpha}\otimes\omega^{i}$
with $\left\{  \omega^{i}\right\}  $ being the co-frame dual to $\left\{
W_{i}\right\}  ,$ then the twisted Dirac operator is
\begin{equation}
\mathcal{D}=W_{1}\times\nabla_{1}+W_{2}\times\nabla_{2}+W_{3}\times\nabla_{3},
\label{twisted-Dirac}%
\end{equation}
where $\nabla_{i}:=\nabla_{W_{i}}^{\bot}$ $\left(  i=1,2,3\right)  $. For
instance when the $G_{2}$ manifold is $\operatorname{Im}\mathbb{O}%
\simeq\operatorname{Im}\mathbb{H\oplus H}$ and the instanton is
$\operatorname{Im}\mathbb{H}$, then by viewing $V$ as a $\mathbb{H}$ valued
function, $V=V^{4}+\mathbf{i}V^{5}+\mathbf{j}V^{6}+\mathbf{k}V^{7}$, the
twisted Dirac operator is $\mathcal{D}=\mathbf{i}\nabla_{1}+\mathbf{j}%
\nabla_{2}+\mathbf{k}\nabla_{3}$. Expression $\left(  \ref{twisted-Dirac}%
\right)  $ can be easily shown to be independent on the choice of orthonormal
basis $\left\{  W_{a}\right\}  _{i=1,2,3}$ of $TA$, thus $\mathcal{D}$ is
globally defined on $A$. From $\left(  \ref{twisted-Dirac}\right)  $ and
$\left(  \ref{orthonormal-n}\right)  $ we have
\begin{align}
\mathcal{D}V\left(  p\right)   &  =\left(  W_{1}\times\nabla_{1}+W_{2}%
\times\nabla_{2}+W_{3}\times\nabla_{3}\right)  \left(  V^{4}W_{4}+V^{5}%
W_{5}+V^{6}W_{6}+V^{7}W_{7}\right) \nonumber\\
&  =-\left(  V_{1}^{5}+V_{2}^{6}+V_{3}^{7}\right)  W_{4}+\left(  V_{1}%
^{4}+V_{3}^{6}-V_{2}^{7}\right)  W_{5}\nonumber\\
&  +\left(  V_{2}^{4}-V_{3}^{5}+V_{1}^{7}\right)  W_{6}+\left(  V_{3}%
^{4}+V_{2}^{5}-V_{1}^{6}\right)  W_{7}. \label{McLean-Dirac}%
\end{align}
We remark that away from $p$ the expression of $\mathcal{D}V$ may have more
$0$-th order terms in general.

\section{Dirac Equation on Thin 3-manifolds\label{Dirac-thin-mfd}}

\subsection{A Simplified Model\label{linear-model}}

To motivate our analytical estimates for later sections, let us first
consider\ a simplified model. Suppose that $\mathtt{A}_{\varepsilon}=\left[
0,\varepsilon\right]  \times\Sigma$ is a three manifold and $\left(
x_{1},z\right)  $ are coordinates on $\mathtt{A}_{\varepsilon}\mathtt{.}$ On
$\mathtt{A}_{\varepsilon}$ we put a warped product metric $g_{\mathtt{A}%
_{\varepsilon},h}=h\left(  z\right)  dx_{1}^{2}+g_{\Sigma},$ where $\Sigma$ is
a Riemann surface with a background metric $g_{\Sigma}$ and $h\left(
z\right)  >0$ is a $C^{\infty}$ function on $\Sigma$. Let $e_{1}$ be the unit
tangent vector field on $\mathtt{A}_{\varepsilon}$ normal to $\Sigma$, namely
along the $x_{1}$-direction and $e_{1}=h^{-1/2}\left(  z\right)
\frac{\partial}{\partial x_{1}}$. We introduce a first order linear
differential operator $\mathcal{D}$ that
\begin{equation}
\mathcal{D}=e_{1}\cdot\nabla_{1}+\bar{\partial}=e_{1}\cdot h^{-1/2}\left(
z\right)  \frac{\partial}{\partial x_{1}}+\bar{\partial}, \label{odd-Dirac}%
\end{equation}
where $\nabla_{1}=\nabla_{e_{1}}=h^{-1/2}\left(  z\right)  \frac{\partial
}{\partial x_{1}}$, and $\bar{\partial}=$ $\left(  \overline{\partial
},\overline{\partial}^{\ast}\right)  $ is the Dirac operator on the Dolbeault
complex $\Omega_{\mathbb{C}}^{0,\ast}\left(  L\right)  $ of Hermitian line
bundle $L$ over the Riemann surface $\Sigma$ with a connection $\nabla$ (c.f.
Proposition 3.67 of \cite{BGV} or Proposition 1.4.25 of \cite{Nicolaescu}, by
taking the Kahler manifold to be $\Sigma$ and the Hermitian line bundle to be
$L$). Here $\bar{\partial}$ acts on the spinor bundle $\mathbb{S}_{\Sigma
}:=\mathbb{S}^{+}\oplus\mathbb{S}^{-}$ via the following identification
\begin{equation}%
\begin{tabular}
[c]{lll}%
$\Omega_{\mathbb{C}}^{0}\left(  L\right)  \oplus\Omega_{\mathbb{C}}%
^{0,1}\left(  L\right)  $ & $\overset{\left(  \overline{\partial}%
,\overline{\partial}^{\ast}\right)  }{\longrightarrow}$ & $\Omega_{\mathbb{C}%
}^{0,1}\left(  L\right)  \oplus\Omega_{\mathbb{C}}^{0}\left(  L\right)  $\\
$\ \ \ \ \ \ \ \ \ \ \ \ \ $\ $\ \ \downarrow$ &  &
$\ \ \ \ \ \ \ \ \ \ \ \ \ \ \downarrow$\\
$\ \ \ \ \ \ \ \ \ \ \ \mathbb{S}^{+}\oplus\mathbb{S}^{-}$ & $\overset
{\bar{\partial}}{\longrightarrow}$ & $\ \ \ \ \ \ \ \ \ \ \mathbb{S}^{-}%
\oplus\mathbb{S}^{+}$%
\end{tabular}
\ \ \ \ \ \ \ \ \ \ \ \ \ \ \ \ \ \label{spin-dbar}%
\end{equation}
where on the left $\mathbb{S}^{+}$ and $\mathbb{S}^{-}$ are identified to
complex line bundles $L$ and $L\otimes\Lambda_{\mathbb{C}}^{0,1}\left(
\Sigma\right)  $ respectively, $\overline{\partial}:\Omega_{\mathbb{C}}%
^{0}\left(  L\right)  \rightarrow\Omega_{\mathbb{C}}^{0,1}\left(  L\right)  $
is the Dolbeault operator and $\overline{\partial}^{\ast}:\Omega_{\mathbb{C}%
}^{0,1}\left(  L\right)  \rightarrow\Omega_{\mathbb{C}}^{0}\left(  L\right)  $
is its formal adjoint operator. The Dolbeault operator $\overline{\partial}$
depends on the complex structures $j$ on $\Sigma$, $J$ on $L$, and the
connection $\nabla$ of $L$.

When $h\left(  z\right)  $ is a constant, from the spinor bundle
$\mathbb{S}_{\Sigma}=\mathbb{S}^{+}\oplus\mathbb{S}^{-}$ over $\Sigma$ one can
construct a spinor bundle $\mathbb{S}$ over the odd dimensional manifold
A$_{\varepsilon}:=\left[  0,\varepsilon\right]  \times\Sigma$ by taking the
Cartesian product of $\mathbb{S}_{\Sigma}$ with $\left[  0,\varepsilon\right]
$ (see Chapter 22 in \cite{BW}). When there is no confusion, we also write
$\mathbb{S}=\mathbb{S}^{+}\oplus\mathbb{S}^{-}$ where the $\mathbb{S}^{\pm}$
are the Cartesian products of the $\mathbb{S}^{\pm}$ of $\mathbb{S}_{\Sigma}$
with $\left[  0,\varepsilon\right]  $. \cite{BW} also constructs a Dirac
operator on the spinor bundle $\mathbb{S\rightarrow}$A$_{\varepsilon}$ right
from the Dirac operator $\bar{\partial}$ on the spinor bundle $\mathbb{S}%
_{\Sigma}\mathbb{\rightarrow}\Sigma$, and it is $\mathcal{D=}e_{1}\cdot
\nabla_{1}+\bar{\partial}$ as in $\left(  \ref{odd-Dirac}\right)  $. For
general $h\left(  z\right)  $, our $\mathcal{D}$ is a Dirac type operator but
not necessarily a genuine Dirac operator.

Let's recall the Clifford multiplication of $T$A$_{\varepsilon}$ on
$\mathbb{S}$. Since $\mathbb{S}$ is the Cartesian product of $\mathbb{S}%
_{\Sigma}$ with $\left[  0,\varepsilon\right]  $, it enough to define the
Clifford multiplication $\cdot$ of $e_{1}$ on $\mathbb{S}_{\Sigma}$. Let
$\sigma$ be the volume element of the Clifford bundle $Cl\left(
\Sigma\right)  $, $\sigma^{2}=1$. Then $\mathbb{S}^{+}$ and $\mathbb{S}^{-}$
are the $\pm1$ eigenbundles of $\sigma$ and $\mathbb{S}_{\Sigma}%
=\mathbb{S}^{+}\oplus\mathbb{S}^{-}$. Since the Clifford multiplication
$\cdot$ of $e_{1}$ on $\mathbb{S}_{\Sigma}$ satisfies $e_{1}^{2}=-1$, the
natural choice of the action $e_{1}\cdot$ on $\mathbb{S}_{\Sigma}$ is to let
$\mathbb{S}^{+}$ and $\mathbb{S}^{-}$ be the $\pm i$ eigenbundles, namely for
$\left(  u,v\right)  \in\mathbb{S}^{+}\oplus\mathbb{S}^{-}$, $e_{1}%
\cdot\left(  u,v\right)  =\left(  iu,-iv\right)  $. The connection of
$\mathbb{S}$ along $x_{1}$ direction is trivial.

Given $z_{0}\in\Sigma$, in its neighborhood in $\Sigma$ we can choose a
complex coordinate $z=x_{2}+ix_{3}\in\mathbb{C}$ with $\frac{\partial
}{\partial x_{2}},\frac{\partial}{\partial x_{3}}$ orthonormal at $z_{0}$. We
locally trivialize the spinor bundle $\mathbb{S}_{\Sigma}\mathbb{=\mathbb{S}%
^{+}\oplus\mathbb{S}}^{-}\rightarrow\Sigma$ as
\[
\mathbb{H=C+C}\mathbf{j=\mathbb{C}^{2}\rightarrow}\mathbb{C}.
\]
We may choose the trivialization such that the Dirac operator%
\[
\left.  \left(  \overline{\partial},\overline{\partial}^{\ast}\right)
\right\vert _{z_{0}}=\left(  \overline{\partial}_{z},-\partial_{z}\right)  .
\]
We may write the section $V$ of $\mathbb{S\rightarrow}$ A$_{\varepsilon}$ as
$V=\left(  u,v\right)  :=u+v\mathbf{j\in}\mathbb{S}^{+}\oplus\mathbb{S}^{-}$
with
\begin{align*}
u  &  =V^{4}+\mathbf{i}V^{5}\in\mathbb{S}^{+}\simeq\mathbb{C},\\
v  &  =V^{6}+\mathbf{i}V^{7}\in\mathbb{S}^{+}\simeq\mathbb{C},\mathbf{\ }%
\text{and \ }v\mathbf{j\in}\text{ }\mathbb{S}^{+}\mathbf{j}=\mathbb{S}^{-}.
\end{align*}
With these understood, we may write $\left(  \ref{odd-Dirac}\right)  $ as%
\begin{align}
\mathcal{D}V  &  =\left[
\begin{array}
[c]{cc}%
\mathbf{i} & 0\\
0 & -\mathbf{i}%
\end{array}
\right]  \left(  h^{-1/2}\left(  z\right)  \frac{\partial}{\partial x_{1}%
}+\left[
\begin{array}
[c]{cc}%
0 & -\mathbf{i}\overline{\partial}^{\ast}\\
\mathbf{i}\overline{\partial} & 0
\end{array}
\right]  \right)  \left[
\begin{array}
[c]{c}%
u\\
v
\end{array}
\right] \label{3mfd-Dirac}\\
&  \overset{\text{at }z_{0}}{=}\left[
\begin{array}
[c]{cc}%
\mathbf{i} & 0\\
0 & -\mathbf{i}%
\end{array}
\right]  \left(  h^{-1/2}\left(  z\right)  \frac{\partial}{\partial x_{1}%
}+\left[
\begin{array}
[c]{cc}%
0 & \mathbf{i}\partial_{z}\\
\mathbf{i}\overline{\partial}_{z} & 0
\end{array}
\right]  \right)  \left[
\begin{array}
[c]{c}%
u\\
v
\end{array}
\right] \nonumber\\
&  =\left(  \left(  \nabla_{1}u+\mathbf{i}\partial_{z}v\right)  +\left(
\nabla_{1}v+\mathbf{i}\overline{\partial}_{z}u\right)  \cdot\mathbf{j}\right)
\cdot\mathbf{i}\nonumber\\
&  =\left(  \mathbf{i}\nabla_{1}u-\partial_{z}v\right)  +\left(
-\mathbf{i}\nabla_{1}v\mathbf{+}\overline{\partial}_{z}u\right)
\cdot\mathbf{j}, \label{3mfd-dirac1}%
\end{align}
where at $z_{0}$ we have the identification \
\[
\overline{\partial}=\overline{\partial}_{z}=\frac{1}{2}\left(  \nabla
_{2}+\mathbf{i}\nabla_{3}\right)  ,\text{ }-\overline{\partial}^{\ast
}=\partial_{z}=\frac{1}{2}\left(  \nabla_{2}-\mathbf{i}\nabla_{3}\right)
\]
with $\nabla_{i}=\nabla_{\frac{\partial}{\partial x_{i}}}$ for $i=2,3$ and
$\nabla_{1}=h^{-1/2}\left(  z\right)  \frac{\partial}{\partial x_{1}}$. We
will also denote
\[
-\mathbf{i}\overline{\partial}^{\ast}=\partial^{+}\text{ and }\mathbf{i}%
\overline{\partial}=\partial^{-}%
\]
respectively. They satisfy $\partial^{+}=\left(  \partial^{-}\right)  ^{\ast}$.

This implies that the equation $\mathcal{D}V=0$ \ is equivalent to the
following equations, an analog of Cauchy-Riemann equation,
\begin{align}
\partial^{-}u+h^{-1/2}\left(  z\right)  \frac{\partial v}{\partial x_{1}}  &
\mathbf{=}0,\nonumber\\
\partial^{+}v+h^{-1/2}\left(  z\right)  \frac{\partial u}{\partial x_{1}}  &
\mathbf{=}0. \label{CR}%
\end{align}
We put the boundary condition
\begin{equation}
v|_{\partial\text{A}_{\varepsilon}}=0. \label{bdry-cond-v}%
\end{equation}
(The more precise formulation will be given in $\left(  \ref{bdry-condition}%
\right)  $). It is similar to the totally real boundary condition for
$J$-holomorphic maps. When $h\left(  z\right)  $ is a constant, it is a
special case of the theory of boundary value problems for Dirac operators
developed in \cite{BW}. For general $h\left(  z\right)  $, that the boundary
value problem is Fredholm follows from our elliptic estimates in Subsection
\ref{linear}. Also see \cite{GayetWitt} for relevant discussion.

Later in Section \ref{proof}, we will apply the above $\mathcal{D}$ to the
case when $h\left(  z\right)  =\left\vert n\left(  z\right)  \right\vert ^{2}%
$, where $n\left(  z\right)  =\left.  \frac{d}{dt}C_{t}\right\vert _{t=0}$ is
the normal vector field on $C_{0}$ coming from the deformation of
coassociative submanifolds $C_{t}$, and the Hermitian line bundle $L$ is the
normal bundle $N_{\Sigma_{0}/C_{0}}$ of the $J_{n}$-holomorphic curve
$\Sigma_{0}$ in a coassociative manifold $C_{0}$, whose connection $\nabla$ is
the normal connection from the induced metric of $\Sigma_{0}\subset C_{0}$.

\subsection{\label{1Sec e v estimate}First Eigenvalue Estimates\label{eigen}}

In this subsection we will establish a quantitative estimate of the eigenvalue
of the linearized operator for the simplified model $\mathtt{A}_{\varepsilon
}=\left[  0,\varepsilon\right]  \times\Sigma$ with warped product metric
$g_{\mathtt{A}_{\varepsilon},h}=h\left(  z\right)  dx_{1}^{2}+g_{\Sigma},$
where $\Sigma$ is a \textbf{compact} Riemann surface, and $h\left(  z\right)
$ is a smooth function on $\Sigma$ with $\frac{1}{K}\leq h\leq K$ for some
constant $K>0$. The volume form of this metric on $\mathtt{A}_{\varepsilon}$
is
\[
dvol_{\mathtt{A}_{\varepsilon}}=h^{\frac{1}{2}}\left(  z\right)  dvol_{\Sigma
}dx_{1}.
\]

We introduce the following function spaces for spinors $V=\left(  u,v\right)
$ over $\mathtt{A}_{\varepsilon}$.

\begin{definition}
Let $\mathbb{S}$ be the spinor bundle over $\left(  \mathtt{A}_{\varepsilon
},g_{\mathtt{A}_{\varepsilon}}\right)  $ and $V$ be a smooth section of
$\mathbb{S},$

\begin{enumerate}
\item We define the norm
\[
\left\Vert V\right\Vert _{L^{m,p}\left(  \mathtt{A}_{\varepsilon}%
,\mathbb{S}\right)  }:=\left(  \sum_{\alpha+\beta\leq m}\int_{0}^{\varepsilon
}\int_{\Sigma}\left\vert \left(  \nabla_{x_{1}}\right)  ^{\alpha}\left(
\nabla_{\Sigma}\right)  ^{\beta}V\right\vert ^{p}h^{\frac{1}{2}}\left(
z\right)  dvol_{\Sigma}dx_{1}.\right)  ^{1/p}%
\]
and
\[
\left\Vert V\right\Vert _{C^{m}\left(  \mathtt{A}_{\varepsilon},\mathbb{S}%
\right)  }:=\sum_{\alpha+\beta\leq m}\sup\left\vert \left(  \nabla_{x_{1}%
}\right)  ^{\alpha}\left(  \nabla_{\Sigma}\right)  ^{\beta}V\right\vert
\]
where $\nabla_{x_{1}}$ and $\nabla_{\Sigma}$ are the covariant differentiation
along $x_{1}$-direction and $\Sigma$-directions (i.e. two tangent directions
on $\Sigma$) respectively with respect to the metric $g_{\mathtt{A}%
_{\varepsilon}}$, and the $L^{p}$-norm is with respect to $g_{\mathtt{A}%
_{\varepsilon}}$ too. By the standard Sobolev embedding theorem, we have an
$\varepsilon$ independent constant $C$ such that for any smooth section $V,$%
\begin{align*}
\left\Vert V\right\Vert _{L^{p}\left(  \mathtt{A}_{\varepsilon},\mathbb{S}%
\right)  }  &  \leq C\left\Vert V\right\Vert _{L^{m,q}\left(  \mathtt{A}%
_{\varepsilon},\mathbb{S}\right)  },\text{ for }p\leq\frac{3q}{3-mq}\\
\left\Vert V\right\Vert _{C^{m}\left(  \mathtt{A}_{\varepsilon},\mathbb{S}%
\right)  }  &  \leq C\left\Vert V\right\Vert _{L^{l,p}\left(  \mathtt{A}%
_{\varepsilon},\mathbb{S}\right)  },\text{ for }p\geq\frac{3}{l-m}%
\end{align*}
as long as $\varepsilon$ is bounded below and above by some universal
constant. For the later purpose let us fix $\varepsilon\in\left[
1/2,3/2\right]  .$

\item We define the function spaces
\[
L^{m,p}\left(  \mathtt{A}_{\varepsilon},\mathbb{S}\right)  :=\left\{
V=\left(  u,v\right)  \in\Gamma\left(  \mathtt{A}_{\varepsilon},\mathbb{S}%
\right)  |\left\Vert V\right\Vert _{L^{m,p}\left(  \mathtt{A}_{\varepsilon
},\mathbb{S}\right)  }<+\infty\right\}
\]
and $L_{-}^{m,p}\left(  \mathtt{A}_{\varepsilon},\mathbb{S}\right)  $
(resp.$L_{+}^{m,p}\left(  \mathtt{A}_{\varepsilon},\mathbb{S}\right)  $ ) be
the closure (with respect to the norm $\left\Vert \cdot\right\Vert
_{L^{m,p}\left(  \mathtt{A}_{\varepsilon}\right)  }$) of \ the subspace of
smooth sections $V=\left(  u,v\right)  \in\Gamma\left(  \mathtt{A}%
_{\varepsilon},\mathbb{S}\right)  $ such that $v\in C_{0}^{\infty}\left(
\mathtt{A}_{\varepsilon}\backslash\partial\mathtt{A}_{\varepsilon}\right)  $
(resp. $u\in C_{0}^{\infty}\left(  \mathtt{A}_{\varepsilon}\backslash
\partial\mathtt{A}_{\varepsilon}\right)  $), where $C_{0}^{\infty}\left(
\mathtt{A}_{\varepsilon}\backslash\partial\mathtt{A}_{\varepsilon}\right)  $
denotes the space of smooth functions with compact support inside
$\mathtt{A}_{\varepsilon}\backslash\partial\mathtt{A}_{\varepsilon}.$ Let us
also introduce the space
\begin{align*}
C^{m}\left(  \mathtt{A}_{\varepsilon},\mathbb{S}\right)   &  :=\left\{
V=\left(  u,v\right)  \in\Gamma\left(  \mathtt{A}_{\varepsilon},\mathbb{S}%
\right)  |\left\Vert V\right\Vert _{C^{m}\left(  \mathtt{A}_{\varepsilon
},\mathbb{S}\right)  }<+\infty\right\}  ,\\
C_{-}^{m}\left(  \mathtt{A}_{\varepsilon},\mathbb{S}\right)   &  :=\left\{
V=\left(  u,v\right)  \in\Gamma\left(  \mathtt{A}_{\varepsilon},\mathbb{S}%
\right)  |\left\Vert V\right\Vert _{C^{m}\left(  \mathtt{A}_{\varepsilon
},\mathbb{S}\right)  }<+\infty,v|_{\partial\mathtt{A}_{\varepsilon}%
}=0\right\}  .
\end{align*}

\end{enumerate}
\end{definition}

Let $\mathcal{D}$ be the operator $\left(  \ref{odd-Dirac}\right)  $ in our
linear model. We will impose boundary conditions for sections $V$ that
$\mathcal{D}$ acts on. It is known (c.f. \cite{BW} Theorem 21.5) that the
Dirac operators
\begin{equation}
\mathcal{D}_{\pm}:=\mathcal{D}|_{L_{\pm}^{1,2}}:L_{\pm}^{1,2}\left(
\mathtt{A}_{\varepsilon},\mathbb{S}\right)  \rightarrow L^{2}\left(
\mathtt{A}_{\varepsilon},\mathbb{S}\right)  \label{bdry-condition}%
\end{equation}
give well-posed\emph{ local elliptic boundary problems} and their formal
adjoint operators are $\mathcal{D}_{\pm}^{\ast}=\mathcal{D}_{\mp}$. This
boundary condition restricted on smooth sections $V=\left(  u,v\right)  $ in
$L_{-}^{1,2}\left(  \mathtt{A}_{\varepsilon},\mathbb{S}\right)  $ means
\begin{equation}
v|_{\partial\text{A}_{\varepsilon}}=0\text{, i.e. }v\left(  0,\Sigma\right)
=v\left(  \varepsilon,\Sigma\right)  =0. \label{bdry-condition-v}%
\end{equation}

The following theorem compares the first eigenvalue for Dirac operator
$\bar{\partial}$ on the Riemann surface $\Sigma$ with the operator
$\mathcal{D}$ on product three manifold $\mathtt{A}_{\varepsilon}$. The
theorem refers to the $L^{2}$ metric which is defined as follows: Let $U,V$ be
two sections in $L^{2}\left(  \mathtt{A}_{\varepsilon},\mathbb{S}\right)  $.
We let the inner product $\left\langle \cdot,\cdot\right\rangle _{L^{2}\left(
\mathtt{A}_{\varepsilon},\mathbb{S}\right)  }$ be
\[
\left\langle U,V\right\rangle _{L^{2}\left(  \mathtt{A}_{\varepsilon
},\mathbb{S}\right)  }:=\int_{\left[  0,\varepsilon\right]  }\int_{\Sigma
}\left\langle U,V\right\rangle \left(  x_{1},z\right)  h^{\frac{1}{2}}\left(
z\right)  dvol_{\Sigma}dx_{1},
\]
where in the integral the $\left\langle \cdot,\cdot\right\rangle $ is the
inner product in fibers of the spinor bundle $\mathbb{S}$. We let
\begin{align*}
\left\vert V\left(  x_{1},z\right)  \right\vert ^{2}  &  :=\left\langle
V\left(  x_{1},z\right)  ,V\left(  x_{1},z\right)  \right\rangle \\
\left\Vert V\right\Vert _{L^{2}\left(  \mathtt{A}_{\varepsilon},\mathbb{S}%
\right)  }^{2}  &  :=\left\langle V,V\right\rangle _{L^{2}\left(
\mathtt{A}_{\varepsilon},\mathbb{S}\right)  }.
\end{align*}

\begin{theorem}
\label{lambda}Suppose $\lambda_{\partial^{-}}$ is the first eigenvalue of the
Laplacian\ $\Delta_{\Sigma}=\partial^{+}\partial^{-}$ acting on the space
$L_{-}^{1,2}\left(  \Sigma,\mathbb{S}^{+}\right)  $ and $\lambda_{\partial
^{+}}$ is the first eigenvalue of the Laplacian\ $\Delta_{\Sigma}=\partial
^{-}\partial^{+}$ acting on the space $L_{+}^{1,2}\left(  \Sigma
,\mathbb{S}^{-}\right)  $. Let
\[
\lambda_{\mathcal{D}_{\pm}}:=\inf_{0\neq V\in L_{\pm}^{1,2}\left(
\mathtt{A}_{\varepsilon},\mathbb{S}\right)  }\frac{\left\Vert \mathcal{D}%
V\right\Vert _{L^{2}\left(  \mathtt{A}_{\varepsilon},\mathbb{S}\right)  }^{2}%
}{\left\Vert V\right\Vert _{L^{2}\left(  \mathtt{A}_{\varepsilon}%
,\mathbb{S}\right)  }^{2}}.
\]
(Notice the boundary conditions $\left(  \ref{bdry-condition}\right)  $ and
$\left(  \ref{bdry-condition-v}\right)  $ for $V\,$). Then for the operator
$\mathcal{D}_{\pm}:L_{\pm}^{1,2}\left(  \mathtt{A}_{\varepsilon}%
,\mathbb{S}\right)  \rightarrow L^{2}\left(  \mathtt{A}_{\varepsilon
},\mathbb{S}\right)  $, we have
\[
\lambda_{\mathcal{D}_{\pm}}\geq\min\frac{1}{K}\left\{  \lambda_{\partial^{\pm
}},\frac{2}{K\varepsilon^{2}}-K\left\Vert h^{-\frac{1}{2}}\right\Vert
_{C^{1}\left(  \Sigma\right)  }^{2}\right\}  ,
\]
where $K>0$ is some constant such that $\frac{1}{K}\leq h\left(  z\right)
\leq K$ for all $z\in\Sigma$.
\end{theorem}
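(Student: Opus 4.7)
The plan is to establish a Bochner-type identity $\|\mathcal{D}V\|^2=\|\nabla_1 V\|^2+\|\bar\partial V\|^2+(\text{cross terms})$, use the Dirichlet boundary condition $v|_{\partial\mathtt{A}_\varepsilon}=0$ built into $L^{1,2}_-(\mathtt{A}_\varepsilon,\mathbb{S})$ to kill the leading cross terms, and then separately control the two remaining pieces via a one-dimensional Poincar\'e inequality in the $x_1$-direction and the Rayleigh-quotient definition of $\lambda_{\partial^-}$ on the closed Riemann surface $\Sigma$.

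Writing $V=(u,v)\in\mathbb{S}^+\oplus\mathbb{S}^-$ and using the coordinate form of $\mathcal{D}$ in $(\ref{3mfd-dirac1})$, one has the pointwise identity $|\mathcal{D}V|^2=|\nabla_1 u+\partial^+ v|^2+|\nabla_1 v+\partial^- u|^2$, which expands to $|\nabla_1 V|^2+|\bar\partial V|^2+2\mathrm{Re}\langle\nabla_1 u,\partial^+ v\rangle+2\mathrm{Re}\langle\nabla_1 v,\partial^- u\rangle$. I would then integrate against the warped volume $h^{1/2}dvol_\Sigma dx_1$: because $\nabla_1=h^{-1/2}\partial_{x_1}$, the factor $h^{-1/2}$ exactly cancels the weight $h^{1/2}$ in the cross terms, reducing them to integrals against the unweighted product measure $dvol_\Sigma dx_1$. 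Invoking the $L^2(\Sigma,dvol_\Sigma)$-adjointness $(\partial^+)^*=\partial^-$ and the commutativity $[\bar\partial,\partial_{x_1}]=0$, one integration by parts in $\Sigma$ followed by one in $x_1$ collapses the two cross terms to the boundary contribution $2\mathrm{Re}\int_\Sigma[\langle\partial^- u,v\rangle]_{x_1=0}^{x_1=\varepsilon}dvol_\Sigma$, which vanishes by $v|_{\partial\mathtt{A}_\varepsilon}=0$.

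For the two remaining pieces, the one-dimensional Poincar\'e inequality on $[0,\varepsilon]$ with Dirichlet boundary conditions gives $\int_0^\varepsilon|v|^2\,dx_1\leq\tfrac{\varepsilon^2}{2}\int_0^\varepsilon|\partial_{x_1}v|^2\,dx_1$; pinching $h$ between $K^{\pm1}$ upgrades this to $\|\nabla_1 v\|^2_{L^2(\mathtt{A}_\varepsilon)}\geq\tfrac{2}{K\varepsilon^2}\|v\|^2_{L^2(\mathtt{A}_\varepsilon)}$. At the same time, the definition of $\lambda_{\partial^-}$ as an infimum of the Rayleigh quotient gives, fiberwise over $x_1$, $\int_\Sigma|\partial^- u|^2\,dvol_\Sigma\geq\lambda_{\partial^-}\int_\Sigma|u|^2\,dvol_\Sigma$, which after the same pinching becomes $\|\partial^- u\|^2_{L^2(\mathtt{A}_\varepsilon)}\geq\tfrac{\lambda_{\partial^-}}{K}\|u\|^2_{L^2(\mathtt{A}_\varepsilon)}$. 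Summing the two inequalities and then taking the minimum of the coefficients yields the stated lower bound on $\lambda_{\mathcal{D}_-}$; the $\mathcal{D}_+$ case follows by swapping the roles of $u$ and $v$ throughout, as $\mathcal{D}_+^*=\mathcal{D}_-$.

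The main obstacle is the careful accounting of the lower-order errors produced by the non-constant warping. Whenever a $\Sigma$-derivative has to be commuted with multiplication by $h^{1/2}$, or whenever a Cauchy-Schwarz is applied with a weight that must be rebalanced, a zeroth-order term proportional to $|\partial_\Sigma h^{-1/2}|$ appears and must be absorbed into the $v$-estimate. This absorption is precisely what costs the subtracted correction $K\|h^{-1/2}\|^2_{C^1(\Sigma)}$ inside the second entry of the $\min$; once these $h$-dependent commutators are collected, the remaining ingredients are the standard interval Poincar\'e inequality and the Riemann-surface spectral estimate.
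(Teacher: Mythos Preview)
Your core argument is correct and in fact cleaner than the paper's. The key observation---that keeping the weighted volume $h^{1/2}\,dvol_\Sigma\,dx_1$ makes the $h^{-1/2}$ in $\nabla_1$ cancel exactly, so the cross terms reduce to $2\mathrm{Re}\int\big(\langle\partial^-\partial_{x_1}u,v\rangle-\langle v,\partial^-\partial_{x_1}u\rangle\big)\,dvol_\Sigma\,dx_1=0$---is valid, and your subsequent use of the interval Poincar\'e inequality for $v$ and the Rayleigh quotient for $u$ (each pinched by a factor of $K$) goes through. Your route therefore yields the slightly stronger bound $\lambda_{\mathcal D_-}\ge \tfrac{1}{K}\min\{\lambda_{\partial^-},\,2/\varepsilon^2\}$, which certainly implies the theorem as stated.

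The paper takes a different order of operations: it \emph{first} pinches the volume form to the unweighted $dvol_\Sigma\,dx_1$ (this is the source of the global $1/K$), and only then expands $\|\mathcal D V\|^2$. With the unweighted measure the factor $h^{-1/2}$ in $\nabla_1$ no longer cancels, so the cross term survives as $\int\langle\partial^-(h^{-1/2})\cdot\partial_{x_1}u,\,v\rangle$; this is then controlled by Young's inequality and absorbed into the $v$-estimate, producing the subtracted correction $K\|h^{-1/2}\|_{C^1}^2$. In other words, the $\|h^{-1/2}\|_{C^1}$ term is an artifact of the paper's choice to drop the weight before integrating by parts, not an intrinsic feature of the problem. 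Your final paragraph anticipating ``lower-order errors from commuting $\Sigma$-derivatives with $h^{1/2}$'' is therefore unnecessary in your own approach: no such commutation occurs, and you should simply delete that discussion rather than try to manufacture the correction term.
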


\begin{proof}
In the following we will assume the volume form $dvol_{\mathtt{A}%
_{\varepsilon}}=dvol_{\Sigma}dx_{1}$ to simplify calculation. General cases
can be reduced to this case by observing
\begin{equation}
\frac{\left\Vert \mathcal{D}V\right\Vert _{L^{2}\left(  \mathtt{A}%
_{\varepsilon},\mathbb{S}\right)  }^{2}}{\left\Vert V\right\Vert
_{L^{2}\left(  \mathtt{A}_{\varepsilon},\mathbb{S}\right)  }^{2}}\geq
\frac{\int_{\mathtt{A}_{\varepsilon}}\left\vert DV\right\vert ^{2}%
K^{-1/2}dvol_{\Sigma}dx_{1}}{\int_{\mathtt{A}_{\varepsilon}}\left\vert
V\right\vert ^{2}K^{1/2}dvol_{\Sigma}dx_{1}}=\frac{1}{K}\frac{\int
_{\mathtt{A}_{\varepsilon}}\left\vert DV\right\vert ^{2}dvol_{\Sigma}dx_{1}%
}{\int_{\mathtt{A}_{\varepsilon}}\left\vert V\right\vert ^{2}dvol_{\Sigma
}dx_{1}} \label{L2-product-vol}%
\end{equation}
from the condition $\frac{1}{K}\leq h\left(  z\right)  \leq K$. It is enough
to consider the case $\mathcal{D}_{-}:L_{-}^{1,2}\left(  \mathtt{A}%
_{\varepsilon},\mathbb{S}\right)  \rightarrow L^{2}\left(  \mathtt{A}%
_{\varepsilon},\mathbb{S}\right)  $. The $\mathcal{D}_{+}$ case is similar.
For any $V=\left(  u,v\right)  \in L_{-}^{1,2}\left(  \mathtt{A}_{\varepsilon
}\right)  ,\mathcal{\ }$ we have%
\begin{align*}
\left\langle \mathcal{D}V,\mathcal{D}V\right\rangle _{L^{2}\left(
\mathtt{A}_{\varepsilon},\mathbb{S}\right)  } &  =\int_{\mathtt{A}%
_{\varepsilon}}\left(  h^{-1}\left(  z\right)  \left\vert \frac{\partial
V}{\partial x_{1}}\right\vert ^{2}+2\left\langle h^{-\frac{1}{2}}\left(
z\right)  \frac{\partial V}{\partial x_{1}},\left[
\begin{array}
[c]{cc}%
0 & \partial^{+}\\
\partial^{-} & 0
\end{array}
\right]  V\right\rangle \right.  \\
&  \left.  +\left\vert \partial^{+}v\right\vert ^{2}+\left\vert \partial
^{-}u\right\vert ^{2}\right)  dvol_{\mathtt{A}_{\varepsilon}}%
\end{align*}
Using the formula $\partial^{-}=\left(  \partial^{+}\right)  ^{\ast}$, we
have
\begin{align}
&  \int_{\mathtt{A}_{\varepsilon}}\left\langle h^{-\frac{1}{2}}\left(
z\right)  \frac{\partial V}{\partial x_{1}},\left[
\begin{array}
[c]{cc}%
0 & \partial^{+}\\
\partial^{-} & 0
\end{array}
\right]  V\right\rangle dvol_{\mathtt{A}_{\varepsilon}}\nonumber\\
&  =\int_{\mathtt{A}_{\varepsilon}}\left(  \left\langle h^{-\frac{1}{2}}%
\frac{\partial u}{\partial x_{1}},\partial^{+}v\right\rangle +\left\langle
h^{-\frac{1}{2}}\frac{\partial v}{\partial x_{1}},\partial^{-}u\right\rangle
\right)  dvol_{\mathtt{A}_{\varepsilon}}\nonumber\\
\text{(} &  \because\mathbb{S}=\mathbb{S}^{+}\mathbb{\oplus S}^{-}\text{
orthogonal decomposition)}\nonumber\\
&  =\int_{\mathtt{A}_{\varepsilon}}\left(  \left\langle \partial^{-}\left(
h^{-\frac{1}{2}}\frac{\partial u}{\partial x_{1}}\right)  ,v\right\rangle
-\left\langle h^{-\frac{1}{2}}v,\partial^{-}\left(  \frac{\partial u}{\partial
x_{1}}\right)  \right\rangle \right)  dvol_{\mathtt{A}_{\varepsilon}%
}\nonumber\\
&  +\int_{\left\{  \varepsilon\right\}  \times\Sigma}\left\langle h^{-\frac
{1}{2}}v,\partial^{-}u\right\rangle dvol_{\Sigma}-\int_{\left\{  0\right\}
\times\Sigma}\left\langle h^{-\frac{1}{2}}v,\partial^{-}u\right\rangle
dvol_{\Sigma}\nonumber\\
&  =\int_{\mathtt{A}_{\varepsilon}}\left(  \left\langle \partial^{-}\left(
h^{-\frac{1}{2}}\right)  \cdot\frac{\partial u}{\partial x_{1}},v\right\rangle
+\left\langle h^{-\frac{1}{2}}\partial^{-}\left(  \frac{\partial u}{\partial
x_{1}}\right)  ,v\right\rangle \right.  \nonumber\\
&  \left.  -\left\langle h^{-\frac{1}{2}}v,\partial^{-}\left(  \frac{\partial
u}{\partial x_{1}}\right)  \right\rangle \right)  dvol_{\mathtt{A}%
_{\varepsilon}}\text{ (since }v\text{ vanishes on }\partial\mathtt{A}%
_{\varepsilon}\text{)}\nonumber\\
&  =\int_{\mathtt{A}_{\varepsilon}}\left\langle \partial^{-}\left(
h^{-\frac{1}{2}}\right)  \cdot\frac{\partial u}{\partial x_{1}},v\right\rangle
\text{ }dvol_{\mathtt{A}_{\varepsilon}}\text{(}\because\text{the above 2nd and
3rd terms cancel)}\label{Mixing-term}%
\end{align}
Therefore
\begin{align}
&  2\left\vert \int_{\mathtt{A}_{\varepsilon}}\left\langle h^{-\frac{1}{2}%
}\left(  z\right)  \frac{\partial V}{\partial x_{1}},\left[
\begin{array}
[c]{cc}%
0 & \partial^{+}\\
\partial^{-} & 0
\end{array}
\right]  V\right\rangle dvol_{\mathtt{A}_{\varepsilon}}\right\vert \nonumber\\
&  \leq2\int_{\mathtt{A}_{\varepsilon}}\left\vert \left\langle \partial
^{-}\left(  h^{-\frac{1}{2}}\right)  \cdot\frac{\partial u}{\partial x_{1}%
},v\right\rangle dvol_{\mathtt{A}_{\varepsilon}}\right\vert \nonumber\\
&  \leq\left\Vert h^{-\frac{1}{2}}\right\Vert _{C^{1}\left(  \Sigma\right)
}\int_{\mathtt{A}_{\varepsilon}}\left\vert 2\left\langle \frac{\partial
u}{\partial x_{1}},v\right\rangle dvol_{\mathtt{A}_{\varepsilon}}\right\vert
\nonumber\\
&  \leq\int_{\mathtt{A}_{\varepsilon}}\left(  \frac{1}{K}\left\vert u_{x_{1}%
}\right\vert ^{2}+K\left\Vert h^{-\frac{1}{2}}\right\Vert _{C^{1}\left(
\Sigma\right)  }^{2}\left\vert v\right\vert ^{2}\right)  dvol_{\mathtt{A}%
_{\varepsilon}}.\label{mixterm-control}%
\end{align}
In order to estimate $\int_{\mathtt{A}_{\varepsilon}}h^{-1}\left(  z\right)
\left\vert V_{x_{1}}\right\vert ^{2}dvol_{\mathtt{A}_{\varepsilon}}$, we
notice that, for any fixed point $p\in\Sigma$, $v|_{\left[  0,\varepsilon
\right]  \times\left\{  p\right\}  }$ can be treated as a $\mathbb{C}$-valued
function over the interval $\left[  0,\varepsilon\right]  \ $with boundary
value $v\left(  0,z\right)  =0$. Hence
\begin{align}
\int_{0}^{\varepsilon}\left\vert v\right\vert ^{2}dx_{1}  &  =\int
_{0}^{\varepsilon}\left\vert \int_{0}^{x_{1}}\frac{\partial v}{\partial x_{1}%
}\left(  t\right)  dt\right\vert ^{2}dx_{1}\nonumber\\
&  \leq\int_{0}^{\varepsilon}\left(  \int_{0}^{x_{1}}ds\right)  \left(
\int_{0}^{x_{1}}\left\vert \frac{\partial v}{\partial x_{1}}\left(  t\right)
\right\vert ^{2}dt\right)  dx_{1}\nonumber\\
&  \leq\int_{0}^{\varepsilon}x_{1}dx_{1}\int_{0}^{\varepsilon}\left\vert
\frac{\partial v}{\partial x_{1}}\left(  t\right)  \right\vert ^{2}%
dt\nonumber\\
&  \leq K\frac{\varepsilon^{2}}{2}\int_{0}^{\varepsilon}h^{-1}\left(
z\right)  \left\vert \frac{\partial v}{\partial x_{1}}\left(  t\right)
\right\vert ^{2}dt, \label{v-L2}%
\end{align}
where the last inequality is by $\frac{1}{K}\leq h\left(  z\right)  \leq K$
for all $z\in\Sigma$. Putting $\left(  \ref{mixterm-control}\right)  ,\left(
\ref{v-L2}\right)  $ in $\left\langle \mathcal{D}V,\mathcal{D}V\right\rangle
_{L^{2}\left(  \mathtt{A}_{\varepsilon},\mathbb{S}\right)  }$, we have
\begin{align*}
&  \left\langle \mathcal{D}V,\mathcal{D}V\right\rangle _{L^{2}\left(
\mathtt{A}_{\varepsilon},\mathbb{S}\right)  }\\
&  \geq\int_{\mathtt{A}_{\varepsilon}}\left(  h^{-1}\left(  z\right)
\left\vert u_{x_{1}}\right\vert ^{2}+\left\vert \partial^{-}u\right\vert
^{2}+h^{-1}\left(  z\right)  \left\vert v_{x_{1}}\right\vert ^{2}+\left\vert
\partial^{+}v\right\vert ^{2}\right. \\
&  \left.  -2\left\vert \left\langle h^{-\frac{1}{2}}\left(  z\right)
\frac{\partial V}{\partial x_{1}},\left[
\begin{array}
[c]{cc}%
0 & \partial^{+}\\
\partial^{-} & 0
\end{array}
\right]  V\right\rangle \right\vert \right)  dvol_{\mathtt{A}_{\varepsilon}}\\
&  \geq\int_{\mathtt{A}_{\varepsilon}}\left(  \left(  h^{-1}\left(  z\right)
-\frac{1}{K}\right)  \left\vert u_{x_{1}}\right\vert ^{2}+\left\vert
\partial^{-}u\right\vert ^{2}+h^{-1}\left(  z\right)  \left\vert v_{x_{1}%
}\right\vert ^{2}\right. \\
&  \left.  -K\left\Vert h^{-\frac{1}{2}}\right\Vert _{C^{1}\left(
\Sigma\right)  }^{2}\left\vert v\right\vert ^{2}\text{ }\right)
dvol_{\mathtt{A}_{\varepsilon}}\text{(by }\left(  \text{\ref{mixterm-control}%
}\right)  \text{)}\\
&  \geq0+\lambda_{\partial^{-}}\int_{\mathtt{A}_{\varepsilon}}\left\vert
u\right\vert ^{2}dvol_{\mathtt{A}_{\varepsilon}}+\left(  \frac{2}%
{K\varepsilon^{2}}-K\left\Vert h^{-\frac{1}{2}}\right\Vert _{C^{1}\left(
\Sigma\right)  }^{2}\right)  \int_{\mathtt{A}_{\varepsilon}}\left\vert
v\right\vert ^{2}\text{ }dvol_{\mathtt{A}_{\varepsilon}}\\
&  \text{(by definition of }\lambda_{\partial^{-}}\text{ and }\left(
\text{\ref{v-L2}}\right)  \text{)}\\
&  \geq\min\left\{  \lambda_{\partial^{-}},\frac{2}{K\varepsilon^{2}%
}-K\left\Vert h^{-\frac{1}{2}}\right\Vert _{C^{1}\left(  \Sigma\right)  }%
^{2}\right\}  \left(  \int_{\mathtt{A}_{\varepsilon}}\left(  \left\vert
u\right\vert ^{2}+\left\vert v\right\vert ^{2}\right)  dvol_{\mathtt{A}%
_{\varepsilon}}\right) \\
&  =\min\left\{  \lambda_{\partial^{-}},\frac{2}{K\varepsilon^{2}}-K\left\Vert
h^{-\frac{1}{2}}\right\Vert _{C^{1}\left(  \Sigma\right)  }^{2}\right\}
\left\Vert V\right\Vert _{L^{2}\left(  \mathtt{A}_{\varepsilon},\mathbb{S}%
\right)  }^{2}.
\end{align*}
For general volume form $dvol_{\mathtt{A}_{\varepsilon}}=h^{\frac{1}{2}%
}\left(  z\right)  dvol_{\Sigma}dx_{1}$, by $\left(  \ref{L2-product-vol}%
\right)  $ there is an extra factor $\frac{1}{K}$ for the lower bound of the
$L^{2}$ eigenvalue $\lambda_{\mathcal{D}_{\pm}}$. Hence the result.
\end{proof}

Suppose $V=\left(  u,v\right)  \in C^{\infty}\left(  \mathtt{A}_{\varepsilon
},\mathbb{S}\right)  \cap L_{-}^{1,2}\left(  \mathtt{A}_{\varepsilon
},\mathbb{S}\right)  $ and $W=\left(  f,g\right)  \in C^{\infty}\left(
\mathtt{A}_{\varepsilon},\mathbb{S}\right)  \cap L^{2}\left(  \mathtt{A}%
_{\varepsilon},\mathbb{S}\right)  $ and $\mathcal{D}$ is the operator $\left(
\ref{odd-Dirac}\right)  $. Then with respect to the induced volume form
$dvol_{\mathtt{A}_{\varepsilon}}=h^{1/2}\left(  z\right)  d\Sigma dx_{1}$ on
$\left(  \mathtt{A}_{\varepsilon},g_{\mathtt{A}_{\varepsilon},h}\right)  $ we
have
\begin{align}
&  \int_{\mathtt{A}_{\varepsilon}}\left\langle \mathcal{D}V,W\right\rangle
dvol_{\mathtt{A}_{\varepsilon}}\nonumber\\
&  =\int_{0}^{\varepsilon}\int_{\Sigma}\left[  \left\langle \mathbf{i}\left(
h^{-1/2}u_{x_{1}}+\partial^{+}v\right)  ,f\right\rangle -\left\langle
\mathbf{i}\left(  h^{-1/2}v_{x_{1}}+\partial^{-}u\right)  ,g\right\rangle
\right]  h^{1/2}d\Sigma dx_{1}\nonumber\\
&  =\mathbf{i}\int_{0}^{\varepsilon}\int_{\Sigma}\left[  -\left\langle
u,f_{x_{1}}\right\rangle +\left\langle h^{1/2}v,\partial^{-}f\right\rangle
-\left\langle h^{1/2}u,\partial^{+}g\right\rangle +\left\langle v,g_{x_{1}%
}\right\rangle \right]  d\Sigma dx_{1}\nonumber\\
&  +\mathbf{i}\int_{\Sigma}\left(  \left\langle u,f\right\rangle
|_{0}^{\varepsilon}-\left\langle v,g\right\rangle |_{0}^{\varepsilon}\right)
d\Sigma\nonumber\\
&  =\mathbf{i}\int_{0}^{\varepsilon}\int_{\Sigma}\left[  \left\langle
u,-f_{x_{1}}-h^{1/2}\partial^{+}g\right\rangle +\left\langle v,g_{x_{1}%
}+h^{1/2}\partial^{-}f\right\rangle \right]  d\Sigma dx_{1}\nonumber\\
&  +\mathbf{i}\int_{\Sigma}\left(  \left\langle u,f\right\rangle
|_{0}^{\varepsilon}-\left\langle v,g\right\rangle |_{0}^{\varepsilon}\right)
d\Sigma\nonumber\\
&  =\int_{0}^{\varepsilon}\int_{\Sigma}\left\langle u,\mathbf{i}\left(
h^{-1/2}f_{x_{1}}+\partial^{+}g\right)  \right\rangle -\left\langle
v,\mathbf{i}\left(  h^{-1/2}g_{x_{1}}+\partial^{-}f\right)  \right\rangle
\left(  h^{1/2}d\Sigma dx_{1}\right) \nonumber\\
&  +\mathbf{i}\int_{\Sigma}\left(  \left\langle u,f\right\rangle
|_{0}^{\varepsilon}-\left\langle v,g\right\rangle |_{0}^{\varepsilon}\right)
d\Sigma\nonumber\\
&  =\int_{\mathtt{A}_{\varepsilon}}\left\langle V,\mathcal{D}W\right\rangle
dvol_{\mathtt{A}_{\varepsilon}}+\mathbf{i}\int_{\Sigma}\left(  \left\langle
u,f\right\rangle |_{0}^{\varepsilon}-\left\langle v,g\right\rangle
|_{0}^{\varepsilon}\right)  d\Sigma\label{GreenFormula}%
\end{align}
since $h$ is independent of $x_{1}$. This is the Green's formula.

When $V\in L_{-}^{1,2}\left(  \mathtt{A}_{\varepsilon},\mathbb{S}\right)  $
and $W\in L_{+}^{1,2}\left(  \mathtt{A}_{\varepsilon},\mathbb{S}\right)  ,$
the above boundary terms are zero so we have
\[
\int_{\mathtt{A}_{\varepsilon}}\left\langle \mathcal{D}V,W\right\rangle
dvol_{\mathtt{A}_{\varepsilon}}=\int_{\mathtt{A}_{\varepsilon}}\left\langle
V,\mathcal{D}W\right\rangle dvol_{\mathtt{A}_{\varepsilon}}.
\]
This implies that $\mathcal{D}$ is a \emph{self adjoint} operator from
$L_{\pm}^{1,2}\left(  \mathtt{A}_{\varepsilon},\mathbb{S}\right)  $ to
$L_{\mp}^{1,2}\left(  \mathtt{A}_{\varepsilon},\mathbb{S}\right)  $ in the
sense of \cite{BW}. Since $L_{+}^{1,2}\left(  \mathtt{A}_{\varepsilon
},\mathbb{S}\right)  $ is dense in $L^{2}\left(  \mathtt{A}_{\varepsilon
},\mathbb{S}\right)  ,$ this implies that $\mathcal{D}:L_{-}^{1,2}\left(
\mathtt{A}_{\varepsilon},\mathbb{S}\right)  \rightarrow L^{2}\left(
\mathtt{A}_{\varepsilon},\mathbb{S}\right)  $ is surjective if and only if
$\ker\mathcal{D}|_{L_{+}^{1,2}\left(  \mathtt{A}_{\varepsilon},\mathbb{S}%
\right)  }=0\subset L_{+}^{1,2}\left(  \mathtt{A}_{\varepsilon},\mathbb{S}%
\right)  $. By Theorem \ref{lambda}, for small enough $\varepsilon$, we have
\[
\ker\mathcal{D}|_{L_{+}^{1,2}\left(  \mathtt{A}_{\varepsilon},\mathbb{S}%
\right)  }=0\Leftarrow\ker\partial^{-}\partial^{+}=0\Leftrightarrow
\ker\partial^{+}=0,
\]
where the last \textquotedblleft$\Leftrightarrow$\textquotedblright\ is
because $\left(  \partial^{+}\right)  ^{\ast}=\partial^{-}$. Hence we have
obtained the following result.

\begin{theorem}
\label{onto} Let $\lambda_{\partial^{-}}$ and $\lambda_{\partial^{+}}$ be the
first eigenvalue for $\partial^{+}\partial^{-}$ and $\partial^{-}\partial^{+}$
respectively. For $\mathcal{D}:L_{-}^{1,2}(\mathtt{A}_{\varepsilon}%
,\mathbb{S}) \rightarrow L^{2}(\mathtt{A}_{\varepsilon},\mathbb{S})$, if
$\varepsilon$ is sufficiently small, then we have
\begin{align*}
\ker\partial^{-}  &  =\left\{  0\right\}  \Leftrightarrow\lambda_{\partial
^{-}}>0\Rightarrow\mathcal{D}\text{ injective, }\\
\ker\partial^{+}  &  =\left\{  0\right\}  \Leftrightarrow\lambda_{\partial
^{+}}>0\Rightarrow\mathcal{D}\text{ surjective.}%
\end{align*}
Especially if both $\lambda_{\partial^{-}},$ $\lambda_{\partial^{+}}>0$, then
$\mathcal{D}$ is one-to-one and onto.
\end{theorem}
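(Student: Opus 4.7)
The plan is to derive both injectivity and surjectivity from the quantitative first-eigenvalue bound of Theorem \ref{lambda}, combined with the Green's formula computation preceding the statement which identifies $\mathcal{D}_-$ and $\mathcal{D}_+$ as formal adjoints. The equivalences $\ker\partial^{\pm}=\{0\}\Leftrightarrow \lambda_{\partial^{\pm}}>0$ are a standard spectral observation on the compact Riemann surface $\Sigma$: the Laplace-type operators $\partial^{+}\partial^{-}$ and $\partial^{-}\partial^{+}$ are non-negative self-adjoint with discrete spectrum, so their first eigenvalues vanish only if they have non-trivial kernel, and pairing $\partial^{+}\partial^{-}u=0$ with $u$ forces $\partial^{-}u=0$ (and analogously with $+/-$ swapped).

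For injectivity of $\mathcal{D}_-$ under the hypothesis $\lambda_{\partial^{-}}>0$, Theorem \ref{lambda} gives
\[
\lambda_{\mathcal{D}_-}\geq \frac{1}{K}\min\left\{\lambda_{\partial^{-}},\;\frac{2}{K\varepsilon^{2}}-K\|h^{-1/2}\|_{C^{1}(\Sigma)}^{2}\right\}.
\]
Choosing $\varepsilon^{2}<2/(K^{2}\|h^{-1/2}\|_{C^{1}(\Sigma)}^{2})$ makes both entries in the $\min$ strictly positive, so $\lambda_{\mathcal{D}_-}>0$ and no nonzero $V\in L_-^{1,2}$ satisfies $\mathcal{D}V=0$. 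For surjectivity I would invoke the Green's formula derived above: restricting to $V\in L_-^{1,2}$ and $W\in L_+^{1,2}$ kills both boundary terms and shows that $\mathcal{D}_+$ is the formal $L^{2}$-adjoint of $\mathcal{D}_-$. Since the boundary conditions (\ref{bdry-condition}) are local elliptic in the sense of \cite{BW} (Theorem~21.5 there), $\mathcal{D}_-$ is Fredholm, and in particular has closed range equal to $(\ker\mathcal{D}_+)^{\perp}$. Applying Theorem \ref{lambda} with the $+$ sign then reduces $\ker\mathcal{D}_+=\{0\}$ to $\lambda_{\partial^{+}}>0$, which gives surjectivity of $\mathcal{D}_-$. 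Combining the two cases yields the final statement.

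The main obstacle, and the only point where more than bookkeeping is required, is the closed-range step in the surjectivity argument: the $L^{2}$ eigenvalue bound by itself only yields density of the range, and one must invoke the elliptic boundary-value theory of \cite{BW} to upgrade this to honest surjectivity via Fredholmness (alternatively, applying Theorem \ref{lambda} directly to $\mathcal{D}_+$ produces a coercive estimate $\|W\|_{L^{2}}\leq C\|\mathcal{D}W\|_{L^{2}}$ on $L_+^{1,2}$, whence $\mathcal{D}_+$ has closed range, and by the duality $\mathcal{D}_-^{*}=\mathcal{D}_+$ so does $\mathcal{D}_-$). Everything else is a direct consequence of the explicit smallness threshold on $\varepsilon$ furnished by Theorem \ref{lambda}, applied in turn to $\mathcal{D}_-$ and to $\mathcal{D}_+$.
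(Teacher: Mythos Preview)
Your proposal is correct and follows essentially the same route as the paper: Green's formula identifies $\mathcal{D}_+$ as the formal adjoint of $\mathcal{D}_-$, and Theorem~\ref{lambda} applied to each boundary condition yields trivial kernel for small $\varepsilon$, from which injectivity and surjectivity follow. You are in fact slightly more explicit than the paper about the closed-range step needed to pass from trivial adjoint kernel to surjectivity (the paper simply asserts the equivalence, implicitly relying on the elliptic boundary theory of \cite{BW}), so your treatment is a mild improvement in rigor rather than a different argument.
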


\subsection{Schauder Estimates for Linear Model\label{linear}}

In this section we will develop the necessary linear theory for the equation
\[
\mathcal{D}V=W\text{ on }\mathtt{A}_{\varepsilon}=\Sigma\times\left[
0,\varepsilon\right]
\]
with a warped product metric $g_{\mathtt{A}_{\varepsilon},h}:=h\left(
z\right)  dx_{1}^{2}+g_{\Sigma}$, where $\mathcal{D}$ is the operator $\left(
\ref{odd-Dirac}\right)  $ in our linear model. The key issue is to estimate
the operator norm of the inverse operator of $\mathcal{D}$\ with explicit
dependence of $\varepsilon$, as $\varepsilon$ goes to zero. When $\varepsilon$
is away from zero, say $\varepsilon\in\left[  1/2,3/2\right]  $, we have
$\varepsilon$-free Schauder estimates. For $\varepsilon$ small, we overcome
the difficulty coming from $\varepsilon$ by choosing an appropriate integer
$k$ so that $k\varepsilon\in\left[  1/2,3/2\right]  $ and we extend any
solution $V=\left(  u,v\right)  $ on $\mathtt{A}_{\varepsilon}$ to
$\mathtt{A}_{k\varepsilon}$ in an $L^{p}$ sense by suitable reflection.
However much care will be needed to obtain the $C^{\alpha}$-estimate, because
after the reflection of $W$\ across the boundary of $\mathtt{A}_{\varepsilon
},$ it will no longer be continuous in general. This problem will be resolved
in the case (ii) part of the proof of the following theorem.

Estimating the operator norm of $\mathcal{D}^{-1}$ in the Schauder setting
rather than in the $L^{p}$ setting is crucial. Since our goal is to construct
instantons $A$ governed by the equation $\tau|_{A}=0$, which involves the
associative $3$\emph{-form} $\tau$, for deformations of an approximate
solution $A$ by normal vector fields $V$ in $M$ that are in $W^{1,p}$ class,
the nonlinear equation will have cubic terms of $\nabla V$ that are outside
the $L^{p}$ space. The cubic terms also cause difficulty for obtaining desired
quadratic estimates for the implicit function theorem in the $L^{p}$ setting
(see Remark \ref{Schauder-over-Lp}).

The Schauder estimates are harder to obtain than for the Cauchy-Riemann type
equations, partly because in our equation $\left(  \ref{CR}\right)  $, the
derivative of $v$ only controls the $\partial_{x_{1}}$ and $\bar{\partial}%
_{z}$ derivatives of $u$, not the full derivatives.

We recall the definition of H\"{o}lder\ norms for functions $f$ on a domain
$\Omega$ in $\mathbb{R}^{n}$:
\begin{align}
\left[  f\right]  _{\alpha;\Omega}  &  :=\sup_{\substack{x,y\in\Omega\\x\neq
y}}\frac{\left\vert f\left(  x\right)  -f\left(  y\right)  \right\vert
}{\left\vert x-y\right\vert ^{\alpha}},\label{Schauder-derivative}\\
\left[  f\right]  _{C^{\alpha}\left(  \Omega\right)  }  &  =\left\Vert
f\right\Vert _{C^{0}\left(  \Omega\right)  }+\left[  f\right]  _{\alpha
;\Omega},\nonumber\\
\left[  f\right]  _{C^{1,\alpha}\left(  \Omega\right)  }  &  =\left\Vert
f\right\Vert _{C^{1}\left(  \Omega\right)  }+\left[  \nabla f\right]
_{\alpha;\Omega}.\nonumber
\end{align}
Using trivialization of the bundle $\mathbb{S}$ over $\mathtt{A}_{\varepsilon
}$, the H\"{o}lder\ norms for sections $V$ of the bundle $\mathbb{S}$ are
defined by patching the (finitely many) $C^{1,\alpha}$-coordinate charts on
$\mathtt{A}_{\varepsilon}$.

\begin{theorem}
\label{e-inverse-bound}Let $\mathcal{D}:L_{-}^{1,2}\left(  \mathtt{A}%
_{\varepsilon},\mathbb{S}\right)  \rightarrow L^{2}\left(  \mathtt{A}%
_{\varepsilon},\mathbb{S}\right)  $ be the operator (\ref{CR}) defined on
$\mathtt{A}_{\varepsilon}=\Sigma\times\left[  0,\varepsilon\right]  $ with
warped product metric $g_{\mathtt{A}_{\varepsilon},h}:=h\left(  z\right)
dx_{1}^{2}+g_{\Sigma}$. Suppose that the first eigenvalues for $\partial
^{-}\partial^{+}$ and $\partial^{+}\partial^{-}$ are bounded below by
$\lambda>0.$ Then for any $0<\alpha<1$ and $p>3$ there is a positive constant
$C=C\left(  \alpha,p,\lambda,h\right)  $ independent of $\varepsilon$ such
that for any $V\in C_{-}^{1,\alpha}\left(  \mathtt{A}_{\varepsilon}%
,\mathbb{S}\right)  $ and $W\in C^{\alpha}\left(  \mathtt{A}_{\varepsilon
},\mathbb{S}\right)  $ satisfying
\[
\mathcal{D}V=W
\]
we have
\[
C\left\Vert V\right\Vert _{C_{-}^{1,\alpha}\left(  \mathtt{A}_{\varepsilon
},\mathbb{S}\right)  }\leq\varepsilon^{-\left(  \frac{3}{p}+2\alpha\right)
}\left\Vert W\right\Vert _{C^{\alpha}\left(  \mathtt{A}_{\varepsilon
},\mathbb{S}\right)  }.
\]
In other words, there exists a right inverse
\[
Q_{\varepsilon}:C^{\alpha}\left(  \mathtt{A}_{\varepsilon},\mathbb{S}\right)
\rightarrow C_{-}^{1,\alpha}\left(  \mathtt{A}_{\varepsilon},\mathbb{S}%
\right)
\]
of $\mathcal{D}:C_{-}^{1,\alpha}\left(  \mathtt{A}_{\varepsilon}%
,\mathbb{S}\right)  \rightarrow C^{\alpha}\left(  \mathtt{A}_{\varepsilon
},\mathbb{S}\right)  $ such that $\left\Vert Q_{\varepsilon}\right\Vert \leq
C\varepsilon^{-\left(  \frac{3}{p}+2\alpha\right)  }$.
\end{theorem}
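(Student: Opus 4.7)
The strategy is to combine a base estimate on a cylinder of unit length (where everything is $\varepsilon$-free) with a reflection construction that reduces arbitrary small $\varepsilon$ to this base case; the $\varepsilon$-dependent blowup will come from tracking how Hölder, $L^p$ and Schauder constants scale.

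\textbf{Base case ($\varepsilon\in[1/2,3/2]$).} In this range Theorem \ref{onto} yields bijectivity of $\mathcal{D}\colon L_{-}^{1,2}\to L^{2}$, and the boundary condition $v|_{\partial\mathtt{A}_\varepsilon}=0$ is a well-posed local elliptic boundary condition for a first-order operator in the sense of \cite{BW}. Standard interior and boundary Schauder theory, together with its $L^p$ companion obtained by duality/interpolation, then provides $\varepsilon$-uniform estimates $\|V\|_{C^{1,\alpha}}\leq C\|W\|_{C^\alpha}$ and $\|V\|_{W^{1,p}}\leq C\|W\|_{L^p}$ with $C$ depending only on $\alpha,p,\lambda,h$.

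\textbf{Reflection and $L^p$ step for small $\varepsilon$.} For $\varepsilon<1/2$, pick an integer $k$ with $k\varepsilon\in[1/2,3/2]$ and extend $V=(u,v)$ to $\tilde V$ on $\mathtt{A}_{k\varepsilon}$ by successive reflections across the seams $x_{1}=j\varepsilon$, with $u$ extended evenly and $v$ extended oddly. The boundary condition makes $\tilde V$ continuous, and a direct check on the Cauchy--Riemann form (\ref{CR}) shows $\mathcal{D}\tilde V=\tilde W$ globally on $\mathtt{A}_{k\varepsilon}$, where $\tilde W$ is the corresponding even/odd extension of $W$. The key technical difficulty, and the main obstacle of the whole argument, is that the odd extension of the generally nonzero boundary trace of the $\mathbb{S}^{+}$ component of $W$ produces jumps, so $\tilde W$ is only bounded, not Hölder continuous, and a direct Schauder estimate on $\mathtt{A}_{k\varepsilon}$ is unavailable. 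Instead we invoke the base-case $L^p$ estimate, which tolerates jump discontinuities: $\|\tilde V\|_{W^{1,p}(\mathtt{A}_{k\varepsilon})}\leq C\|\tilde W\|_{L^p}\leq C\|W\|_{C^{0}}$, and Sobolev embedding for $p>3$ gives the uniform $C^{0}$ control $\|V\|_{C^{0}(\mathtt{A}_\varepsilon)}\leq C\|W\|_{C^\alpha}$.

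\textbf{Local Schauder upgrade and bookkeeping of the exponent.} To recover full $C^{1,\alpha}$ regularity, cover $\mathtt{A}_\varepsilon$ by balls $B_r$ of radius $r\sim\varepsilon$ chosen so that each ball sits inside a subregion of $\mathtt{A}_{k\varepsilon}$ on which $\tilde W$ coincides (up to the appropriate reflection) with $W$ and is therefore genuinely $C^\alpha$. The interior Schauder estimate for the first-order elliptic operator $\mathcal{D}$ on each such ball reads
\[
\|V\|_{C^{1,\alpha}(B_{r/2})}\leq C\bigl(\|W\|_{C^\alpha(B_{r})}+r^{-1-\alpha}\|V\|_{C^{0}(B_{r})}\bigr).
\]
Combining with the $C^{0}$ bound obtained above, and tracking the scaling relations for Hölder seminorms under the $\varepsilon$-scale change, produces the claimed inequality $\|V\|_{C^{1,\alpha}}\leq C\varepsilon^{-(3/p+2\alpha)}\|W\|_{C^\alpha}$: the factor $\varepsilon^{-3/p}$ is the Sobolev/volume loss incurred when converting between $\|W\|_{L^p(\mathtt{A}_\varepsilon)}$ and $\|W\|_{C^{0}}$, while the two factors of $\varepsilon^{-\alpha}$ arise from Hölder-seminorm rescaling on the balls $B_r$ of radius $\sim\varepsilon$, one each for $[W]_\alpha$ and $[\nabla V]_\alpha$ in the Schauder inequality above. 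Finally, the right inverse $Q_\varepsilon$ is constructed by coupling the $L^{2}$-surjectivity from Theorem \ref{onto} with the a priori estimate just derived, so that $\|Q_\varepsilon\|\leq C\varepsilon^{-(3/p+2\alpha)}$ follows directly.
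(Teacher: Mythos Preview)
Your overall architecture---reflection to a cylinder of unit length for the $L^p$ step, followed by local Schauder estimates on balls of radius $\sim\varepsilon$---matches the paper's. But there is a genuine gap in your bookkeeping, and the exponent you claim does not follow from the ingredients you list.

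The weak point is your $C^0$ control. From the $L^p$ estimate on $\mathtt{A}_{k\varepsilon}$ and Sobolev embedding you only get $\|V\|_{C^{0}}\le C\|W\|_{C^{0}}$. Feeding this into the scaled interior Schauder inequality you wrote, with $r\sim\varepsilon$, yields
\[
\|V\|_{C^{1,\alpha}}\le C\bigl(\|W\|_{C^{\alpha}}+\varepsilon^{-1-\alpha}\|V\|_{C^{0}}\bigr)\le C\varepsilon^{-1-\alpha}\|W\|_{C^{\alpha}},
\]
which is far worse than $\varepsilon^{-(3/p+2\alpha)}$ in the regime $\tfrac{3}{p}+3\alpha<\tfrac{1}{2}$ needed later. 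The paper obtains the sharper bound $\|V\|_{C^{0}}\le C\varepsilon^{1-3/p}\|W\|_{C^{0}}$ by an argument you omit entirely: for $v$ one uses $v|_{\partial\mathtt{A}_\varepsilon}=0$ together with the $C^{1-3/p}$ H\"older seminorm to gain $\varepsilon^{1-3/p}$; for $u$, which has no boundary condition, one observes that $\bar u(z):=\int_0^\varepsilon u(x_1,z)\,dx_1$ satisfies $\partial^-\bar u=0$ (integrate the second equation in (\ref{CR}) and use $v|_{\partial}=0$), hence $\bar u\equiv 0$ by the eigenvalue hypothesis $\lambda_{\partial^-}>0$, so by the mean value theorem $u(\cdot,z)$ vanishes somewhere on each fiber $[0,\varepsilon]$ and the same H\"older gain applies. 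Your explanation that $\varepsilon^{-3/p}$ is a ``Sobolev/volume loss'' is not where this factor comes from.

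There is a second, more localized problem in your Schauder step. You assert that $\mathtt{A}_\varepsilon$ can be covered by balls of radius $\sim\varepsilon$ each lying inside a single reflected copy, so that $\tilde W$ is genuinely $C^\alpha$ there. But a ball of radius $\sim\varepsilon$ centered near $\partial\mathtt{A}_\varepsilon$ necessarily crosses a seam $x_1=0$ or $x_1=\varepsilon$, and on that ball the odd extension of $w_1$ has a jump (and the even extension of $u$ is not $C^1$, since $u_{x_1}|_{\partial}=h^{1/2}w_1|_{\partial}\neq 0$ in general). The paper sidesteps this by passing to the second-order equations $v_{x_1x_1}-\partial^-\partial^+v=-\partial^-w_1$ and $u_{x_1x_1}-\partial^+\partial^-u=\partial_{x_1}w_1$, applying boundary Schauder estimates on half-balls with the Dirichlet condition for $v$, and for $u$ introducing a cut-off $\tilde u=u-\rho(x_1/\varepsilon)u(\varepsilon,z)-(1-\rho(x_1/\varepsilon))u(0,z)$ to reduce to zero boundary data. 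This is also where the extra factor $\varepsilon^{-\alpha}$ (giving $2\alpha$ rather than $\alpha$) enters.
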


\begin{proof}
For the operator (\ref{CR}), it is known (c.f. \cite{BW} Theorem 21.5) that
the Dirac operators
\[
\mathcal{D}_{\pm}:=\mathcal{D}|_{L_{\pm}^{1,2}}:L_{\pm}^{1,2}\left(
\mathtt{A}_{\varepsilon},\mathbb{S}\right)  \rightarrow L^{2}\left(
\mathtt{A}_{\varepsilon},\mathbb{S}\right)
\]
give well-posed local elliptic boundary problems. Using the orthogonal
decomposition $\mathbb{S=S}^{+}\mathbb{\oplus S}^{-}$ we write sections
$V=\left(  u,v\right)  \in C_{-}^{\infty}\left(  \mathtt{A}_{\varepsilon
},\mathbb{S}\right)  $ and $W=\left(  w_{1},w_{2}\right)  \in C^{\infty
}\left(  \mathtt{A}_{\varepsilon},\mathbb{S}\right)  $, where $u,w_{1}$ are
sections of $\mathbb{S}^{+}$ and $v,w_{2}$ are sections of $\mathbb{S}^{-}$.
So the equation $\mathcal{D}V=W$ may be explicitly written as (c.f. equation
(\ref{CR}))
\[
\left\{
\begin{array}
[c]{ccc}%
h^{-1/2}\left(  z\right)  u_{x_{1}}+\partial^{+}v & = & w_{1}\\
h^{-1/2}\left(  z\right)  v_{x_{1}}+\partial^{-}u & = & w_{2}%
\end{array}
\right.  \text{ \ with }v|_{\partial\mathtt{A}_{\varepsilon}}=0.
\]
To make the exposition more transparent we will assume that $h\equiv1$, and it
is clear from the proof below that the argument works equally well for any
$h\left(  z\right)  \in C^{\infty}\left(  \Sigma\right)  $ such that $\frac
{1}{K}\leq h\left(  z\right)  \leq K$ for some constant $K>0$. For any
$0<\varepsilon<3/2$ we take an integer $k=k\left(  \varepsilon\right)  >0$
(which \emph{depends} on $\varepsilon$) such that
\[
k\left(  \varepsilon\right)  \varepsilon\in\left[  1/2,3/2\right]  .
\]
For notation brevity, we will write $k$ for $k\left(  \varepsilon\right)  $ in
the remainder of our paper. Then A$_{k\varepsilon}=\Sigma\times\left[
0,k\varepsilon\right]  $ is a product region whose second component has length
\emph{uniformly bounded below and above} for any $\varepsilon$. We divide the
estimates into two cases:
\emph{Case (i)}: Suppose that $w_{1}=0,$ then we will have along the boundary
$\partial\mathtt{A}_{\varepsilon},$ $u_{x_{1}}=0$ since $v=0$. Since
$v|_{\partial\mathtt{A}_{\varepsilon}}=0$, we can extend $v$ from
$\mathtt{A}_{\varepsilon}$ to $\mathtt{A}_{k\varepsilon}$ by odd reflection
along the walls $\Sigma\times\left\{  j\varepsilon\right\}  $ with $0\leq
j\leq k-1,$ while still keeping $v$ in $C^{1,\alpha}\left(  A_{k\varepsilon
}\right)  $. Similarly we consider an even extension of $u$ to $\mathtt{A}%
_{k\varepsilon}$, then $u$ is still in $C^{1,\alpha}\left(  A_{k\varepsilon
}\right)  $ since $u_{x_{1}}|_{\partial\mathtt{A}_{\varepsilon}}=0$. The
extension formula is
\begin{align*}
v\left(  x,z\right)   &  =\left\{
\begin{array}
[c]{ccc}%
-v\left(  \left(  2j+2\right)  \varepsilon-x,z\right)  &  & \text{for }%
x\in\left[  \left(  2j+1\right)  \varepsilon,\left(  2j+2\right)
\varepsilon\right] \\
v\left(  x-2j\varepsilon,z\right)  &  & \text{for }x\in\left[  2j\varepsilon
,\left(  2j+1\right)  \varepsilon\right]
\end{array}
\right.  ,\\
u\left(  x,z\right)   &  =\left\{
\begin{array}
[c]{ccc}%
u\left(  \left(  2j+2\right)  \varepsilon-x,z\right)  &  & \text{for }%
x\in\left[  \left(  2j+1\right)  \varepsilon,\left(  2j+2\right)
\varepsilon\right] \\
u\left(  x-2j\varepsilon,z\right)  &  & \text{for }x\in\left[  2j\varepsilon
,\left(  2j+1\right)  \varepsilon\right]
\end{array}
\right.  .
\end{align*}
This will induce an even extension of $w_{2}$ so that the equation
$\mathcal{D}V=W$ is satisfied in the $C^{\alpha}$ sense on $\mathtt{A}%
_{k\varepsilon}.$ The motivation of even and odd extension is to have an
$\varepsilon$-independent $L^{p}$ and Schauder estimate.
By differentiating both sides of the equation $v_{x_{1}}+\partial^{-}u=w_{2}$
with respect to $x_{1},$ we obtain an equation which is equivalent to the
Dirichlet problem of the second order elliptic equation
\begin{align*}
v_{x_{1}x_{1}}-\partial^{-}\partial^{+}v &  =\frac{\partial w_{2}}{\partial
x_{1}}\text{ and }v|_{\partial\mathtt{A}_{k\varepsilon}}=0,\\
u_{x_{1}x_{1}}-\partial^{+}\partial^{-}u &  =-\partial^{+}w_{2},
\end{align*}
noting that $\partial^{-}\partial^{+}$ is a positive operator. (Since $u,v$
are only in $C^{1,\alpha}$, they should be understood as weak solutions of the
above equations). Since the $C^{\alpha}$-norm is preserved under the above
extension, Schauder and $L^{p}$ interior estimate for the second order
elliptic equation on the region
\[
\mathtt{A}_{k\varepsilon}\subset\mathtt{A}_{2k\varepsilon}\cup\mathtt{A}%
_{-2k\varepsilon}%
\]
would then imply that there are constants $C\left(  \alpha\right)  $ and
$\tilde{C}\left(  p\right)  $ independent of $\varepsilon$
(because$\ k\varepsilon\in\left[  1/2,3/2\right]  $) such that%
\begin{align}
\left\Vert w_{2}\right\Vert _{C^{\alpha}\left(  \mathtt{A}_{\varepsilon
},\mathbb{S}^{-}\right)  }+\left\Vert V\right\Vert _{C^{0}\left(
\mathtt{A}_{\varepsilon},\mathbb{S}\right)  } &  =\left\Vert w_{2}\right\Vert
_{C^{\alpha}\left(  \mathtt{A}_{2k\varepsilon}\cup\mathtt{A}_{-2k\varepsilon
},\mathbb{S}^{-}\right)  }+\left\Vert V\right\Vert _{C^{0}\left(
\mathtt{A}_{2k\varepsilon}\cup\mathtt{A}_{-2k\varepsilon},\mathbb{S}\right)
}\nonumber\\
&  \geq C\left(  \alpha\right)  \left\Vert V\right\Vert _{C^{1,\alpha}\left(
\mathtt{A}_{k\varepsilon},\mathbb{S}\right)  }\nonumber\\
&  =C\left(  \alpha\right)  \left\Vert V\right\Vert _{C_{-}^{1,\alpha}\left(
\mathtt{A}_{\varepsilon},\mathbb{S}\right)  },\label{Schauder-V}%
\end{align}
and%
\[
\left\Vert w_{2}\right\Vert _{L^{p}\left(  \mathtt{A}_{2k\varepsilon}%
\cup\mathtt{A}_{-2k\varepsilon},\mathbb{S}^{-}\right)  }+\left\Vert
V\right\Vert _{L^{p}\left(  \mathtt{A}_{2k\varepsilon}\cup\mathtt{A}%
_{-2k\varepsilon},\mathbb{S}\right)  }\geq4\tilde{C}\left(  p\right)
\left\Vert V\right\Vert _{L_{-}^{1,p}\left(  \mathtt{A}_{k\varepsilon
},\mathbb{S}\right)  }%
\]
\ (c.f. \cite{GT} section 8.11 Theorem 8.32 and \cite{MS} Theorem B.3.2
respectively), so%
\[
\left\Vert w_{2}\right\Vert _{L^{p}\left(  \mathtt{A}_{\varepsilon}%
,\mathbb{S}^{-}\right)  }+\left\Vert V\right\Vert _{L^{p}\left(
\mathtt{A}_{\varepsilon},\mathbb{S}\right)  }\geq\tilde{C}\left(  p\right)
\left\Vert V\right\Vert _{L_{-}^{1,p}\left(  \mathtt{A}_{\varepsilon
},\mathbb{S}\right)  }%
\]
by periodicity of reflection. We also have
\[
\left\Vert V\right\Vert _{C^{0}\left(  \mathtt{A}_{\varepsilon},\mathbb{S}%
\right)  }\leq\left\Vert V\right\Vert _{C_{-}^{1-3/p}\left(  \mathtt{A}%
_{\varepsilon},\mathbb{S}\right)  }\leq C\left(  p,\lambda\right)  \left\Vert
W\right\Vert _{C^{0}\left(  \mathtt{A}_{\varepsilon},\mathbb{S}\right)  },
\]
whose proof is identical to that of $\left(  \ref{C-Sch-p}\right)  $ in the
following case (ii). Plugging this in $\left(  \ref{Schauder-V}\right)  $ we
have%
\begin{equation}
C\left(  \alpha\right)  \left\Vert V\right\Vert _{C_{-}^{1,\alpha}\left(
\mathtt{A}_{\varepsilon},\mathbb{S}\right)  }\leq\left(  C\left(
p,\lambda\right)  +1\right)  \left\Vert w_{2}\right\Vert _{C^{\alpha}\left(
\mathtt{A}_{\varepsilon},\mathbb{S}^{-}\right)  }.\label{V-W-case1}%
\end{equation}
\emph{Case (ii)}: suppose that $w_{2}=0$ and $w_{1}\in C^{\alpha}\left(
\mathtt{A}_{\varepsilon},\mathbb{S}^{+}\right)  .$ Since $v=0$, $\ $this
implies that if we consider the odd extension of $v$ and the even extension of
$u$ to $\mathtt{A}_{2k\varepsilon}\cup$ $\mathtt{A}_{-2k\varepsilon}$ as in
the previous case, then they induce an odd extension of $w_{1}$ so that the
equation
\begin{equation}
\left\{
\begin{array}
[c]{ccc}%
u_{x_{1}}+\partial^{+}v & = & w_{1}\\
v_{x_{1}}+\partial^{-}u & = & 0
\end{array}
\right.  \label{CR1}%
\end{equation}
is satisfied in the weak sense on $\mathtt{A}_{2k\varepsilon}\cup
\mathtt{A}_{-2k\varepsilon}.$ Notice that $w_{1}$ does not vanish on
$\partial\mathtt{A}_{\varepsilon}$ in general, so after the odd extension,
$w_{1}$ is no longer continuous but still we have $w_{1}\in L^{p}\left(
\mathtt{A}_{2k\varepsilon}\cup\mathtt{A}_{-2k\varepsilon},\mathbb{S}\right)  $
for $\forall p$. Since A$_{k\varepsilon}$ is a proper subdomain in
$\mathtt{A}_{2k\varepsilon}\cup\mathtt{A}_{-2k\varepsilon}$ and $k\varepsilon
\in\left[  1/2,3/2\right]  ,$ the interior $L^{p}$-estimate then implies that
there is a constant $\tilde{C}\left(  p\right)  $ independent of $\varepsilon$
such that%
\[
\left\Vert w_{1}\right\Vert _{L^{p}\left(  \mathtt{A}_{2k\varepsilon}%
\cup\mathtt{A}_{-2k\varepsilon},\mathbb{S}^{-}\right)  }+\left\Vert
V\right\Vert _{L^{p}\left(  \mathtt{A}_{2k\varepsilon}\cup\mathtt{A}%
_{-2k\varepsilon},\mathbb{S}\right)  }\geq4\tilde{C}\left(  p\right)
\left\Vert V\right\Vert _{L_{-}^{1,p}\left(  \mathtt{A}_{k\varepsilon
},\mathbb{S}\right)  }.
\]
By the periodicity of $w_{1}$ and $V,$ that is
\begin{equation}
\left\Vert w_{1}\right\Vert _{L^{p}\left(  \mathtt{A}_{k\varepsilon
},\mathbb{S}^{-}\right)  }+\left\Vert V\right\Vert _{L^{p}\left(
\mathtt{A}_{k\varepsilon},\mathbb{S}\right)  }\geq\tilde{C}\left(  p\right)
\left\Vert V\right\Vert _{L_{-}^{1,p}\left(  \mathtt{A}_{k\varepsilon
},\mathbb{S}\right)  }.\label{Lp}%
\end{equation}
Then we proceed to the $L^{p\text{ }}$estimates of $V$ purely in terms of $W$.
It follows from (\ref{Lp}) and Theorem \ref{lambda} that for $\varepsilon<3/2$
we have
\[
\left\Vert W\right\Vert _{L^{2}\left(  \mathtt{A}_{\varepsilon},\mathbb{S}%
\right)  }\geq C\left(  \lambda\right)  \left\Vert V\right\Vert _{L^{2}\left(
\mathtt{A}_{\varepsilon},\mathbb{S}\right)  }\text{ for }V\in C_{-}^{\infty
}\left(  \mathtt{A}_{\varepsilon},\mathbb{S}\right)
\]
with the constant $C\left(  \lambda\right)  $ dependent on $\lambda
_{\partial^{+}}$ but independent of $\varepsilon.$ To go from $L^{2}$ to
$L^{p}$, \ note the following interpolation\ inequality
\begin{align*}
\left\Vert V\right\Vert _{L^{p}\left(  \mathtt{A}_{k\varepsilon}%
,\mathbb{S}\right)  } &  =\left(  \int_{\mathtt{A}_{k\varepsilon}}\left\vert
V\right\vert ^{p}\right)  ^{1/p}\leq\left\Vert V\right\Vert _{C^{0}\left(
\mathtt{A}_{k\varepsilon},\mathbb{S}\right)  }^{\frac{p-1}{p}}\cdot\left(
\int_{\mathtt{A}_{k\varepsilon}}\left\vert V\right\vert \right)  ^{1/p}\\
&  \leq\frac{p-1}{p}\delta^{\frac{p}{p-1}}\left\Vert V\right\Vert
_{C^{0}\left(  \mathtt{A}_{k\varepsilon},\mathbb{S}\right)  }^{\frac{p-1}%
{p}\cdot\frac{p}{p-1}}+\frac{1}{p}\frac{1}{\delta^{p}}\left(  \int
_{\mathtt{A}_{k\varepsilon}}\left\vert V\right\vert \right)  ^{\frac{p}{p}%
}\text{ }\\
&  \text{(By Young's Inequality)}\\
&  =\frac{p-1}{p}\delta^{\frac{p}{p-1}}\left\Vert V\right\Vert _{C^{0}\left(
\mathtt{A}_{k\varepsilon},\mathbb{S}\right)  }+\frac{1}{p}\frac{1}{\delta^{p}%
}\left\Vert V\right\Vert _{L^{1}\left(  \mathtt{A}_{k\varepsilon}%
,\mathbb{S}\right)  }\\
&  \leq C\left(  \delta^{\frac{p}{p-1}}\left\Vert V\right\Vert _{L^{1,p}%
\left(  \mathtt{A}_{k\varepsilon},\mathbb{S}\right)  }+\delta^{-p}\left\Vert
V\right\Vert _{L^{2}\left(  \mathtt{A}_{k\varepsilon},\mathbb{S}\right)
}\right)  \text{ }\\
&  \text{(By Sobolev embedding),}%
\end{align*}
where $C$ is independent of $\varepsilon$ in the last inequality because
$k\varepsilon\in\left[  1/2,3/2\right]  $. Putting this in $\left(
\ref{Lp}\right)  $ we have
\begin{align*}
\left(  \tilde{C}\left(  p\right)  -C\delta^{\frac{p}{p-1}}\right)  \left\Vert
V\right\Vert _{L^{1,p}\left(  \mathtt{A}_{k\varepsilon},\mathbb{S}\right)  }
&  \leq\left\Vert W\right\Vert _{L^{p}\left(  \mathtt{A}_{k\varepsilon
},\mathbb{S}\right)  }+C\delta^{-p}\left\Vert V\right\Vert _{L^{2}\left(
\mathtt{A}_{k\varepsilon},\mathbb{S}\right)  }\\
&  \leq\left\Vert W\right\Vert _{L^{p}\left(  \mathtt{A}_{k\varepsilon
},\mathbb{S}\right)  }+\frac{C\delta^{-p}}{C\left(  \lambda\right)
}\left\Vert W\right\Vert _{L^{2}\left(  \mathtt{A}_{k\varepsilon}%
,\mathbb{S}\right)  }\\
&  \leq\left\Vert W\right\Vert _{L^{p}\left(  \mathtt{A}_{k\varepsilon
},\mathbb{S}\right)  }\left(  1+\frac{C\delta^{-p}}{C\left(  \lambda\right)
}\right)  .
\end{align*}
So
\begin{equation}
\left\Vert V\right\Vert _{L_{-}^{1,p}\left(  \mathtt{A}_{k\varepsilon
},\mathbb{S}\right)  }\leq\left(  \tilde{C}\left(  p\right)  -C\delta
^{\frac{p}{p-1}}\right)  ^{-1}\left(  1+\frac{C\delta^{-p}}{C\left(
\lambda\right)  }\right)  \left\Vert W\right\Vert _{L^{p}\left(
\mathtt{A}_{k\varepsilon},\mathbb{S}\right)  }.\label{V-Lp}%
\end{equation}
Fixing a small $\delta$ such that $\tilde{C}\left(  p\right)  -C\delta
^{\frac{p}{p-1}}>0$ and let the constant
\[
\tilde{C}\left(p,\lambda\right)  :=\left(  \tilde{C}\left(  p\right)
-C\delta^{\frac{p}{p-1}}\right)  ^{-1}\left(  1+\frac{C\delta^{-p}}{C\left(
\lambda\right)  }\right)
\]
which is independent of $\varepsilon$. Then for $p>3,$ by Sobolev embedding
$C_{-}^{1-3/p}\left(  \mathtt{A}_{k\varepsilon},\mathbb{S}\right)
\hookrightarrow L_{-}^{1,p}\left(  \mathtt{A}_{k\varepsilon},\mathbb{S}%
\right)  $ and $\left(  \ref{V-Lp}\right)  $ we have
\begin{align}
\left\Vert V\right\Vert _{C_{-}^{1-3/p}\left(  \mathtt{A}_{\varepsilon
},\mathbb{S}\right)  } &  =\left\Vert V\right\Vert _{C_{-}^{1-3/p}\left(
\mathtt{A}_{k\varepsilon},\mathbb{S}\right)  }\nonumber\\
&  \leq C\left\Vert V\right\Vert _{L_{-}^{1,p}\left(  \mathtt{A}%
_{k\varepsilon},\mathbb{S}\right)  }\leq C\cdot\tilde{C}\left(  p,\lambda
\right)  \left\Vert W\right\Vert _{L^{p}\left(  \mathtt{A}_{k\varepsilon
},\mathbb{S}\right)  }\nonumber\\
&  \leq C\left(  p,\lambda\right)  \left\Vert W\right\Vert _{C^{0}\left(
\mathtt{A}_{k\varepsilon},\mathbb{S}\right)  }=C\left(  p,\lambda\right)
\left\Vert W\right\Vert _{C^{0}\left(  \mathtt{A}_{\varepsilon},\mathbb{S}%
\right)  },\label{C-Sch-p}%
\end{align}
where the constant $C\left(  p,\lambda\right)  =C\cdot\tilde{C}\left(
p,\lambda\right)  \left(  \frac{3}{2}vol\left(  \Sigma\right)  \right)
^{\frac{1}{p}}$ is independent of $\varepsilon$. We remark that the above
argument also works in case (i), so $\left(  \ref{C-Sch-p}\right)  $ holds in
all cases.
In the following we give the Schauder estimate of $V=\left(  u,v\right)  $.
Differentiating the second equation in (\ref{CR1}) with respect to $x_{1},$ we
get the Dirichlet problem of the second order elliptic equation for (weak
solutions) $v$ and $u$:
\begin{align}
v_{x_{1}x_{1}}-\partial^{-}\partial^{+}v  &  =-\partial^{-}w_{1}\text{ \ \ and
\ \ }v|_{\partial\mathtt{A}_{\varepsilon}}=0,\label{Dirichlet-uv}\\
u_{x_{1}x_{1}}-\partial^{+}\partial^{-}u  &  =\partial_{x_{1}}w_{1}.\nonumber
\end{align}
Recall that in \cite{GT} chapter 4, the \emph{non-dimensional }Schauder norm
$\left\Vert f\right\Vert _{C^{k,\alpha}\left(  \overline{\Omega}\right)
}^{\prime}$ is defined as%
\begin{equation}
\left\Vert f\right\Vert _{C^{k,\alpha}\left(  \overline{\Omega}\right)
}^{\prime}=\Sigma_{j=0}^{k}d^{j}\left\vert D^{j}u\right\vert _{0;\Omega
}+d^{k+\alpha}\left[  D^{k}u\right]  _{\alpha;\Omega},
\label{Schauder-norm-Prime}%
\end{equation}
where $d$ is the diameter of $\Omega$.
For component $v$, since $v|_{\partial\mathtt{A}_{\varepsilon}}=0,$ the
standard Schauder estimate for second order elliptic equation on radius
$\varepsilon$ half balls $B_{\varepsilon}$ with centers on $\partial
\mathtt{A}_{\varepsilon}$ (c.f. \cite{GT} Section 8.11 for regularity of weak
solutions and Section 4.4 equation $\left(  4.43\right)  $) gives
\[
\varepsilon\left\Vert w_{1}\right\Vert _{C^{\alpha}\left(  B_{\varepsilon
},\mathbb{S}^{+}\right)  }^{\prime}+\left\Vert v\right\Vert _{C_{-}^{0}\left(
B_{\varepsilon},\mathbb{S}\right)  }\geq C\left(  \alpha\right)  \left\Vert
v\right\Vert _{C_{-}^{1,\alpha}\left(  B_{\varepsilon/2},\mathbb{S}\right)
,}^{\prime}%
\]
or equivalently, by the definition of $\left\Vert f\right\Vert _{C^{k,\alpha
}\left(  \overline{\Omega}\right)  }^{\prime}$,
\begin{align}
&  \varepsilon\left\Vert w_{1}\right\Vert _{C^{\alpha}\left(  B_{\varepsilon
},\mathbb{S}^{+}\right)  ^{\prime}}+\left\Vert v\right\Vert _{C_{-}^{0}\left(
B_{\varepsilon},\mathbb{S}\right)  }\nonumber\\
&  \geq C\left(  \alpha\right)  \left[  \left\Vert v\right\Vert _{C_{-}%
^{0}\left(  B_{\varepsilon/2},\mathbb{S}\right)  }+\varepsilon\left\Vert
\nabla v\right\Vert _{C_{-}^{0}\left(  B_{\varepsilon/2},\mathbb{S}\right)
}+\varepsilon^{1+\alpha}\left[  \nabla v\right]  _{\alpha;\left(
B_{\varepsilon/2},\mathbb{S}\right)  }\right]  ,\label{e-Schauder-v}%
\end{align}
hence $\qquad$%
\begin{equation}
\varepsilon\left\Vert w_{1}\right\Vert _{C^{\alpha}\left(  B_{\varepsilon
},\mathbb{S}^{+}\right)  }+\left\Vert v\right\Vert _{C_{-}^{0}\left(
B_{\varepsilon},\mathbb{S}\right)  }\geq C\left(  \alpha\right)
\varepsilon^{1+\alpha}\left\Vert v\right\Vert _{C_{-}^{1,\alpha}\left(
B_{\varepsilon/2},\mathbb{S}\right)  },\label{v-ball}%
\end{equation}
where the constant $C\left(  \alpha\right)  $ is independent of $\varepsilon$.
Covering $\mathtt{A}_{\varepsilon}$ by such radius-$\varepsilon$ half balls
and radius-$\varepsilon/2$ full balls centered on meridian $\Sigma
\times\left\{  \varepsilon/2\right\}  $ and take supremum on $\mathtt{A}%
_{\varepsilon}$, we have
\begin{equation}
\varepsilon\left\Vert w_{1}\right\Vert _{C^{\alpha}\left(  \mathtt{A}%
_{\varepsilon},\mathbb{S}^{+}\right)  }+\left\Vert v\right\Vert _{C_{-}%
^{0}\left(  \mathtt{A}_{\varepsilon},\mathbb{S}\right)  }\geq C\left(
\alpha\right)  \varepsilon^{1+\alpha}\left\Vert v\right\Vert _{C_{-}%
^{1,\alpha}\left(  \mathtt{A}_{\varepsilon},\mathbb{S}\right)  }%
.\label{c-alpha}%
\end{equation}
\ By combining this with the fact that $v|_{\partial\mathtt{A}_{\varepsilon
}=0}$, and the definition of the Schauder norm, we have
\begin{equation}
\left\Vert v\right\Vert _{C_{-}^{0}\left(  \mathtt{A}_{\varepsilon}%
,\mathbb{S}\right)  }\leq\left\Vert v\right\Vert _{C_{-}^{1-3/p}\left(
\mathtt{A}_{\varepsilon},\mathbb{S}\right)  }\varepsilon^{1-3/p}\leq C\left(
p,\lambda\right)  \left\Vert w_{1}\right\Vert _{C^{0}\left(  \mathtt{A}%
_{\varepsilon},\mathbb{S}\right)  }\varepsilon^{1-\frac{3}{p}}.\label{v-C0}%
\end{equation}
Plug these into (\ref{c-alpha}) we obtain
\begin{equation}
C\left(  \alpha\right)  \left\Vert v\right\Vert _{C_{-}^{1,\alpha}\left(
\mathtt{A}_{\varepsilon},\mathbb{S}\right)  }\leq\varepsilon^{-\alpha
}\left\Vert w_{1}\right\Vert _{C^{\alpha}\left(  \mathtt{A}_{\varepsilon
},\mathbb{S}^{+}\right)  }+C\left(  p,\lambda\right)  \left\Vert
w_{1}\right\Vert _{C^{0}\left(  \mathtt{A}_{\varepsilon},\mathbb{S}\right)
}\varepsilon^{-\left(  \frac{3}{p}+\alpha\right)  }.\label{v-sch}%
\end{equation}
For the component $u$, its boundary value is nonzero so we can not directly
apply the above inequalities as $v$. First we derive the $C^{0}$ estimate of
$u$, using the assumption that $\partial^{+}\partial^{-}$ has \emph{trivial
kernel} on $\Sigma$. Consider the section $\overline{u}$ of the bundle
$\mathbb{S}^{+}\rightarrow\Sigma$ defined as
\[
\overline{u}\left(  z\right)  =\int_{0}^{\varepsilon}u\left(  x_{1},z\right)
dx_{1}.
\]
From equation $\left(  \ref{CR1}\right)  $ we have
\[
\partial^{-}\overline{u}\left(  z\right)  =-\int_{0}^{\varepsilon}%
h^{-1/2}\left(  z\right)  \partial_{x_{1}}v\left(  x_{1},z\right)
dx_{1}=h^{-1/2}\left(  z\right)  \left(  v\left(  0,z\right)  -v\left(
\varepsilon,z\right)  \right)  =0.
\]
But from the assumption that $\partial^{+}\partial^{-}$ has trivial kernel on
$\Sigma$ we get that
\[
\overline{u}\left(  z\right)  \equiv0.
\]
Let $\operatorname{Re}\overline{u}$ and $\operatorname{Im}\overline{u}$ be the
real and imaginary parts of the section $\overline{u}$ (notice that
$\mathbb{S}^{+}$ is a complex line bundle). Then for any fixed $z\in\Sigma$,
\[
\int_{0}^{\varepsilon}\operatorname{Re}u\left(  x_{1,}z\right)  dx_{1}%
=0=\int_{0}^{\varepsilon}\operatorname{Im}u\left(  x_{1,}z\right)  dx_{1}.
\]
By mean value theorem for the $\mathbb{R}$-valued function $\operatorname{Re}%
u\left(  x_{1,}z\right)  $, there exists $s\in\left[  0,\varepsilon\right]  $
depending on $z$, such $\operatorname{Re}u\left(  s,z\right)  =0$. Therefore
for $x_{1}\neq s$, by the definition of the Schauder norm we have
\begin{align*}
\left\vert \operatorname{Re}u\left(  x_{1},z\right)  \right\vert  &
=\frac{\left\vert \operatorname{Re}u\left(  x_{1},z\right)  -\operatorname{Re}%
u\left(  s,z\right)  \right\vert }{\left\vert x_{1}-s\right\vert ^{1-\frac
{3}{p}}}\cdot\left\vert x_{1}-s\right\vert ^{1-\frac{3}{p}}\\
& \leq\left\Vert \operatorname{Re}u\right\Vert _{C^{1-3/p}\left(
\mathtt{A}_{\varepsilon},\mathbb{S}\right)  }\varepsilon^{1-\frac{3}{p}}.
\end{align*}
Then we have
\begin{align*}
\left\Vert \operatorname{Re}u\right\Vert _{C^{0}\left(  \mathtt{A}%
_{\varepsilon},\mathbb{S}\right)  } &  \leq\left\Vert \operatorname{Re}%
u\right\Vert _{C^{1-3/p}\left(  \mathtt{A}_{\varepsilon},\mathbb{S}\right)
}\varepsilon^{1-\frac{3}{p}}\\
&  \leq\left\Vert V\right\Vert _{C_{-}^{1-3/p}\left(  \mathtt{A}_{\varepsilon
},\mathbb{S}\right)  }\varepsilon^{1-\frac{3}{p}}\\
&  \leq C\left(  p,\lambda\right)  \left\Vert w_{1}\right\Vert _{C^{0}\left(
\mathtt{A}_{\varepsilon},\mathbb{S}\right)  }\varepsilon^{1-\frac{3}{p}}%
\end{align*}
by $\left(  \ref{C-Sch-p}\right)  $. Similarly\
\[
\left\Vert \operatorname{Im}u\right\Vert _{C_{-}^{0}\left(  \mathtt{A}%
_{\varepsilon},\mathbb{S}^{+}\right)  }\leq C\left(  p,\lambda\right)
\left\Vert w_{1}\right\Vert _{C^{0}\left(  \mathtt{A}_{\varepsilon}%
,\mathbb{S}\right)  }\varepsilon^{1-\frac{3}{p}}.
\]
Combining these we get
\begin{equation}
\left\Vert u\right\Vert _{C_{-}^{0}\left(  \mathtt{A}_{\varepsilon}%
,\mathbb{S}^{+}\right)  }\leq C\left(  p,\lambda\right)  \left\Vert
w_{1}\right\Vert _{C^{0}\left(  \mathtt{A}_{\varepsilon},\mathbb{S}\right)
}\varepsilon^{1-\frac{3}{p}}.\label{u-C0}%
\end{equation}
Now we are ready to derive the Schauder estimate of $u$. Using the equation
\[
\left\{
\begin{array}
[c]{ccc}%
u_{x_{1}}+\partial^{+}v & = & w_{1}\\
v_{x_{1}}+\partial^{-}u & = & 0
\end{array}
\right.  \text{ \ with }v|_{\partial\mathtt{A}_{\varepsilon}}=0,
\]
the Schauder estimate of $\ u_{x_{1}}$ and $\partial^{-}u$ can be reduced to
that of $v$. For the full covariant derivative $\nabla_{\Sigma}u$ on $\Sigma$,
we observe that
\[
\left[  \nabla_{\Sigma}u\right]  _{\alpha;\left(  \mathtt{A}_{\varepsilon
},\mathbb{S}\right)  }\leq\left[  \nabla_{\Sigma}u\right]  _{\alpha;\left(
\mathtt{A}_{\varepsilon},\mathbb{S}\right)  }^{z}+\left[  \nabla_{\Sigma
}u\right]  _{\alpha;\left(  \mathtt{A}_{\varepsilon},\mathbb{S}\right)
}^{x_{1}}%
\]
where $\left[  \cdot\right]  _{\alpha;\left(  \mathtt{A}_{\varepsilon
},\mathbb{S}\right)  }^{z}$ and $\left[  \cdot\right]  _{\alpha;\left(
\mathtt{A}_{\varepsilon},\mathbb{S}\right)  }^{x_{1}}$ as defined in $\left(
\ref{Schauder-derivative}\right)  $ are the Schauder $\alpha$-components for
the $\Sigma$ and $\left[  0,\varepsilon\right]  $ directions respectively. For
$\left[  \nabla_{\Sigma}u\right]  _{\alpha;\left(  \mathtt{A}_{\varepsilon
},\mathbb{S}\right)  }^{z}$, on each slice $\Sigma_{s}:=\Sigma\times\left\{
s\right\}  $, we use the elliptic estimate on compact closed surface
$\Sigma_{s}$ to control $\nabla_{\Sigma}u$ by $\partial^{-}u$,%
\[
\left[  \nabla_{\Sigma}u\right]  _{\alpha;\left(  \Sigma_{s},\mathbb{S}%
\right)  }^{z}\leq\left\vert u\right\vert _{C^{1,\alpha}\left(  \Sigma
_{s},\mathbb{S}\right)  }\leq C\left(  \left\vert \partial^{-}u\right\vert
_{C^{\alpha}\left(  \Sigma_{s},\mathbb{S}\right)  }+\left\vert u\right\vert
_{C^{0}\left(  \Sigma_{s},\mathbb{S}\right)  }\right)  ,
\]
and then take the sup for $0\leq s\leq\varepsilon$ to get
\begin{align}
\left[  \nabla_{\Sigma}u\right]  _{\alpha;\left(  \mathtt{A}_{\varepsilon
},\mathbb{S}\right)  }^{z} &  \leq C\left(  \left\vert \partial^{-}%
u\right\vert _{C^{\alpha}\left(  \text{A}_{\varepsilon},\mathbb{S}\right)
}+\left\vert u\right\vert _{C^{0}\left(  \text{A}_{\varepsilon},\mathbb{S}%
\right)  }\right)  \nonumber\\
&  =C\left(  \left\vert v_{x_{1}}\right\vert _{C^{\alpha}\left(
\text{A}_{\varepsilon},\mathbb{S}\right)  }+\left\vert u\right\vert
_{C^{0}\left(  \text{A}_{\varepsilon},\mathbb{S}\right)  }\right)
.\label{gradient-u-Sigma-z}%
\end{align}
Since the estimate for $v_{x_{1}}$ is known in $\left(  \ref{v-sch}\right)  $,
the only term left to estimate is $\left[  \nabla_{\Sigma}u\right]
_{\alpha;\left(  \mathtt{A}_{\varepsilon},\mathbb{S}\right)  }^{x_{1}}$. For
this we reduce $u$ to the zero boundary value case by introducing
\[
\widetilde{u}=u-\rho\left(  \frac{x_{1}}{\varepsilon}\right)  u\left(
\varepsilon,z\right)  -\left(  1-\rho\left(  \frac{x_{1}}{\varepsilon}\right)
\right)  u\left(  0,z\right)  ,
\]
where $\rho:\left[  0,1\right]  \rightarrow\left[  0,1\right]  ,\rho\left(
0\right)  =0,\rho\left(  1\right)  =1$ is a smooth cut-off function such that
$\left\Vert \rho\right\Vert _{C^{2,\alpha}\left[  0,1\right]  }\leq C\,$. Then
$\widetilde{u}$ satisfies
\[
\widetilde{u}_{x_{1}x_{1}}-\partial^{+}\partial^{-}\widetilde{u}%
=\partial_{x_{1}}\widetilde{w_{1}}+g\ \ \ and\ \ \widetilde{u}|_{\partial
\mathtt{A}_{\varepsilon}}=0,
\]
where%
\[
\widetilde{w_{1}}=w_{1}+\rho\left(  \frac{x_{1}}{\varepsilon}\right)  \left[
\left(  u_{x_{1}}\left(  \varepsilon,z\right)  -u_{x_{1}}\left(  0,z\right)
\right)  +\left(  w_{1}\left(  0,z\right)  -w_{1}\left(  \varepsilon,z\right)
\right)  \right]
\]
and
\begin{align*}
g &  =\frac{1}{\varepsilon^{2}}\rho^{\prime\prime}\left(  \frac{x_{1}%
}{\varepsilon}\right)  \left[  u\left(  0,z\right)  -u\left(  \varepsilon
,z\right)  \right]  \\
&  +\frac{1}{\varepsilon}\rho^{\prime}\left(  \frac{x_{1}}{\varepsilon
}\right)  \left[  \left(  u_{x_{1}}\left(  0,z\right)  -u_{x_{1}}\left(
\varepsilon,z\right)  \right)  +\left(  w_{1}\left(  \varepsilon,z\right)
-w_{1}\left(  0,z\right)  \right)  \right]  .
\end{align*}
In the above derivation of $\widetilde{w_{1}}$ and $g$ we have used the
equation
\[
\partial^{+}\partial^{-}u=u_{x_{1}x_{1}}-\partial_{x_{1}}w_{1}%
\]
from $\left(  \ref{Dirichlet-uv}\right)  $. By \cite{GT} section 4.4 equation
$\left(  4.46\right)  $ we have on radius-$\varepsilon$ half ball
$B_{\varepsilon},$
\[
\left\Vert \widetilde{u}\right\Vert _{C_{-}^{0}\left(  B_{\varepsilon
},\mathbb{S}\right)  }+\varepsilon\left\Vert \widetilde{w_{1}}\right\Vert
_{C^{\alpha}\left(  B_{\varepsilon},\mathbb{S}^{+}\right)  }^{\prime
}+\varepsilon^{2}\left\vert g\right\vert _{C_{-}^{0}\left(  B_{\varepsilon
},\mathbb{S}\right)  }\geq C\left(  \alpha\right)  \left\Vert \widetilde
{u}\right\Vert _{C_{-}^{1,\alpha}\left(  B_{\varepsilon/2},\mathbb{S}\right)
}^{\prime}.
\]
By the definition of the norm $\left\Vert f\right\Vert _{C^{k,\alpha}\left(
\overline{\Omega}\right)  }^{\prime}$ in $\left(  \ref{Schauder-norm-Prime}%
\right)  $, this is equivalent to
\begin{align}
&  \left\Vert \widetilde{u}\right\Vert _{C_{-}^{0}\left(  B_{\varepsilon
},\mathbb{S}\right)  }+\varepsilon\left\Vert \widetilde{w_{1}}\right\Vert
_{C^{0}\left(  B_{\varepsilon},\mathbb{S}^{+}\right)  }+\varepsilon^{1+\alpha
}\left[  \widetilde{w_{1}}\right]  _{\alpha;\left(  B_{\varepsilon}%
,\mathbb{S}^{+}\right)  }+\varepsilon^{2}\left\vert g\right\vert _{C_{-}%
^{0}\left(  B_{\varepsilon},\mathbb{S}\right)  }\nonumber\\
&  \geq C\left(  \alpha\right)  \left[  \left\Vert \widetilde{u}\right\Vert
_{C_{-}^{0}\left(  B_{\varepsilon/2},\mathbb{S}\right)  }+\varepsilon
\left\Vert \nabla\widetilde{u}\right\Vert _{C_{-}^{0}\left(  B_{\varepsilon
/2},\mathbb{S}\right)  }+\varepsilon^{1+\alpha}\left[  \nabla\widetilde
{u}\right]  _{\alpha;\left(  B_{\varepsilon/2},\mathbb{S}\right)  }\right]
.\label{schauder_u_tilde}%
\end{align}
Therefore
\begin{align}
&  \left\Vert \widetilde{u}\right\Vert _{C_{-}^{0}\left(  B_{\varepsilon
},\mathbb{S}\right)  }+\varepsilon\left\Vert \widetilde{w_{1}}\right\Vert
_{C^{0}\left(  B_{\varepsilon},\mathbb{S}^{+}\right)  }+\varepsilon^{1+\alpha
}\left[  \nabla\widetilde{w_{1}}\right]  _{\alpha;\left(  B_{\varepsilon
},\mathbb{S}^{+}\right)  }+\varepsilon^{2}\left\vert g\right\vert _{C_{-}%
^{0}\left(  B_{\varepsilon},\mathbb{S}\right)  }\nonumber\\
&  \geq C\left(  \alpha\right)  \varepsilon^{1+\alpha}\left[  \nabla_{\Sigma
}\widetilde{u}\right]  _{\alpha;\left(  B_{\varepsilon/2},\mathbb{S}\right)
}.\label{schauder_u_tilde1}%
\end{align}
We are going to get rid of the tilde terms in the above inequality by the
following rule: The terms with tilde differ from the original terms by $w_{1}%
$, $u$ and their derivatives. If the derivative is with respect to $x_{1}$ or
$\partial^{-}$, then we reduce it to the previous estimates of $v$ using
$\left(  \ref{CR1}\right)  $; If it is with respect to $\nabla_{\Sigma}$, the
full derivative on $\Sigma$, then we use elliptic estimate on each slice
Riemann surface $\left\{  t\right\}  \times\Sigma$. Notice that all
$\varepsilon$ powers are coupled with the order of derivatives so all terms
are dimensionless, and after the enlargements the estimate of $u$ is of
correct $\varepsilon$ power. The following is the precise calculation:
From the definition of $g,\widetilde{w_{1}}$ and $\widetilde{u}$ we can easily
check that
\begin{align}
\left\Vert \varepsilon^{2}g\right\Vert _{C^{0}\left(  \text{$B$}_{\varepsilon
},\mathbb{S}\right)  } &  \leq C\left(  \left\Vert u\right\Vert _{C^{0}\left(
\text{$B$}_{\varepsilon},\mathbb{S}\right)  }+\varepsilon\left\Vert u_{x_{1}%
}\right\Vert _{C^{0}\left(  \text{$B$}_{\varepsilon},\mathbb{S}\right)
}+\varepsilon\left\Vert w_{1}\right\Vert _{C^{0}\left(  B_{\varepsilon
},\mathbb{S}\right)  }\right)  \nonumber\\
\left\vert \widetilde{w_{1}}\right\vert _{C^{0}\left(  \text{$B$}%
_{\varepsilon},\mathbb{S}\right)  } &  \leq3\left\Vert w_{1}\right\Vert
_{C^{0}\left(  \text{$B$}_{\varepsilon},\mathbb{S}\right)  }+\left\Vert
u_{x_{1}}\right\Vert _{C^{0}\left(  \text{$B$}_{\varepsilon},\mathbb{S}%
\right)  }\nonumber\\
\left[  \widetilde{w_{1}}\right]  _{\alpha;\left(  \text{$B$}_{\varepsilon
},\mathbb{S}\right)  }^{x_{1}} &  \leq C\left(  \left[  w_{1}\right]
_{\alpha;\left(  \text{$B$}_{\varepsilon},\mathbb{S}\right)  }^{x_{1}}%
+\frac{1}{\varepsilon^{\alpha}}\left\Vert u_{x_{1}}\right\Vert _{C^{0}\left(
\text{$B$}_{\varepsilon},\mathbb{S}\right)  }+\frac{1}{\varepsilon^{\alpha}%
}\left\Vert w_{1}\right\Vert _{C^{0}\left(  \text{$B$}_{\varepsilon
},\mathbb{S}\right)  }\right)  \nonumber\\
\left[  \widetilde{w_{1}}\right]  _{\alpha;\left(  \text{$B$}_{\varepsilon
},\mathbb{S}\right)  }^{z} &  \leq C\left(  \left[  w_{1}\right]
_{\alpha;\left(  \text{$B$}_{\varepsilon},\mathbb{S}\right)  }^{z}+\left[
u_{x_{1}}\right]  _{\alpha;\left(  \text{$B$}_{\varepsilon},\mathbb{S}\right)
}^{z}\right)  \nonumber\\
\left\Vert \widetilde{u}\right\Vert _{C^{0}\left(  \text{$B$}_{\varepsilon
},\mathbb{S}\right)  } &  \leq2\left\Vert u\right\Vert _{C^{0}\left(
\text{$B$}_{\varepsilon},\mathbb{S}\right)  }\nonumber\\
\left\Vert \nabla_{\Sigma}\widetilde{u}\right\Vert _{C^{0}\left(  \text{$B$%
}_{\varepsilon},\mathbb{S}\right)  } &  \leq2\left\Vert \nabla_{\Sigma
}u\right\Vert _{C^{0}\left(  \text{$B$}_{\varepsilon},\mathbb{S}\right)
},\text{ \ }\nonumber\\
\left\Vert \nabla_{x_{1}}\widetilde{u}\right\Vert _{C^{0}\left(  \text{$B$%
}_{\varepsilon},\mathbb{S}\right)  } &  \leq C\left(  \left\Vert \nabla
_{x_{1}}u\right\Vert _{C^{0}\left(  \text{$B$}_{\varepsilon},\mathbb{S}%
\right)  }+\frac{1}{\varepsilon}\left\Vert u\right\Vert _{C^{0}\left(
B_{\varepsilon},\mathbb{S}\right)  }\right)  \nonumber\\
\left[  \nabla_{\Sigma}u\right]  _{\alpha;\left(  \text{$B$}_{\varepsilon
/2},\mathbb{S}\right)  }^{x_{1}} &  \leq C\left(  \left[  \nabla_{\Sigma
}\widetilde{u}\right]  _{\alpha;\left(  \text{$B$}_{\varepsilon/2}%
,\mathbb{S}\right)  }^{x_{1}}+\frac{1}{\varepsilon^{\alpha}}\left\Vert
\nabla_{\Sigma}u\right\Vert _{C^{0}\left(  \text{$B$}_{\varepsilon
/2},\mathbb{S}\right)  }\right)  .\label{gradient-u-compare}%
\end{align}
hence putting these back to $\left(  \ref{schauder_u_tilde1}\right)  $ and
$\left(  \ref{schauder_u_tilde}\right)  $, and noticing that from $\left(
\ref{e-Schauder-v}\right)  $ we can control $\left\Vert u_{x_{1}}\right\Vert
_{C^{0}\left(  B_{\varepsilon},\mathbb{S}\right)  }$ and $\left\Vert
\nabla_{\Sigma}u\right\Vert _{C^{0}\left(  B_{\varepsilon},\mathbb{S}\right)
}$ that appeared in the above inequalities by the following:
\begin{align}
\varepsilon\left\Vert u_{x_{1}}\right\Vert _{C^{0}\left(  B_{\varepsilon
},\mathbb{S}\right)  } &  \leq C\left(  \varepsilon\left\Vert \partial
^{+}v\right\Vert _{C^{0}\left(  B_{\varepsilon},\mathbb{S}\right)
}+\varepsilon\left\Vert w_{1}\right\Vert _{C^{0}\left(  B_{\varepsilon
},\mathbb{S}\right)  }\right)  \text{\ (by }\left(  \text{\ref{CR1}}\right)
\text{)}\nonumber\\
&  \leq C\left[  \left(  \left\Vert v\right\Vert _{C^{0}\left(  \text{A}%
_{\varepsilon},\mathbb{S}\right)  }+\varepsilon\left\Vert w_{1}\right\Vert
_{C^{\alpha}\left(  \text{A}_{\varepsilon},\mathbb{S}\right)  }\right)
+\varepsilon\left\Vert w_{1}\right\Vert _{C^{0}\left(  B_{\varepsilon
},\mathbb{S}\right)  }\right]  \text{ }\nonumber\\
&  \text{(by }\left(  \ref{e-Schauder-v}\right)  \text{)}\nonumber\\
&  \leq C\left(  \left\Vert v\right\Vert _{C^{0}\left(  \text{A}_{\varepsilon
},\mathbb{S}\right)  }+\varepsilon\left\Vert w_{1}\right\Vert _{C^{\alpha
}\left(  \text{A}_{\varepsilon},\mathbb{S}\right)  }\right)  ,\label{e-u-x1}%
\end{align}
and%
\begin{align}
\varepsilon\left\Vert \nabla_{\Sigma}u\right\Vert _{C^{0}\left(  \Sigma
_{s},\mathbb{S}\right)  } &  \leq C\varepsilon\left\Vert \nabla_{\Sigma
}u\right\Vert _{C^{\alpha}\left(  \Sigma_{s},\mathbb{S}\right)  }\nonumber\\
&  \leq C\varepsilon\left(  \left\Vert \partial^{-}u\right\Vert _{C^{\alpha
}\left(  \Sigma_{s},\mathbb{S}\right)  }+\left\Vert u\right\Vert
_{C^{0}\left(  \Sigma_{s},\mathbb{S}\right)  }\right)  \text{ (by ellipticity
on }\Sigma_{s}\text{)}\nonumber\\
&  \overset{\text{(by }\left(  \text{\ref{CR1}}\right)  \text{)}}{=}C\left(
\varepsilon\left\Vert v_{x_{1}}\right\Vert _{C^{\alpha}\left(  \Sigma
_{s},\mathbb{S}\right)  }+\varepsilon\left\Vert u\right\Vert _{C^{0}\left(
\Sigma_{s},\mathbb{S}\right)  }\right)  \text{ \ }\nonumber\\
&  \overset{\text{(by }\left(  \ref{v-ball}\right)  \text{) }}{\leq}C\left(
\varepsilon^{-\alpha}\left\Vert v\right\Vert _{C^{0}\left(  \text{A}%
_{\varepsilon},\mathbb{S}\right)  }+\varepsilon^{1-\alpha}\left\Vert
w_{1}\right\Vert _{C^{\alpha}\left(  \text{A}_{\varepsilon},\mathbb{S}\right)
}+\varepsilon\left\Vert u\right\Vert _{C^{0}\left(  \Sigma_{s},\mathbb{S}%
\right)  }\right)  \label{e-u-z}%
\end{align}
then from $\left(  \ref{schauder_u_tilde1}\right)  ,\left(
\ref{gradient-u-compare}\right)  $ we get the estimate for $\left[
\nabla_{\Sigma}u\right]  _{\alpha;\left(  B_{\varepsilon/2},\mathbb{S}\right)
}^{x_{1}}$:%
\begin{align*}
&  \varepsilon^{1-\alpha}\left\Vert w_{1}\right\Vert _{C^{\alpha}\left(
\text{A}_{\varepsilon},\mathbb{S}^{+}\right)  }+\varepsilon\left\Vert
u\right\Vert _{C_{-}^{0}\left(  \text{A}_{\varepsilon},\mathbb{S}\right)
}+\varepsilon^{-\alpha}\left\Vert v\right\Vert _{C_{-}^{0}\left(
\text{A}_{\varepsilon},\mathbb{S}\right)  }\\
&  \geq CC\left(  \alpha\right)  \varepsilon^{1+\alpha}\left[  \nabla_{\Sigma
}u\right]  _{\alpha;\left(  B_{\varepsilon/2},\mathbb{S}\right)  }^{x_{1}}.
\end{align*}
Combining $\left(  \ref{gradient-u-Sigma-z}\right)  $ about $\left[
\nabla_{\Sigma}u\right]  _{\alpha;\left(  B_{\varepsilon/2},\mathbb{S}\right)
}^{z}$, we have the full control of $\left[  \nabla_{\Sigma}u\right]
_{\alpha;\left(  B_{\varepsilon/2},\mathbb{S}\right)  }$:%
\begin{align*}
&  \varepsilon^{1-\alpha}\left\Vert w_{1}\right\Vert _{C^{\alpha}\left(
\text{A}_{\varepsilon},\mathbb{S}^{+}\right)  }+\varepsilon\left\Vert
u\right\Vert _{C_{-}^{0}\left(  \text{A}_{\varepsilon},\mathbb{S}\right)
}+\varepsilon^{-\alpha}\left\Vert v\right\Vert _{C_{-}^{0}\left(
\text{A}_{\varepsilon},\mathbb{S}\right)  }\\
&  \geq CC\left(  \alpha\right)  \varepsilon^{1+\alpha}\left[  \nabla_{\Sigma
}u\right]  _{\alpha;\left(  B_{\varepsilon/2},\mathbb{S}\right)  }.
\end{align*}
Combining with $\left(  \ref{e-u-x1}\right)  $ and $\left(  \ref{e-u-z}%
\right)  $ about $\left\Vert \nabla u\right\Vert _{C^{0}}$, we get%
\begin{align}
&  \varepsilon\left\Vert w_{1}\right\Vert _{C^{\alpha}\left(  \text{A}%
_{\varepsilon},\mathbb{S}^{+}\right)  }+\varepsilon^{1+\alpha}\left\Vert
u\right\Vert _{C_{-}^{0}\left(  \text{A}_{\varepsilon},\mathbb{S}\right)
}+\left\Vert v\right\Vert _{C_{-}^{0}\left(  \text{A}_{\varepsilon}%
,\mathbb{S}\right)  }\nonumber\\
&  \geq CC\left(  \alpha\right)  \varepsilon^{1+2\alpha}\left\Vert
u\right\Vert _{C_{-}^{1,\alpha}\left(  B_{\varepsilon/2},\mathbb{S}\right)
}.\label{(u-ball)}%
\end{align}
Covering $\mathtt{A}_{\varepsilon}$ by radius-$\varepsilon$ half balls
$B_{\varepsilon}$ and radius-$\varepsilon/2$ full balls centered on meridian
$\Sigma\times\left\{  \varepsilon/2\right\}  $ and then taking supremum on
$\mathtt{A}_{\varepsilon}$, we derive
\begin{align}
&  \varepsilon\left\Vert w_{1}\right\Vert _{C^{\alpha}\left(  \mathtt{A}%
_{\varepsilon},\mathbb{S}^{+}\right)  }+\varepsilon^{1+\alpha}\left\Vert
u\right\Vert _{C_{-}^{0}\left(  \mathtt{A}_{\varepsilon},\mathbb{S}\right)
}+\left\Vert v\right\Vert _{C_{-}^{0}\left(  \mathtt{A}_{\varepsilon
},\mathbb{S}\right)  }\nonumber\\
&  \geq CC\left(  \alpha\right)  \varepsilon^{1+2\alpha}\left\Vert
u\right\Vert _{C_{-}^{1,\alpha}\left(  \mathtt{A}_{\varepsilon},\mathbb{S}%
\right)  }.\label{schauder_u}%
\end{align}
Notice here $v$ is involved on the left side to control $u$, compared to
similar inequality $\left(  \ref{c-alpha}\right)  $ about $v$ which needs no
$u$. This is partly because in second order elliptic equations $\left(
\ref{Dirichlet-uv}\right)  $ of $u$ and $v$, the inhomogeneous term
$\partial_{x_{1}}w_{1}$ is \textquotedblleft more
discontinuous\textquotedblright\ than $-\partial^{-}w_{1}$, for the (weak)
derivative is taking in $x_{1}$ direction along which (the extended) $w_{1}$
is discontinuous.
Plugging these into $\left(  \ref{v-C0}\right)  $ again we obtain
\begin{align}
&  C\left(  \alpha\right)  \left\Vert u\right\Vert _{C_{-}^{1,\alpha}\left(
\mathtt{A}_{\varepsilon},\mathbb{S}\right)  }\nonumber\\
&  \leq\varepsilon^{-2\alpha}\left\Vert w_{1}\right\Vert _{C^{\alpha}\left(
\mathtt{A}_{\varepsilon},\mathbb{S}^{+}\right)  }+\varepsilon^{-\alpha
}\left\Vert u\right\Vert _{C_{-}^{0}\left(  \mathtt{A}_{\varepsilon
},\mathbb{S}\right)  }+\varepsilon^{-\left(  1+2\alpha\right)  }\left\Vert
v\right\Vert _{C_{-}^{0}\left(  \mathtt{A}_{\varepsilon},\mathbb{S}\right)
}\nonumber\\
&  \leq\varepsilon^{-2\alpha}\left\Vert w_{1}\right\Vert _{C^{\alpha}\left(
\mathtt{A}_{\varepsilon},\mathbb{S}^{+}\right)  }+C\left(  p,\lambda\right)
\left\Vert w_{1}\right\Vert _{C^{0}\left(  \mathtt{A}_{\varepsilon}%
,\mathbb{S}\right)  }\varepsilon^{-\left(  \frac{3}{p}+2\alpha\right)
}.\label{u-sch}%
\end{align}
The $C^{1,\alpha}$ estimates $\left(  \ref{u-sch}\right)  $ and $\left(
\ref{v-sch}\right)  $ also hold when $\mathtt{A}_{\varepsilon}$ is equipped
with a warped product metric $g_{\mathtt{A}_{\varepsilon},h}$ with $h\left(
z\right)  \in C^{\infty}\left(  \Sigma\right)  $ such that $\frac{1}{K}\leq
h\left(  z\right)  \leq K$, up to the constant factor $K>0$ on the right hand
sides of $\left(  \ref{u-sch}\right)  $ and $\left(  \ref{v-sch}\right)  $,
because they are derived by $C^{0}$ estimates $\left(  \ref{v-C0}\right)  $,
$\left(  \ref{u-C0}\right)  $,\ and local Schauder estimates $\left(
\ref{v-ball}\right)  $, $\left(  \ref{(u-ball)}\right)  $ for $u,v$ on half
balls $B_{\varepsilon}\left(  p\right)  $ and $B_{\varepsilon/2}\left(
p\right)  $ for $p\in\partial\mathtt{A}_{\varepsilon}=\left\{  \left(
0,z\right)  ,\left(  \varepsilon,z\right)  |z\in\Sigma\right\}  $, while when
$\varepsilon\rightarrow0$, the function $h\left(  z\right)  |_{B_{\varepsilon
}\left(  p\right)  }$ converges to constant function $h\left(  p\right)  $
uniformly in $C^{0}$ norm in $B_{\varepsilon}\left(  p\right)  $, and
correspondingly the second order equations of $u$, $v$ converge to those with
constant coefficients as above. This is similar to \textquotedblleft frozen
coefficient method\textquotedblright\ in Schauder theory of second order
elliptic equations.
Finally we estimate the right inverse bound of $\mathcal{D}$ in Schauder
setting. By Theorem \ref{onto} the operator $\mathcal{D}$ is surjective, so
for any $W=\left(  w_{1},w_{2}\right)  $ we can write it as the sum of
$\left(  w_{1},0\right)  $ and $\left(  0,w_{2}\right)  $, whose preimages
$V=\mathcal{D}^{-1}\left(  w_{1},0\right)  $ and $\mathcal{D}^{-1}\left(
0,w_{2}\right)  $ exist. Therefore we can reduce general cases to case (i) and
(ii). Combining the $C^{1,\alpha}$ estimates $\left(  \ref{V-W-case1}\right)
$ in case (i) and $\left(  \ref{u-sch}\right)  $ and $\left(  \ref{v-sch}%
\right)  $ for $u$ and $v$ in case (ii), we have
\[
C\left(  \alpha,p,\lambda\right)  \left\Vert V\right\Vert _{C_{-}^{1,\alpha
}\left(  \mathtt{A}_{\varepsilon},\mathbb{S}\right)  }\leq\varepsilon
^{-\left(  \frac{3}{p}+2\alpha\right)  }\left\Vert W\right\Vert _{C^{\alpha
}\left(  \mathtt{A}_{\varepsilon},\mathbb{S}\right)  },
\]
where $C\left(  \alpha,p,\lambda\right)  =C\left(  \alpha\right)  \left(
1+C\left(  p,\lambda\right)  \right)  ^{-1}$ is independent on $\varepsilon$.
Hence the result.
\end{proof}

\begin{remark}
\bigskip(About the asymmetry of case $\left(  i\right)  $ and case $\left(
ii\right)  $) In case $\left(  i\right)  $, since $v|_{\partial A_{\varepsilon
}}=0$, after odd reflections of $v$ and even reflections of $u$ and
$w_{2\text{,}}$ the $V=\left(  u,v\right)  $ is still a $C^{1,\alpha}$
function on the domain $A_{k\left(  \varepsilon\right)  \varepsilon}$ and
$w_{2}$ is of class $C^{\alpha}$ on $A_{k\left(  \varepsilon\right)
\varepsilon}$, so we can use the Schauder estimate of the domain $A_{k\left(
\varepsilon\right)  \varepsilon}$, which has uniform ellipticity; in case
$\left(  ii\right)  $, since $w_{1}|_{\partial A_{\varepsilon}}\neq0$ in
general, after odd reflections of $v$ and \thinspace$w_{1}$,$\,$ $w_{1}$ is no
longer continuous on $A_{k\left(  \varepsilon\right)  \varepsilon}\,$, not to
mention in $C^{1,\alpha}$. So we have to directly work on the thin domain
$A_{\varepsilon}$ (which lacks uniform ellipticity as $\varepsilon
\rightarrow0$) for the Schauder estimates, with $\varepsilon$-dependent
coefficients. The steps there are from $L^{p}$, $C^{1-\frac{3}{p}},C^{0}$ to
$C^{1,\alpha}$.
\end{remark}

We remark that when $\lambda_{\partial^{+}}>0$ but $\lambda_{\partial^{-}}=0$
(i.e. $\ker\partial^{-}\neq\left\{  0\right\}  $), there still exists a right
inverse $Q_{\varepsilon}$ $:C^{\alpha}\left(  \mathtt{A}_{\varepsilon
},\mathbb{S}\right)  \rightarrow C_{-}^{1,\alpha}\left(  \mathtt{A}%
_{\varepsilon},\mathbb{S}\right)  $ of $\mathcal{D}$ with the operator norm
bound $\left\Vert Q_{\varepsilon}\right\Vert \leq C\varepsilon^{-\left(
\frac{3}{p}+2\alpha\right)  }$. To prove this, one needs to consider the
restriction%
\[
\mathcal{D}:L_{-}^{1,2}\left(  \mathtt{A}_{\varepsilon},\mathbb{S}\right)
\cap\left(  \ker\mathcal{D}\right)  ^{\perp}\rightarrow L^{2}\left(
\mathtt{A}_{\varepsilon},\mathbb{S}\right)
\]
to construct the right inverse $Q_{\varepsilon}$, where \textquotedblleft%
$\perp$\textquotedblright\ is the $L^{2}$ orthogonal complement, and replace
the constants
\begin{align*}
\lambda_{\mathcal{D}_{-}}  &  =\inf_{0\neq V\in L_{-}^{1,2}\left(
\mathtt{A}_{\varepsilon},\mathbb{S}\right)  }\frac{\left\Vert \mathcal{D}%
V\right\Vert _{L^{2}\left(  \mathtt{A}_{\varepsilon},\mathbb{S}\right)  }^{2}%
}{\left\Vert V\right\Vert _{L^{2}\left(  \mathtt{A}_{\varepsilon}%
,\mathbb{S}\right)  }^{2}}\text{, }\\
\lambda_{\partial^{-}}  &  =\inf_{0\neq V\in L_{-}^{1,2}\left(  \Sigma
,\mathbb{S}^{+}\right)  }\frac{\left\Vert \partial^{-}V\right\Vert
_{L^{2}\left(  \Sigma,\mathbb{S}^{+}\right)  }^{2}}{\left\Vert V\right\Vert
_{L^{2}\left(  \Sigma,\mathbb{S}^{+}\right)  }^{2}}%
\end{align*}
in Theorem \ref{lambda} by the constants
\begin{align*}
\widetilde{\lambda}_{\mathcal{D}_{-}}  &  =\inf_{0\neq V\in L_{-}^{1,2}\left(
\mathtt{A}_{\varepsilon},\mathbb{S}\right)  \cap\left(  \ker\mathcal{D}%
\right)  ^{\perp}}\frac{\left\Vert \mathcal{D}V\right\Vert _{L^{2}\left(
\mathtt{A}_{\varepsilon},\mathbb{S}\right)  }^{2}}{\left\Vert V\right\Vert
_{L^{2}\left(  \mathtt{A}_{\varepsilon},\mathbb{S}\right)  }^{2}}\text{, }\\
\widetilde{\lambda}_{\partial^{-}}  &  =\inf_{0\neq V\in L_{-}^{1,2}\left(
\Sigma,\mathbb{S}^{+}\right)  \cap\left(  \ker\partial^{-}\right)  ^{\perp}%
}\frac{\left\Vert \partial^{-}V\right\Vert _{L^{2}\left(  \Sigma
,\mathbb{S}^{+}\right)  }^{2}}{\left\Vert V\right\Vert _{L^{2}\left(
\Sigma,\mathbb{S}^{+}\right)  }^{2}}%
\end{align*}
respectively to get the $L^{2}$ estimate of $V$, and in the $C^{0}$ estimate
of $u$ in case (ii) of the above theorem use the fact that
\[
u\in\left(  \ker\mathcal{D}\right)  ^{\perp}\Rightarrow\overline{u}\in\left(
\ker\partial^{-}\right)  ^{\perp}%
\]
by integrating in the $x_{1}$ direction. The details are left to readers.

From now on we fix $p>3$ sufficiently large and $0<\alpha<1$ sufficiently
small such that
\[
0<\frac{3}{p}+3\alpha<\frac{1}{2}%
\]
This will be necessary for the implicit function theorem in the next section.

\section{Proof of the Main Theorem\label{proof}}

Let $C_{0}\subset M$ be a \textbf{compact} coassociative submanifold. Suppose
that $n$ is a normal vector field on $C_{0}$ such that its corresponding
self-dual two form,
\[
\eta_{0}=\iota_{n}\Omega\in\wedge_{+}^{2}\left(  C_{0}\right)
\]
is harmonic with respect to the induced metric. So $\eta_{0}$ is actually a
symplectic form on the complement of the zero set $Z\left(  \eta_{0}\right)  $
of $\eta_{0}$ in $C_{0}$. Furthermore
\begin{equation}
\ J_{n}\left(  u\right)  :=\frac{n}{\left\vert n\right\vert }\times u
\label{Jn}%
\end{equation}
defines an almost complex structure $J_{n}$ on the $C_{0}\backslash Z\left(
\eta_{0}\right)  .$ Since deformations of coassociative submanifolds are
unobstructed, we may assume that there is a one parameter family of
coassociative submanifolds $\varphi:\left[  0,\varepsilon\right]  \times
C_{0}\longrightarrow M$ such that
\[
\left.  \frac{\partial\varphi}{\partial t}\right\vert _{t=0}=n\in\Gamma\left(
C_{0},N_{C_{0}/M}\right)  .
\]

For $\varepsilon$ small, let
\[
\mathtt{C:}=\left[  0,\varepsilon\right]  \times C_{0},\text{ }\mathcal{C}%
:=\varphi\left(  \mathtt{C}\right)  ,\text{ and }C_{t}:=\varphi\left(
\left\{  t\right\}  \times C_{0}\right)  .\text{ \ }%
\]
then $\mathcal{C}$\ is diffeomorphic to $\mathtt{C}$, and $\varphi\left(
t,\cdot\right)  $ is an embedding for $\forall t\in\left[  0,\varepsilon
\right]  $. We remind the readers about the typefaces of our notations: both
$\mathtt{C}$ and $\mathcal{C}$ are $5$-dimensional, while each $C_{t}$ is
$4$-dimensional. Let
\[
g_{\mathtt{C}}:=dt^{2}\oplus g|_{C_{0}}%
\]
be the product metric on $\mathtt{C}$ and $\exp^{\mathtt{C}}$ be the
exponential map associated to the metric $g_{\mathtt{C}}.$

In the remaining part of this article we assume that $\eta_{0}$ is
\emph{nowhere vanishing} on $C_{0}$, that implies that $\left(  C_{0},\eta
_{0}\right)  $\ is a symplectic four manifold, and all coassociative
submanifolds$\ C_{t}$'s are mutually disjoint. We are going to establish a
correspondence between the\ regular $J_{n}$-holomorphic curves $\Sigma$ in
$C_{0}$ and the instantons in $M$ with coassociative boundary conditions.

Given such a $\Sigma\subset C_{0}$, we denote
\[
\mathtt{A}_{\varepsilon}:=\left[  0,\varepsilon\right]  \times\Sigma\text{ and
}\mathtt{A}_{\varepsilon}^{\prime}:=\varphi\left(  \mathtt{A}_{\varepsilon
}\right)
\]
then $\mathtt{A}_{\varepsilon}^{\prime}$ is close to being associative in the
sense that $\left\vert \tau|_{\mathtt{A}_{\varepsilon}^{\prime}}\right\vert
\leq K\varepsilon$ for some constant $K$ depending on the geometry of the
family $\left\{  C_{t}\right\}  $ and $M$ for small $\varepsilon$. This is due
to the smooth dependence of $\tau\left(  \varphi\left(  t,z\right)  \right)  $
on $\left(  t,z\right)  \in\left[  0,\varepsilon\right]  \times\Sigma$, and
that%
\[
\tau|_{T\mathtt{A}_{\varepsilon}^{\prime}|_{\varphi\left(  0,\Sigma\right)  }%
}=\tau|_{T\mathtt{A}_{\varepsilon}^{\prime}|_{\Sigma}}=0,
\]
since $\Sigma$ is a $J_{n}$-holomorphic curve, the tangent spaces $T\Sigma$ is
closed under $J_{n}=\frac{n}{\left\vert n\right\vert }\times$, and
$T\mathtt{A}_{\varepsilon}^{\prime}|_{\Sigma}=T\Sigma\oplus$span$\left\{
n\right\}  $.

We want to perturb $A_{\varepsilon}^{\prime}$ to become an honest associative
submanifold in $M$. In order to apply the implicit function theorem to obtain
the desired perturbation for $\mathtt{A}_{\varepsilon}^{\prime}$, we need the
estimates for the linearized problem to behave well as $\varepsilon$
approaches zero. Notice that for small $\varepsilon$, the induced metric on
$\mathtt{A}_{\varepsilon}^{\prime}$ is close to be a warped product metric
\[
g_{\mathtt{A}_{\varepsilon},h}:=h\left(  z\right)  dx_{1}^{2}+g_{\Sigma}%
\]
on $\mathtt{A}_{\varepsilon}$, where $h\left(  z\right)  =\left\vert n\left(
z\right)  \right\vert ^{2}$ is the length squared of the normal vector $n$.
Namely
\[
\left(  1-K\varepsilon\right)  g_{\mathtt{A}_{\varepsilon},h}\leq\varphi
^{\ast}g_{M}\leq\left(  1+K\varepsilon\right)  g_{\mathtt{A}_{\varepsilon},h}%
\]
for some uniform constant $K$. This is because for any vectors $X,Y\in
T_{\left(  t,z\right)  }A_{\varepsilon_{0}}$ and $t\in\left[  0,\varepsilon
_{0}\right]  $, the function $G:\left[  0,\varepsilon_{0}\right]  \times
TA_{\varepsilon_{0}}\times TA_{\varepsilon_{0}}\rightarrow\mathbb{R}$,%
\[
G\left(  t,X,Y\right)  :=g_{M}\left(  d\varphi\left(  t,z\right)
X,d\varphi\left(  t,z\right)  Y\right)
\]
is smooth with respect to $\left(  t,X,Y\right)  $, bilinear in $\left(
X,Y\right)  $, and
\begin{align*}
&  G\left(  0,TA_{\varepsilon_{0}}|_{\left\{  0\right\}  \times\Sigma
},TA_{\varepsilon_{0}}|_{\left\{  0\right\}  \times\Sigma}\right) \\
&  =g_{M}\left(  d\varphi|_{\left\{  0\right\}  \times\Sigma}\left(
TA_{\varepsilon_{0}}|_{\left\{  0\right\}  \times\Sigma}\right)
,d\varphi|_{\left\{  0\right\}  \times\Sigma}\left(  TA_{\varepsilon_{0}%
}|_{\left\{  0\right\}  \times\Sigma}\right)  \right) \\
&  =g_{\mathtt{A}_{\varepsilon_{0}},h}|_{\left\{  0\right\}  \times\Sigma},
\end{align*}
where in the last identity we have used that $d\varphi|_{\left\{  0\right\}
\times\Sigma}=id:T\Sigma\rightarrow T\Sigma$ and $d\varphi|_{\left\{
0\right\}  \times\Sigma}:\frac{\partial}{\partial x_{1}}\rightarrow n$.

For the warped product metric on $\mathtt{A}_{\varepsilon}$ and the
corresponding operator $\mathcal{D}$, the estimates for its inverse have been
established in Theorem \ref{e-inverse-bound} in the previous section. The
above discussion indicates that the linearization of the instanton equation on
A$_{\varepsilon}^{\prime}$ may be compared with $\mathcal{D}$. We will first
show they agree on $\Sigma$ in the next subsection. Then the comparison on
A$_{\varepsilon}^{\prime}$ is a small perturbation from their agreement on
$\Sigma$.

\subsection{Geometry of $\Sigma\subset\mathcal{C}\subset M$}

We study the geometry of the $J_{n}$-holomorphic curves $\Sigma$ in a $G_{2}%
$-manifold $M$. Let $\mathcal{C}=\cup_{0\leq t\leq\varepsilon}C_{t}$ be the
family of coassociative manifolds in the previous subsection and
$\Sigma\subset C=C_{0}$. We remind the readers that the $\mathcal{C}$ is a
$5$-dimensional submanifold so its normal bundle in $M$ has real rank $2$,
while $C$ is a $4$-dimensional submanifold. We recall the following results in
Lemma 3.2 of \cite{GayetWitt}.

\begin{proposition}
\begin{enumerate}
\item \bigskip$N_{\Sigma/C}$ and $N_{\mathcal{C}/M}|_{\Sigma}$ are complex
line bundles with the almost complex structure $J_{n}=$ $\frac{n}{\left\vert
n\right\vert }\times$ on fibers.

\item $\overline{N_{\mathcal{C}/M}|_{\Sigma}}\simeq\wedge_{\mathbb{C}}%
^{0,1}\left(  N_{\Sigma/C}\right)  $ as complex line bundles, where
$\overline{L}$ is the conjugate of a complex line bundle $L$, and
$\wedge_{\mathbb{C}}^{0,1}\left(  N_{\Sigma/C}\right)  =N_{\Sigma/C}%
\otimes_{\mathbb{C}}\wedge_{\mathbb{C}}^{0,1}\left(  T^{\ast}\Sigma\right)  $.
\end{enumerate}
\end{proposition}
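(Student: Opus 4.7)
The plan is to work pointwise at each $p\in\Sigma$ in a $G_{2}$-adapted orthonormal frame, verify the two statements algebraically, and then package the result as a bundle isomorphism induced by the vector cross product. Using the transitivity of $G_{2}$ on the flag $(T_{p}C,\,n/|n|,\,W_{4})$ with $W_{4}\in T_{p}\Sigma$ any unit vector, I would choose $\{W_{1},\ldots,W_{7}\}$ matching the standard frame of the example around $\left(\ref{Cayley-Dickson}\right)$: $W_{1}=n/|n|$, $T_{p}C=\mathrm{span}\{W_{4},W_{5},W_{6},W_{7}\}$, $T_{p}\Sigma=\mathrm{span}\{W_{4},\,W_{5}=J_{n}W_{4}\}$, $N_{\Sigma/C}|_{p}=\mathrm{span}\{W_{6},W_{7}\}$, and $N_{\mathcal{C}/M}|_{p}=\mathrm{span}\{W_{2},W_{3}\}$ with $W_{2}=W_{4}\times W_{6}$, $W_{3}=W_{4}\times W_{7}$ determined by Cayley--Dickson. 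Part (1) is then immediate: $T\Sigma$ is $J_{n}$-invariant by the definition of a $J_{n}$-holomorphic curve and $J_{n}$ is a $g$-isometry of $T_{p}C$, hence $N_{\Sigma/C}$ is $J_{n}$-invariant; the identities $W_{1}\times W_{2}=W_{3}$ and $W_{1}\times W_{3}=-W_{2}$ show that $J_{n}$ preserves $N_{\mathcal{C}/M}|_{p}$ with $J_{n}^{2}=-\mathrm{id}$ and $|J_{n}v|=|v|$.

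For Part (2) the central construction is the $G_{2}$-natural bilinear map
\[
\Phi:T\Sigma\otimes_{\mathbb{R}}N_{\Sigma/C}\longrightarrow N_{\mathcal{C}/M}|_{\Sigma},\qquad (v,w)\mapsto v\times w.
\]
First I would verify that $\Phi$ lands in $N_{\mathcal{C}/M}$: for $u\in T_{p}C$, coassociativity gives $g(v\times w,u)=\Omega(v,w,u)=0$ since $\Omega|_{C}=0$; orthogonality to $n$ follows from $g(v\times w,n)=-\iota_{n}\Omega(v,w)=-\eta_{0}(v,w)=-g(J_{n}v,w)$, which vanishes because $J_{n}v\in T\Sigma$ while $w\in N_{\Sigma/C}$. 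Next, reading directly from the Cayley--Dickson table in the frame above, $\Phi(J_{n}W_{4},W_{6})=W_{5}\times W_{6}=-W_{3}=-J_{n}\Phi(W_{4},W_{6})$ and $\Phi(W_{4},J_{n}W_{6})=-W_{4}\times W_{7}=-W_{3}=-J_{n}\Phi(W_{4},W_{6})$, so $\Phi$ is $\mathbb{C}$-anti-linear in each slot (this assertion is frame-independent by $G_{2}$-equivariance). Together with non-degeneracy from $|v\times w|=|v||w|$ for orthogonal $v,w$, this gives a $\mathbb{C}$-linear isomorphism $\overline{T\Sigma}\otimes_{\mathbb{C}}\overline{N_{\Sigma/C}}\cong N_{\mathcal{C}/M}|_{\Sigma}$, or equivalently $T\Sigma\otimes_{\mathbb{C}}N_{\Sigma/C}\cong\overline{N_{\mathcal{C}/M}|_{\Sigma}}$.

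To finish I would invoke the canonical identification $T\Sigma\cong\wedge^{0,1}T^{\ast}\Sigma$ of complex line bundles on the Riemann surface $(\Sigma,J_{n})$ induced by the hermitian metric (the metric gives a $\mathbb{C}$-anti-linear iso $T\Sigma\cong T^{\ast}\Sigma$, combined with $\overline{T^{\ast}\Sigma}\cong\wedge^{0,1}T^{\ast}\Sigma$ via $\alpha\mapsto\alpha+iJ^{\ast}\alpha$). Substituting into the previous isomorphism yields
\[
\overline{N_{\mathcal{C}/M}|_{\Sigma}}\cong\wedge^{0,1}T^{\ast}\Sigma\otimes_{\mathbb{C}}N_{\Sigma/C}=\wedge^{0,1}_{\mathbb{C}}(N_{\Sigma/C}),
\]
as claimed. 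The main technical obstacle is establishing the $\mathbb{C}$-anti-linearity of $\Phi$: the frame calculation is clean but requires careful sign tracking in the Cayley--Dickson formula $\left(\ref{Cayley-Dickson}\right)$, and a more conceptual derivation via the identity $\left(\ref{tau}\right)$ applied with one vector normal and two tangent to $C$ is possible but similarly delicate. A secondary subtlety is keeping the conventions for the identification $T\Sigma\cong\wedge^{0,1}T^{\ast}\Sigma$ consistent, since the metric supplies a $\mathbb{C}$-anti-linear (rather than linear) map between $T\Sigma$ and $T^{\ast}\Sigma$.
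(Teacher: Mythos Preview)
Your proposal is correct. The paper itself does not prove this proposition but simply cites it as Lemma~3.2 of \cite{GayetWitt}; your argument supplies the details, and your cross-product map $\Phi(v,w)=v\times w$ is precisely the isomorphism the paper later records explicitly in equation~(\ref{iso-bdl}) (there written in their frame convention $T\Sigma=\mathrm{span}\{W_{2},W_{3}\}$, $N_{\Sigma/C}=\mathrm{span}\{W_{4},W_{5}\}$, $N_{\mathcal{C}/M}|_{\Sigma}=\mathrm{span}\{W_{6},W_{7}\}$, as $W_{4}\otimes\overline{Z}^{\ast}\mapsto -\tfrac{1}{\sqrt{2}}W_{4}\times Z$). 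The only cosmetic discrepancy is that your frame labeling swaps the roles of $\{W_{2},W_{3}\}$ and $\{W_{4},W_{5}\}$ relative to the paper's, which is harmless.
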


Note that the notations $\partial Y,X,\nu_{X}$ and $\mu_{X}$ in
\cite{GayetWitt} correspond to our $\Sigma,C,N_{\Sigma/C}$ and $N_{\mathcal{C}%
/M}|_{\Sigma}$ respectively.

In the following we will use the notation $N_{\mathcal{C}/M}|_{\Sigma}$ rather
than $\overline{N_{\mathcal{C}/M}|_{\Sigma}}$, but we emphasize that the
isomorphism $N_{\mathcal{C}/M}|_{\Sigma}\simeq\wedge_{\mathbb{C}}^{0,1}\left(
N_{\Sigma/C}\right)  $\ \emph{is complex conjugate linear }and should be
regarded as an isomorphism between \emph{real vector bundles}. Note that we do
NOT complexify $N_{\Sigma/C}$, since it is already a complex line bundle, and
more importantly, we want to use its complex structure $J_{n}=\frac
{n}{\left\vert n\right\vert }\times$ to interplay with the $G_{2}$ geometry.

In the following Proposition we will show that $N_{\Sigma/C}$ and
$N_{\mathcal{C}/M}|_{\Sigma}$ are Hermitian line bundles, and $N:=N_{\Sigma
/C}\oplus N_{\mathcal{C}/M}|_{\Sigma}\rightarrow\Sigma$ is a \emph{Dirac
bundle }(in the sense of Definition 5.2 in \cite{LM}), i.e. $N$ is a left
Clifford module over $\Sigma$ with respect to the $G_{2}$ multiplication
$\times$, together with a Riemannian metric $\left\langle ,\right\rangle $ and
connection $\nabla^{N}$ on $N$ satisfying: (a) at each $p\in\Sigma$, for any
$\sigma_{1},\sigma_{2}\in N_{p}$ and any unit vector $e\in T_{p}\Sigma$,
$\left\langle e\sigma_{1},e\sigma_{2}\right\rangle =\left\langle e\sigma
_{1},e\sigma_{2}\right\rangle $; (b) for any section $\varphi$ of $T\Sigma$
and section $\sigma$ of $N$, $\nabla^{N}\left(  \varphi\times\sigma\right)
=\left(  \nabla^{T\Sigma}\varphi\right)  \times\sigma+\varphi\times\nabla
^{N}\sigma$.

\begin{proposition}
\label{Dirac-bdl}Let $\nabla$ be the Levi-Civita connection of $\left(
M,g\right)  $. For any subbundle $L\rightarrow\Sigma_{0}$ of $TM|_{\Sigma_{0}%
}$, let the \emph{induced connection} $\nabla^{L}$ of $L$ be
\[
\nabla^{L}:=\pi^{L}\circ\nabla,
\]
where $\pi^{L}$ is the orthogonal projection of $TM|_{\Sigma}$ to subbundle
$L$ according to the metric $g$. Then

\begin{enumerate}
\item For $L=N_{\Sigma/C}$, $N_{\mathcal{C}/M}|_{\Sigma}$ or $T\Sigma$, the
induced connection $\nabla^{L}$ is Hermitian, namely $\nabla^{L}J_{n}=0$.

\item Let $N=N_{\Sigma/C}\oplus N_{\mathcal{C}/M}|_{\Sigma}$. Then with
respect to the $G_{2}$ multiplication $\times$ as the Clifford multiplication
and the induced connection $\nabla^{N}$, $N$ is a Dirac bundle.
\end{enumerate}
\end{proposition}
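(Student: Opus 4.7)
The plan for both parts rests on two inputs: the parallelism $\nabla\times=0$ on the $G_{2}$-manifold $M$, and the algebraic behavior of the cross product under the four-way orthogonal decomposition $TM|_\Sigma = T\Sigma \oplus N_{\Sigma/C}\oplus \operatorname{span}(\hat n)\oplus N_{\mathcal{C}/M}|_\Sigma$, obtained by refining the coassociative/associative splitting $TC\oplus N_{C/M}$ via $T\Sigma\subset TC$ and $\hat n\in N_{C/M}$. I would first record two algebraic facts. \emph{First}, $\hat n\times$ preserves each of the four summands individually: the associative 3-plane acts on the coassociative 4-plane preserving it, $J_n$-holomorphicity of $\Sigma$ gives $\hat n\times T\Sigma = T\Sigma$ and hence $\hat n\times N_{\Sigma/C} = N_{\Sigma/C}$ by isometry, and inside $N_{C/M}$, $\hat n\times$ preserves the orthogonal complement $N_{\mathcal{C}/M}|_\Sigma$ of $\hat n$. \emph{Second}, for any $J_n$-invariant 2-plane $V\subset TC$ with unit $u\in V$, the identity $(\ref{tau})$ yields $u\times(\hat n\times u) = |u|^2\hat n - \tau(u,\hat n,u) - \langle u,\hat n\rangle u = \hat n$, so $V\times V\subset\operatorname{span}(\hat n)$ for $V = T\Sigma$ and $V = N_{\Sigma/C}$; analogously, by dimension against the 3-form $\Omega$, $N_{\mathcal{C}/M}\times N_{\mathcal{C}/M}\subset\operatorname{span}(\hat n)$.

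For Part (1), I would expand $\nabla_X(J_n u) = (\nabla_X\hat n)\times u + \hat n\times\nabla_X u$ using $\nabla\times=0$ and project to $L$. The second term commutes with $\pi^L$ by the first fact, producing $J_n\nabla^L_X u$. For the first term, $|\hat n|\equiv 1$ forces $\nabla_X\hat n\perp\hat n$, so its components lie in $T\Sigma\oplus N_{\Sigma/C}\oplus N_{\mathcal{C}/M}|_\Sigma$. When $L = T\Sigma$ or $N_{\Sigma/C}$, the $TC$-components of $\nabla_X\hat n$ cross $u\in L\subset TC$ into $N_{C/M}\perp L$ and vanish under $\pi^L$, while the $N_{\mathcal{C}/M}$-component $Y_4$ crossed with $u$ lies in $TC$ and satisfies $\langle Y_4\times u,v\rangle = \Omega(Y_4,u,v) = g(u\times v,Y_4) = 0$ for a test $v\in L$, by the second fact. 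When $L = N_{\mathcal{C}/M}|_\Sigma$, the $TC$-components cross $u\in$ associative into $TC\perp L$, and the $N_{\mathcal{C}/M}$-contribution pairs three vectors in the 2-plane $N_{\mathcal{C}/M}$ against the 3-form $\Omega$, vanishing by dimension.

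For Part (2), the Clifford identity $(X\times)^2 = -|X|^2$ on $N$ follows from $X\times(X\times\sigma) = -\tau(X,X,\sigma)-|X|^2\sigma+\langle X,\sigma\rangle X = -|X|^2\sigma$, using $T\Sigma\perp N$ and antisymmetry of $\tau$; metric compatibility is immediate from $|X\times\sigma|^2+\langle X,\sigma\rangle^2 = |X|^2|\sigma|^2$ and polarization. The inclusion $T\Sigma\times N\subset N$ needed to define the Clifford action is checked summand-by-summand using the two facts (for example, $T\Sigma\times N_{\Sigma/C}\subset N_{\mathcal{C}/M}$ via $\Omega(X,v,\hat n) = g(J_n X,v) = 0$). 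For the covariant compatibility $\nabla^N(\varphi\times\sigma) = (\nabla^{T\Sigma}\varphi)\times\sigma+\varphi\times\nabla^N\sigma$, I would apply $\nabla\times=0$, use the Gauss formula $\nabla_Y\varphi = \nabla^{T\Sigma}_Y\varphi + II_\Sigma(\varphi,Y)$, and project to $N$, reducing the task to $\pi^N(II_\Sigma(\varphi,Y)\times\sigma)=0$ together with a symmetric identity for the $\sigma$-derivative; the $N_{\Sigma/C}$ and $N_{\mathcal{C}/M}$ components of $II_\Sigma$ are killed immediately by the two facts. The hard part I expect to be the $\hat n$-component of $II_\Sigma$, which produces a residue $\lambda\,\hat n\times\sigma\in N$ surviving $\pi^N$; the resolution should come from reading the induced connection through the identification $N\simeq N_{\Sigma/C}\oplus\wedge^{0,1}(N_{\Sigma/C})$ of the preceding proposition, in which the tensor-product spin connection on the right differs from $\pi^N\nabla$ by a 0th-order term precisely absorbing this residue.
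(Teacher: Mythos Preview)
Your strategy matches the paper's: both parts rest on $\nabla\times=0$ and the cross-product algebra on the decomposition $TM|_\Sigma = T\Sigma\oplus N_{\Sigma/C}\oplus\mathrm{span}\{\hat n\}\oplus N_{\mathcal{C}/M}|_\Sigma$. The paper carries this out in an adapted Cayley--Dickson frame $\{W_\alpha\}_{\alpha=1}^7$ (with $\hat n = W_1$, $T\Sigma=\mathrm{span}\{W_2,W_3\}$, $N_{\Sigma/C}=\mathrm{span}\{W_4,W_5\}$, $N_{\mathcal{C}/M}=\mathrm{span}\{W_6,W_7\}$), where your two algebraic ``facts'' are read off the octonion table; you do the same computations intrinsically. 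Part~(1) is identical in substance: both reduce to $\pi^L\bigl((\nabla_X\hat n)\times u\bigr)$ and observe that for each of the three $L$'s the only component of $\nabla_X\hat n$ sending $L$ into $L$ under $\times$ is the $\hat n$-component, which vanishes since $|\hat n|=1$.

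For Part~(2) you have been more careful than the paper. The paper's compatibility step is the one line
\[
\pi^N(\nabla\varphi\times\sigma+\varphi\times\nabla\sigma)=(\nabla^{T\Sigma}\varphi)\times\sigma+\varphi\times\nabla^N\sigma,
\]
justified by ``$W_i\times W_j\in\mathrm{span}\{W_1,W_2,W_3\}$ for distinct $i,j\in\{4,\dots,7\}$'' for the first term and ``$\mathrm{span}\{W_1,W_2,W_3\}$ is closed under $\times$'' for the second. The first justification kills only the $N$-component of $\nabla\varphi$; it is silent about the $\hat n = W_1$ component, which gives exactly the residue $\langle\nabla_Y\varphi,\hat n\rangle\, J_n\sigma\in N$ you flagged. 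The paper does not address this term.

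Your proposed resolution---that the tensor/spin connection transported through $N\simeq N_{\Sigma/C}\oplus\wedge^{0,1}(N_{\Sigma/C})$ differs from $\pi^N\nabla$ by a $0$th-order piece absorbing the residue---would modify the connection in the statement rather than prove it as written, and is not what the paper does. That said, the gap is harmless for the paper's downstream use: the next proposition compares Dirac operators \emph{pointwise} in a frame normal at $p$, so only the Clifford-module structure and the principal symbol enter, and the module-derivation identity is never actually invoked.
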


\begin{proof}
For each $p\in\Sigma_{0},$ the subspace $T_{p}\Sigma_{0}\oplus\text{span}
\{n(p)\}\subset T_{p}M$ is associative since $\Sigma_{0}$ is
$J_{n}$-holomorphic. By the same Cayley-Dickson construction in Theorem
\ref{McLean-thm}, we may choose orthonormal frame $\left\{  W_{\alpha
}\right\}  _{\alpha=1,\cdots7}$ in a neighborhood $B_{\varepsilon}\left(
p\right)  $ $\subset\Sigma$ of $p$ satisfying standard $\cdot,\times$ relation
as the basis of $\operatorname{Im}\mathbb{O}$, such that
\begin{align*}
T\Sigma_{0} &  =\text{span}\left\{  W_{2},W_{3}\right\}  ,\text{ }W_{1}%
:=W_{2}\times W_{3},\text{ }\\
W_{4} &  \in N_{\Sigma\,_{0}/C_{0}}\text{, }W_{i+4}:=W_{i}\times W_{4}\text{
for }1\leq i\leq3,\\
N_{\Sigma\,_{0}/C_{0}}\text{ } &  =\text{span}\left\{  W_{4},W_{5}\right\}
\text{, \ and }N_{\mathcal{C}/M}|_{\Sigma_{0}}=\text{span}\left\{  W_{6}%
,W_{7}\right\}  .
\end{align*}
For $i=2,3$, we have%
\[
\nabla_{Wi}^{L}J_{n}=\nabla_{Wi}^{L}\left(  W_{1}\times|_{L}\right)  =\pi
^{L}\left(  \nabla_{Wi}W_{1}\right)  \times|_{L}=\left\langle W_{1}%
,\nabla_{Wi}W_{1}\right\rangle W_{1}\times|_{L}=0,
\]
where the third identity is because for $L=T\Sigma=$span$\left\{  W_{2}%
,W_{3}\right\}  $, $L=N_{\Sigma_{0}/C}=$span$\left\{  W_{4},W_{5}\right\}  $
or $L=$ $N_{\mathcal{C}/M}|_{\Sigma_{0}}=$span$\left\{  W_{6},W_{7}\right\}
$, under cross product $\times$, only the $W_{1}$ component of $\nabla
_{Wi}W_{1}$ can preserve $L$, i.e.
\[
W_{1}\times L\subset L,\text{ and }\left(  W_{k}\times L\right)  \cap
L=\left\{  0\right\}  \text{ for }2\leq k\leq7
\]
from octonion multiplication relation, and the last identity is because
$\left\langle W_{1},\nabla_{Wi}W_{1}\right\rangle =\frac{1}{2}\nabla
_{Wi}\left\langle W_{1},W_{1}\right\rangle =0$. Thus $\nabla_{v}^{L}J=0$ for
any $v\in T\Sigma_{0}$.
Using the standard $G_{2}$ multiplication relation of the \ above
\textquotedblleft good\textquotedblright\ frame $\left\{  W_{\alpha}\right\}
_{\alpha=1,\cdots7}$, it is easy to check that $N:=N_{\Sigma/C}\oplus
N_{\mathcal{C}/M}|_{\Sigma}\rightarrow\Sigma$ is a Clifford module.
To show
$N$ is a Dirac bundle, we first note that for any unit vector $e\in
T_{p}\Sigma$, $e\times$ is an isometry on $N$\ by the property of $\times$;
second, for any section $\varphi$ of $T\Sigma$ and section $\sigma$ of
$N_{\Sigma/C}\oplus N_{\mathcal{C}/M}|_{\Sigma}$, with respect to the induced
connection $\nabla^{N}$ on $N$ we have%
\[
\nabla^{N}\left(  \varphi\times\sigma\right)  =\pi^{N}\circ\left(
\nabla\varphi\times\sigma+\varphi\times\nabla\sigma\right)  =\left(
\nabla^{T\Sigma}\varphi\right)  \times\sigma+\varphi\times\nabla^{N}\sigma,
\]
where in the second identity, the first term is because for distinct
$i,j\in\left\{  4,5,6,7\right\}  $, $W_{i}\times W_{j}\in$span$\left\{
W_{1},W_{2},W_{3}\right\}  $, and the second term is because span$\left\{
W_{1},W_{2},W_{3}\right\}  $ is closed under $\times$.
\end{proof}

It is well-known that on a compact K\"{a}hler manifold with a Hermitian line
bundle $L$, one can define a Dirac operator on the Dolbeault complex
$\Omega_{\mathbb{C}}^{0,\ast}\left(  L\right)  $ (c.f. Proposition 3.67 of
\cite{BGV} or Proposition 1.4.25 of \cite{Nicolaescu}). Taking the K\"{a}hler
manifold to be $\Sigma$ and the Hermitian line bundle to be $N_{\Sigma/C}$, we
have the Dolbeault Dirac operator on $N_{\Sigma/C}\oplus\wedge_{\mathbb{C}%
}^{0,1}\left(  N_{\Sigma/C}\right)  $. On the other hand, by the above Lemma,
$N_{\Sigma/C}\oplus N_{\mathcal{C}/M}|_{\Sigma}$ is a Dirac bundle and has a
canonically associated Dirac operator. The following Proposition compares the
two Dirac operators.

\begin{proposition}
\label{dirac-agree}\bigskip The Dolbeault Dirac operator on $N_{\Sigma
/C}\oplus\wedge_{\mathbb{C}}^{0,1}\left(  N_{\Sigma/C}\right)  $ agrees with
the Dirac operator on $N_{\Sigma/C}\oplus N_{\mathcal{C}/M}|_{\Sigma}$, whose
Clifford multiplication is the $G_{2}$ multiplication $\times$ and connection
is the induced connection from $M$.
\end{proposition}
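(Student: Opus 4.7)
The plan is to show that the two Dirac operators coincide by checking that, under the isomorphism $N_{\mathcal{C}/M}|_{\Sigma}\simeq\wedge_{\mathbb{C}}^{0,1}(N_{\Sigma/C})$ of real rank-two bundles, both (i) the Clifford multiplication by $T\Sigma$ and (ii) the connection on the total bundle $N=N_{\Sigma/C}\oplus N_{\mathcal{C}/M}|_{\Sigma}$ match the Clifford multiplication and Chern-type connection that define the Dolbeault Dirac operator on $N_{\Sigma/C}\oplus\wedge_{\mathbb{C}}^{0,1}(N_{\Sigma/C})$. Once these two pieces of data agree, the two first-order operators must agree, because a Dirac operator on a Dirac bundle is determined by its Clifford multiplication and its Clifford-compatible connection.

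First I would fix the orthonormal frame $\{W_{\alpha}\}_{\alpha=1,\dots,7}$ used in the proof of Proposition \ref{Dirac-bdl}, with $T\Sigma=\mathrm{span}\{W_{2},W_{3}\}$, $N_{\Sigma/C}=\mathrm{span}\{W_{4},W_{5}\}$, $N_{\mathcal{C}/M}|_{\Sigma}=\mathrm{span}\{W_{6},W_{7}\}$, and $W_{1}=W_{2}\times W_{3}=n/|n|$. Using the octonion multiplication table, I would verify that for any $v\in T\Sigma$ and $s\in N_{\Sigma/C}$ the cross product $v\times s$ lies in $N_{\mathcal{C}/M}|_{\Sigma}$, and likewise $v\times N_{\mathcal{C}/M}|_{\Sigma}\subset N_{\Sigma/C}$, exhibiting $v\times(\cdot)$ as an off-diagonal operator on $N$. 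Translating into complex language via $J_{n}=W_{1}\times$, I would then check that the map $s\mapsto v\times s$ equals, up to the identification from Lemma~3.2 of \cite{GayetWitt}, the Dolbeault symbol $s\mapsto v^{0,1}\otimes s$ (with the conjugate-linear identification $N_{\mathcal{C}/M}|_{\Sigma}\simeq\wedge_{\mathbb{C}}^{0,1}(N_{\Sigma/C})$). This is the step where the conjugate-linearity of the identification is essential and will be the most delicate point.

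Next I would compare the two connections. On the Dolbeault side, $N_{\Sigma/C}$ carries its Chern connection from the Hermitian structure, and $\wedge_{\mathbb{C}}^{0,1}(N_{\Sigma/C})$ inherits the tensor-product connection using the Levi-Civita connection of $\Sigma$. On the $G_{2}$ side, $N=N_{\Sigma/C}\oplus N_{\mathcal{C}/M}|_{\Sigma}$ carries the induced connection $\nabla^{N}=\pi^{N}\circ\nabla^{M}$. By Proposition \ref{Dirac-bdl}(1), $\nabla^{N}$ preserves $J_{n}$ on each summand, so it is Hermitian on $N_{\Sigma/C}$; uniqueness of the Chern connection on a Hermitian holomorphic line bundle over a Riemann surface then forces it to coincide with the Chern connection of $N_{\Sigma/C}$. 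The compatibility property $\nabla^{N}(\varphi\times\sigma)=(\nabla^{T\Sigma}\varphi)\times\sigma+\varphi\times\nabla^{N}\sigma$ established in the same proposition, combined with the identification of Clifford multiplication above, transports this to the identification of the induced connection on $N_{\mathcal{C}/M}|_{\Sigma}$ with the tensor-product connection on $\wedge_{\mathbb{C}}^{0,1}(N_{\Sigma/C})$.

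Finally, I would invoke the general fact (see e.g.\ Chapter II of \cite{LM}) that on a Dirac bundle, the Dirac operator $D=\sum e_{i}\cdot\nabla_{e_{i}}$ is entirely determined by the Clifford multiplication and the Clifford-compatible metric connection; since both data have just been identified, the two Dirac operators must agree. The main obstacle is the first step: keeping the signs and the conjugate-linearity consistent in the identification of $N_{\mathcal{C}/M}|_{\Sigma}$ with $\wedge_{\mathbb{C}}^{0,1}(N_{\Sigma/C})$, so that $v\times(\cdot)$ really lands on $v^{0,1}\wedge(\cdot)$ rather than $v^{1,0}\wedge(\cdot)$. I expect this to be a direct, if careful, computation in the standard $G_{2}$ frame above, using formulas \eqref{Cayley-Dickson} and the explicit form of $\Omega$ from the example preceding Proposition \ref{1Prop alm instanton}.
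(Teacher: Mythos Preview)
Your strategy is genuinely different from the paper's, and it has a real gap in the connection step.

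\textbf{What the paper does.} The paper does not attempt to match the two pieces of data (Clifford action and connection) abstractly. Instead it fixes an arbitrary $p\in\Sigma$, chooses the frame $\{W_\alpha\}$ so that all of the induced covariant derivatives $\nabla^{T\Sigma}_{W_i}W_j(p)$, $\nabla^{N_{\Sigma/C}}_{W_i}W_k(p)$, $\nabla^{N_{\mathcal C/M}}_{W_i}W_{k+2}(p)$ vanish at $p$ (equation \eqref{base-on-sigma}), writes the bundle isomorphism $\Phi$ explicitly on the basis (equation \eqref{Dirac-basis-corresp}), and then computes \emph{both} operators at $p$ by hand: the Dolbeault Dirac operator via $c(\bar Z)\nabla_{\bar Z}+c(Z)\nabla_{Z}$ and the $G_2$ Dirac operator via $W_2\times\nabla^N_{W_2}+W_3\times\nabla^N_{W_3}$. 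In this normal frame both reduce to purely algebraic expressions in the $V^k_i=dV^k(W_i)$, and one checks $\Phi(\bar\partial U(p))=DV(p)$ termwise. Since $p$ is arbitrary, the operators agree. The point is that the normal frame kills all zeroth-order connection terms simultaneously, so one never has to identify the two connections globally.

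\textbf{Where your argument breaks.} Your step ``uniqueness of the Chern connection on a Hermitian holomorphic line bundle over a Riemann surface then forces it to coincide with the Chern connection of $N_{\Sigma/C}$'' is not valid. Uniqueness of the Chern connection requires a \emph{fixed} holomorphic structure; being Hermitian (metric- and $J_n$-compatible) alone leaves an affine space of connections parametrised by $\Omega^1(\Sigma;i\mathbb R)$. You have not identified the holomorphic structure on $N_{\Sigma/C}$ independently and checked that $(\nabla^{N_{\Sigma/C}})^{0,1}$ equals that $\bar\partial$; in fact, in this paper the $\bar\partial$ on $L=N_{\Sigma/C}$ is \emph{defined} via the normal connection (end of Section \ref{linear-model}), so the connections on $N_{\Sigma/C}$ agree by definition and no Chern-uniqueness argument is needed or available. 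For the second summand, your appeal to the Dirac-bundle Leibniz rule from Proposition \ref{Dirac-bdl} is in the right spirit, but as written it does not show that the conjugate-linear isomorphism $f$ intertwines $\nabla^{N_{\mathcal C/M}|_\Sigma}$ with the tensor-product connection on $N_{\Sigma/C}\otimes_{\mathbb C}\wedge^{0,1}T^*\Sigma$: one must project $\nabla(\sigma\times Z)$ to $N_{\mathcal C/M}|_\Sigma$ and track which components of $\nabla\sigma$ and $\nabla Z$ survive, which is exactly the bookkeeping that the paper's normal-frame computation handles automatically. Your Clifford-multiplication step, by contrast, is essentially the same calculation the paper carries out when it checks \eqref{Dirac-basis-corresp}.
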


\begin{proof}
Given any $p\in\Sigma$, we may further assume the \textquotedblleft
good\textquotedblright\ frame $\left\{  W_{\alpha}\right\}  _{\alpha
=1,2,\cdots7}$ on $B_{\varepsilon}\left(  p\right)  $ in Proposition
\ref{Dirac-bdl} satisfies
\begin{equation}
\nabla_{W_{i}}^{T\Sigma_{0}}W_{j}\left(  p\right)  =\text{ }\nabla_{W_{i}%
}^{N_{\Sigma\,_{0}/C}}W_{k}\left(  p\right)  =\nabla_{W_{i}}^{N_{\mathcal{C}%
/M}|_{\Sigma_{0}}}W_{k+2}\left(  p\right)  =0\text{ } \label{base-on-sigma}%
\end{equation}
for $2\leq i,j\leq3$ and $4\leq k\leq5$, where the $\nabla^{T\Sigma_{0}}%
$,$\nabla^{N_{\Sigma\,_{0}/C_{0}}}$ and $\nabla^{N_{\mathcal{C}/M}%
|_{\Sigma_{0}}}$ are orthogonal projections of the Levi-Civita connection
$\nabla$ on $M$ to $T\Sigma_{0},N_{\Sigma\,_{0}/C_{0}}$ and $N_{\mathcal{C}%
/M}|_{\Sigma_{0}}$ respectively (with respect to the metric $g$). To achieve
$\left(  \ref{base-on-sigma}\right)  $, we can first require $\nabla_{W_{i}%
}^{\Sigma_{0}}W_{j}\left(  p\right)  =\nabla_{W_{i}}^{N_{\Sigma\,_{0}/C}}%
W_{4}\left(  p\right)  =0$ for $2\leq i,j\leq3$, and then use $W_{i+4}%
:=W_{i}\times W_{4}$ for $1\leq i\leq3$ to show all other covariant
derivatives in $\left(  \ref{base-on-sigma}\right)  $ vanish at $p$.
Let $Z=\frac{1}{\sqrt{2}}\left(  W_{2}-iW_{3}\right)  \in T\Sigma_{\mathbb{C}%
}^{1,0}$, $\overline{Z}=\frac{1}{\sqrt{2}}\left(  W_{2}+iW_{3}\right)  \in
T\Sigma_{\mathbb{C}}^{0,1}$and $\overline{Z}^{\ast}=\frac{W_{2}^{\ast}%
-iW_{3}^{\ast}}{\sqrt{2}}\in T^{\ast}\Sigma_{\mathbb{C}}^{0,1}$, then
$\left\langle \overline{Z}^{\ast},\overline{Z}\right\rangle =1$. We recall
that the real bundle isomorphism $f:\wedge_{\mathbb{C}}^{0,1}\left(
N_{\Sigma/C}\right)  \simeq N_{\mathcal{C}/M}|_{\Sigma}$ from \cite{GayetWitt}
(up to factor $-\frac{1}{\sqrt{2}}$) in local frame was given by%
\begin{equation}%
\begin{tabular}
[c]{llll}%
$f:$ & $\wedge_{\mathbb{C}}^{0,1}\left(  N_{\Sigma/C}\right)  =W_{4}%
\otimes_{\mathbb{C}}T^{\ast}\Sigma^{0,1}$ & $\longrightarrow$ & $W_{4}\times
T\Sigma^{1,0}=N_{\mathcal{C}/M}|_{\Sigma},$\\
& $\ \ \ \ \ \ \ \ \ \ W_{4}\otimes_{\mathbb{C}}\overline{Z}^{\ast}$ &
$\mapsto$ & $-\frac{1}{\sqrt{2}}W_{4}\times Z,$%
\end{tabular}
\label{iso-bdl}%
\end{equation}
which is complex conjugate linear and independent on the choice of $W_{4}\in
N_{\Sigma/C}$. So we have the map
\[
\Phi:=id\oplus f:N_{\Sigma/C}\oplus\wedge_{\mathbb{C}}^{0,1}\left(
N_{\Sigma/C}\right)  \rightarrow N_{\Sigma/C}\oplus N_{\mathcal{C}/M}%
|_{\Sigma}%
\]
\emph{ }whose action on the basis of (rank two \emph{real} \emph{vector
bundles}) $N_{\Sigma/C}$ and $\wedge_{\mathbb{C}}^{0,1}\left(  N_{\Sigma
/C}\right)  $ is the following:
\begin{align}
\Phi &  :N_{\Sigma/C}\overset{id}{\rightarrow}N_{\Sigma/C},\text{ \ }\left\{
W_{4},W_{5}\right\}  \rightarrow\left\{  W_{4},W_{5}\right\}  ,\nonumber\\
\Phi &  :\wedge_{\mathbb{C}}^{0,1}\left(  N_{\Sigma/C}\right)  \overset
{f}{\rightarrow}N_{\mathcal{C}/M}|_{\Sigma},\text{ \ }\left\{  W_{4}%
\otimes\overline{Z}^{\ast},W_{5}\otimes\overline{Z}^{\ast}\right\}
\rightarrow\left\{  W_{6},W_{7}\right\}  . \label{Dirac-basis-corresp}%
\end{align}
This is because under $f$,
\begin{align*}
W_{4}\otimes\left(  \frac{W_{2}^{\ast}-iW_{3}^{\ast}}{\sqrt{2}}\right)   &
\rightarrow-\frac{1}{\sqrt{2}}\frac{W_{4}\times W_{2}-W_{4}\times iW_{3}%
}{\sqrt{2}}\\
&  =-\frac{-W_{6}-\left(  W_{1}\times W_{4}\right)  \times W_{3}}{2}=W_{6},\\
W_{5}\otimes\left(  \frac{W_{2}^{\ast}-iW_{3}^{\ast}}{\sqrt{2}}\right)   &
=J_{n}W_{4}\otimes\left(  \frac{W_{2}^{\ast}-iW_{3}^{\ast}}{\sqrt{2}}\right)
\\
&  \rightarrow-J_{n}\left(  W_{6}\right)  =-W_{1}\times W_{6}=W_{7},
\end{align*}
where the second row is from the tensor property of $f$ and the last row is
because $f:\wedge_{\mathbb{C}}^{0,1}\left(  N_{\Sigma/C}\right)  \rightarrow
N_{\mathcal{C}/M}|_{\Sigma}$ is complex conjugate linear. So for the sections
\begin{align*}
U  &  =V^{4}W_{4}+V^{5}W_{5}+V^{6}W_{4}\otimes\overline{Z}^{\ast}+V^{7}%
W_{5}\otimes\overline{Z}^{\ast}\in\Gamma\left(  N_{\Sigma/M}\oplus
\wedge_{\mathbb{C}}^{0,1}\left(  N_{\Sigma/C}\right)  \right)  \text{,}\\
V  &  =V^{4}W_{4}+V^{5}W_{5}+V^{6}W_{6}+V^{7}W_{7}\in\Gamma\left(
N_{\Sigma/C}\oplus N_{\mathcal{C}/M}|_{\Sigma}\right)  ,
\end{align*}
we have
\[
\Phi\left(  U\right)  =V.
\]
We compute the Dolbeault Dirac operator using the above frame. The Clifford
multiplication $c$ on the Dolbeault complex $N_{\Sigma/C}\oplus\wedge
_{\mathbb{C}}^{0,1}\left(  N_{\Sigma/C}\right)  $ at $p$ is (see Section 1.4.3
in \cite{Nicolaescu})
\begin{align*}
c\left(  \frac{W_{2}+iW_{3}}{\sqrt{2}}\right)   &  :V^{4}W_{4}+V^{5}%
W_{5}\overset{\sqrt{2}e\left(  \overline{Z}^{\ast}\right)  }{\rightarrow}%
\sqrt{2}\left(  V^{4}W_{4}\otimes\overline{Z}^{\ast}+V^{5}W_{5}\otimes
\overline{Z}^{\ast}\right)  ,\\
c\left(  \frac{W_{2}-iW_{3}}{\sqrt{2}}\right)   &  :V^{6}W_{4}\otimes
\overline{Z}^{\ast}+V^{7}W_{5}\otimes\overline{Z}^{\ast}\overset{-\sqrt
{2}i\left(  \overline{Z}\right)  }{\rightarrow}\sqrt{2}\left(  -V^{6}%
W_{4}-V^{7}W_{5}\right)  ,
\end{align*}
where $e\left(  \overline{Z}^{\ast}\right)  $ is the wedge by $\overline
{Z}^{\ast}$ and $i\left(  \overline{Z}\right)  $ is the contraction by
$\overline{Z}$. Using $\left(  \ref{base-on-sigma}\right)  $ at $p$ and
complex linearity, we have
\begin{align*}
&  \nabla_{\left(  \frac{W_{2}+iW_{3}}{\sqrt{2}}\right)  }\left(  V^{4}%
W_{4}+V^{5}W_{5}\right)  \left(  p\right)  \\
&  =\frac{1}{\sqrt{2}}\left(  V_{2}^{4}W_{4}+V_{3}^{4}J_{n}W_{4}+V_{2}%
^{5}W_{5}+V_{3}^{5}J_{n}W_{5}\right)  \\
&  =\frac{1}{\sqrt{2}}\left(  \left(  V_{2}^{4}-V_{3}^{5}\right)
W_{4}+\left(  V_{2}^{5}+V_{3}^{4}\right)  W_{5}\right)
\end{align*}
and
\begin{align*}
&  c\left(  \frac{W_{2}+iW_{3}}{\sqrt{2}}\right)  \circ\nabla_{\left(
\frac{W_{2}+iW_{3}}{\sqrt{2}}\right)  }\left(  V^{4}W_{4}+V^{5}W_{5}\right)
\left(  p\right)  \\
&  =\left(  V_{2}^{4}-V_{3}^{5}\right)  W_{4}\otimes\overline{Z}^{\ast
}+\left(  V_{2}^{5}+V_{3}^{4}\right)  W_{5}\otimes\overline{Z}^{\ast}.
\end{align*}
Similarly we have
\begin{align*}
&  \nabla_{\left(  \frac{W_{2}-iW_{3}}{\sqrt{2}}\right)  }\left(  V^{6}%
W_{4}\otimes\overline{Z}^{\ast}+V^{7}W_{5}\otimes\overline{Z}^{\ast}\right)
\left(  p\right)  \\
&  =\frac{1}{\sqrt{2}}\left[  \left(  V_{2}^{6}+V_{3}^{7}\right)  W_{4}%
\otimes\overline{Z}^{\ast}+\left(  -V_{3}^{6}+V_{2}^{7}\right)  W_{5}%
\otimes\overline{Z}^{\ast}\right]
\end{align*}
and
\begin{align*}
&  c\left(  \frac{W_{2}-iW_{3}}{\sqrt{2}}\right)  \circ\nabla_{\left(
\frac{W_{2}-iW_{3}}{\sqrt{2}}\right)  }\left(  V^{6}W_{4}\otimes\overline
{Z}^{\ast}+V^{7}W_{5}\otimes\overline{Z}^{\ast}\right)  \left(  p\right)  \\
&  =-\left(  V_{2}^{6}+V_{3}^{7}\right)  W_{4}-\left(  -V_{3}^{6}+V_{2}%
^{7}\right)  W_{5}.
\end{align*}
The Dolbeault Dirac operator $\overline{\partial}$ on $N_{\Sigma/M}%
\oplus\wedge_{\mathbb{C}}^{0,1}\left(  N_{\Sigma/M}\right)  $ is defined by
\[
\overline{\partial}=c\left(  \frac{W_{2}+iW_{3}}{\sqrt{2}}\right)  \circ
\nabla_{\left(  \frac{W_{2}+iW_{3}}{\sqrt{2}}\right)  }+c\left(  \frac
{W_{2}-iW_{3}}{\sqrt{2}}\right)  \circ\nabla_{\left(  \frac{W_{2}-iW_{3}%
}{\sqrt{2}}\right)  }\text{,}%
\]
so at $p$ we have
\begin{align*}
\overline{\partial}U\left(  p\right)   &  =-\left(  V_{2}^{6}+V_{3}%
^{7}\right)  W_{4}+\left(  V_{3}^{6}-V_{2}^{7}\right)  W_{5}\\
&  +\left(  V_{2}^{4}-V_{3}^{5}\right)  W_{4}\otimes Z^{\ast}+\left(
V_{2}^{5}+V_{3}^{4}\right)  W_{5}\otimes Z^{\ast},
\end{align*}
and
\begin{align*}
\Phi\left(  \overline{\partial}U\left(  p\right)  \right)   &  =-\left(
V_{2}^{6}+V_{3}^{7}\right)  W_{4}+\left(  V_{3}^{6}-V_{2}^{7}\right)  W_{5}\\
&  +\left(  V_{2}^{4}-V_{3}^{5}\right)  W_{6}+\left(  V_{2}^{5}+V_{3}%
^{4}\right)  W_{7}.
\end{align*}
On the other hand, the twisted Dirac operator on $N:=N_{\Sigma/C}\oplus
N_{\mathcal{C}/M}|_{\Sigma}$ is
\begin{equation}
D:=W_{2}\times\nabla_{W_{2}}^{N}+W_{3}\times\nabla_{W_{3}}^{N}%
.\label{twisted-Dirac-Sigma}%
\end{equation}
Using $\left(  \ref{base-on-sigma}\right)  $ at $p$, we have
\begin{align}
DV\left(  p\right)   &  =\left(  W_{2}\times\nabla_{W_{2}}^{N}+W_{3}%
\times\nabla_{W_{3}}^{N}\right)  \left(  V^{4}W_{4}+V^{5}W_{5}+V^{6}%
W_{6}+V^{7}W_{7}\right)  \nonumber\\
&  =\left(  V_{2}^{4}W_{2}\times W_{4}+V_{3}^{4}W_{3}\times W_{4}\right)
+\left(  V_{2}^{5}W_{2}\times W_{5}+V_{3}^{5}W_{3}\times W_{5}\right)
\nonumber\\
&  +\left(  V_{2}^{6}W_{2}\times W_{6}+V_{3}^{6}W_{3}\times W_{6}\right)
+\left(  V_{2}^{7}W_{2}\times W_{7}+V_{3}^{7}W_{3}\times W_{7}\right)
\nonumber\\
&  =-\left(  V_{2}^{6}+V_{3}^{7}\right)  W_{4}+\left(  V_{3}^{6}-V_{2}%
^{7}\right)  W_{5}\nonumber\\
&  +\left(  V_{2}^{4}-V_{3}^{5}\right)  W_{6}+\left(  V_{3}^{4}+V_{2}%
^{5}\right)  W_{7}.\label{twisted-Dirac-Sigma-expression}%
\end{align}
Therefore
\[
\Phi\left(  \overline{\partial}U\left(  p\right)  \right)  =DV\left(
p\right)  .
\]
Note that Dirac operators are independent on the choice of orthonormal basis
at $p$, and our $p\in\Sigma$ is arbitrary, so $\Phi\left(  \overline{\partial
}U\right)  =DV$ on $\Sigma$. The proof of the proposition is completed.
\end{proof}

We now make connection to Subsection \ref{linear-model}. Taking $L=N_{\Sigma
/C}$ in that subsection, then $\mathbb{S}^{+}=N_{\Sigma/C}$ and $\mathbb{S}%
^{-}=\wedge_{\mathbb{C}}^{0,1}\left(  N_{\Sigma/C}\right)  \,$. Our assumption
that $\Sigma$ is regular implies $\dim H_{\overline{\partial}}^{0,1}\left(
N_{\Sigma/C}\right)  =0$, for $H_{\overline{\partial}}^{0,1}\left(
N_{\Sigma/C}\right)  $ corresponds to the cokernel of $\overline{\partial}$.
By the Dolbeault isomorphism, we have
\[
\dim H^{1}\left(  \Sigma,N_{\Sigma/C}\right)  =\dim H_{\overline{\partial}%
}^{0,1}\left(  N_{\Sigma/C}\right)  =0.
\]
Since the dimension for the Seiberg-Witten moduli is $0$, by the equivalence
to Gromov-Witten moduli we have
\[
\dim H^{0}\left(  \Sigma,N_{\Sigma/C}\right)  =\dim H^{1}\left(
\Sigma,N_{\Sigma/C}\right)  =0,
\]
for we only count $J_{n}$-holomorphic curves $\Sigma$ of index $0$. Hence for
\begin{align*}
\overline{\partial}  &  :\Omega^{0}\left(  N_{\Sigma/C}\right)  \rightarrow
\Omega^{0,1}\left(  N_{\Sigma/C}\right) \\
\overline{\partial}^{\ast}  &  :\Omega^{0,1}\left(  N_{\Sigma/C}\right)
\rightarrow\Omega^{0}\left(  N_{\Sigma/C}\right)
\end{align*}
in Subsection \ref{linear-model}, $\ker\overline{\partial}$ and $\ker
\overline{\partial}^{\ast}$ are trivial. Moreover by Theorem \ref{onto}, the
linear operator
\[
\mathcal{D}:L_{-}^{1,2}\left(  \mathtt{A}_{\varepsilon},\mathbb{S}\right)
\rightarrow L^{2}\left(  \mathtt{A}_{\varepsilon},\mathbb{S}\right)
\]
is one-to-one and onto. \

\subsection{Linearization of Instanton Equation and Comparison with the
Operator $\mathcal{D}$\label{compare-Dirac}}

To prove the main theorem (Theorem \ref{main}), we will construct a map
\[
F_{\varepsilon}:C_{-}^{m,\alpha}\left(  N_{A_{\varepsilon}^{\prime}/M}\right)
\rightarrow C^{m-1,\alpha}\left(  N_{A_{\varepsilon}^{\prime}/M}\right)
\]
such that the solution to the equation $F_{\varepsilon}\left(  V\right)  =0$
will give rise to an associative submanifold (instanton) with boundary lying
on $C_{0}\cup C_{\varepsilon}.$ The spaces $C^{m,\alpha}\left(
N_{A_{\varepsilon}^{\prime}/M}\right)  $ and $C_{-}^{m,\alpha}\left(
N_{A_{\varepsilon}^{\prime}/M}\right)  $ are defined by
\begin{align*}
C^{m}\left(  N_{A_{\varepsilon}^{\prime}/M}\right)   &  :=\left\{  \left.
V\in\Gamma\left(  N_{A_{\varepsilon}^{\prime}/M}\right)  \right\vert
\left\Vert V\right\Vert _{C^{m}\left(  N_{A_{\varepsilon}^{\prime}/M}\right)
}<+\infty\right\}  ,\\
C_{-}^{m}\left(  N_{A_{\varepsilon}^{\prime}/M}\right)   &  :=\left\{
V\in\Gamma\left(  N_{A_{\varepsilon}^{\prime}/M}\right)  \left\vert
\begin{array}
[c]{c}%
\left\Vert V\right\Vert _{C^{m}\left(  N_{A_{\varepsilon}^{\prime}/M}\right)
}<+\infty,\text{ and }\\
V|_{\varphi\left(  \left\{  0\right\}  \times\Sigma\right)  }\subset
TC_{0},V|_{\varphi\left(  \left\{  \varepsilon\right\}  \times\Sigma\right)
}\subset TC_{\varepsilon}.
\end{array}
\right.  \right\}  .
\end{align*}

We construct a three dimensional submanifold $\mathtt{A}_{\varepsilon}%
^{\prime}=\varphi\left(  \mathtt{A}_{\varepsilon}\right)  \subset M$ by
flowing $\Sigma$ along with $C_{t}$. For the spinor bundle
$\mathbb{S\rightarrow}\mathtt{A}_{\varepsilon}$, we will construct an
exponential-like map $\widetilde{\exp}:$ $\mathbb{S\rightarrow}M$, with the
following properties of the differential $d\widetilde{\exp}|_{\mathtt{A}%
_{\varepsilon}}$ on $\left\{  0\right\}  \times\Sigma$:

\begin{enumerate}
\item On fiber directions of $\mathbb{S}$, $d\widetilde{\exp}|_{\left\{
0\right\}  \times\Sigma}=\left(  id,f\right)  :N_{\Sigma/C}\oplus
\wedge_{\mathbb{C}}^{0,1}\left(  N_{\Sigma/C}\right)  \rightarrow N_{\Sigma
/C}\oplus N_{\mathcal{C}/M}|_{\Sigma},$ where $f$ $:\wedge_{\mathbb{C}}%
^{0,1}\left(  N_{\Sigma/C}\right)  \rightarrow N_{\mathcal{C}/M}|_{\Sigma}$ is
the real vector bundle isomorphism in $\left(  \ref{iso-bdl}\right)  $;

\item On base directions of $\mathbb{S}$, $d\widetilde{\exp}|_{\left\{
0\right\}  \times\Sigma}=id:T\Sigma\rightarrow T\Sigma$ and $d\widetilde{\exp
}|_{\left\{  0\right\}  \times\Sigma}:\frac{\partial}{\partial x_{1}%
}\rightarrow n\left(  z\right)  ;$

\item On the boundary $\left\{  0,\varepsilon\right\}  \times\Sigma$ of
$\mathtt{A}_{\varepsilon}$, $\widetilde{\exp}|_{\left\{  0\right\}
\times\Sigma}\left(  \mathbb{S}^{+}\oplus0\right)  \subset C_{0}$, and
$\widetilde{\exp}|_{\left\{  \varepsilon\right\}  \times\Sigma}\left(
\mathbb{S}^{+}\oplus0\right)  \subset C_{\varepsilon}$.
\end{enumerate}

The construction of the map $\widetilde{\exp}$ is somewhat technical. To keep
the main flow of our paper, we postpone it to the appendix.

To make the linear theory developed in the previous section applicable, we
compare the linearization $DF_{\varepsilon}\left(  0\right)  $ with the
operator $\mathcal{D}$ in Section \ref{linear-model} by the following diagram
to get $\varepsilon$-dependent bound of its right inverse:%
\[%
\begin{tabular}
[c]{lll}%
$C_{-}^{m,\alpha}\left(  \mathtt{A}_{\varepsilon},\mathbb{S}\right)  $ &
$\overset{\mathcal{D}}{\longrightarrow}$ & $C_{-}^{m-1,\alpha}\left(
\mathtt{A}_{\varepsilon},\mathbb{S}\right)  $\\
$d\widetilde{\exp}\downarrow$ &  & $\ \downarrow d\widetilde{\exp}$\\
$C_{-}^{m,\alpha}\left(  N_{\mathtt{A}_{\varepsilon}^{\prime}/M}\right)  $ &
$\overset{DF_{\varepsilon}\left(  0\right)  }{\longrightarrow}$ &
$C^{m-1,\alpha}\left(  N_{\mathtt{A}_{\varepsilon}^{\prime}/M}\right)  $%
\end{tabular}
\ \ \ \ \ \
\]

Now we define the nonlinear map $F_{\varepsilon}$ with the important property
that elements in $F_{\varepsilon}^{-1}\left(  0\right)  $ with small norm
correspond to associative submanifolds in $M$ near $A_{\varepsilon}^{\prime}$
for small $\varepsilon$. Given any $C_{0}\cup C_{\varepsilon}$, we modify the
metric $g$ near $C_{0}$ and $C_{\varepsilon}$ to make them \emph{totally
geodesic}. We denote this new smooth metric by $g_{\varepsilon}$, and we make $g_{\varepsilon}$
$C^1$-continuously depend on ${\varepsilon}$ in our construction. Let
$\exp^{g_{\varepsilon}}$ be the exponential map of $g_{\varepsilon}$. Then $\exp^{g_{\varepsilon}}$
has the following properties:

\begin{enumerate}
\item For sections $V$ of $N_{\mathtt{A}_{\varepsilon}^{\prime}/M}$ with
$C^{0}$ norm smaller than a fixed constant \ $\delta_{0}$ (depending on the
uniform injectivity radius of the family of metrics $\left\{  g_{\varepsilon
}\right\}  _{0\leq\varepsilon\leq\varepsilon_{0}}$),
\[
\exp^{g_{\varepsilon}}V:\mathtt{A}_{\varepsilon}^{\prime}\rightarrow M
\]
is a smooth embedding.

\item For $V\in C_{-}^{m,\alpha}\left(  N_{A_{\varepsilon}^{\prime}/M}\right)
$, let
\begin{equation}
\mathtt{A}_{\varepsilon}\left(  V\right)  :=\left(  \exp^{g_{\varepsilon}%
}V\right)  \left(  \mathtt{A}_{\varepsilon}^{\prime}\right)  . \label{A_e_V}%
\end{equation}
Then $\mathtt{A}_{\varepsilon}\left(  V\right)  $ is a submanifold of $M$
nearby $\mathtt{A}_{\varepsilon}^{\prime}$ satisfying the boundary condition
\begin{equation}
\partial\mathtt{A}_{\varepsilon}\left(  V\right)  \subset C_{0}\cup
C_{\varepsilon}. \label{bdryAe}%
\end{equation}

\item For small $t\geq0$, the family of embeddings $\exp^{g_{\varepsilon}%
}\left(  tV\right)  :\mathtt{A}_{\varepsilon}^{\prime}\rightarrow M$ satisfies%
\begin{equation}
\left.  \frac{d}{dt}\right\vert _{t=0}\exp^{g_{\varepsilon}}\left(  tV\right)
=V. \label{expgeV}%
\end{equation}

\end{enumerate}

Next we define $F_{\varepsilon}:C_{-}^{m,\alpha}\left(  \mathtt{A}%
_{\varepsilon}^{\prime},N_{\mathtt{A}_{\varepsilon}^{\prime}/M}\right)
\rightarrow C^{m-1,\alpha}\left(  \mathtt{A}_{\varepsilon}^{\prime
},N_{\mathtt{A}_{\varepsilon}^{\prime}/M}\right)  $,
\begin{equation}
F_{\varepsilon}\left(  V\right)  =\ast_{\mathtt{A}_{\varepsilon}^{\prime}%
}\circ\bot_{\mathtt{A}_{\varepsilon}^{\prime}}\circ\left(  T_{V}\circ\left(
\exp^{g_{\varepsilon}}V\right)  ^{\ast}\tau\right)  , \label{F_epsilon}%
\end{equation}
where

\begin{enumerate}
\item $\left(  \exp^{g_{\varepsilon}}V\right)  ^{\ast}$ pulls back the
\emph{differential form part} of $\tau,$

\item $T_{V}:T_{\exp_{p}^{g_{\varepsilon}}\left(  tV\right)  }M\rightarrow
T_{p}M$ \ pulls back the \emph{vector part }of $\tau$ by the parallel
transport with respect to $g$ along the path $\exp_{p}^{g_{\varepsilon}%
}\left(  tV\right)  ,$

\item $\bot_{\mathtt{A}_{\varepsilon}^{\prime}}:TM|_{\mathtt{A}_{\varepsilon
}^{\prime}}\rightarrow N_{\mathtt{A}_{\varepsilon}^{\prime}/M}$ \ \ is
the\ $\ $orthogonal$\ $projection with respect to $g,$

\item $\ast_{\mathtt{A}_{\varepsilon}^{\prime}}:\Omega^{3}\left(
\mathtt{A}_{\varepsilon}^{\prime}\right)  \rightarrow\Omega^{0}\left(
\mathtt{A}_{\varepsilon}^{\prime}\right)  $ is the quotient by the volume form
$dvol_{\mathtt{A}_{\varepsilon}^{\prime}}$ induced from $g.$\
\end{enumerate}

\bigskip We stress that $g_{\varepsilon}$ is only used to construct a map
$\exp^{g_{\varepsilon}}:N_{\mathtt{A}_{\varepsilon}^{\prime}/M}\rightarrow M$
satisfying the coassociative boundary condition $\left(  \ref{bdryAe}\right)
$ and derivative condition $\left(  \ref{expgeV}\right)  $. For our covariant
derivatives, parallel transport, orthogonal projection and volume form, we
still use the original metric $g$.

To better understand $F_{\varepsilon}$, we let%
\[
P_{\varepsilon}:=\ast_{\mathtt{A}_{\varepsilon}^{\prime}}\circ\bot
_{\mathtt{A}_{\varepsilon}^{\prime}}:\Gamma\left(  TM|_{\mathtt{A}%
_{\varepsilon}^{\prime}}\right)  \otimes\Omega^{3}\left(  \mathtt{A}%
_{\varepsilon}^{\prime}\right)  \rightarrow\Gamma\left(  N_{\mathtt{A}%
_{\varepsilon}^{\prime}/M}\right)  ,
\]
and
\begin{equation}
F^{g_{\varepsilon}}\left(  V\right)  :=T_{V}\circ\left(  \exp^{g_{\varepsilon
}}V\right)  ^{\ast}\tau. \label{Fge}%
\end{equation}
Then
\[
F_{\varepsilon}\left(  V\right)  =P_{\varepsilon}\circ F^{g_{\varepsilon}%
}\left(  V\right)  .
\]
At any $p\in\mathtt{A}_{\varepsilon}^{\prime}$, both $\bot_{\mathtt{A}%
_{\varepsilon}^{\prime}}$ and $\ast_{\mathtt{A}_{\varepsilon}^{\prime}}$are
linear operators between finite dimensional spaces, so $\left\Vert
P_{\varepsilon}\right\Vert \leq C$, where the constant $C$ only depends on
$\varphi$ and is uniform for all $0<\varepsilon\leq\varepsilon_{0}$. We notice
that $P_{\varepsilon}$ does not involve $V$ so the essential part of
$F_{\varepsilon}\left(  V\right)  $ is $F^{g_{\varepsilon}}\left(  V\right)  $.

By Proposition \ref{1Prop alm instanton}, if $\mathtt{A}_{\varepsilon}%
^{\prime}$ is sufficiently close to being associative, then there exists
$\delta>0$, such that for $\left\Vert V\right\Vert _{C_{-}^{1,\alpha}\left(
A_{\varepsilon}^{\prime},N_{A_{\varepsilon}^{\prime}/M}\right)  }<\delta$,
\[
F_{\varepsilon}\left(  V\right)  =0\text{ }\Leftrightarrow F^{g_{\varepsilon}%
}\left(  V\right)  =0\Leftrightarrow\mathtt{A}_{\varepsilon}\left(  V\right)
\text{ associative. }%
\]
We also note that $\partial\mathtt{A}_{\varepsilon}\left(  V\right)  \subset
C_{0}\cup C_{\varepsilon}$ for $V\in$ $C_{-}^{1,\alpha}\left(  A_{\varepsilon
}^{\prime},N_{A_{\varepsilon}^{\prime}/M}\right)  $.

Before we compute $F^{g_{\varepsilon}\prime}\left(  0\right)  V$, we observe
the following useful fact. The original exponential map $\exp^{g}$
$:N_{\mathtt{A}_{\varepsilon}^{\prime}/M}\rightarrow M$ does not satisfy the
the coassociative boundary condition $\left(  \ref{bdryAe}\right)  $, but
satisfies the same derivative condition $\left(  \ref{expgeV}\right)  $:
\[
\left.  \frac{d}{dt}\right\vert _{t=0}\exp^{g}\left(  tV\right)  =V=\left.
\frac{d}{dt}\right\vert _{t=0}\exp^{g_{\varepsilon}}\left(  tV\right)  .
\]
For smooth maps $f:A_{\varepsilon}^{\prime}\rightarrow M$ and a smooth form
$\tau$, the nonlinear map $\Gamma:f\rightarrow f^{\ast}\tau$ is differentiable
with respect to $f$. Therefore%
\begin{equation}
\left.  \frac{d}{dt}\right\vert _{t=0}\left(  \exp^{g}\left(  tV\right)
\right)  ^{\ast}\tau=\Gamma^{\prime}\left(  0\right)  V=\left.  \frac{d}%
{dt}\right\vert _{t=0}\left(  \exp^{g_{\varepsilon}}\left(  tV\right)
\right)  ^{\ast}\tau. \label{exp-tau-derivative-agree}%
\end{equation}
Recall the $F\left(  V\right)  $ $\left(  \ref{pull-vec-form}\right)  $
defined in our proof of McLean's theorem is%
\[
F\left(  V\right)  :=T_{V}\circ\left(  \exp^{g}V\right)  ^{\ast}\tau,
\]
where the parallel transport $T_{V}$ is with respect to $g$ along the geodesic
$\exp^{g}\left(  tV\right)  $. $F^{g_{\varepsilon}}\left(  V\right)  $ and
$F\left(  V\right)  $ are different nonlinear maps, but $\left(
\ref{exp-tau-derivative-agree}\right)  $ says that
\begin{equation}
F^{g_{\varepsilon}\prime}\left(  0\right)  V=F^{\prime}\left(  0\right)
V\text{.} \label{F-deri-agree}%
\end{equation}
We give an alternative proof of $\left(  \ref{F-deri-agree}\right)  $ in the
following Lemma \ref{F-derivative-agree}. The proof gives more information of
$F^{g_{\varepsilon}\prime}\left(  0\right)  V$ when there is a good frame
field for the vector-valued form $\tau$.

\bigskip

\begin{lemma}
\label{F-derivative-agree}For any $V\in C_{-}^{1,\alpha}\left(
N_{A_{\varepsilon}^{\prime}/M}\right)  $, $F^{g_{\varepsilon}\prime}\left(
0\right)  V=F^{\prime}\left(  0\right)  V$.
\end{lemma}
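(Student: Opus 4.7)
The plan is to exploit the fact that the two nonlinear maps $F$ and $F^{g_{\varepsilon}}$ differ only by a change of exponential map used to deform $\mathtt{A}_{\varepsilon}^{\prime}$, while their first-order information at $V=0$ is controlled solely by the common derivative $\left.\frac{d}{dt}\right\vert_{t=0}\exp^{g_{\varepsilon}}(tV) = V = \left.\frac{d}{dt}\right\vert_{t=0}\exp^{g}(tV)$ coming from $\left(\ref{expgeV}\right)$. The idea is thus to show that the linearization at $V=0$ of each of the two building blocks of $F^{g_{\varepsilon}}(V) = T_{V}\circ(\exp^{g_{\varepsilon}}V)^{\ast}\tau$ --- the pull-back of the differential form part and the parallel transport of the vector part --- depends only on the first jet of the path $t\mapsto\exp^{g_{\varepsilon}}(tV)$ at $t=0$, and hence coincides with the corresponding linearization for $F$.

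More concretely, I would proceed in two steps. First, for the form pull-back, I would use the standard identity that for any smooth $1$-parameter family of maps $f_{t}:\mathtt{A}_{\varepsilon}^{\prime}\rightarrow M$ with $f_{0}=\mathrm{id}$ and $\left.\tfrac{d}{dt}\right\vert_{t=0}f_{t}=V$, one has $\left.\tfrac{d}{dt}\right\vert_{t=0}f_{t}^{\ast}\omega = L_{V}\omega$ for any differential form $\omega$ on $M$; applied to $f_{t}=\exp^{g_{\varepsilon}}(tV)$ and $f_{t}=\exp^{g}(tV)$, which by $\left(\ref{expgeV}\right)$ share $f_{0}$ and $\left.\tfrac{d}{dt}\right\vert_{t=0}f_{t}$, this yields the coincidence $\left(\ref{exp-tau-derivative-agree}\right)$. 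Second, for the parallel transport along the path $t\mapsto\exp_{p}^{g_{\varepsilon}}(tV)$ with respect to the fixed metric $g$: at $t=0$ we have $T_{0}=\mathrm{id}$, and the derivative $\left.\tfrac{d}{dt}\right\vert_{t=0}T_{tV}(W(\exp_{p}^{g_{\varepsilon}}(tV))) = \nabla_{V}W(p)$ for any vector field $W$ on $M$ depends only on the initial velocity of the curve. Hence writing $\tau=\omega^{\alpha}\otimes W_{\alpha}$ in a local parallel frame $\{W_{\alpha}\}$ as in the proof of Theorem \ref{McLean-thm}, both linearizations simultaneously evaluate to
\[
F^{g_{\varepsilon}\prime}(0)V \;=\; L_{V}\omega^{\alpha}\otimes W_{\alpha}+\omega^{\alpha}\otimes\nabla_{V}W_{\alpha} \;=\; F^{\prime}(0)V,
\]
which is exactly the formula $\left(\ref{DF-general}\right)$ already established.

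There is essentially no hard step here: the statement is really the assertion that $F^{g_{\varepsilon}\prime}(0)$ is a first-order linear differential operator whose symbol and zeroth-order part are functorial in the $1$-jet of the deformation, which is determined by $V$. The only point requiring a little care is to make sure the parallel-transport derivative is legitimately independent of the specific path and depends only on its initial tangent $V$; this is standard and follows from expanding $T_{tV}^{-1}W(\gamma(t))= W(p)+t\nabla_{\gamma'(0)}W(p)+O(t^{2})$ for any smooth curve $\gamma$ with $\gamma(0)=p$, $\gamma'(0)=V$. Once these two observations are recorded, the identity $F^{g_{\varepsilon}\prime}(0)V=F^{\prime}(0)V$ follows immediately from the product rule applied to $T_{V}\circ(\exp^{?}V)^{\ast}\tau$ at $V=0$.
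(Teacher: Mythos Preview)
Your proposal is correct. In fact it is essentially the argument the paper sketches in the paragraph \emph{preceding} the lemma, where the identity $\left(\ref{exp-tau-derivative-agree}\right)$ is derived from the fact that $\left.\frac{d}{dt}\right\vert_{t=0}\exp^{g_{\varepsilon}}(tV)=V=\left.\frac{d}{dt}\right\vert_{t=0}\exp^{g}(tV)$; you have simply filled in the parallel-transport half of that sketch carefully.

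The paper's own proof of the lemma takes a slightly different, more computational route: it fixes a frame $\{W_{\alpha}\}$ on $A_{\varepsilon}^{\prime}$ and extends it to a neighborhood in $M$ by $g$-parallel transport along the $g_{\varepsilon}$-geodesics $\exp_{p}^{g_{\varepsilon}}(tV)$. With this choice the terms $i_{V}d\omega^{\alpha}\otimes W_{\alpha}$ and $\omega^{\alpha}\otimes\nabla_{V}W_{\alpha}$ vanish identically, so $F^{g_{\varepsilon}\prime}(0)V$ reduces to the single term $d(i_{V}\omega^{\alpha})\otimes W_{\alpha}\big|_{A_{\varepsilon}^{\prime}}$, which manifestly depends only on the restriction of $V$ and $\omega^{\alpha}$ to $A_{\varepsilon}^{\prime}$ and hence agrees with $F^{\prime}(0)V$. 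Your argument avoids choosing such a frame at the cost of invoking the (standard) fact that the $t$-derivative of parallel transport along a curve depends only on the curve's initial velocity; the paper's frame computation trades that observation for an explicit reduction to the principal-symbol term, which it then reuses in the subsequent comparison with $\mathcal{D}$ in Proposition~\ref{compare-linear-model}.
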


\begin{proof}
For any $p\in A_{\varepsilon}^{\prime}$, we choose a frame field $\left\{
W_{a}\right\}  _{1\leq\alpha\leq7}$ in its neighborhood $B\subset
A_{\varepsilon}^{\prime}$, and then extend $V$ and $\left\{  W_{a}\right\}
_{1\leq\alpha\leq7}$ to $M$ by the parallel transport with respect to $g\,$\ along the
curve $\exp_p^{g_{\varepsilon}}\left(  tV\right)  $. We write $\tau
=\omega^{\alpha}\otimes W_{\alpha}$ in the neighborhood of $p$ in $M$, following Einstein's summation convention. Similar
to $\left(  \ref{DF}\right)  $, we compute%
\[
F^{g_{\varepsilon}\prime}\left(  0\right)  V=d\left(  i_{V}\omega^{\alpha
}\right)  \otimes W_{\alpha}+i_{V}d\omega^{\alpha}\otimes W_{\alpha}%
+\omega^{\alpha}\otimes\nabla_{V}W_{\alpha},
\]
where the covariant derivative $\nabla$ is with respect to $g$, and we have used that $\frac{d}{dt}%
|_{t=0}\exp^{g_{\varepsilon}}\left(  tV\right)  =V$. By the parallel property
of $\tau$ and $W_{\alpha}$ with respect to $g$, we have%
\[
i_{V}d\omega^{\alpha}\otimes W_{\alpha}=0=\omega^{\alpha}\otimes\nabla
_{V}W_{\alpha}%
\]
(also see Remark 10 item 3). The first term $d\left(  i_{V}\omega^{\alpha
}\right)  |_{A_{\varepsilon}^{\prime}}$ only depends on the restriction of $V$ and $\omega^{\alpha}$
on $A_{\varepsilon}^{\prime}$. (Remark 10 item 1). Therefore%
\[
F^{g_{\varepsilon}\prime}\left(  0\right)  V=d\left(  i_{V}\omega^{\alpha
}\right)  \otimes W_{\alpha}|_{A_{\varepsilon}^{\prime}}=F^{\prime}\left(
0\right)  V.
\]
\end{proof}

\begin{remark}
To apply the implicit function theorem to $F_{\varepsilon}\left(  V\right)  $,
in the remaining part of our paper we will only need the estimate of
$F_{\varepsilon}^{\prime}\left(  0\right)  $, and the quadratic estimate
$\left(  \ref{quadratic-estmt}\right)  $ of $F_{\varepsilon}^{\prime}\left(
V\right)  $. Because of the above lemma, to compute $F_{\varepsilon}^{\prime
}\left(  0\right)  =P_{\varepsilon}\circ F^{g_{\varepsilon}\prime}\left(
0\right)  $, we can replace the metric $g_{\varepsilon}$ by $g$ in $\left(
\ref{Fge}\right)  $. This will simplify the exposition in many places. For our
quadratic estimate $\left(  \ref{quadratic-estmt}\right)  $, the proof uses no
feature of the $G_{2}$ metric $g$ and is valid for any Riemannian metrics,
including $g_{\varepsilon}$. The constant $C$ in $\left(  \ref{quadratic-estmt}\right)  $ is uniform for
$\left\{  g_{\varepsilon}\right\}  _{0\leq\varepsilon\leq\varepsilon_{0}}$,
since this is a compact family of metrics $C^1$-continuously depending on $\varepsilon$.
So although we used $g_{\varepsilon
}$ in the definition of $F_{\varepsilon}\left(  V\right)  $ $\left(
\ref{F_epsilon}\right)  $, from now on we will pretend $g_{\varepsilon}$ is
$g$ in $F_{\varepsilon}\left(  V\right)  $, and we will simply write $\exp
^{g}$ as $\exp$.
\end{remark}

\begin{proposition}
\label{compare-linear-model}\bigskip For any section $V_{1}$ of $\mathbb{S}$
and section $V_{2}:=d\widetilde{\exp}\cdot V_{1}$ of $N_{\mathtt{A}%
_{\varepsilon}^{\prime}/M}$, we have%
\[
\left\Vert F^{\prime}\left(  0\right)  V_{2}-\left(  d\widetilde{\exp}%
\circ\mathcal{D}V_{1}\right)  \otimes dvol_{A_{\varepsilon}^{\prime}%
}\right\Vert _{C^{\alpha}\left(  A_{\varepsilon}^{\prime},N_{A_{\varepsilon
}^{\prime}/M}\right)  }\leq C\varepsilon^{1-\alpha}\left\Vert V_{1}\right\Vert
_{C^{1,\alpha}\left(  A_{\varepsilon},\mathbb{S}\right)  },
\]
and%
\[
\left\Vert F_{\varepsilon}^{\prime}\left(  0\right)  V_{2}-\left(
d\widetilde{\exp}\right)  \circ\mathcal{D}V_{1}\right\Vert _{C^{\alpha}\left(
A_{\varepsilon}^{\prime},N_{A_{\varepsilon}^{\prime}/M}\right)  }\leq
C\varepsilon^{1-\alpha}\left\Vert V_{1}\right\Vert _{C^{1,\alpha}\left(
A_{\varepsilon},\mathbb{S}\right)  },
\]
where $\mathcal{D}$ is the operator on $\mathbb{S}$ in linear model, and the
constant $C$ is uniform for all $\varepsilon$.
\end{proposition}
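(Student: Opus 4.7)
The plan for proving Proposition~\ref{compare-linear-model} is to write the difference
\[
E := F'(0) - (d\widetilde{\exp}\circ\mathcal{D})\otimes dvol_{A_\varepsilon'}
\]
as a first-order linear operator whose coefficients vanish along $\Sigma=A_\varepsilon'\cap C_0$. Since every point of $A_\varepsilon'$ lies within distance $\varepsilon$ of $\Sigma$, those coefficients are automatically $O(\varepsilon)$ in $C^0$ while remaining $O(1)$ in $C^1$, so a standard Hölder interpolation will yield the $\varepsilon^{1-\alpha}$ factor.

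First I would select the $G_2$-adapted orthonormal frame $\{W_\alpha\}_{\alpha=1}^{7}$ along $\Sigma$ from the proof of Proposition~\ref{Dirac-bdl} ($W_1=n/|n|$, $\{W_2,W_3\}$ spanning $T\Sigma$, $\{W_4,W_5\}$ spanning $N_{\Sigma/C_0}$, $\{W_6,W_7\}$ spanning $N_{\mathcal{C}/M}|_\Sigma$) and extend it to a neighbourhood in $A_\varepsilon'$, so that $d\widetilde{\exp}|_\Sigma$ realises precisely the bundle map $\Phi=\mathrm{id}\oplus f$ of Proposition~\ref{dirac-agree}. Writing $\tau=\omega^\alpha\otimes W_\alpha$ and applying the linearization identity
\[
F'(0)V_2 = d(\iota_{V_2}\omega^\alpha)\otimes W_\alpha + \iota_{V_2}d\omega^\alpha\otimes W_\alpha + \omega^\alpha\otimes\nabla_{V_2}W_\alpha,
\]
the lower-order terms vanish along $\Sigma$ (exactly as in the proof of Theorem~\ref{McLean-thm}, since $TA_\varepsilon'|_\Sigma$ is associative, $\nabla\tau=0$, and the frame was chosen with the right parallelism on $\Sigma$). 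The principal symbol term restricts on $\Sigma$ to the twisted Dirac operator $D$ of $\left(\ref{twisted-Dirac-Sigma}\right)$, which by Proposition~\ref{dirac-agree} is intertwined by $\Phi$ with the Dolbeault Dirac operator $\bar\partial$; this is $\mathcal{D}$ at $t=0$. Hence $E|_\Sigma \equiv 0$.

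Off $\Sigma$, the almost-associative bound $|\tau|_{A_\varepsilon'}(p)|\le K\,\mathrm{dist}(p,\Sigma)\le K\varepsilon$, together with the smooth dependence of $\varphi$, $\widetilde{\exp}$, the warped metric $g_{\mathtt{A}_\varepsilon,h}$, and the frame on $t\in[0,\varepsilon]$, shows that the coefficients $a$ of $E$ (regarded as operating between $\mathbb{S}$ and itself after pull-back by $d\widetilde{\exp}$) are smooth, vanish at $t=0$, and satisfy $\|a\|_{C^1(\mathtt{A}_\varepsilon)}\le C$ uniformly in $\varepsilon$. Because $a$ vanishes on $\Sigma$ and the $x_1$-interval has length $\varepsilon$, this forces $\|a\|_{C^0(\mathtt{A}_\varepsilon)}\le C\varepsilon$; then the standard interpolation inequality
\[
[a]_{C^\alpha(\mathtt{A}_\varepsilon)}\le C\,\|a\|_{C^0}^{1-\alpha}\|a\|_{C^1}^{\alpha}\le C\varepsilon^{1-\alpha}
\]
yields $\|EV_2\|_{C^\alpha(A_\varepsilon',N_{A_\varepsilon'/M})}\le C\varepsilon^{1-\alpha}\|V_1\|_{C^{1,\alpha}(\mathtt{A}_\varepsilon,\mathbb{S})}$, which is the first estimate. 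The second estimate, for $F_\varepsilon'(0)=P_\varepsilon\circ F'(0)$, follows by composing with the bundle map $P_\varepsilon=\ast_{A_\varepsilon'}\circ\bot_{A_\varepsilon'}$, whose norm is bounded uniformly in $\varepsilon$.

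The main obstacle is the pointwise identity $E|_\Sigma\equiv 0$. This requires careful bookkeeping of: the extension of $\{W_\alpha\}$ off $\Sigma$; the boundary data built into $\widetilde{\exp}$ (which must ensure $d\widetilde{\exp}|_\Sigma$ matches $\Phi$ exactly on fibre directions and sends $\partial/\partial x_1$ to $n$ on base directions); and the way these two ingredients combine across the three terms of the linearization formula. Once this match on $\Sigma$ is secured, the remainder is the interpolation argument above.
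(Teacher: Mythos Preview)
Your overall strategy---show that the difference operator $E$ vanishes along $\Sigma$, then exploit the $\varepsilon$-thinness via interpolation---is exactly the paper's approach, and your handling of the lower-order terms and the final passage to $F_\varepsilon'(0)$ via $P_\varepsilon$ matches the paper as well.

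There is one genuine imprecision in the heart of your argument. You write that ``the principal symbol term restricts on $\Sigma$ to the twisted Dirac operator $D$ of $(\ref{twisted-Dirac-Sigma})$,'' and that this equals $\mathcal{D}$ at $t=0$. Neither statement is correct as written: the principal symbol of $F'(0)$ at $p\in\Sigma$ is the \emph{three}-dimensional operator $\mathcal{D}V_2$ of $(\ref{DV2})$, which contains the $W_1$-direction derivatives $\phi_1^k$ in addition to the $\Sigma$-tangential part $D$; and $\mathcal{D}=h^{-1/2}e_1\cdot\partial_{x_1}+\bar\partial$ always carries the $\partial_{x_1}$ term. Proposition~\ref{dirac-agree} only gives you $DV_2=\Phi(\bar\partial V_1)$ on $\Sigma$, i.e.\ the match of the $\bar\partial$ summands. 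The remaining piece is to verify
\[
d\widetilde{\exp}\circ\bigl[h^{-1/2}(z)\,e_1\cdot\partial_{x_1}V_1(0,z)\bigr]
= -\phi_1^5 W_4+\phi_1^4 W_5+\phi_1^7 W_6-\phi_1^6 W_7,
\]
which is not automatic: it requires the chain rule $\psi_1^\alpha(0,z)=h(z)^{1/2}\phi_1^\alpha(\varphi(0,z))$ (coming from $\langle W_1,\partial_t\varphi|_{t=0}\rangle=h^{1/2}$) together with the fibrewise action $d\widetilde{\exp}|_{\{0\}\times\Sigma}=\Phi$ and the Clifford action of $e_1$ on $\mathbb{S}^\pm$. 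The paper carries this out explicitly in $(\ref{Compare-Dirac-n})$--$(\ref{Chain-rule})$. You flag this as the ``main obstacle'' in your final paragraph, which is accurate; once you supply that computation, your sketch becomes the paper's proof.
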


\begin{proof}
For each $p=\varphi\left(  0,z\right)  $ in $\Sigma_{0}:=\varphi\left(
\left\{  0\right\}  \times\Sigma\right)  \subset A_{\varepsilon}^{\prime}$, we
choose \textquotedblleft good\textquotedblright\ frame $\left\{  W_{\alpha
}\right\}  _{\alpha=1,2,\cdots7}$ on $B_{\varepsilon}\left(  z\right)
\subset\Sigma$ as before. Then we extend the frame to $U_{\varepsilon}\left(
p\right)  :=\varphi\left(  \left[  0,\varepsilon\right]  \times B_{\varepsilon
}\left(  z\right)  \right)  $ such that
\[
W_{\alpha}\left(  \varphi\left(  t,z\right)  \right)  =T_{\gamma}\cdot
W_{\alpha}\left(  \varphi\left(  0,z\right)  \right)  \text{ \ for }%
\alpha=1,2,\cdots7,
\]
where $T_{\gamma}$ is the parallel transport along the path $\gamma
:=\varphi\left(  \left[  0,t\right]  \times\left\{  z\right\}  \right)  $ by
the Levi-Civita connection on $M$. We further extend $\left\{  W_{\alpha
}\right\}  _{\alpha=1,2,\cdots7}$ to a tubular neighborhood of $U_{\varepsilon
}\left(  p\right)  \subset M$ by parallel transport in fiber directions of
$N_{A_{\varepsilon}^{\prime}/M}$. Both $F^{\prime}\left(  0\right)  V_{2}$ and
$\mathcal{D}V_{1}$ are globally defined on $A_{\varepsilon}^{\prime}$ and
$A_{\varepsilon}$ respectively. We are going to compare them in
$U_{\varepsilon}\left(  p\right)  $ using the \textquotedblleft
good\textquotedblright\ frame.
We write $\tau=\omega^{\alpha}\otimes W_{\alpha}$ in the neighborhood of $p$.
By the parallel property of $\left\{  W_{\alpha}\right\}  _{\alpha
=1,2,\cdots7}$, the $3$-forms $\omega^{\alpha}$ are similar to the standard
ones in $\operatorname{Im}\mathbb{O}$, in the sense that $dx_{i}$ is replaced
by $\left(  W_{i}\right)  ^{\ast}$for $i=1,2,\cdots7$. \ We still have the
formula $\left(  \ref{DF}\right)  $ at $q:$%
\begin{align}
F^{\prime}\left(  0\right)  V_{2}\left(  q\right)   &  =d\left(  i_{V_{2}%
}\omega^{\alpha}\right)  \otimes W_{\alpha}\left(  q\right) \nonumber\\
&  +i_{V_{2}}d\omega^{\alpha}\otimes W_{\alpha}\left(  q\right)
+\omega^{\alpha}\otimes\nabla_{V_{2}}W_{\alpha}\left(  q\right)  .
\label{DFV-q}%
\end{align}
The first term $d\left(  i_{V_{2}}\omega^{\alpha}\right)  \otimes W_{\alpha
}\left(  q\right)  $ is the principal symbol part of the differential operator
$F^{\prime}\left(  0\right)  $. We claim that
\begin{equation}
d\left(  i_{V_{2}}\omega^{\alpha}\right)  \otimes W_{\alpha}=\left(
d\widetilde{\exp}\cdot\mathcal{D}V_{1}\right)  \otimes dvol_{A_{\varepsilon
}^{\prime}}+E\left(  q\right)  V_{1}\label{relate-linear-model}%
\end{equation}
where $E\left(  q\right)  $ is a smooth tensor on $U_{\varepsilon}\left(
p\right)  $\ with $\left\Vert E\right\Vert _{C^{1}\left(  U_{\varepsilon
}\left(  p\right)  \right)  }\leq C_{3}\varepsilon$.
To see this, we first compute $d\left(  i_{V_{2}}\omega^{\alpha}\right)
\otimes W_{\alpha}$. For section $V_{2}$ of $N_{A_{\varepsilon}^{\prime}/M}$
we write
\[
V_{2}\left(  q\right)  =\Sigma_{\alpha=4}^{7}\phi^{\alpha}\left(  q\right)
W_{\alpha}\left(  q\right)  .
\]
Similar to our proof of McLean's theorem, we have
\begin{equation}
d\left(  i_{V_{2}}\omega^{\alpha}\right)  \otimes W_{\alpha}\left(  q\right)
|_{T_{q}A_{\varepsilon}^{\prime}}=\mathcal{D}V_{2}\left(  q\right)
dvol_{A_{\varepsilon}^{\prime}}+E_{1}\left(  q\right)  V_{2}
\label{almost-Dirac}%
\end{equation}
where
\begin{align}
\mathcal{D}V_{2}\left(  q\right)   &  =-\left(  \phi_{1}^{5}+\phi_{2}^{6}%
+\phi_{3}^{7}\right)  W_{4}+\left(  \phi_{1}^{4}+\phi_{3}^{6}-\phi_{2}%
^{7}\right)  W_{5}\nonumber\\
&  +\left(  \phi_{2}^{4}-\phi_{3}^{5}+\phi_{1}^{7}\right)  W_{6}+\left(
\phi_{3}^{4}+\phi_{2}^{5}-\phi_{1}^{6}\right)  W_{7}, \label{DV2}%
\end{align}
and
\[
\phi_{i}^{k}\left(  q\right)  :=d\phi^{k}\left(  q\right)  \left(
W_{i}\right)
\]
The reason is the following: From our construction of $W_{\alpha}$,
$\nabla_{W_{i}}W_{j}\left(  p\right)  =0$ for $1\leq i,j\leq7$, and
$\omega^{\alpha}$ are similar to the standard ones in $\operatorname{Im}%
\mathbb{O}$. Thus when $q=p$,%
\[
d\left(  i_{V_{2}}\omega^{\alpha}\right)  \otimes W_{\alpha}\left(  p\right)
|_{T_{p}A_{\varepsilon}^{\prime}}=\mathcal{D}V_{2}\left(  p\right)
dvol_{A_{\varepsilon}^{\prime}}\text{.}%
\]
If $q$ is $\varepsilon$-close to $p$, then in $\left(  \ref{almost-Dirac}%
\right)  $ the error term $E_{1}\left(  q\right)  $ is of order $\varepsilon$
in $C^{1}$ norm, because $\nabla_{W_{i}}W_{j}\left(  q\right)  =o\left(
\varepsilon\right)  $ in $C^{1}$ and span$\left\{  W_{\alpha}\left(  q\right)
\right\}  _{\alpha=1,2,3}$ has $\varepsilon$-order deviation from
$T_{q}A_{\varepsilon}^{\prime}$ in $C^{1}$.
We want to show
\begin{equation}
\mathcal{D}V_{2}\left(  q\right)  =d\widetilde{\exp}\circ\left[  \left(
h^{-1/2}\left(  z\right)  e_{1}\cdot\frac{\partial}{\partial x_{1}}%
+\bar{\partial}\right)  V_{1}\right]  +E_{2}\left(  q\right)  V_{2},
\label{Compare-Dirac}%
\end{equation}
where the error term $E_{2}\left(  q\right)  $ is of order $\varepsilon$ in
$C^{1}$ norm, and the $\bar{\partial}$ is the Dolbeault Dirac operator on
$N_{\Sigma/C}\oplus\wedge_{\mathbb{C}}^{0,1}\left(  N_{\Sigma/C}\right)  $.
Similar to our argument for $E_{1}\left(  q\right)  $, it is enough to show
\[
\mathcal{D}V_{2}\left(  p\right)  =d\widetilde{\exp}|_{\left\{  0\right\}
\times\Sigma}\circ\left[  \left(  h^{-1/2}\left(  z\right)  e_{1}\cdot
\frac{\partial}{\partial x_{1}}+\bar{\partial}\right)  V_{1}\left(
0,z\right)  \right]  .
\]
We observe that
\[
\mathcal{D}V_{2}\left(  p\right)  =DV_{2}\left(  p\right)  -\phi_{1}^{5}%
W_{4}+\phi_{1}^{4}W_{5}+\phi_{1}^{7}W_{6}-\phi_{1}^{6}W_{7}%
\]
from $\left(  \ref{DV2}\right)  $\ and $\left(
\ref{twisted-Dirac-Sigma-expression}\right)  $, where $D$ is the twisted Dirac
operator $\left(  \ref{twisted-Dirac-Sigma}\right)  $ of $N_{\Sigma/C}\oplus
N_{\mathcal{C}/M}|_{\Sigma}$ over $\Sigma$ in previous subsection. By
Proposition \ref{dirac-agree}, we have
\[
d\widetilde{\exp}|_{\left\{  0\right\}  \times\Sigma}\cdot\bar{\partial}%
V_{1}=\Phi\left(  \bar{\partial}V_{1}\right)  =DV_{2}.
\]
So it is enough to prove, at $p=\varphi\left(  0,z\right)  $, that%
\begin{equation}
d\widetilde{\exp}\circ\left[  h^{-1/2}\left(  z\right)  e_{1}\cdot
\frac{\partial}{\partial x_{1}}V_{1}\left(  0,z\right)  \right]  =\left(
-\phi_{1}^{5}W_{4}+\phi_{1}^{4}W_{5}+\phi_{1}^{7}W_{6}-\phi_{1}^{6}%
W_{7}\right)  \left(  p\right)  . \label{Compare-Dirac-n}%
\end{equation}
From
\[
V_{2}\left(  q\right)  =\Sigma_{\alpha=4}^{7}\phi^{\alpha}\left(  q\right)
W_{\alpha}\left(  q\right)  =\Sigma_{\alpha=4}^{7}\phi^{\alpha}\left(
q\right)  \cdot T_{\gamma}W_{\alpha}\left(  \varphi\left(  0,z\right)
\right)
\]
we get%
\begin{align*}
V_{1}\left(  t,z\right)   &  =\Sigma_{\alpha=4}^{7}\phi^{\alpha}\left(
\varphi\left(  t,z\right)  \right)  \cdot\left(  d\widetilde{\exp}\right)
^{-1}T_{\gamma}W_{\alpha}\left(  \varphi\left(  0,z\right)  \right) \\
&  =\Sigma_{\alpha=4}^{7}\phi^{\alpha}\left(  \varphi\left(  t,z\right)
\right)  \cdot\Phi^{-1}W_{\alpha}\left(  \varphi\left(  0,z\right)  \right)
+E_{3}\left(  q\right)  V_{2},
\end{align*}
where $\left\{  \Phi^{-1}W_{\alpha}\left(  \varphi\left(  0,z\right)  \right)
\right\}  _{\alpha=4}^{7}$ is regarded as a frame of $\mathbb{S\rightarrow
}A_{\varepsilon}$ that is invariant along $t$ direction, and $E_{3}\left(
q\right)  $ is of order $\varepsilon$ in $C^{1}$ norm. The second identity of
$V_{1}\left(  t,z\right)  $ is because the maps $d\widetilde{\exp}$ and
$T_{\gamma}$, at $t=0$, are maps $\Phi$ and the identity map respectively on
span$\left\{  W_{\alpha}\right\}  _{4\leq\alpha\leq7}$, and for $0\leq
t\leq\varepsilon$ we have the error term $E_{3}\left(  q\right)  $ of desired
order by smoothness of $\varphi,\phi^{\alpha}$ and $W_{\alpha}$. Let
\[
\psi^{\alpha}\left(  x_{1},z\right)  :=\phi^{\alpha}\left(  \varphi\left(
x_{1},z\right)  \right)  \text{, and }\psi_{1}^{\alpha}\left(  x_{1},z\right)
=\frac{\partial}{\partial x_{1}}\psi^{\alpha}\left(  x_{1},z\right)  .\text{ }%
\]
We have%
\begin{align}
&  e_{1}\cdot\frac{\partial}{\partial x_{1}}\left[  \Sigma_{\alpha=4}^{7}%
\psi^{\alpha}\left(  x_{1},z\right)  \cdot\Phi^{-1}W_{\alpha}\left(
\varphi\left(  0,z\right)  \right)  \right] \nonumber\\
&  =\Phi^{-1}\left[  W_{1}\times\Sigma_{\alpha=4}^{7}\psi_{1}^{\alpha}\left(
x_{1},z\right)  \cdot W_{\alpha}\left(  \varphi\left(  0,z\right)  \right)
\right] \nonumber\\
&  =\Phi^{-1}\left[  -\psi_{1}^{5}W_{4}+\psi_{1}^{4}W_{5}+\psi_{1}^{7}%
W_{6}-\psi_{1}^{6}W_{7}\right]  . \label{DV1}%
\end{align}
By the chain rule, for $4\leq\alpha\leq7$,
\begin{equation}
\psi_{1}^{\alpha}\left(  0,z\right)  =\nabla_{W_{1}}\phi^{\alpha}\left(
\varphi\left(  0,z\right)  \right)  \cdot h\left(  z\right)  ^{\frac{1}{2}%
}=h\left(  z\right)  ^{\frac{1}{2}}\phi_{1}^{\alpha}\left(  \varphi\left(
0,z\right)  \right)  \label{Chain-rule}%
\end{equation}
where the factor $h\left(  z\right)  ^{\frac{1}{2}}$ is from $\left\langle
W_{1},\frac{d}{dt}\varphi\left(  t,z\right)  |_{t=0}\right\rangle =h\left(
z\right)  ^{\frac{1}{2}}$. Comparing $\left(  \ref{Compare-Dirac-n}\right)  $
and $\left(  \ref{DV1}\right)  $, and noticing $\left(  \ref{Chain-rule}%
\right)  $, we have%
\[
d\widetilde{\exp}|_{\left\{  0\right\}  \times\Sigma}\circ\mathcal{D}%
V_{1}=\mathcal{D}V_{2}.
\]
Therefore for $q$ that is $\varepsilon$-close to $p$, claim $\left(
\ref{relate-linear-model}\right)  $\ is proved. So we get
\begin{align}
&  \left\Vert d\left(  i_{V_{2}}\omega^{\alpha}\right)  \otimes W_{\alpha
}-\left(  d\widetilde{\exp}\circ\mathcal{D}V_{1}\right)  \otimes
dvol_{A_{\varepsilon}^{\prime}}\right\Vert _{C^{\alpha}\left(  U_{\varepsilon
}\left(  p\right)  \right)  }\nonumber\\
&  =\left\Vert E\left(  q\right)  V_{2}\right\Vert _{C^{\alpha}\left(
U_{\varepsilon}\left(  p\right)  \right)  }\nonumber\\
&  \leq C_{1}\varepsilon^{1-\alpha}\left\Vert V_{2}\right\Vert _{C^{1,\alpha
}\left(  U_{\varepsilon}\left(  p\right)  \right)  },
\label{1st-order-Deviation}%
\end{align}
where the constant $C_{1}$ is uniform for all $\varepsilon$.
The remaining term
\[
BV_{2}:=i_{V_{2}}d\omega^{\alpha}\otimes W_{\alpha}+\omega^{\alpha}%
\otimes\nabla_{V_{2}}W_{\alpha}%
\]
is a $0$-th order linear operator on $V_{2}$, where $B=B\left(  q\right)  $ is
a smooth tensor on $A_{\varepsilon}^{\prime}$. Since $\nabla\tau=0$, same as
in our proof in Theorem \ref{McLean-thm}, we have
\[
i_{W}d\omega^{\alpha}\left(  p\right)  =\nabla_{W}W_{\alpha}\left(  p\right)
=0\text{ for all }W\in N_{A_{\varepsilon}^{\prime}/M}\left(  p\right)  ,
\]
so we have $B\left(  p\right)  |_{N_{A_{\varepsilon}^{\prime}/M}\left(
p\right)  }=0$. Since dist$\left(  p,q\right)  \leq C_{0}\varepsilon$ for some
uniform constant $C_{0}$, and $N_{A_{\varepsilon}^{\prime}/M}$ is a smooth
fiber bundle over $A_{\varepsilon}^{\prime}$, we conclude that the operator
norm
\[
\left\Vert B\left(  q\right)  |_{N_{A_{\varepsilon}^{\prime}/M}\left(
q\right)  }\right\Vert \leq C_{2}\varepsilon
\]
for some uniform constant $C_{2}$. From this it is easy to prove
\begin{equation}
\left\Vert BV_{2}\right\Vert _{C^{\alpha}\left(  U_{\varepsilon}\left(
p\right)  \right)  }\leq C_{3}\varepsilon^{1-\alpha}\left\Vert V_{2}%
\right\Vert _{C^{1,\alpha}\left(  A_{\varepsilon}^{\prime},N_{A_{\varepsilon
}^{\prime}/M}\right)  }.\label{0th-order-deviation}%
\end{equation}
The constant $C_{3}$ is uniform for all $\varepsilon$, by the compactness of
$\Sigma$ and finite covering of $A_{\varepsilon}^{\prime}$ by $U_{\varepsilon
}\left(  p\right)  $.
Putting $\left(  \ref{0th-order-deviation}\right)  $ and $\left(
\ref{1st-order-Deviation}\right)  $ in $\left(  \ref{DFV-q}\right)  $, we have%
\begin{align*}
&  \left\Vert F^{\prime}\left(  0\right)  V_{2}-\left(  d\widetilde{\exp}%
\cdot\mathcal{D}V_{1}\right)  \otimes dvol_{A_{\varepsilon}^{\prime}%
}\right\Vert _{C^{\alpha}\left(  U_{\varepsilon}\left(  p\right)  \right)  }\\
&  \leq C_{4}\varepsilon^{1-\alpha}\left\Vert V_{2}\right\Vert _{C^{1,\alpha
}\left(  A_{\varepsilon}^{\prime},N_{A_{\varepsilon}^{\prime}/M}\right)  }\\
&  \leq C_{5}\varepsilon^{1-\alpha}\left\Vert V_{1}\right\Vert _{C^{1,\alpha
}\left(  A_{\varepsilon},\mathbb{S}\right)  }%
\end{align*}
where the constants $C_{4}$ and $C_{5}$ are uniform for all $\varepsilon$.
Using finitely many $U_{\varepsilon}\left(  p\right)  $ covering
$A_{\varepsilon}^{\prime}$ and then taking supremum, we have
\begin{equation}
\left\Vert F^{\prime}\left(  0\right)  V_{2}-\left(  d\widetilde{\exp}%
\cdot\mathcal{D}V_{1}\right)  \otimes dvol_{A_{\varepsilon}^{\prime}%
}\right\Vert _{C^{\alpha}\left(  A_{\varepsilon}^{\prime}\right)  }\leq
C\varepsilon^{1-\alpha}\left\Vert V_{1}\right\Vert _{C^{1,\alpha}\left(
A_{\varepsilon},\mathbb{S}\right)  }. \label{almost-Dirac1}%
\end{equation}
Applying $P_{\varepsilon}$ on the left hand side of the above inequality, and
noticing that $\left(  d\widetilde{\exp}\right)  |_{\left\{  0\right\}
\times\Sigma}\circ\mathcal{D}V_{1}=\mathcal{D}V_{2}$ is a section of
$N_{A_{\varepsilon}^{\prime}/M}$, we have
\[
P_{\varepsilon}\circ\left(  \left(  d\widetilde{\exp}\right)  \circ
\mathcal{D}V_{1}\left(  t,z\right)  \otimes dvol_{A_{\varepsilon}^{\prime}%
}\right)  =\left(  d\widetilde{\exp}\right)  \circ\mathcal{D}V_{1}\left(
t,z\right)  +E_{3}\left(  q\right)  V_{1},
\]
where $E_{3}\left(  q\right)  $ is of order $\varepsilon$ in $C^{1}$ norm,
because at $q=\varphi\left(  t,z\right)  $ the vector $\left(  d\widetilde
{\exp}\right)  \circ\mathcal{D}V_{1}\left(  t,z\right)  $ may have component
of order $\varepsilon$ in $C^{1}$ norm orthogonal to $N_{A_{\varepsilon
}^{\prime}/M}\left(  q\right)  $. So we get
\begin{align}
&  \left\Vert F_{\varepsilon}^{\prime}\left(  0\right)  V_{2}-\left(
d\widetilde{\exp}\right)  \circ\mathcal{D}V_{1}\right\Vert _{C^{\alpha}\left(
A_{\varepsilon}^{\prime},N_{A_{\varepsilon}^{\prime}/M}\right)  }\nonumber\\
&  \leq\left\Vert P_{\varepsilon}\cdot\left(  F^{\prime}\left(  0\right)
V_{2}-\left(  d\widetilde{\exp}\right)  \circ\mathcal{D}V_{1}\otimes
dvol_{A_{\varepsilon}^{\prime}}\right)  \right\Vert _{C^{\alpha}\left(
A_{\varepsilon}^{\prime},N_{A_{\varepsilon}^{\prime}/M}\right)  }+\left\Vert
E_{3}\left(  q\right)  V_{1}\right\Vert _{C^{\alpha}\left(  A_{\varepsilon
}^{\prime}\right)  }\nonumber\\
&  \leq C\varepsilon^{1-\alpha}\left\Vert V_{1}\right\Vert _{C^{1,\alpha
}\left(  A_{\varepsilon},\mathbb{S}\right)  } \nonumber\label{almost-Dirac2}%
\end{align}
where $C$ is a uniform constant independent on $\varepsilon$.
\end{proof}

\begin{proposition}
\label{DFe-inverse-bd}There exists a right inverse $\widetilde{Q_{\varepsilon
}\bigskip}^{true}$ of $F_{\varepsilon}^{\prime}\left(  0\right)  $, such that
$\left\Vert \widetilde{Q_{\varepsilon}\bigskip}^{true}\right\Vert \leq
C\varepsilon^{-\left(  \frac{3}{p}+2\alpha\right)  }$, where the constant $C$
is uniform for all $0<\varepsilon\leq\varepsilon_{0}$.
\end{proposition}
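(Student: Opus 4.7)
The plan is to construct $\widetilde{Q_{\varepsilon}}^{\text{true}}$ by first building an approximate right inverse of $F_{\varepsilon}^{\prime}(0)$ by transferring the right inverse $Q_{\varepsilon}$ of $\mathcal{D}$ (from Theorem~\ref{e-inverse-bound}) across the bundle isomorphism $d\widetilde{\exp}:\mathbb{S}\rightarrow N_{\mathtt{A}_{\varepsilon}^{\prime}/M}$, and then correcting it to a genuine right inverse via a Neumann series. Concretely, set
\[
\widetilde{Q_{\varepsilon}}:=d\widetilde{\exp}\circ Q_{\varepsilon}\circ(d\widetilde{\exp})^{-1}:C^{\alpha}\left(A_{\varepsilon}^{\prime},N_{\mathtt{A}_{\varepsilon}^{\prime}/M}\right)\rightarrow C^{1,\alpha}_{-}\left(A_{\varepsilon}^{\prime},N_{\mathtt{A}_{\varepsilon}^{\prime}/M}\right).
\]
Since $d\widetilde{\exp}$ and its inverse are uniformly bounded in $C^{k,\alpha}$-norms (for $k=0,1$) as $\varepsilon\to 0$ (the map was designed so that its derivative at $\{0\}\times\Sigma$ is the fixed isomorphism $\Phi$, and for small $\varepsilon$ the derivative on all of $\mathtt{A}_{\varepsilon}$ is an $O(\varepsilon)$ perturbation of this), Theorem~\ref{e-inverse-bound} gives $\|\widetilde{Q_{\varepsilon}}\|\leq C\varepsilon^{-(\frac{3}{p}+2\alpha)}$.

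Next I would estimate the error operator $E_{\varepsilon}:=F_{\varepsilon}^{\prime}(0)\circ\widetilde{Q_{\varepsilon}}-\mathrm{id}$. For $W\in C^{\alpha}(A_{\varepsilon}^{\prime},N_{\mathtt{A}_{\varepsilon}^{\prime}/M})$, put $V_{1}=Q_{\varepsilon}\circ(d\widetilde{\exp})^{-1}(W)$ and $V_{2}=d\widetilde{\exp}\circ V_{1}$, so $\mathcal{D}V_{1}=(d\widetilde{\exp})^{-1}(W)$ and thus $d\widetilde{\exp}\circ\mathcal{D}V_{1}=W$. Applying the second inequality of Proposition~\ref{compare-linear-model},
\[
\|E_{\varepsilon}W\|_{C^{\alpha}}=\|F_{\varepsilon}^{\prime}(0)V_{2}-d\widetilde{\exp}\circ\mathcal{D}V_{1}\|_{C^{\alpha}}\leq C\varepsilon^{1-\alpha}\|V_{1}\|_{C^{1,\alpha}(A_{\varepsilon},\mathbb{S})}.
\]
Using $\|V_{1}\|_{C^{1,\alpha}}\leq\|Q_{\varepsilon}\|\cdot\|(d\widetilde{\exp})^{-1}W\|_{C^{\alpha}}\leq C\varepsilon^{-(\frac{3}{p}+2\alpha)}\|W\|_{C^{\alpha}}$, we obtain the operator-norm bound
\[
\|E_{\varepsilon}\|\leq C\varepsilon^{1-\alpha-\frac{3}{p}-2\alpha}=C\varepsilon^{1-3\alpha-\frac{3}{p}}.
\]

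The exponent $1-3\alpha-\frac{3}{p}$ is strictly positive by the standing assumption $\frac{3}{p}+3\alpha<\frac{1}{2}$ fixed at the end of Section~\ref{linear}; indeed $1-3\alpha-\frac{3}{p}>\frac{1}{2}$. Hence for all $\varepsilon$ sufficiently small we have $\|E_{\varepsilon}\|\leq\frac{1}{2}$, so $\mathrm{id}+E_{\varepsilon}=F_{\varepsilon}^{\prime}(0)\circ\widetilde{Q_{\varepsilon}}$ is invertible on $C^{\alpha}(A_{\varepsilon}^{\prime},N_{\mathtt{A}_{\varepsilon}^{\prime}/M})$ by a Neumann series, with $\|(I+E_{\varepsilon})^{-1}\|\leq 2$. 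Setting
\[
\widetilde{Q_{\varepsilon}}^{\text{true}}:=\widetilde{Q_{\varepsilon}}\circ(I+E_{\varepsilon})^{-1}
\]
yields $F_{\varepsilon}^{\prime}(0)\circ\widetilde{Q_{\varepsilon}}^{\text{true}}=\mathrm{id}$, and $\|\widetilde{Q_{\varepsilon}}^{\text{true}}\|\leq 2\|\widetilde{Q_{\varepsilon}}\|\leq C\varepsilon^{-(\frac{3}{p}+2\alpha)}$, which is the desired bound.

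The main delicate point is ensuring that when transferring norms through $d\widetilde{\exp}$ the comparison constants are $\varepsilon$-independent, so that the gain $\varepsilon^{1-\alpha}$ from Proposition~\ref{compare-linear-model} genuinely dominates the loss $\varepsilon^{-(\frac{3}{p}+2\alpha)}$ from $Q_{\varepsilon}$; this is precisely why the restrictions on $p$ and $\alpha$ were imposed. Everything else is routine Banach-space bookkeeping built on top of Theorem~\ref{e-inverse-bound} and Proposition~\ref{compare-linear-model}.
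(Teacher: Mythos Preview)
Your proof is correct and follows essentially the same approach as the paper: conjugate $Q_{\varepsilon}$ by $d\widetilde{\exp}$ to get an approximate right inverse $\widetilde{Q_{\varepsilon}}$, use Proposition~\ref{compare-linear-model} together with the bound on $\|Q_{\varepsilon}\|$ from Theorem~\ref{e-inverse-bound} to show $\|F_{\varepsilon}'(0)\widetilde{Q_{\varepsilon}}-\mathrm{id}\|\leq C\varepsilon^{1-\alpha-(3/p+2\alpha)}<\tfrac{1}{2}$, and then correct via a Neumann series. The only cosmetic difference is that the paper packages the error as $(F_{\varepsilon}'(0)-\widetilde{\mathcal{D}})\widetilde{Q_{\varepsilon}}$ with $\widetilde{\mathcal{D}}=d\widetilde{\exp}\circ\mathcal{D}\circ(d\widetilde{\exp})^{-1}$, whereas you unwind this directly on a test section $W$; the computations are identical.
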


\begin{proof}
From Theorem \ref{e-inverse-bound}, for operator $\mathcal{D}$ on spinor
bundle $\mathbb{S}$ over $\mathtt{A}_{\varepsilon}$, there is a right inverse
$Q_{\varepsilon}$ of $\mathcal{D}$ such that $\left\Vert Q_{\varepsilon
}\right\Vert \leq C\varepsilon^{-\left(  \frac{3}{p}+2\alpha\right)  }$. Let
\begin{align*}
\widetilde{\mathcal{D}}  &  =d\widetilde{\exp}\circ\mathcal{D}\circ\left(
d\widetilde{\exp}\right)  ^{-1},\\
\widetilde{Q_{\varepsilon}}  &  =d\widetilde{\exp}\circ Q_{\varepsilon}%
\circ\left(  d\widetilde{\exp}\right)  ^{-1},
\end{align*}
then
\[
\left\Vert \widetilde{Q_{\varepsilon}}\right\Vert \leq\left\Vert
d\widetilde{\exp}\right\Vert \cdot C\varepsilon^{-\left(  \frac{3}{p}%
+2\alpha\right)  }\cdot\left\Vert \left(  d\widetilde{\exp}\right)
^{-1}\right\Vert \leq C\varepsilon^{-\left(  \frac{3}{p}+2\alpha\right)  },
\]
and
\begin{align*}
F_{\varepsilon}^{\prime}\left(  0\right)  \widetilde{Q_{\varepsilon}}-id  &
=\left(  F_{\varepsilon}^{\prime}\left(  0\right)  -\widetilde{\mathcal{D}%
}+\widetilde{\mathcal{D}}\right)  \widetilde{Q_{\varepsilon}}-id\\
&  =\left(  F_{\varepsilon}^{\prime}\left(  0\right)  -\widetilde{\mathcal{D}%
}\right)  \widetilde{Q_{\varepsilon}}+\widetilde{\mathcal{D}}\widetilde
{Q_{\varepsilon}}-id\\
&  =\left(  F_{\varepsilon}^{\prime}\left(  0\right)  -\widetilde{\mathcal{D}%
}\right)  \widetilde{Q_{\varepsilon}},
\end{align*}
where the last identity is because $\widetilde{\mathcal{D}}\widetilde
{Q_{\varepsilon}}=d\widetilde{\exp}\circ\mathcal{D}Q_{\varepsilon}\circ\left(
d\widetilde{\exp}\right)  ^{-1}=id$. From the previous proposition $\left\Vert
\left(  F_{\varepsilon}^{\prime}\left(  0\right)  -\widetilde{\mathcal{D}%
}\right)  \right\Vert \leq C\varepsilon^{1-\alpha}.$ Therefore
\begin{align*}
\left\Vert F_{\varepsilon}^{\prime}\left(  0\right)  \widetilde{Q_{\varepsilon
}}-id\right\Vert  &  \leq\left\Vert \left(  F_{\varepsilon}^{\prime}\left(
0\right)  -\widetilde{\mathcal{D}}\right)  \right\Vert \left\Vert
\widetilde{Q_{\varepsilon}}\right\Vert \\
&  \leq C\varepsilon^{1-\alpha}\cdot C\varepsilon^{-\left(  \frac{3}%
{p}+2\alpha\right)  }<\frac{1}{2}%
\end{align*}
when $\varepsilon$ is sufficiently small, by our assumption that $1-\left(
\frac{3}{p}+3\alpha\right)  >0$. So $\widetilde{Q_{\varepsilon}}$ is an
approximate right inverse of $F_{\varepsilon}^{\prime}\left(  0\right)  $. The
true right inverse $\widetilde{Q}^{true}$ of $F_{\varepsilon}^{\prime}\left(
0\right)  $ is $\widetilde{Q}^{true}=\widetilde{Q_{\varepsilon}}\left(
F_{\varepsilon}^{\prime}\left(  0\right)  \widetilde{Q_{\varepsilon}}\right)
^{-1}$and
\[
\left\Vert \widetilde{Q}^{true}\right\Vert \leq\left\Vert \widetilde
{Q_{\varepsilon}}\right\Vert \left\Vert \left(  F_{\varepsilon}^{\prime
}\left(  0\right)  \widetilde{Q_{\varepsilon}}\right)  ^{-1}\right\Vert \leq
C\varepsilon^{-\left(  \frac{3}{p}+2\alpha\right)  }\cdot2\leq C\varepsilon
^{-\left(  \frac{3}{p}+2\alpha\right)  }.
\]
\end{proof}

\subsection{Quadratic Estimates\label{quadratic}}

\begin{proposition}
\label{quadratic-estimate}There exists $\delta_{0}>0$ such that for all
sections $V_{0},V$ of $N_{A_{\varepsilon}^{\prime}/M}$ that $\left\Vert
V_{0}\right\Vert _{C^{1,\alpha}\left(  A_{\varepsilon}^{\prime}%
,N_{A_{\varepsilon}^{\prime}/M}\right)  }<\delta_{0}$, we have%
\begin{align}
&  \left\Vert F_{\varepsilon}^{\prime}\left(  V_{0}\right)  V-F_{\varepsilon
}^{\prime}\left(  0\right)  V\right\Vert _{C^{\alpha}\left(  A_{\varepsilon
}^{\prime},N_{A_{\varepsilon}^{\prime}/M}\right)  }\nonumber\\
&  \leq C\left\Vert V_{0}\right\Vert _{C^{1,\alpha}\left(  A_{\varepsilon
}^{\prime},N_{A_{\varepsilon}^{\prime}/M}\right)  }\left\Vert V\right\Vert
_{C^{1,\alpha}\left(  A_{\varepsilon}^{\prime},N_{A_{\varepsilon}^{\prime}%
/M}\right)  ,} \label{quadratic-estmt}%
\end{align}
where the constant $C$ is independent on $\varepsilon$.
\end{proposition}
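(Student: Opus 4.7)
The plan is to reduce $(\ref{quadratic-estmt})$ to a pointwise first-order Taylor expansion of a smooth nonlinearity, followed by the standard $C^{\alpha}$ algebra property. Pointwise, $F_{\varepsilon}(V)(p)$ depends only on the $1$-jet of $V$ at $p$ since it is built from three operations, each of which depends on at most the $1$-jet: the pullback $(\exp^{g_{\varepsilon}}V)^{\ast}\tau$ brings in $d(\exp^{g_{\varepsilon}}V)$, which involves $V(p)$ and $\nabla V(p)$; the parallel transport $T_{V}$ is determined by $V(p)$ alone; and the projection $\bot_{\mathtt{A}_{\varepsilon}^{\prime}}$ together with the quotient $\ast_{\mathtt{A}_{\varepsilon}^{\prime}}$ act fiberwise. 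Thus, using the good local frames constructed around each $p\in\mathtt{A}_{\varepsilon}^{\prime}$, I can write
\[
F_{\varepsilon}(V)(p)=\Phi\bigl(p;V(p),\nabla V(p)\bigr),
\]
where $\Phi(p;\xi,\zeta)$ is a smooth $N_{\mathtt{A}_{\varepsilon}^{\prime}/M}$-valued function of $(\xi,\zeta)$ on the ball $|\xi|<\delta_{0}$, with $\delta_{0}$ bounded below by the injectivity radius of $g_{\varepsilon}$.

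Differentiating at $V_{0}$ gives
\[
F_{\varepsilon}^{\prime}(V_{0})V(p)=A\bigl(p,V_{0},\nabla V_{0}\bigr)V(p)+B\bigl(p,V_{0},\nabla V_{0}\bigr)\nabla V(p),
\]
with $A=\partial_{\xi}\Phi$ and $B=\partial_{\zeta}\Phi$, which are themselves smooth in their arguments. Therefore by Taylor's theorem applied to $A$ and $B$ at $(\xi,\zeta)=(0,0)$,
\[
A(p,V_{0},\nabla V_{0})-A(p,0,0)=\int_{0}^{1}\partial_{(\xi,\zeta)}A\bigl(p,sV_{0},s\nabla V_{0}\bigr)(V_{0},\nabla V_{0})\,ds,
\]
and similarly for $B$. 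The key step is then to apply the $C^{\alpha}$-algebra inequality $\|fg\|_{C^{\alpha}}\leq C(\|f\|_{C^{0}}\|g\|_{C^{\alpha}}+\|f\|_{C^{\alpha}}\|g\|_{C^{0}})$, combined with the composition estimate $\|\psi(V_{0},\nabla V_{0})\|_{C^{\alpha}}\leq C\|V_{0}\|_{C^{1,\alpha}}$ for smooth $\psi$ vanishing at $(0,0)$, to obtain
\[
\|F_{\varepsilon}^{\prime}(V_{0})V-F_{\varepsilon}^{\prime}(0)V\|_{C^{\alpha}}\leq C\|V_{0}\|_{C^{1,\alpha}}\|V\|_{C^{1,\alpha}}.
\]

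Finally, I must verify $\varepsilon$-independence of $C$. This requires that (i) the smooth function $\Phi$ and its derivatives of every order are bounded uniformly in $p\in\mathtt{A}_{\varepsilon}^{\prime}$ and $\varepsilon\in[0,\varepsilon_{0}]$, and (ii) the $C^{\alpha}$ algebra constants do not degenerate as $\varepsilon\to 0$. For (i) I appeal to the fact that $\{g_{\varepsilon}\}_{0\leq\varepsilon\leq\varepsilon_{0}}$ is a $C^{1}$-continuous family of metrics on $M$, so the exponential maps $\exp^{g_{\varepsilon}}$ and parallel transports $T_{V}$ have bounds uniform in $\varepsilon$ on fixed compact neighborhoods; combined with the compactness of $\Sigma$ and the smooth family of embeddings $\varphi$, this gives uniform bounds on $\Phi$ and its derivatives. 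Point (ii) is standard since the Hölder seminorm is defined pointwise; the algebra constant is universal and independent of the thin direction's size. The main obstacle to monitor is that the $C^{1,\alpha}$-norm on $\mathtt{A}_{\varepsilon}^{\prime}$ uses the induced (nearly warped-product) metric, so I must check that covariant derivatives $\nabla V_{0}$ in the thin $x_{1}$-direction, which carry the factor $h^{-1/2}(z)$, remain comparable to the corresponding derivatives on $\mathtt{A}_{\varepsilon}$; this comparison is controlled by $(1\pm K\varepsilon)$-quasi-isometry of $\varphi^{\ast}g_{M}$ with $g_{\mathtt{A}_{\varepsilon},h}$ already established, so no $\varepsilon$-dependence is introduced.
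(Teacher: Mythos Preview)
Your approach is correct and takes a genuinely different route from the paper's proof. The paper proceeds by an explicit geometric computation: it writes $F'(V_0)V$ via the chain rule applied to $T_V\circ(\exp V)^*\tau$, expresses $\exp_p(V_0+tV)$ as a composition through $q=\exp_pV_0$, invokes a Gauss--Bonnet holonomy formula to compare parallel transports along the two geodesic paths, and then splits $F'(V_0)V-F'(0)V$ into a principal-symbol piece $(\exp V_0)^*d(i_{V_1}\omega^\alpha)-d(i_V\omega^\alpha)$ (handled by a dedicated lemma comparing pullbacks) and a zeroth-order piece that visibly vanishes at $V_0=0$. Your argument instead abstracts the situation: since $F_\varepsilon(V)(p)$ depends smoothly on the $1$-jet of $V$ at $p$, one may write $F_\varepsilon(V)(p)=\Phi(p;V(p),\nabla V(p))$ and obtain the quadratic bound by a one-line Taylor expansion of the coefficient functions $A=\partial_\xi\Phi$, $B=\partial_\zeta\Phi$, followed by the $C^\alpha$ product and composition inequalities.

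What each buys: the paper's computation is more transparent geometrically and yields the pointwise inequality in Remark~\ref{Schauder-over-Lp}, which explains concretely why the cubic structure of the $3$-form $\tau$ forces the Schauder setting over $L^p$. Your route is shorter and more robust---it requires no structure beyond ``smooth first-order operator on a compact family''---and the $\varepsilon$-independence falls out exactly as you say, from the $C^1$-compactness of $\{g_\varepsilon\}$ and of the embeddings $\varphi|_{[0,\varepsilon]\times\Sigma}$. Both proofs ultimately rest on the same compactness mechanism for the uniform constant, which the paper itself acknowledges in the remark following Lemma~\ref{F-derivative-agree}.
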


\begin{proof}
Because $F_{\varepsilon}\left(  V\right)  =P_{\varepsilon}\circ F\left(
V\right)  $ and $P_{\varepsilon}$ is a bounded linear operator independent on
$V$, it is enough to prove the quadratic estimate of $F\left(  V\right)  $,
namely%
\begin{align*}
&  \left\Vert F^{\prime}\left(  V_{0}\right)  V-F^{\prime}\left(  0\right)
V\right\Vert _{C^{\alpha}\left(  A_{\varepsilon}^{\prime},N_{A_{\varepsilon
}^{\prime}/M}\right)  }\\
&  \leq C\left\Vert V_{0}\right\Vert _{C^{1,\alpha}\left(  A_{\varepsilon
}^{\prime},N_{A_{\varepsilon}^{\prime}/M}\right)  }\left\Vert V\right\Vert
_{C^{1,\alpha}\left(  A_{\varepsilon}^{\prime},N_{A_{\varepsilon}^{\prime}%
/M}\right)  \text{.}}%
\end{align*}
For any $p=\varphi\left(  t,z\right)  $ on $A_{\varepsilon}^{\prime}$ , we
choose a normal frame field $\left\{  W_{\alpha}\right\}  _{\alpha=1,2,...,7}$
in its neighborhood $B_{\delta_{0}}\left(  p\right)  $ where $\tau
=\omega^{\alpha}\otimes W_{a}$, with $\delta_{0}=$ the injectivity radius of
$M$. For the point $q:=\exp_{p}V_{0}$ and if $\left\vert V_{0}\right\vert
<\delta_{0}$, then we can assume this neighborhood covers $q$. Let the
submanifold
\[
A_{\varepsilon}\left(  V\right)  =:\left(  \exp V\right)  \left(
A_{\varepsilon}^{\prime}\right)  ,
\]
which is similar to $\left(  \ref{A_e_V}\right)  $, except that the section
$V$ here is in $N_{A_{\varepsilon}^{\prime}/M}$ instead of $\mathbb{S}$, and
the $\exp:N_{A_{\varepsilon}^{\prime}/M}\rightarrow M$ is the exponential map
on $M$. For the family of embeddings of submanifolds%
\[
\psi_{t}:=\exp\left(  V_{0}+tV\right)  :\mathtt{A}_{\varepsilon}\rightarrow
A_{\varepsilon}\left(  V_{0}+tV\right)  \subset M,
\]
we have
\begin{align*}
\psi_{0}\left(  \mathtt{A}_{\varepsilon}\right)   &  =A_{\varepsilon}\left(
V_{0}\right)  ,\\
\left.  \frac{d}{dt}\right\vert _{t=0}\psi_{t}\left(  q\right)   &  =\left(
d\exp_{^{p}}V_{0}\right)  V\left(  p\right)  :=V_{1}\left(  q\right)
\end{align*}
for any $p\in A_{\varepsilon}^{\prime}=A_{\varepsilon}\left(  0\right)  $.
Arguing as in Theorem \ref{McLean-thm}, from $\nabla\tau=0$ we have $\nabla
W_{\alpha}\left(  q\right)  =d\omega^{\alpha}\left(  q\right)  =0$. We have%
\begin{align}
&  F^{\prime}\left(  V_{0}\right)  V\left(  p\right)  \nonumber\\
&  =\left.  \frac{d}{dt}\right\vert _{t=0}F\left(  V_{0}+tV\right)  \left(
p\right)  \nonumber\\
&  =\left.  \frac{d}{dt}\right\vert _{t=0}\left[  \exp\left(  V_{0}+tV\right)
^{\ast}\omega^{\alpha}\left(  p\right)  \otimes T_{V_{0}+tV}W_{\alpha}\left(
\exp_{p}\left(  V_{0}+tV\right)  \right)  \right]  \nonumber\\
&  =\left.  \frac{d}{dt}\right\vert _{t=0}\left(  \exp\left(  V_{0}+tV\right)
^{\ast}\omega^{\alpha}\right)  \otimes W_{\alpha}\left(  p\right)  \nonumber\\
&  +\left(  \exp V_{0}\right)  ^{\ast}\omega^{\alpha}\left(  p\right)
\otimes\left.  \frac{d}{dt}\right\vert _{t=0}T_{V_{0}+tV}W_{\alpha}\left(
\exp_{p}\left(  V_{0}+tV\right)  \right)  ,\label{DF-2-terms}%
\end{align}
where in the last equality we have used $T_{V_{0}+tV}W_{\alpha}\left(
\exp_{p}\left(  V_{0}+tV\right)  \right)  =W_{\alpha}\left(  p\right)  $ by
the parallel property of $W_{\alpha}$.
For the first term of $\left(  \ref{DF-2-terms}\right)  $, by the property of
the exponential map, namely $d\exp_{q}\left(  0\right)  =id_{T_{q}M\text{ }}%
,$we have
\[
\exp_{p}\left(  V_{0}+tV\right)  =\exp_{q}\left(  tV_{1}+t\left\vert
V\right\vert \beta\left(  V_{0,}V,t\right)  \right)  \circ\exp_{p}V_{0}%
\]
where $V_{1}=\left(  d\exp_{A_{\varepsilon}^{\prime}}V_{0}\right)  V$ is a
vector field on $A_{\varepsilon}\left(  V_{0}\right)  $, $\beta\left(
V_{0,}V,t\right)  $ and $\beta^{\prime}\left(  V_{0,}V,t\right)  $ are
uniformly bounded, and $\lim_{t\rightarrow0}\beta\left(  V_{0,}V,t\right)
=0$. Therefore
\begin{align*}
&  \left.  \frac{d}{dt}\right\vert _{t=0}\exp\left(  V_{0}+tV\right)  ^{\ast
}\omega^{\alpha}\left(  p\right)  \\
&  =\left.  \frac{d}{dt}\right\vert _{t=0}\left(  \exp_{A_{\varepsilon
}^{\prime}}V_{0}\right)  ^{\ast}\exp_{A_{\varepsilon}\left(  V_{0}\right)
}\left(  tV_{1}+t\left\vert V\right\vert \beta\left(  V_{0,}V,t\right)
\right)  ^{\ast}\omega^{\alpha}\\
&  =\left(  \exp_{p}V_{0}\right)  ^{\ast}L_{V_{1}}\omega^{\alpha}\\
&  =\left(  \exp_{p}V_{0}\right)  ^{\ast}\left(  d\left(  i_{V_{1}}%
\omega^{\alpha}\right)  +i_{V_{1}}d\omega^{\alpha}\right)  ,
\end{align*}
where in the third equality we have used that $\lim_{t\rightarrow0}%
\beta\left(  V_{0,}V,t\right)  =0$. For the second term of $\left(
\ref{DF-2-terms}\right)  $, by Gauss-Bonnet formula, which compares the
parallel transports along different paths by curvature integration, we have
\begin{align*}
&  T_{V_{0}+tV}W_{\alpha}\left(  \exp_{p}\left(  V_{0}+tV\right)  \right)  \\
&  =T_{V_{0}}\circ T_{tV_{1}+t\left\vert V\right\vert \beta\left(
V_{0,}V,t\right)  }W_{\alpha}\left(  \exp_{q}\left(  tV_{1}+t\left\vert
V\right\vert \beta\left(  V_{0,}V,t\right)  \right)  \right)  \\
&  +\int_{\Delta\left(  V_{0,}tV\right)  }R\left(  \sigma\right)  d\sigma
W_{\alpha}\left(  p\right)  ,\text{ \ }%
\end{align*}
where $R\left(  \sigma\right)  $ is the Ricci curvature tensor, $\Delta\left(
V_{0,}tV\right)  $ is the $2$-dimensional geodesic triangle with the vertices
$p,\exp_{p}V_{0},$ and $\exp_{p}\left(  V_{0}+tV\right)  $, and $\sigma$ is
the area element. Therefore%
\begin{align*}
&  \left.  \frac{d}{dt}\right\vert _{t=0}T_{V_{0}+tV}W_{\alpha}\left(
\exp_{p}\left(  V_{0}+tV\right)  \right)  \\
&  =T_{V_{0}}\circ\left.  \frac{d}{dt}\right\vert _{t=0}T_{tV_{1}}W_{\alpha
}\left(  \exp_{q}tV_{1}\right)  +\left.  \frac{d}{dt}\right\vert _{t=0}%
\int_{\Delta\left(  V_{0,}tV\right)  }R\left(  \sigma\right)  d\sigma
W_{\alpha}\left(  p\right)  ,
\end{align*}
where in the first term we have dropped the higher order term $t\left\vert
V\right\vert \beta\left(  V_{0,}V,t\right)  $ since it is irrelevant for
derivative at $0$. If we denote
\[
B\left(  V_{0},V\right)  :=\left.  \frac{d}{dt}\right\vert _{t=0}\int
_{\Delta\left(  V_{0,}tV\right)  }R\left(  \sigma\right)  d\sigma,
\]
by the area formula of $\Delta\left(  V_{0,}tV\right)$ it can be shown that%
\[
\left\vert B\left(  V_{0},V\right)  W_{\alpha}\left(  p\right)  \right\vert
\leq C_{5}\left\vert V_{0}\right\vert \left\vert V\right\vert \left\vert
W_{\alpha}\right\vert \left(  p\right)  ,
\]
where $C_{5}$ is a constant only depending on $\left(  M,g\right)  $. Plugging
these in $\left(  \ref{DF-2-terms}\right)  $, we have
\begin{align}
&  F^{\prime}\left(  V_{0}\right)  |_{A_{\varepsilon\left(  0\right)  }%
}V\left(  p\right)  \nonumber\\
&  =\left(  \exp V_{0}\right)  ^{\ast}L_{V_{1}}\omega^{\alpha}\otimes
W_{\alpha}\left(  p\right)  +\left(  \exp V_{0}\right)  ^{\ast}\omega^{\alpha
}\otimes T_{V_{0}}\nabla_{V_{1}}W_{\alpha}\left(  q\right)  \nonumber\\
&  \text{ \ \ }+B\left(  V_{0},V\right)  W_{\alpha}\left(  p\right)
\nonumber\\
&  =\left(  \left(  \exp V_{0}\right)  ^{\ast}\otimes T_{V_{0}}\right)
\circ\left[  L_{V_{1}}\omega^{\alpha}\otimes W_{\alpha}\left(  q\right)
+\omega^{\alpha}\left(  q\right)  \otimes\nabla_{V_{1}}W_{\alpha}\left(
q\right)  \right]  \nonumber\\
&  \text{ \ \ }+B\left(  V_{0},V\right)  W_{\alpha}\left(  p\right)
\nonumber\\
&  =\left(  \left(  \exp V_{0}\right)  ^{\ast}\otimes T_{V_{0}}\right)
\circ\left[  \left(  d\left(  i_{V_{1}}\omega^{\alpha}\right)  +i_{V_{1}%
}d\omega^{\alpha}\right)  \otimes W_{\alpha}\left(  q\right)  +\omega^{\alpha
}\left(  q\right)  \otimes\nabla_{V_{1}}W_{\alpha}\left(  q\right)  \right]
\nonumber\\
&  \text{ \ \ }+B\left(  V_{0},V\right)  W_{\alpha}\left(  p\right)
.\label{DFV0_triangle}%
\end{align}
Here we have used that $T_{V_{0}}W_{\alpha}\left(  q\right)  =W_{\alpha
}\left(  p\right)  $. The $\left(  \ref{DFV0_triangle}\right)  $ can be
rewritten as%
\begin{align*}
F^{\prime}\left(  V_{0}\right)  |_{A_{\varepsilon\left(  0\right)  }}V\left(
p\right)   &  =\left(  \left(  \exp V_{0}\right)  ^{\ast}\otimes T_{V_{0}%
}\right)  \circ\left[  F^{\prime}\left(  0\right)  |_{A_{\varepsilon}\left(
V_{0}\right)  }V_{1}\left(  q\right)  \right]  \\
&  +B\left(  V_{0},V\right)  W_{\alpha}\left(  p\right)  ,
\end{align*}
which means the derivative $F^{\prime}\left(  V_{0}\right)  $ on
$A_{\varepsilon}\left(  0\right)  $ can be expressed by the derivative
$F^{\prime}\left(  0\right)  $ on $A_{\varepsilon}\left(  V_{0}\right)  $ via
the transform $\left(  \exp V_{0}\right)  ^{\ast}\otimes T_{V_{0}}$, up to the
curvature term $B\left(  V_{0},V\right)  W_{\alpha}\left(  p\right)  $. If
$V_{0}=0$, then $q=p$ and $V_{1}=V$, together with
\[
i_{V}d\omega^{\alpha}\left(  p\right)  =\nabla_{V}W_{\alpha}\left(  p\right)
=B\left(  0,V\right)  =0,
\]
the formula $\left(  \ref{DFV0_triangle}\right)  $ is simplified as
\[
F^{\prime}\left(  0\right)  |_{A_{\varepsilon}^{\prime}}V\left(  p\right)
=d\left(  i_{V}\omega^{\alpha}\right)  \otimes W_{\alpha}\left(  p\right)  .
\]
Therefore
\begin{align}
&  F^{\prime}\left(  V_{0}\right)  V\left(  p\right)  -F^{\prime}\left(
0\right)  V\left(  p\right)  \nonumber\\
&  =\left[  \left(  \exp V_{0}\right)  ^{\ast}d\left(  i_{V_{1}}\omega
^{\alpha}\right)  -d\left(  i_{V}\omega^{\alpha}\right)  \right]
|_{A_{\varepsilon\left(  0\right)  }}\otimes W_{\alpha}\left(  p\right)
+\label{DF-difference-1}\\
&  \left(  \left(  \exp V_{0}\right)  ^{\ast}\otimes T_{V_{0}}\right)
\circ\left[  i_{V_{1}}d\omega^{\alpha}\otimes W_{\alpha}+\omega^{\alpha
}\otimes\nabla_{V_{1}}W_{\alpha}\right]  \left(  p\right)  +B\left(
V_{0},V\right)  W_{\alpha}\left(  p\right)  \label{DF-difference-2}%
\end{align}
For the second row $\left(  \ref{DF-difference-2}\right)  $, it is a $0$-th
order linear differential operator on $V\in C^{1,\alpha}\left(  \Gamma\left(
N_{A_{\varepsilon}^{\prime}/M}\right)  \right)  $, where
\[
V_{1}=\left(  d\exp V_{0}\right)  |_{A_{\varepsilon}\left(  0\right)
}V=E\left(  V_{0}\right)  V.
\]
We notice that for the linear operator%
\[
H:C^{1,\alpha}\left(  \Gamma\left(  N_{A_{\varepsilon}^{\prime}/M}\right)
\right)  \rightarrow C^{\alpha}\left(  \Gamma\left(  Hom\left(
TM|_{A_{\varepsilon}^{\prime}},TM|_{A_{\varepsilon}\left(  V_{0}\right)
}\right)  \right)  \right)
\]
that sends%
\begin{align*}
V_{0}  &  \rightarrow\left(  \left(  \exp V_{0}\right)  ^{\ast}\otimes
T_{V_{0}}\right)  \circ\left[  i_{E\left(  V_{0}\right)  \left(  \cdot\right)
}d\omega^{\alpha}\otimes W_{\alpha}+\omega^{\alpha}\otimes\nabla_{E\left(
V_{0}\right)  \left(  \cdot\right)  }W_{\alpha}\right] \\
&  +B\left(  V_{0},\cdot\right)  W_{\alpha},
\end{align*}
$H$ is a bounded operator since the terms $\exp V_{0},\left(  \exp
V_{0}\right)  ^{\ast},T_{V_{0}},E\left(  V_{0}\right)  $ and $B\left(
V_{0},\cdot\right)  $ as elements in $C^{\alpha}$ are differentiable for
$V_{0}\in$ $C^{1,\alpha}\left(  \Gamma\left(  N_{A_{\varepsilon}^{\prime}%
/M}\right)  \right)  $ with bounded Frechet derivatives. The bound of the
derivatives is uniform on $\varepsilon$ so $\left\Vert H\right\Vert $ is
uniformly bounded. Since $H\left(  V_{0}\right)  =0$ when $V_{0}=0$, we have
\[
\left\Vert H\left(  V_{0,}V\right)  \right\Vert _{C^{\alpha}}\leq
C_{8}\left\Vert V_{0}\right\Vert _{C^{1,\alpha}\left(  A_{\varepsilon}%
^{\prime}\right)  }\left\Vert V\right\Vert _{C^{1,\alpha}\left(
A_{\varepsilon}^{\prime}\right)  }.
\]
For the first row $\left(  \ref{DF-difference-1}\right)  $, $\left[  \left(
\exp V_{0}\right)  ^{\ast}d\left(  i_{V_{1}}\omega^{\alpha}\right)  -d\left(
i_{V}\omega^{\alpha}\right)  \right]  $ is a first order linear differential
operator on $V$ $\left(  V_{1}=\left(  d\exp V_{0}\right)  |_{A_{\varepsilon
}\left(  0\right)  }V\right)  $, and only involves the covariant derivatives
of $V$.\ In the next lemma we will show
\[
\left\Vert \left(  \exp V_{0}\right)  ^{\ast}d\left(  i_{V_{1}}\omega^{\alpha
}\right)  -d\left(  i_{V}\omega^{\alpha}\right)  \right\Vert _{C^{\alpha
}\left(  A_{\varepsilon}^{\prime}\right)  }\leq C_{7}\left\Vert V_{0}%
\right\Vert _{C^{1,\alpha}\left(  A_{\varepsilon}^{\prime}\right)  }\left\Vert
V\right\Vert _{C^{1,\alpha}\left(  A_{\varepsilon}^{\prime}\right)  }.
\]
Combining the two rows we get
\begin{align*}
&  \left\Vert F^{\prime}\left(  V_{0}\right)  V-F^{\prime}\left(  0\right)
V\right\Vert _{C^{\alpha}\left(  A_{\varepsilon}^{\prime},N_{A_{\varepsilon
}^{\prime}/M}\right)  }\\
&  \leq C\left\Vert V_{0}\right\Vert _{C^{1,\alpha}\left(  A_{\varepsilon
}^{\prime},N_{A_{\varepsilon}^{\prime}/M}\right)  }\left\Vert V\right\Vert
_{C^{1,\alpha}\left(  A_{\varepsilon}^{\prime},N_{A_{\varepsilon}^{\prime}%
/M}\right)  .}%
\end{align*}
\end{proof}

\begin{lemma}
For any $V_{0},V\in\Gamma\left(  N_{A_{\varepsilon}^{\prime}/M}\right)  $ with
$\left\Vert V_{0}\right\Vert _{C^{1,\alpha}\left(  A_{\varepsilon}^{\prime
}\right)  }\leq\delta_{0}$, we have%
\[
\left\Vert \left(  \exp V_{0}\right)  ^{\ast}d\left(  i_{V_{1}}\omega^{\alpha
}\right)  -d\left(  i_{V}\omega^{\alpha}\right)  \right\Vert _{C^{\alpha
}\left(  A_{\varepsilon}^{\prime}\right)  }\leq C_{7}\left\Vert V_{0}%
\right\Vert _{C^{1,\alpha}\left(  A_{\varepsilon}^{\prime}\right)  }\left\Vert
V\right\Vert _{C^{1,\alpha}\left(  A_{\varepsilon}^{\prime}\right)  },
\]
where the constants $\delta_{0}$ and $C_{7}$ are independent on $\varepsilon$.
\end{lemma}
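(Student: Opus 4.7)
\noindent My plan is to show that $\Psi(V_0, V)(p) := [(\exp V_0)^* d(i_{V_1}\omega^\alpha) - d(i_V \omega^\alpha)](p)$ can be written pointwise as $\widetilde{\Psi}(p, V_0(p), \nabla V_0(p), V(p), \nabla V(p))$ for a smooth bundle map $\widetilde{\Psi}$ that is linear in $(V, \nabla V)$ and vanishes when $V_0 = 0$. Once this structural form is in hand, I will deduce the quadratic H\"older estimate from Taylor's theorem in the $(V_0, \nabla V_0)$ slots, combined with standard product and composition rules in $C^\alpha$; the constants will be uniform in $\varepsilon$ because $\widetilde{\Psi}$ is built from Riemannian data on $M$ restricted to the fixed compact tubular neighborhood $\bigcup_{0 \leq \varepsilon \leq \varepsilon_0} A_\varepsilon^{\prime}$ of $C_0$.

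\noindent The hard part will be to exclude $\nabla^2 V_0$ from $\Psi$. Setting $\phi := \exp V_0$, I note that $B_i := d\phi\,\partial_i$ is tangent to $A_\varepsilon(V_0)$ with $[B_i, B_j] = \phi_{*}[\partial_i, \partial_j] = 0$, so the intrinsic formula for the exterior derivative reduces to
\[
d(i_{V_1}\omega^\alpha)(B_1, B_2, B_3) = \sum_{i=1}^{3} (-1)^{i+1} B_i \cdot \bigl[\omega^\alpha(V_1, B_j, B_k)\bigr],
\]
where $(j,k)$ is the complement of $i$ in $\{1,2,3\}$. Each directional term equals the $\partial_i$-derivative on $A_\varepsilon^{\prime}$ of the scalar $p \mapsto \omega^\alpha_{\phi(p)}\bigl(V_1 \circ \phi(p),\, d\phi\,\partial_j,\, d\phi\,\partial_k\bigr)$. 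Since $V_1 \circ \phi(p) = d(\exp_p)_{V_0(p)} V(p)$ is algebraic (non-differential) in $(p, V_0(p), V(p))$ and $d\phi_p$ depends algebraically on $(p, V_0(p), \nabla V_0(p))$, the chain rule produces only terms in $\nabla V_0$ and $\nabla V$, never in $\nabla^2 V_0$. Subtracting $d(i_V \omega^\alpha)$, which is the specialization of the same expression at $V_0 \equiv 0$, yields the claimed form for $\Psi$ with $\widetilde{\Psi}|_{V_0 = 0} \equiv 0$. As a backup, a direct coordinate computation shows that the apparent $\nabla^2 V_0$ contributions in $d[\phi^*(i_{V_1}\omega^\alpha)]$ cancel pairwise under the antisymmetrization inherent in $d$, as one readily verifies in the model case $\omega^\alpha = dx^{456}$.

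\noindent With the structural form secured, Taylor's theorem applied to $\widetilde{\Psi}$ around $(V_0, \nabla V_0) = (0, 0)$ will give the pointwise bound
\[
|\widetilde{\Psi}(p, V_0, \nabla V_0, V, \nabla V)| \leq C\bigl(|V_0| + |\nabla V_0|\bigr)\bigl(|V| + |\nabla V|\bigr)
\]
for $\|V_0\|_{C^1} \leq \delta_0$, with $C$ depending only on the mixed second derivatives of $\widetilde{\Psi}$. The corresponding $C^\alpha$-estimate will follow from the standard product and composition rules for H\"older-continuous functions, yielding
$\|\Psi(V_0, V)\|_{C^\alpha(A_\varepsilon^{\prime})} \leq C_7 \|V_0\|_{C^{1,\alpha}(A_\varepsilon^{\prime})}\|V\|_{C^{1,\alpha}(A_\varepsilon^{\prime})}$,
with $C_7$ depending only on the $C^2$-norm of $\widetilde{\Psi}$ over the fixed compact region and hence independent of $\varepsilon$.
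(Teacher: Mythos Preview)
Your proposal is correct and reaches the same conclusion as the paper, but by a different route.

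The paper's proof is more algebraic: it uses the naturality identity $(\exp V_{0})^{\ast}\,i_{d(\exp V_{0})V}\,\omega^{\alpha}=i_{V}\,(\exp V_{0})^{\ast}\omega^{\alpha}$ together with a comparison of the three linear maps $Pal_{V_{0}}$, $d(\exp V_{0})$, and $d(\exp_{p})(V_{0})$ via error terms $h_{i}(V_{0})$ that vanish at $V_{0}=0$. This yields the closed form
\[
(\exp V_{0})^{\ast}d(i_{V_{1}}\omega^{\alpha})-d(i_{V}\omega^{\alpha})
= d\bigl[\,i_{V}\bigl(h_{3}(V_{0})\omega^{\alpha}\bigr)+(\exp V_{0})^{\ast}i_{h_{2}(V_{0})V}\,\omega^{\alpha}\bigr],
\]
and the estimate follows because the right-hand side, viewed as a map $C^{1,\alpha}\to C^{\alpha}$, depends smoothly on $V_{0}$, is linear in $V$, and vanishes at $V_{0}=0$. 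In particular the naturality identity makes the absence of $\nabla^{2}V_{0}$ automatic, with no cancellation to check.

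Your approach is instead a direct pointwise analysis: you expand $(\exp V_{0})^{\ast}d(i_{V_{1}}\omega^{\alpha})$ via the intrinsic formula for $d$ on the pushforward frame $B_{i}=d\phi\,\partial_{i}$ and observe that the only potential source of $\nabla^{2}V_{0}$ is the symmetric quantity $\partial_{i}B_{j}=\partial_{i}\partial_{j}\phi$, which cancels under the alternation in $d$. This is more hands-on but entirely valid, and it has the advantage of making the pointwise dependence $\widetilde{\Psi}(p,V_{0},\nabla V_{0},V,\nabla V)$ and hence the $\varepsilon$-uniformity of the constants completely explicit. The paper's argument is cleaner conceptually; yours is more elementary and self-contained.
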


\begin{proof}
Consider the $3$ linear maps from $T_{p}M$ to $T_{q}M$, where $q=\exp_{p}%
V_{0}:$%
\begin{align*}
&
\begin{array}
[c]{c}%
Pal_{V_{0}}:T_{p}M\rightarrow T_{q}M,\text{ parallel transport along the
geodesic }\\
\text{ \ \ \ \ \ \ \ \ \ \ \ }\exp_{p}tV_{0}\text{ for }0\leq t\leq1,
\end{array}
\\
&
\begin{array}
[c]{c}%
d\left(  \exp V_{0}\right)  :T_{p}M\rightarrow T_{q}M,\text{ tangent map of
the diffeomorphism \ \ }\\
\text{ \ \ \ \ \ \ \ \ \ \ \ \ \ }\exp V_{0}:A_{\varepsilon}^{\prime
}\rightarrow A_{\varepsilon}\left(  V_{0}\right)  ,
\end{array}
\\
&
\begin{array}
[c]{c}%
d\left(  \exp_{p}\right)  \left(  V_{0}\right)  :T_{V_{0}}\left(
T_{p}M\right)  \rightarrow T_{q}M,\text{ tangent map of the exponential map}\\
\text{ \ \ \ \ \ \ \ \ \ \ \ \ \ \ \ \ \ \ \ \ \ \ }\exp_{p}:T_{p}M\rightarrow
M,
\end{array}
\end{align*}
where in the last one we identify $T_{V_{0}}\left(  T_{p}M\right)  \simeq
T_{p}M$. Note that for diffeomorphism $d\left(  \exp V_{0}\right)  $ we need a
vector field $V_{0}$ on $A_{\varepsilon}^{\prime}$ but for $\ d\left(
\exp_{p}\right)  \left(  V_{0}\right)  $ we only need $V_{0}\in T_{p}M$, so
they are essentially different maps. But the $3$ maps are very close to each
other when $V_{0}$ is small. More precisely, for any $V_{0},V\in\Gamma\left(
N_{A_{\varepsilon}^{\prime}/M}\right)  $ with $\left\Vert V_{0}\right\Vert
_{C^{1,\alpha}\left(  A_{\varepsilon}^{\prime}\right)  }\leq\delta_{0}$, we
have comparision of the maps as the following:
\begin{align*}
d\left(  \exp V_{0}\right)  V &  =Pal_{V_{0}}V+h_{1}\left(  V_{0}\right)  V,\\
d\left(  \exp_{p}\right)  \left(  V_{0}\right)  V &  =d\left(  \exp
V_{0}\right)  V+h_{2}\left(  V_{0}\right)  V,
\end{align*}
where for each $i=1,2,$ the error term%
\[
h_{i}\left(  \cdot\right)  :C^{1,\alpha}\left(  \Gamma\left(
N_{A_{\varepsilon}^{\prime}/M}\right)  \right)  \rightarrow C^{\alpha}\left(
\Gamma\left(  Hom\left(  TM|_{A_{\varepsilon}^{\prime}},TM|_{A_{\varepsilon
}\left(  V_{0}\right)  }\right)  \right)  \right)  .
\]
is differentiable with respect to the variable $V_{0}$, and
\[
h_{i}\left(  V_{0}\right)  =0\text{ for }V_{0}=0.
\]
This is because for any fixed $p$, $Pal_{V_{0}\left(  p\right)  }$ and
$d\left(  \exp_{p}\right)  \left(  V_{0}\right)  $ smoothly depend on $V_{0}$,
and for the compact family $p\in A_{\varepsilon}^{\prime}$, $Pal_{V_{0}}$ and
$d\left(  \exp_{p}\right)  \left(  V_{0}\right)  $ inherits the smooth
dependence on $V_{0}\in C^{1,\alpha}\left(  \Gamma\left(  N_{A_{\varepsilon
}^{\prime}/M}\right)  \right)  $. For $d\left(  \exp V_{0}\right)  $, since%
\[
d\left(  \exp V_{0}\right)  \left(  x\right)  =F_{1}\left(  x,V_{0}\right)
dx+F_{2}\left(  x,V_{0}\right)  dV_{0}\left(  x\right)
\]
where $F_{1}\left(  x,y\right)  =\frac{\partial}{\partial x}\exp_{x}y$ and
$F_{2}\left(  x,y\right)  =\frac{\partial}{\partial y}\exp_{x}y$ are bounded
on $A_{\varepsilon}$, we see $d\left(  \exp V_{0}\right)  \in C^{\alpha}$
smoothly depends on $V_{0}\in C^{1,\alpha}\left(  \Gamma\left(
N_{A_{\varepsilon}^{\prime}/M}\right)  \right)  $ with bounded Frechet
derivative. This especially implies that
\begin{equation}
\left\Vert h_{i}\left(  V_{0}\right)  V\right\Vert _{C^{a}}\leq C_{8}%
\left\Vert V_{0}\right\Vert _{C^{1,\alpha}\left(  A_{\varepsilon}^{\prime
}\right)  }\left\Vert V\right\Vert _{C^{1,\alpha}\left(  A_{\varepsilon
}^{\prime}\right)  }.\label{hi}%
\end{equation}
for $i=1,2$. We have
\begin{align*}
V_{1} &  =d\left(  \exp V_{0}\right)  V+h_{2}\left(  V_{0}\right)  V,\\
d\left(  \exp V_{0}\right)  W_{\alpha}\left(  p\right)   &  =W_{a}\left(
q\right)  +h_{1}\left(  V_{0}\right)  W_{\alpha}\left(  p\right)  ,\\
\left(  \exp V_{0}\right)  ^{\ast}\omega^{\alpha}\left(  q\right)   &
=\omega^{\alpha}\left(  p\right)  +h_{3}\left(  V_{0}\right)  \omega^{\alpha
}\left(  q\right)
\end{align*}
using the parallel property of $W_{a}$ and $\tau$ (hence $\omega^{\alpha}$).
(Here $h_{3}\left(  V_{0}\right)  $ depends on $V_{0}\in C^{1,\alpha}\left(
\Gamma\left(  N_{A_{\varepsilon}^{\prime}/M}\right)  \right)  $ smoothly by
similar reason of $h_{1}$ and $h_{2}$. Since $\omega^{\alpha}$ is a $3$-form,
$h_{3}\left(  V_{0}\right)  $ can have terms of cubic power order of $V_{0}$).
Therefore%
\begin{align*}
&  \left(  \exp V_{0}\right)  ^{\ast}d\left(  i_{V_{1}}\omega^{\alpha}\right)
-d\left(  i_{V}\omega^{\alpha}\right)  \\
&  =d\left[  \left(  \exp V_{0}\right)  ^{\ast}\left(  i_{d\left(  \exp
V_{0}\right)  V+h_{2}\left(  V_{0}\right)  V}\omega^{\alpha}\right)
-i_{V}\omega^{\alpha}\right]  \\
&  =d\left[  \left(  \exp V_{0}\right)  ^{\ast}i_{d\left(  \exp V_{0}\right)
V}\omega^{\alpha}+\left(  \exp V_{0}\right)  ^{\ast}i_{h_{2}\left(
V_{0}\right)  V}\omega^{\alpha}-i_{V}\omega^{\alpha}\right]  \\
&  =d\left[  i_{V}\left(  \exp V_{0}\right)  ^{\ast}\omega^{\alpha}+\left(
\exp V_{0}\right)  ^{\ast}i_{h_{2}\left(  V_{0}\right)  V}\omega^{\alpha
}-i_{V}\omega^{\alpha}\right]  \\
&  =d\left[  i_{V}\left(  \omega^{\alpha}+h_{3}\left(  V_{0}\right)
\omega^{\alpha}\right)  +\left(  \exp V_{0}\right)  ^{\ast}i_{h_{2}\left(
V_{0}\right)  V}\omega^{\alpha}-i_{V}\omega^{\alpha}\right]  \\
&  =d\left[  i_{V}\left(  h_{3}\left(  V_{0}\right)  \omega^{\alpha}\left(
q\right)  \right)  +\left(  \exp V_{0}\right)  ^{\ast}i_{h_{2}\left(
V_{0}\right)  V}\omega^{\alpha}\left(  q\right)  \right]  .
\end{align*}
Using the property $\left(  \ref{hi}\right)  $ of $h_{i}\left(
i=1,2,3\right)  $, we observe that each term in the above last identity in
$C^{\alpha}$ norm smoothly depends on $V_{0}\in C^{1,\alpha}\left(
\Gamma\left(  N_{A_{\varepsilon}^{\prime}/M}\right)  \right)  $ and linearly
on $V\in C^{1,\alpha}\left(  \Gamma\left(  N_{A_{\varepsilon}^{\prime}%
/M}\right)  \right)  $. Also when $V_{0}=0$, we have $\left(  \exp
V_{0}\right)  ^{\ast}d\left(  i_{V_{1}}\omega^{\alpha}\right)  -d\left(
i_{V}\omega^{\alpha}\right)  =0$. Therefore
\[
\left\Vert \left(  \exp V_{0}\right)  ^{\ast}d\left(  i_{V_{1}}\omega^{\alpha
}\right)  -d\left(  i_{V}\omega^{\alpha}\right)  \right\Vert _{C^{\alpha
}\left(  A_{\varepsilon}^{\prime}\right)  }\leq C\left\Vert V_{0}\right\Vert
_{C^{1,\alpha}\left(  A_{\varepsilon}^{\prime}\right)  }\left\Vert
V\right\Vert _{C^{1,\alpha}\left(  A_{\varepsilon}^{\prime}\right)  }.
\]
\end{proof}

\begin{remark}
\label{Schauder-over-Lp}We also have some point estimates tied to the feature
that $\tau$ is a $3$-form. It is well known that for any smooth embeddings
$\varphi:A_{\varepsilon}\rightarrow M$ and smooth sections $V_{0}$ of
$TM|_{A_{\varepsilon}^{\prime}}$ with $\left\vert V_{0}\right\vert $ smaller
than the injectivity radius of $M$,
\[
\left\vert d\exp_{\varphi}V_{0}\right\vert \leq C_{6}\left(  \left\vert
d\varphi\right\vert +\left\vert \nabla V_{0}\right\vert \right)  ,
\]
where $C_{6}$ is a constant only depending on $\left(  M,g\right)  $. Hence
for the map $\exp V_{0}:TM|_{A_{\varepsilon}^{\prime}}\rightarrow M$ and any
section $V$ of $TM|_{A_{\varepsilon}^{\prime}}$, at $p\in A_{\varepsilon
}^{\prime}$ we have%
\[
\left\vert \left(  d\exp V_{0}\right)  \left(  p\right)  V\right\vert \leq
C_{6}\left(  \left\vert d\varphi\right\vert +\left\vert \nabla V_{0}%
\right\vert \right)  \left\vert V\left(  p\right)  \right\vert .
\]
So for the first term $\left(  \ref{DF-difference-1}\right)  $, we have
\begin{align*}
&  \left\vert \left[  \left(  \exp V_{0}\right)  ^{\ast}d\left(  i_{V_{1}%
}\omega^{\alpha}\right)  -d\left(  i_{V}\omega^{\alpha}\right)  \right]
\otimes W_{\alpha}\left(  p\right)  \right\vert \\
&  \leq C_{7}\left(  \left\vert d\varphi\right\vert +\left\vert \nabla
V_{0}\right\vert \right)  ^{3}\left(  \left\vert \nabla V\right\vert
\left\vert V_{0}\right\vert +\left\vert V\right\vert \left\vert \nabla
V_{0}\right\vert +\left\vert V\right\vert \left\vert V_{0}\right\vert \right)
\left(  p\right)  ,
\end{align*}
where the power $3$ is because $\omega^{\alpha}$ is a $3$-form. For the next
row $\left(  \ref{DF-difference-2}\right)  $, it is a $0$-th order linear
differential operator on $V$, where $V_{1}=\left(  d\exp V_{0}\right)
|_{A_{\varepsilon}\left(  0\right)  }V$. By the $C^{2}$ smoothness of
$\tau=\omega^{\alpha}\otimes W_{\alpha}$ and
\[
i_{W}d\omega^{\alpha}\left(  p\right)  =\nabla_{W}W_{\alpha}\left(  p\right)
=0
\]
for any $W\in\Gamma\left(  N_{A_{\varepsilon}^{\prime}/M}\right)  $, at
$q=\exp_{p}V_{0}$ we have
\[
\left\vert i_{V_{1}}d\omega^{\alpha}\left(  q\right)  \right\vert \leq
C_{4}\left\vert V_{0}\right\vert \left\vert V\right\vert ,\text{
\ \ }\left\vert \nabla_{V_{1}}W_{\alpha}\left(  q\right)  \right\vert \leq
C_{4}\left\vert V_{0}\right\vert \left\vert V\right\vert .\text{ \ }%
\]
Hence
\begin{align*}
&  \left\vert \left(  \left(  \exp V_{0}\right)  ^{\ast}\otimes T_{V_{0}%
}\right)  \circ\left[  i_{V_{1}}d\omega^{\alpha}\otimes W_{\alpha}%
+\omega^{\alpha}\otimes\nabla_{V_{1}}W_{\alpha}\right]  \left(  p\right)
+B\left(  V_{0},V\right)  W_{\alpha}\left(  p\right)  \right\vert \\
&  \leq C_{6}^{3}\left(  \left\vert d\varphi\right\vert +\left\vert \nabla
V_{0}\right\vert \right)  ^{3}\cdot2C_{4}\left\vert V_{0}\right\vert
\left\vert V\right\vert \left(  p\right)  +C_{5}\left\vert V_{0}\right\vert
\left\vert V\right\vert \left(  p\right)  .
\end{align*}
where the power $3$ is because $\omega^{\alpha}$ is a $3$-form. Combining all,
we have
\begin{align*}
&  \left\vert F^{\prime}\left(  V_{0}\right)  V-F^{\prime}\left(  0\right)
V\right\vert \left(  p\right) \\
&  \leq\left[  C_{7}\left(  \left\vert d\varphi\right\vert +\left\vert \nabla
V_{0}\right\vert \right)  ^{3}\left(  \left\vert V_{0}\right\vert \left\vert
V\right\vert +\left\vert \nabla V_{0}\right\vert \left\vert V\right\vert
+\left\vert V_{0}\right\vert \left\vert \nabla V\right\vert \right)
+C_{5}\left\vert V_{0}\right\vert \left\vert V\right\vert \right]  \left(
p\right)  .
\end{align*}
Because of the cubic power terms, it is difficult to get similar quadratic
estimate in the $L^{p}$ setting, namely
\begin{align*}
&  \left\Vert F^{\prime}\left(  V_{0}\right)  V-F^{\prime}\left(  0\right)
V\right\Vert _{L^{p}\left(  A_{\varepsilon}^{\prime},N_{A_{\varepsilon
}^{\prime}/M}\right)  }\\
&  \leq C\left\Vert V_{0}\right\Vert _{W^{1,p}\left(  A_{\varepsilon}^{\prime
},N_{A_{\varepsilon}^{\prime}/M}\right)  }\left\Vert V\right\Vert
_{W^{1,p}\left(  A_{\varepsilon}^{\prime},N_{A_{\varepsilon}^{\prime}%
/M}\right)  .}%
\end{align*}
This is one of the key reasons that we choose the Schauder setting for
implicit function theorem, where the right inverse bound of $F^{\prime}\left(
0\right)  $ is much harder to obtain than in the $L^{p}$ setting. In contrast,
the Cauchy-Riemann operator of $J$-holomorphic curves is more linear in the
$L^{p}$ setting: it has the quadratic estimate (see Proposition 3.5.3 in
\cite{MS})%
\[
\left\Vert F^{\prime}\left(  V_{0}\right)  V-F^{\prime}\left(  0\right)
V\right\Vert _{L^{p}\left(  \Sigma\right)  }\leq C\left\Vert V_{0}\right\Vert
_{W^{1,p}\left(  \Sigma\right)  }\left\Vert V\right\Vert _{W^{1,p}\left(
\Sigma\right)  .}%
\]

\end{remark}

\subsection{Perturbation Argument}

To find the zeros of $F_{\varepsilon}$, we are going to apply the following
quantitative version of the implicit function theorem (c.f. Theorem 15.6
\cite{DE} or Proposition A3.4 in \cite{MS}).

\begin{theorem}
\label{implicit}Let $\left(  X,\left\vert \cdot\right\vert _{X}\right)  $ and
$\left(  Y,\left\vert \cdot\right\vert _{Y}\right)  $be Banach spaces and
$F:B_{r}\left(  0\right)  \subset X\rightarrow Y$ a $C^{1}$-map, such that

\begin{enumerate}
\item $\left(  DF\left(  0\right)  \right)  ^{-1}$ is a bounded linear
operator with $\left\Vert \left(  DF\left(  0\right)  \right)  ^{-1}F\left(
0\right)  \right\Vert \leq A$ and $\left\Vert \left(  DF\left(  0\right)
\right)  ^{-1}\right\Vert \leq B;$

\item $\left\Vert DF\left(  x\right)  -DF\left(  0\right)  \right\Vert
\leq\kappa\left\vert x\right\vert _{X}$ \ for all $x\in B_{r}\left(  0\right)
;$

\item $2\kappa AB<1$ and $2A<r.$
\end{enumerate}

Then $F$ has a unique zero in $B_{2A}\left(  0\right)  .$
\end{theorem}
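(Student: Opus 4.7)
The plan is to recast the equation $F(x)=0$ as a fixed-point problem and apply the Banach contraction mapping principle on the closed ball $\overline{B_{2A}(0)} \subset X$. Set $L := DF(0)$, which by hypothesis (1) is invertible with $\|L^{-1}\| \le B$, and define the Newton-type iteration map
\[
T : \overline{B_{2A}(0)} \longrightarrow X, \qquad T(x) := x - L^{-1} F(x).
\]
A fixed point of $T$ is exactly a zero of $F$, so it suffices to produce a fixed point in $\overline{B_{2A}(0)}$. Note that $2A < r$ by hypothesis (3), so $\overline{B_{2A}(0)} \subset B_r(0)$ and $DF$ satisfies the Lipschitz bound (2) throughout the relevant domain.

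First I would verify that $T$ is a contraction. For $x,y \in \overline{B_{2A}(0)}$, write
\[
T(x) - T(y) = L^{-1}\bigl( L(x-y) - (F(x) - F(y)) \bigr) = -L^{-1} \int_0^1 \bigl( DF(y + t(x-y)) - L \bigr)(x-y)\, dt.
\]
Since $y + t(x-y) \in \overline{B_{2A}(0)} \subset B_r(0)$, hypothesis (2) gives $\|DF(y+t(x-y)) - L\| \le \kappa \cdot 2A$, so
\[
\|T(x) - T(y)\|_X \le B \cdot \kappa \cdot 2A \cdot \|x-y\|_X = 2\kappa AB \cdot \|x-y\|_X.
\]
By hypothesis (3), $2\kappa AB < 1$, so $T$ is a strict contraction.

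Next I would check that $T$ maps $\overline{B_{2A}(0)}$ into itself. Observe that $T(0) = -L^{-1}F(0)$, so $\|T(0)\|_X \le A$ by hypothesis (1). Combined with the contraction estimate,
\[
\|T(x)\|_X \le \|T(x) - T(0)\|_X + \|T(0)\|_X \le 2\kappa AB \cdot \|x\|_X + A \le 2\kappa AB \cdot 2A + A < 2A,
\]
again using $2\kappa AB < 1$. Hence $T$ preserves the complete metric space $\overline{B_{2A}(0)}$, so by the Banach fixed-point theorem $T$ admits a unique fixed point $x_* \in \overline{B_{2A}(0)}$, giving the unique zero of $F$ in this ball.

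There is no real obstacle here since this is the classical quantitative implicit function theorem; the only care needed is to keep the constants explicit and to check that the bound $2A < r$ is used precisely to guarantee that the full iteration domain sits inside the region where the Lipschitz estimate (2) on $DF$ holds. In the applications above (Theorem \ref{main}), $X = C_{-}^{1,\alpha}(N_{\mathtt{A}_\varepsilon'/M})$ and $Y = C^{\alpha}(N_{\mathtt{A}_\varepsilon'/M})$, with $F = F_\varepsilon$; the constants $A, B, \kappa$ are then controlled by $\|F_\varepsilon(0)\|_{C^\alpha} = O(\varepsilon)$, the right-inverse bound $\|\widetilde{Q}^{\text{true}}\| = O(\varepsilon^{-(3/p+2\alpha)})$ from Proposition \ref{DFe-inverse-bd}, and the quadratic estimate of Proposition \ref{quadratic-estimate}, whose interplay is arranged by the choice $3/p + 3\alpha < 1/2$ to ensure $2\kappa AB < 1$ for $\varepsilon$ small.
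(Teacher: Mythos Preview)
The paper does not give its own proof of this theorem; it is simply quoted as a standard quantitative implicit function theorem with references to Deimling and McDuff--Salamon, and then applied. Your Newton--contraction argument is exactly the standard one and is the right approach.

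There is, however, a small arithmetic slip in your self-mapping step. From the contraction bound $\|T(x)-T(0)\|\le 2\kappa AB\,\|x\|$ together with $\|x\|\le 2A$ you obtain $\|T(x)\|\le 4\kappa A^2B + A$, and then assert this is $<2A$; but that would require $4\kappa AB<1$, while hypothesis (3) only gives $2\kappa AB<1$. The fix is immediate: in the integral representation
\[
T(x)-T(0) \;=\; -L^{-1}\int_0^1\bigl(DF(tx)-L\bigr)x\,dt,
\]
use the pointwise bound $\|DF(tx)-L\|\le \kappa\,t\|x\|$ from hypothesis (2) (rather than the uniform bound $\kappa\cdot 2A$) to get
\[
\|T(x)-T(0)\|\;\le\; B\|x\|\int_0^1 \kappa t\|x\|\,dt \;=\; \tfrac12 B\kappa\|x\|^2 \;\le\; 2B\kappa A^2,
\]
whence $\|T(x)\|\le A(1+2\kappa AB)<2A$, and the closed ball is preserved. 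With this correction your proof is complete.
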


To apply the above theorem, we define the map $\mathcal{\ }$
\[
\tilde{F}_{\varepsilon}:=\varepsilon^{-\left(  \frac{3}{p}+2\alpha\right)
}F_{\varepsilon}:C_{-}^{1,\alpha}\left(  \mathtt{A}_{\varepsilon}^{\prime
},N_{A_{\varepsilon}^{\prime}/M}\right)  \longrightarrow C^{\alpha}\left(
\mathtt{A}_{\varepsilon}^{\prime},N_{A_{\varepsilon}^{\prime}/M}\right)  .
\]
Then Proposition \ref{DFe-inverse-bd} implies that
\[
\left\Vert \left(  D\tilde{F}_{\varepsilon}\left(  0\right)  \right)
^{-1}\right\Vert \leq\varepsilon^{\frac{3}{p}+2\alpha}\cdot C\varepsilon
^{-\left(  \frac{3}{p}+2\alpha\right)  }=C.
\]

By Proposition \ref{quadratic-estimate}, for $\left\Vert V_{0}\right\Vert
_{C_{-}^{1,\alpha}\left(  \mathtt{A}_{\varepsilon}^{\prime},N_{A_{\varepsilon
}^{\prime}/M}\right)  }\leq\delta_{0}$ and any $V\in C_{-}^{1,\alpha}\left(
\mathtt{A}_{\varepsilon}^{\prime},N_{A_{\varepsilon}^{\prime}/M}\right)  $,
\begin{align*}
&  \left\Vert \left(  D\tilde{F}_{\varepsilon}\left(  V_{0}\right)
-D\tilde{F}_{\varepsilon}\left(  0\right)  \right)  V\right\Vert _{C^{\alpha
}\left(  \mathtt{A}_{\varepsilon}^{\prime},N_{A_{\varepsilon}^{\prime}%
/M}\right)  }\\
&  \leq C\varepsilon^{-\left(  \frac{3}{p}+2\alpha\right)  }\left\Vert
V_{0}\right\Vert _{C_{-}^{1,\alpha}\left(  \mathtt{A}_{\varepsilon}^{\prime
},N_{A_{\varepsilon}^{\prime}/M}\right)  }\left\Vert V\right\Vert
_{C_{-}^{1,\alpha}\left(  \mathtt{A}_{\varepsilon}^{\prime},N_{A_{\varepsilon
}^{\prime}/M}\right)  }\text{.}%
\end{align*}

For $F\left(  V\right)  =T_{V}\circ\left(  \exp V\right)  ^{\ast}\tau$, fixing
a volume form on $\mathtt{A}_{\varepsilon}^{\prime}$ we can regard $F\left(
V\right)  $ as a section of $TM|_{\mathtt{A}_{\varepsilon}^{\prime}}$. We have%
\begin{align}
\left\Vert F\left(  0\right)  \right\Vert _{_{C^{\alpha}\left(  \mathtt{A}%
_{\varepsilon}^{\prime},TM\right)  }}  &  =\left\Vert \varphi^{\ast}%
\tau\right\Vert _{C^{\alpha}\left(  \mathtt{A}_{\varepsilon}^{\prime
},TM\right)  }\nonumber\\
&  =C\left\Vert \left(  \varphi^{\ast}\tau\right)  \left(  0,z\right)
+\int_{0}^{x_{1}}\frac{\partial}{\partial x_{1}}\left(  \varphi^{\ast}%
\tau\right)  \left(  s,z\right)  ds\right\Vert _{C^{\alpha}\left(
\mathtt{A}_{\varepsilon}^{\prime},TM\right)  }\nonumber\\
&  =C\left\Vert \int_{0}^{x_{1}}\frac{\partial}{\partial x_{1}}\varphi^{\ast
}\tau\left(  s,z\right)  ds\right\Vert _{C^{\alpha}\left(  \mathtt{A}%
_{\varepsilon}^{\prime},TM\right)  }\leq C\varepsilon^{1-\alpha},
\label{almost-instanton}%
\end{align}
where $\left(  \varphi^{\ast}\tau\right)  \left(  0,z\right)  =0$ is because
$\varphi\left(  0\times\Sigma\right)  $ is $J_{n}$-holomorphic and then
$\tau|_{T\mathtt{A}_{\varepsilon}^{\prime}|_{\varphi\left(  0\times
\Sigma\right)  }}=0$, and the last inequality is because for%
\[
H\left(  x_{1},z\right)  :=\int_{0}^{x_{1}}\frac{\partial}{\partial x_{1}%
}\varphi^{\ast}\tau\left(  s,z\right)  ds
\]
where $f\left(  s,z\right)  :=\frac{\partial}{\partial x_{1}}\varphi^{\ast
}\tau\left(  s,z\right)  $ is smooth on $\left[  0,\varepsilon\right]
\times\Sigma$, we have
\begin{align*}
\left\Vert H\right\Vert _{C^{0}\left(  \mathtt{A}_{\varepsilon}\right)  }  &
\leq\int_{0}^{\varepsilon}\left\Vert f\right\Vert _{C^{0}\left(
\mathtt{A}_{\varepsilon}\right)  }ds=\left\Vert f\right\Vert _{C^{0}\left(
\mathtt{A}_{\varepsilon}\right)  }\varepsilon,\\
\left[  H\right]  _{\alpha;\mathtt{A}_{\varepsilon}}^{z}  &  \leq\int
_{0}^{\varepsilon}\left[  f\right]  _{\alpha;\mathtt{A}_{\varepsilon}}%
^{z}ds=\left[  f\right]  _{\alpha;\mathtt{A}_{\varepsilon}}^{z}\varepsilon,\\
\left[  H\right]  _{\alpha;\mathtt{A}_{\varepsilon}}^{x_{1}}  &  \leq C\left[
H\right]  _{1;\mathtt{A}_{\varepsilon}}^{x_{1}}\varepsilon^{1-\alpha}%
\leq\left\Vert f\right\Vert _{C^{0}\left(  \mathtt{A}_{\varepsilon}\right)
}\varepsilon^{1-\alpha}\text{.}%
\end{align*}
(Here $\left[  f\right]  _{\alpha;\mathtt{A}_{\varepsilon}}^{z}$, $\left[
f\right]  _{\alpha;\mathtt{A}_{\varepsilon}}^{x_{1}}$ are the Schauder
components of $f$ in the $z$ and $x_{1}$ directions of $\mathtt{A}%
_{\varepsilon}$). Note that $\left(  \ref{almost-instanton}\right)  $ implies
that the tangent space of $\mathtt{A}_{\varepsilon}^{\prime}$ is
$\varepsilon^{1-\alpha}$-close to be associative.

By our construction
\[
F_{\varepsilon}\left(  V\right)  =\ast_{\mathtt{A}_{\varepsilon}^{\prime}%
}\circ\bot_{\mathtt{A}_{\varepsilon}^{\prime}}\circ F\left(  V\right)  ,
\]
where $\ast_{\mathtt{A}_{\varepsilon}^{\prime}}\circ\bot_{\mathtt{A}%
_{\varepsilon}^{\prime}}$ is a bounded linear operator and $T_{V}$ is a
parallel transport, we have
\begin{align*}
\left\Vert \tilde{F}_{\varepsilon}\left(  0\right)  \right\Vert _{C^{\alpha
}\left(  \mathtt{A}_{\varepsilon},N_{A_{\varepsilon}^{\prime}/M}\right)  }  &
\leq C\varepsilon^{-\left(  \frac{3}{p}+2\alpha\right)  }\left\Vert F\left(
0\right)  \right\Vert _{C^{\alpha}\left(  \mathtt{A}_{\varepsilon
},N_{A_{\varepsilon}^{\prime}/M}\right)  }\\
&  \leq C\varepsilon^{-\left(  \frac{3}{p}+2\alpha\right)  }\varepsilon
^{1-\alpha}=C\varepsilon^{1-\left(  \frac{3}{p}+3\alpha\right)  }.
\end{align*}
Then we have
\[
\left\Vert D\tilde{F}_{\varepsilon}\left(  0\right)  ^{-1}\right\Vert
\left\Vert \tilde{F}_{\varepsilon}\left(  0\right)  \right\Vert _{C^{\alpha
}\left(  \mathtt{A}_{\varepsilon},\mathbb{S}\right)  }\varepsilon^{-\left(
\frac{3}{p}+2\alpha\right)  }\leq C\varepsilon^{1-\frac{6}{p}-5\alpha}.
\]
Note we have chosen that $0<\frac{3}{p}+3\alpha<$ $\frac{1}{2}$ (say
$\alpha=1/12$ and $p>12$) in the beginning, so $1-\frac{6}{p}-5\alpha>0$ then
\begin{align*}
&  \left\Vert D\tilde{F}_{\varepsilon}\left(  0\right)  ^{-1}\right\Vert
\left\Vert \tilde{F}_{\varepsilon}\left(  0\right)  \right\Vert _{C^{\alpha
}\left(  \mathtt{A}_{\varepsilon},\mathbb{S}\right)  }\varepsilon^{-\left(
\frac{3}{p}+2\alpha\right)  }\\
&  \leq\,C\varepsilon^{1-\left(  \frac{3}{p}+3\alpha\right)  }\varepsilon
^{-\left(  \frac{3}{p}+2\alpha\right)  }=C\varepsilon^{1-\frac{6}{p}-5\alpha
}\rightarrow0
\end{align*}
as $\varepsilon\rightarrow0$. Theorem \ref{implicit} then implies that there
is a unique $V_{\varepsilon}\in\Gamma\left(  N_{A_{\varepsilon}^{\prime}%
/M}\right)  $ with
\begin{equation}
\left\Vert V_{\varepsilon}\right\Vert _{C^{1,\alpha}\left(  \mathtt{A}%
_{\varepsilon}^{\prime},N_{A_{\varepsilon}^{\prime}/M}\right)  }%
\leq2C\varepsilon^{1-\left(  \frac{3}{p}+3\alpha\right)  }
\label{perturb-order}%
\end{equation}
that solves $\tilde{F}_{\varepsilon}\left(  V_{\varepsilon}\right)  =0$, i.e.
$F_{\varepsilon}\left(  V_{\varepsilon}\right)  =0$. Note that $\left(
\ref{almost-instanton}\right)  $ and $\left(  \ref{perturb-order}\right)  $
together imply that the tangent space of $\mathtt{A}_{\varepsilon}\left(
V_{\varepsilon}\right)  $ is $\varepsilon^{1-\left(  \frac{3}{p}%
+3\alpha\right)  }$-close to be associative. By Proposition
\ref{1Prop alm instanton}, for almost associative submanifolds $\mathtt{A}%
_{\varepsilon}\left(  V_{\varepsilon}\right)  $,
\[
F_{\varepsilon}\left(  V_{\varepsilon}\right)  =0\Leftrightarrow F\left(
V_{\varepsilon}\right)  =0\text{,}%
\]
while the latter means that $\mathtt{A}_{\varepsilon}\left(  V_{\varepsilon
}\right)  $ is associative.

Finally, we obtain our main result:

\begin{theorem}
\label{main}Suppose that $M$ is a $G_{2}$-manifold and $C_{t}$ is a one
parameter family of coassociative submanifolds in $M$. Suppose that the
self-dual two form $\eta=dC_{t}/dt|_{t=0}\in\Omega_{+}^{2}\left(  C\right)  $
is nonvanishing, then it defines an almost complex structure $J$ on $C_{0}$.

For any regular $J$-holomorphic curve $\Sigma$ in $C_{0}$, there is an
instanton $A_{\varepsilon}$ in $M$ which is diffeomorphic to $\left[
0,\varepsilon\right]  \times\Sigma$ and $\partial A_{\varepsilon}\subset
C_{0}\cup C_{\varepsilon}$, for all sufficiently small positive $\varepsilon$.
\end{theorem}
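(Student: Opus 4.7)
The plan is to realize $A_\varepsilon$ as a small normal perturbation of an explicit approximate instanton $\mathtt{A}_\varepsilon' := \varphi([0,\varepsilon]\times\Sigma)$, obtained by flowing $\Sigma$ through the coassociative family $C_t$. Because $\Sigma$ is $J_n$-holomorphic and $T\mathtt{A}_\varepsilon'|_\Sigma = T\Sigma \oplus \mathrm{span}\{n\}$, one has $\tau|_{T\mathtt{A}_\varepsilon'|_\Sigma}=0$, and smoothness in the $\varepsilon$-direction gives $|\tau|_{\mathtt{A}_\varepsilon'}|=O(\varepsilon)$; integrating this from $x_1=0$ will yield the key a priori bound $\|F_\varepsilon(0)\|_{C^\alpha}\le C\varepsilon^{1-\alpha}$. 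I will then define the nonlinear map $F_\varepsilon(V) = \ast_{\mathtt{A}_\varepsilon'} \bot_{\mathtt{A}_\varepsilon'} (T_V\circ(\exp V)^*\tau)$ on sections $V$ of $N_{\mathtt{A}_\varepsilon'/M}$ constrained to be tangent to $C_0$ (resp.\ $C_\varepsilon$) along the two boundary components, so that zeros of $F_\varepsilon$ of sufficiently small norm correspond, via Proposition \ref{1Prop alm instanton}, to genuine instantons with boundary on $C_0 \cup C_\varepsilon$.

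To apply a quantitative implicit function theorem (Theorem \ref{implicit}), the crucial input is a right inverse of $DF_\varepsilon(0)$ with controlled norm. Here I would invoke the comparison Proposition \ref{compare-linear-model}: conjugating by the bundle map $\widetilde{\exp}$ sends $DF_\varepsilon(0)$ to the model Dirac-type operator $\mathcal{D}$ of Section \ref{linear-model}, up to an error of operator norm $O(\varepsilon^{1-\alpha})$. The regularity assumption on $\Sigma$ translates into $\ker\bar\partial = \ker\bar\partial^* = 0$, so by Theorem \ref{e-inverse-bound} the operator $\mathcal{D}$ admits a right inverse $Q_\varepsilon$ with $\|Q_\varepsilon\| \le C\varepsilon^{-(3/p+2\alpha)}$. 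Since the product $\varepsilon^{1-\alpha}\cdot\varepsilon^{-(3/p+2\alpha)} = \varepsilon^{1-3/p-3\alpha}$ tends to $0$ by the standing choice $3/p+3\alpha<1/2$, a Neumann series argument converts the approximate right inverse $\widetilde Q_\varepsilon := d\widetilde{\exp}\circ Q_\varepsilon\circ (d\widetilde{\exp})^{-1}$ into a genuine right inverse of $DF_\varepsilon(0)$ with the same $\varepsilon^{-(3/p+2\alpha)}$ bound.

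The remaining ingredient is the $C^{1,\alpha}$ quadratic estimate $\|DF_\varepsilon(V_0)-DF_\varepsilon(0)\|\le C\|V_0\|_{C^{1,\alpha}}$ from Proposition \ref{quadratic-estimate}, with constant $C$ uniform in $\varepsilon$. After the natural rescaling $\tilde F_\varepsilon := \varepsilon^{-(3/p+2\alpha)} F_\varepsilon$ (which makes $\|(D\tilde F_\varepsilon(0))^{-1}\|$ uniformly bounded), the decisive quantity
\[
\|(D\tilde F_\varepsilon(0))^{-1}\|\cdot\|\tilde F_\varepsilon(0)\|_{C^\alpha} \le C\varepsilon^{1-6/p-5\alpha}
\]
tends to $0$, so Theorem \ref{implicit} yields a unique zero $V_\varepsilon$ with $\|V_\varepsilon\|_{C^{1,\alpha}} = O(\varepsilon^{1-3/p-3\alpha})$. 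Since this is much smaller than the threshold $\delta$ in Proposition \ref{1Prop alm instanton}, $A_\varepsilon := \exp V_\varepsilon(\mathtt{A}_\varepsilon')$ is the required instanton.

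The hard part is the loss of uniform ellipticity as $\varepsilon\to 0$: the thin $3$-manifold $\mathtt{A}_\varepsilon = [0,\varepsilon]\times\Sigma$ collapses in one direction, so no inverse estimate for $\mathcal{D}$ can be $\varepsilon$-independent. This forces one to track the exact $\varepsilon$-power degeneration of $Q_\varepsilon$ and balance it against the $O(\varepsilon^{1-\alpha})$ smallness of $F_\varepsilon(0)$. Moreover, because the $G_2$ form $\tau$ is cubic, the quadratic estimate needed by the implicit function theorem is unavailable in the $L^p$ setting (Remark \ref{Schauder-over-Lp}), so the entire linear theory must be pushed into Schauder spaces; in particular, the boundary reflection trick for $v$ in case (ii) of Theorem \ref{e-inverse-bound} -- where $w_1$ becomes discontinuous after odd extension -- is the principal technical hurdle, and the resulting Schauder bound with exponent $3/p+2\alpha$ is what dictates the choice $3/p+3\alpha < 1/2$.
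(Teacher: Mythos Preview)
Your proposal is correct and follows essentially the same route as the paper's own argument in Section~\ref{proof}: build the approximate instanton $\mathtt{A}_\varepsilon'=\varphi([0,\varepsilon]\times\Sigma)$, set up $F_\varepsilon$ with coassociative boundary conditions, compare $DF_\varepsilon(0)$ with the model operator $\mathcal{D}$ via Proposition~\ref{compare-linear-model}, invoke Theorem~\ref{e-inverse-bound} (using regularity of $\Sigma$ to get $\ker\bar\partial=\ker\bar\partial^*=0$) and a Neumann series to obtain the right inverse bound $C\varepsilon^{-(3/p+2\alpha)}$ (this is Proposition~\ref{DFe-inverse-bd}), feed in the quadratic estimate of Proposition~\ref{quadratic-estimate}, rescale to $\tilde F_\varepsilon$, and close with Theorem~\ref{implicit} and Proposition~\ref{1Prop alm instanton}. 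One small slip: the displayed quantity $\|(D\tilde F_\varepsilon(0))^{-1}\|\cdot\|\tilde F_\varepsilon(0)\|_{C^\alpha}$ is bounded by $C\varepsilon^{1-(3/p+3\alpha)}$, not $C\varepsilon^{1-6/p-5\alpha}$; the latter is the bound for $\kappa AB$ (the rescaled Lipschitz constant $\kappa$ picks up an extra $\varepsilon^{-(3/p+2\alpha)}$), but either way both tend to zero under $3/p+3\alpha<1/2$ and your conclusion $\|V_\varepsilon\|_{C^{1,\alpha}}=O(\varepsilon^{1-3/p-3\alpha})$ is the right one.
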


Finally, we expect that any instanton $A$ in $M$ bounding $C_{0}\cup C_{t}$
and with small volume must arise in the above manner. Namely we need to prove
a $\varepsilon$-regularity result for instantons.

\bigskip A few remarks are in order: First, counting such small instantons is
basically a problem in four manifold theory because of Bryant's result
\cite{Bryant} which says that the zero section $C$ in $\Lambda_{+}^{2}\left(
C\right)  $ is always a coassociative submanifold for an incomplete $G_{2}%
$-metric on its neighborhood provided that the bundle $\Lambda_{+}^{2}\left(
C\right)  $ is topologically trivial. Second, when $\eta$ has zeros, the above
Theorem \ref{main} should still hold true. However using the present approach
to prove it would require a good understanding of the Seiberg-Witten theory on
any four manifold with a degenerated symplectic form as in Taubes program. But
at least for regular $J_{\eta}$-holomorphic curves $\Sigma$ in $C_{0}$ that is
\emph{away from the zero locus} of $\eta$, the instantons $\mathtt{A}%
_{\varepsilon}^{\prime}$ in Theorem \ref{main} still exist nearby
$\Sigma\subset M$, for our gluing analysis only involves the local geometry of
$\Sigma$ in $M$. Third, if we do not restrict to instantons of small volume,
then we have to take into account the compactification of the moduli of
instantons which has not been established yet, e.g. \emph{bubbling}%
{\footnotesize \ }phenomenon as for pseudo-holomorphic curves, and gluing of
instantons of big and small volumes similar to \cite{Oh Zhu} in Floer
trajectory case. Nevertheless, one would expect that if the volume of
$\mathtt{A}_{t}$'s are small, then bubbling cannot occur, thus they would
converge to a $J_{\eta}$-holomorphic curve in $C_{0}$.

\section{Appendix: The Exponential-like Map $\widetilde{\exp}$}

We construct the exponential-like map $\widetilde{\exp}:\mathbb{S\rightarrow
}M$ satisfying the properties 1$\sim$3 in Subsection 4.2.\

For any section $V=\left(  u,v\right)  $ of the spinor bundle
$\mathbb{S\rightarrow}A_{\varepsilon}$, since the bundle $\mathbb{S}%
\rightarrow\mathtt{A}_{\varepsilon}$ is a Cartesian product of the spinor
bundle $\mathbb{S}_{\Sigma}\rightarrow\Sigma$ with the interval $\left[
0,\varepsilon\right]  $, we may alternatively regard $V\left(  t,z\right)  $
as a one parameter family of sections of the bundle $\mathbb{S}_{\Sigma
}\rightarrow\Sigma$ for the parameter $t\in\left[  0,\varepsilon\right]  $.
Our goal is to obtain the deformation of $A_{\varepsilon}^{\prime}\subset M$
from $V$. The idea is to deform $\mathtt{A}_{\varepsilon}\subset
\mathtt{C:}=\left[  0,\varepsilon\right]  \times C$ using only the $u$
component, then map the deformed $\mathtt{A}_{\varepsilon}$ to $M$ by
$\varphi$, and at last deform it in $M$ by the $v$ component. The
coassociative boundary condition is preserved under the map $\widetilde{\exp}%
$, since we separate the $u$ and $v$ deformations before and after the map
$\varphi$ respectively. The precise description is in order.

For each fixed $t\in\left[  0,\varepsilon\right]  $, $u\left(  t,z\right)  $
and $v\left(  t,z\right)  $ are sections of the spinor bundle $\mathbb{S}%
_{\Sigma}\rightarrow\Sigma$. Using the real vector bundle isomorphism
\[
\left(  id,f\right)  :N_{\Sigma/C}\oplus\wedge_{\mathbb{C}}^{0,1}\left(
N_{\Sigma/C}\right)  \simeq N_{\Sigma/C}\oplus N_{\mathcal{C}/M}|_{\Sigma},
\]
$u\left(  t,z\right)  $ and $f\left(  v\left(  t,z\right)  \right)  $ are
sections of $N_{\Sigma/C}$ and $N_{\mathcal{C}/M}|_{\Sigma}$ respectively. For
any fixed $z\in\Sigma$, using the $u$ component, we have a one parameter
deformation $\exp_{z}^{C}u\left(  t,z\right)  $ in $C$ for $0\leq
t\leq\varepsilon$, where $\exp^{C}$ is the exponential map associated to the
induced metric of $C$ in $M$. Alternatively, we can view the deformation in
the product space $\mathtt{C:}=\left[  0,\varepsilon\right]  \times C$, such
that the line $\left[  0,\varepsilon\right]  \times\left\{  z\right\}
\subset\mathtt{C}$ is deformed to the curve $\left\{  \left(  t,\exp_{z}%
^{C}u\left(  t,z\right)  \right)  |0\leq t\leq\varepsilon\text{ }\right\}
\subset\mathtt{C}$. Let the curve $\gamma_{u}$ in $M$ be
\[
\gamma_{u}\left(  t,z\right)  :=\varphi\left(  t\text{,}\exp_{z}^{C}u\left(
t,z\right)  \right)  ,\text{ for }0\leq t\leq\varepsilon\text{ and }z\in
\Sigma\text{.}%
\]
It is clear that
\[
\gamma_{u}\left(  t,z\right)  \subset C_{t}:=\varphi\left(  t,C\right)
\]
since $\exp_{z}^{C}u\left(  t,z\right)  \subset C$.

For the $v$ component, for each fixed $t\in\left[  0,\varepsilon\right]  $, we
have $f\left(  v\left(  t,z\right)  \right)  \in N_{\mathcal{C}/M}|_{\Sigma
}\left(  z\right)  $. Let
\[
T_{\gamma_{u}\left(  t,z\right)  }:N_{\mathcal{C}/M}|_{\gamma_{u}\left(
0,z\right)  }\longrightarrow N_{\mathcal{C}/M}|_{\gamma_{u}\left(  t,z\right)
}%
\]
be the parallel transport \ along the curve $\gamma_{u}\left(  s,z\right)  $
for $0\leq s\leq t$ with respect to the connection of $N_{\mathcal{C}/M}$
induced from the metric $g$ on $M$. We define the exponential-like map
$\widetilde{\exp}$ as follows:%
\[%
\begin{array}
[c]{ccc}%
\widetilde{\exp}:\mathbb{S=S}^{+}\oplus\mathbb{S}^{-} & \longrightarrow & M\\
V\left(  t,z\right)  =\left(  u\left(  t,z\right)  ,v\left(  t,z\right)
\right)  &  & \exp_{\gamma_{u\left(  t,z\right)  }}^{M}\left(  T_{\gamma
_{u\left(  t,z\right)  }}f\left(  v\left(  t,z\right)  \right)  \right)
\end{array}
\]
where $\exp^{M}$ is the exponential map in $M$. It follows from our
construction that for any $V\in C_{-}^{m}\left(  \mathtt{A}_{\varepsilon
},\mathbb{S}\right)  $ and $x\in\partial\mathtt{A}_{\varepsilon}=\left\{
0,\varepsilon\right\}  \times\Sigma$, $\widetilde{\exp}V$ \emph{satisfies the}
\emph{boundary condition} $\widetilde{\exp}V|_{\partial\mathtt{A}%
_{\varepsilon}}$ $\subset C_{0}\cup C_{\varepsilon}$, because
\[
\left(  \widetilde{\exp}V\right)  \left(  x\right)  =\exp_{\gamma_{u}\left(
x\right)  }^{M}\left(  0\right)  =\gamma_{u}\left(  x\right)  \subset
\varphi\left(  \left\{  0,\varepsilon\right\}  \times C\right)  =C_{0}\cup
C_{\varepsilon}\text{.}%
\]
By the definition of $\widetilde{\exp}$, it is easy to see $\widetilde{\exp
}|_{\mathtt{A}_{\varepsilon}}=\varphi$, because $\mathtt{A}_{\varepsilon}$ is
the zero section of $\mathbb{S}$. Hence on base directions of \ $\mathbb{S}%
\rightarrow\mathtt{A}_{\varepsilon}$, $d\widetilde{\exp}|_{\mathtt{A}%
_{\varepsilon}}=d\varphi|_{\mathtt{A}_{\varepsilon}}$, especially
\[
d\widetilde{\exp}|_{\left\{  0\right\}  \times\Sigma}=id:T\Sigma\rightarrow
T\Sigma\text{ and }d\widetilde{\exp}|_{\left\{  0\right\}  \times\Sigma}%
:\frac{\partial}{\partial x_{1}}\rightarrow n\left(  z\right)  .
\]
On fiber directions, that $d\widetilde{\exp}|_{\left\{  0\right\}
\times\Sigma}=\left(  id,f\right)  :N_{\Sigma/C}\oplus\wedge_{\mathbb{C}%
}^{0,1}\left(  N_{\Sigma/C}\right)  \rightarrow N_{\Sigma/C}\oplus
N_{\mathcal{C}/M}|_{\Sigma}$ follows from $d\exp^{M}\left(  0\right)  =id$.

\bigskip{\small Addresses:}

{\small Naichung Conan Leung}

{\small The Institute of Mathematical Sciences, The Chinese University of Hong
Kong, Shatin, Hong Kong}

{\small Email: leung@math.cuhk.edu.hk}

\bigskip

{\small Xiaowei Wang}

{\small Department of Mathematics, Rutgers University-Newark, Newark, NJ
07102}

{\small Email: xiaowwan@rutgers.edu }

\bigskip

{\small Ke Zhu}

{\small Department of Mathematics, University of Minnesota, Minneapolis, MN
55455}

{\small Current: Department of Mathematics, Harvard University, Cambridge, MA
02138}

{\small Email: kzhu@math.harvard.edu}
\end{document}